\renewenvironment{abstract}
  {{\bfseries\noindent{\abstractname}\par\nobreak}\footnotesize}
  {\bigskip}
\titlespacing{\section}{0pt}{*3}{*1}
\titlespacing{\subsection}{0pt}{*2}{*0.5}
\titlespacing{\subsubsection}{0pt}{*1.5}{0pt}
\providecommand\citet{\cite}
\providecommand\citep{\cite}
\newif\iflatexml\latexmlfalse
\newtheorem{remark}{Remark}
\newtheorem{theorem}{Theorem}
\newtheorem{lemma}{Lemma}
\newtheorem{proposition}{Proposition}
\newtheorem{definition}{Definition}
\def\finpv{\hfill $\square$  \\ \newline }
\newcommand{\keywords}[1][]{\par\addvspace\baselineskip\noindent\enspace\ignorespaces#1}
\begin{document}

\title{Contrast function estimation for the drift parameter of ergodic jump diffusion process.}

\author[1]{Chiara Amorino}%
\author[1]{Arnaud Gloter}%
\affil[1]{Laboratoire de Mathématiques et Modélisation d'Evry, Université Paris-Saclay}%

\vspace{-1em}

  \date{\today}

\begingroup
\let\center\flushleft
\let\endcenter\endflushleft
\maketitle
\endgroup

\selectlanguage{english}
\begin{abstract}
In this paper we consider an ergodic diffusion process with jumps whose drift coefficient depends on an unknown parameter. We suppose that the process is discretely observed. We introduce an estimator based on a contrast function, which is efficient without requiring any conditions on the rate at which the step discretization goes to zero, and where we allow the observed process to have non summable jumps. This extends earlier results where the condition on the step discretization was needed
and where the process was supposed to have summable jumps.
In general situations, our contrast function is not explicit and one has to resort to some approximation. In the case of a finite jump activity, we propose explicit approximations of the contrast function, such that the efficient estimation of the drift parameter is
 feasible. This extends the results obtained by Kessler in the case of continuous processes. 
 \keywords{Efficient drift estimation, ergodic properties, high frequency data, L\'evy-driven SDE,    thresholding methods.}%
\end{abstract}%

\section{Introduction}
Diffusion processes with jumps have been widely used to describe the evolution of phenomenon arising in various fields.
 In finance, jump-processes were introduced to model the dynamic of asset prices \hyperref[csl:1]{(Merton, 1976)},\hyperref[csl:2]{(Kou,2002)}, exchange rates \hyperref[csl:3]{(Bates, 1996)}, or volatility processes \hyperref[csl:4]{(Barndorff-Nielsen \& Shephard, 2001)},\hyperref[csl:5]{(Eraker, Johannes, \& N, 2003)}. Utilization of jump-processes in neuroscience can be found for instance in \hyperref[csl:6]{(Ditlevsen \& Greenwood, 2013)}. 
 
 Practical applications of these models has lead to the recent development of many statistical methods. In this work, our aim is to estimate the drift parameter $\theta$ from a discrete sampling of the process $X^\theta$ solution to
\begin{equation*}
X_t^\theta=X_0^\theta+ \int_0^t b(\theta,X_s^\theta) ds + \int_0^t \sigma(X_s^\theta) dW_s + 
\int_0^t \int_{\mathbb{R}\backslash \left \{0 \right \}} \gamma(X_{s-}^\theta) z \tilde{\mu}(ds,dz),
\end{equation*}
where $W$ is a one dimensional Brownian motion and $\tilde{\mu}$ a compensated Poisson random measure, with a possible infinite jump activity. We assume that the process is sampled at the times
$(t^n_i)_{i=0,\dots,n}$ where the sampling step $\Delta_n:=\sup_{i=0,\dots,n-1} t^n_{i+1}-t^n_i$ goes to zero. Due to the presence of a Gaussian component, we know that it is impossible to estimate the drift parameter on a finite horizon of time. Thus, we assume that $t_n \to \infty$ and the ergodicity of the process $X^\theta$.

Generally, the main difficulty while considering statistical inference of discretely observed stochastic processes comes from the lack of explicit expression for the likelihood. Indeed, the transition density of a jump-diffusion process is usually unknown explicitly. Several methods have been developed to circumvent this difficulty. For instance, closed form expansions of the transition density of jump-diffusions is studied in \hyperref[csl:7]{(A\"it-Sahalia \& Yu, 2006)}, \hyperref[csl:8]{(Li \& Chen, 2016)}. In the context of high frequency observation, the asymptotic behaviour of estimating functions are studied in \hyperref[csl:9]{(Jakobsen \& S{\o}rensen, 2017)}, and conditions are given to ensure rate optimality and efficiency.  Another approach, fruitful in the case of high frequency observation, is to consider pseudo-likelihood method, for instance based on the high frequency approximation of the dynamic of the process by the one of the Euler scheme. This leads to explicit contrast functions with Gaussian structures (see e.g. \hyperref[csl:10]{(Shimizu \& Yoshida, 2006)},\hyperref[csl:11]{(Shimizu, 2006)},\hyperref[csl:12]{(Masuda, 2013)}).

The validity of the approximation by the Euler pseudo-likelihood is justified by the high frequency assumption of the observations, and actually proving that  the estimators are asymptotic normal usually necessitates some conditions on the rate at which $\Delta_n$ should tend to zero. For applications, it is important that the condition on $\Delta_n \to 0$ is less
 stringent as possible.
 
In the case of continuous processes, Florens-Zmirou \hyperref[Florens Zmirou]{(Florens-Zmirou, 1989)} proposes estimation of drift and diffusion parameters under the fast sampling assumption $n\Delta_n^2 \to 0$. Yoshida \hyperref[csl:14]{(Yoshida, 1992)} suggests a correction of the contrast function that yields to the condition $n \Delta_n^3 \to 0$.
In Kessler \hyperref[csl:15]{(Kessler, 1997)}, the author introduces 
an explicit modification of the Euler scheme contrast such that the associated estimators are asymptotically normal, under the condition $n\Delta_n^k \to 0$ where $k \ge 2$ is arbitrarily large. Hence, the result by Kessler allows for any arbitrarily slow polynomial decay to zero of the sampling step.

In the case of jump-diffusions, Shimizu \hyperref[csl:11]{(Shimizu, 2006)} proposes parametric estimation of drift, diffusion and jump coefficients.
The asymptotic normality of the estimators are obtained under some explicit conditions relating the sampling step and jump intensity of the process. These conditions on  $\Delta_n$ are more restrictive as the intensity of jumps near zero is high. In the situation where this jump intensity is finite, the conditions of \hyperref[csl:11]{(Shimizu,2006)} reduces to $n \Delta_n^2 \to 0$. In \hyperref[csl:16]{(Gloter, Loukianova, \& Mai, 2018)}, the condition on the sampling step is relaxed to $n \Delta_n^3 \to 0$, when one estimates the drift parameter only.

In this paper, we focus on the estimation of the drift parameter, and our aim is to weaken the conditions on the decay of the sampling step in way comparable to Kessler's work \hyperref[csl:15]{(Kessler, 1997)}, but in the framework of jump-diffusion processes.

One of the idea in Kessler's paper is to replace, in the Euler scheme contrast function, the contribution of the drift by the exact value of the first conditional moment 
$ \displaystyle
m^{(1)}_{\theta,t_i,t_{i+1}}(x)=E[X^\theta_{t_{i+1}}\mid X^\theta_{t_{i}}=x]$ or some explicit approximation with arbitrarily high order when $\Delta_n \to 0$. In presence of jumps, the contrasts functions in \hyperref[csl:10]{(Shimizu \& Yoshida, 2006)} (see also \hyperref[csl:11]{(Shimizu, 2006)}, \hyperref[csl:16]{(Gloter, Loukianova, \& Mai, 2018)}) resort to a filtering procedure in order to suppress the contribution of jumps and recover the continuous part of the process.
Based on those ideas, we introduce a contrast function (see Definition \ref{D:definition_contrast}), whose expression relies on the quantity $m_{\theta,t_i,t_{i+1}}(x)=\frac{E[X^\theta_{t_{i+1}} \varphi( (X^\theta_{t_{i+1}}-X^\theta_{t_{i}})/(t_{i+1}-t_i)^\beta) \mid X^\theta_{t_{i}}=x]}{E[\varphi( (X^\theta_{t_{i+1}}-X^\theta_{t_{i}})/(t_{i+1}-t_i)^\beta) \mid X^\theta_{t_{i}}=x]}$, where $\varphi$ is some compactly supported function and $\beta<1/2$. The function $\varphi$ is such that $\varphi( (X^\theta_{t_{i+1}}-X^\theta_{t_{i}})/(t_{i+1}-t_i)^\beta)$ vanishes when the 
increments of the data are too large compared to the typical increments of a continuous diffusion process, and thus can be used to filter the contribution of the jumps.

The main result of our paper is that the associated estimator converges at rate $\sqrt{t_n}$, with some explicit asymptotic variance and is efficient.
Comparing to earlier results (\hyperref[csl:10]{(Shimizu \& Yoshida, 2006)}, \hyperref[csl:11]{(Shimizu, 2006)}, \hyperref[csl:16]{(Gloter, Loukianova, \& Mai, 2018)}), the sampling step $(t_i^n)_{i=0,\dots,n}$ can be irregular, no condition is needed on the rate at which $\Delta_n \to 0$ and we have suppressed the assumption that
the jumps of the process are summable. Let us stress that when the jumps activity is so high that the jumps are not summable, we have to choose $\beta<1/3$ (see Assumption $A_\beta$).

Moreover, in the case where the intensity is finite and with the specific choice of $\varphi$ being an oscillating function, we prove that we can approximate our contrast function by a completely explicit one, exactly as in the paper by Kessler \hyperref[csl:15]{(Kessler, 1997)}.
This yields to an efficient estimator under the condition
$n \Delta_n^{k} \to 0$, where $k$ is related to the oscillating properties of the function $\varphi$. 
As $k$ can be chosen arbitrarily high, up to a proper choice of $\varphi$, our method allows to estimate efficiently the drift parameter, under the assumption that the sampling step tends to zero at some polynomial rate. \\
We also show numerically that, when the jump activity is finite, the estimator we deduce from the explicit approximation of the contrast function performs well, making the bias visibly reduced. \\
On the other side, considering the case of infinite jumps activity (taking in particular a tempered $\alpha$-stable jump process with $\alpha <1$), we implement our main results building an approximation of $m$ (see Theorem \ref{th: dL mtheta alpha <1} below) from which we deduce an approximation of the contrast that we minimize in order to get the estimator of the drift coefficient. The estimator we found is a corrected version of the estimator that would result from the choice of an Euler scheme approximation. We see numerically that our estimator is well-performed and that the correction term we give drastically reduces the bias, especially as $\alpha$ gets bigger.

The outline of the paper is the following. In Section \ref{S:Model} we present the assumptions on the process $X$. The Section \ref{S:Construction_and_main} contains the main results of the paper: in Section \ref{S:Construction} we define the contrast function while the consistency and asymptotic normality of the estimator are stated in Section \ref{S:Main}.
In Section \ref{S: Practical implementation} we explain how to use in practice the contrast function and so we deal with its approximations in Section \ref{S: Approximate contrast} while 
its explicit modification is presented in the case of finite jump activity in Section \ref{S:Explicit}. The Section \ref{S: Numerical} is devoted to numerical results and perspectives for practical applications. In Section \ref{S:Limit} we state limit theorems useful to study the asymptotic behavior of the contrast function. The proofs of the main statistical results are given in Section \ref{S:Proof_main}, while the proofs of the limit theorems and some technical results are presented in the Appendix.

\section{Model, assumptions}\label{S:Model}
Let $\Theta$ be a compact subset of $\mathbb{R}$ and $X^\theta$ a solution to
\begin{equation}
X_t^\theta= X_0^\theta + \int_0^t b(\theta, X_s^\theta)ds + \int_0^t a(X_s^\theta)dW_s + \int_0^t \int_{\mathbb{R} \backslash \left \{0 \right \} }
\gamma(X_{s^-}^\theta)z \tilde{\mu}(ds,dz), \quad t \in \mathbb{R}_+, 
\label{eq: model} 
\end{equation}
where $W=(W_t)_{t \ge 0}$ is a one dimensional Brownian motion, $\mu$ is a Poisson random measure associated to the L\'evy process $L=(L_t)_{t \ge 0}$, with $L_t:= \int_0^t \int_\mathbb{R} z \tilde{\mu} (ds, dz)$ and $\tilde{\mu}= \mu - \bar{\mu}$ is the compensated one, on $[0, \infty) \times \mathbb{R}$. We denote $(\Omega, \mathcal{F}, \mathbb{P})$ the probability space on which $W$ and $\mu$ are defined. \\
We suppose that the compensator has the following form: $\bar{\mu}(dt,dz): = F(dz) dt $, where conditions on the Levy measure $F$ will be given later. \\
The initial condition $X_0^\theta$, $W$ and $L$ are independent.

\subsection{Assumptions}
We suppose that the functions $b: \Theta \times \mathbb{R} \rightarrow \mathbb{R}$, $a : \mathbb{R} \rightarrow \mathbb{R}$ and $\gamma : \mathbb{R} \rightarrow \mathbb{R}$ satisfy the following assumptions: \\
\\
ASSUMPTION 1: \textit{The functions $a(x)$, $\gamma(x)$ and, for all $\theta \in \Theta$, $b(x, \theta)$ are globally Lipschitz. Moreover, the Lipschitz constant of $b$ is uniformly bounded on $\Theta$.} \\
\\
Under Assumption 1 the equation (\ref{eq: model}) admits a unique non-explosive c\`adl\`ag adapted solution possessing the strong Markov property, cf \hyperref[csl:17]{(Applebaum, 2009)} (Theorems 6.2.9. and 6.4.6.). \\
\\
ASSUMPTION 2: \textit{For all $\theta \in \Theta$ there exists a constant $t > 0$ such that $X_t^\theta$ admits a density $p_t^\theta(x,y)$ with respect to the Lebesgue measure on $\mathbb{R}$; bounded in $y \in \mathbb{R}$ and in $x \in K$ for every compact $K \subset \mathbb{R}$. Moreover, for every $x \in \mathbb{R}$ and every open ball $U \in \mathbb{R}$, there exists a point $z = z(x, U) \in supp(F) $ such that $\gamma(x)z \in U$.} \\
\\
The last assumption was used in \hyperref[18 GLM]{(Masuda, 2007)} to prove the irreducibility of the process $X^\theta$. Other sets of conditions, sufficient for irreducibility, are in \hyperref[18 GLM]{(Masuda, 2007)}. \\
\\
ASSUMPTION 3 (Ergodicity): \textit{
\begin{enumerate}
\item For all $q >0$, $\int_{|z|> 1} |z|^q F(z) dz < \infty$.
\item For all $\theta \in \Theta$ there exists $C > 0$ such that $xb(x,\theta)\le -C|x|^2$, if $|x| \rightarrow \infty$.
\item $|\gamma(x)| / |x| \rightarrow 0$ as $|x|\rightarrow \infty$.
\item $|a(x)| / |x| \rightarrow 0$ as $|x|\rightarrow \infty$.
\item $\forall \theta \in \Theta$, $\forall q > 0$ we have $\mathbb{E}|X_0^\theta|^q < \infty$.
\end{enumerate}} 
Assumption 2 ensures, together with the Assumption 3, the existence of unique invariant distribution $\pi^\theta$, as well as the ergodicity of the process $X^\theta$, as stated in the Lemma \ref{lemma 2.1 GLM} below. \\
\\
ASSUMPTION 4 (Jumps): \textit{
\begin{enumerate}
\item The jump coefficient $\gamma$ is bounded from below, that is $\inf_{x \in \mathbb{R}}|\gamma(x)|:= \gamma_{min} >0$. 
\label{it:1}
\item The L\'evy measure $F$ is absolutely continuous with respect to the Lebesgue measure and we denote $F(z) = \frac{F(dz)}{dz}$.
\label{it:3}
\item We suppose that $ \exists \, c> 0$ s.t., for all $z \in \mathbb{R}$, $F(z) \le \frac{c}{|z|^{1 + \alpha}}$, with $\alpha \in (0,2)$.
\label{it:2}
\label{it: 4}
\end{enumerate}
} 
Assumptions 4.1 is useful to compare size of jumps of $X$ and $L$.  \\
\\
ASSUMPTION 5 (Non-degeneracy): \textit{There exists some $\alpha > 0$, such that $a^2(x) \ge  \alpha$ for all $x \in \mathbb{R}$} \\
\\
The Assumption $5$ ensures the existence of the contrast function defined in Section $3.1$. \\
\\
ASSUMPTION 6 (Identifiability): \textit{For all $\theta \ne \theta'$,$(\theta, \theta') \in \Theta^2$,
$$\int_\mathbb{R} \frac{(b(\theta, x) - b(\theta', x))^2}{a^2(x)} d\pi^\theta (x) > 0 $$} \\ \\
We can see that this last assumption is equivalent to
\begin{equation}
\forall \theta \ne \theta', \qquad (\theta, \theta') \in \Theta^2, \qquad b(\theta,.) \ne b(\theta',.).
\label{eq: 2.1 GLM} 
\end{equation}
We also need the following technical assumption: \\
\\
ASSUMPTION 7:

\begin{enumerate}
\item The derivatives $\frac{\partial^{k_1 + k_2} b}{ \partial x^{k_1} \partial \theta^{k_2}}$, with $k_1 +k_2 \le 4$ and $k_2 \le 3$, exist and they are bounded if $k_1 \ge 1$. If $k_1 = 0$, for each $k_2 \le 3$ they have polynomial growth.
\item The derivatives $a^{(k)}(x)$ exist and they are bounded for each $1 \le k \le 4$.
\item The derivatives $\gamma^{(k)}(x)$ exist and they are bounded for each $ 1 \le k \le 4$. \\

\end{enumerate}

Define the asymptotic Fisher information by
\begin{equation}
I(\theta)= \int_\mathbb{R} \frac{(\dot{b}(\theta,x))^2}{a^2(x)} \pi^\theta (dx).
\label{eq: 2.2 GLM, Fisher} 
\end{equation}
ASSUMPTION 8: \textit{For all $\theta \in \Theta$, $I(\theta) >0$} . \\ \\

\begin{remark}
 \label{R:alpha_smaller_1} 
If $\alpha <1$, using Assumption 4.3 the stochastic differential equation (\ref{eq: model}) can be rewritten as follows:
\begin{equation}
X_t^\theta= X_0^\theta + \int_0^t \bar{b}(\theta, X_s^\theta)ds + \int_0^t a(X_s^\theta)dW_s + \int_0^t \int_{\mathbb{R} \backslash \left \{0 \right \}} \gamma(X_{s^-}^\theta)z \mu(ds,dz), \quad t \in \mathbb{R}_+, 
\label{eq: model no-compensated measure} 
\end{equation}
where $\bar{b}(\theta, X_s^\theta) = b(\theta, X_s^\theta) - \int_{\mathbb{R} \backslash \left \{0 \right \}} \gamma(X_{s^-}^\theta)z F(z) dz$. \\
This expression implies that $X$ follows diffusion equation $X_t^\theta= X_0^\theta + \int_0^t \bar{b}(\theta, X_s^\theta)ds + \int_0^t a(X_s^\theta)dW_s$ in the interval in which no jump occurred.

\end{remark}
\medskip
From now on we denote the true parameter value by $\theta_0$, an interior point of the parameter space $\Theta$ that we want to estimate. We shorten $X$ for $X^{\theta_0}$. \\
We will use some moment inequalities for jump diffusions, gathered in the following lemma:

\begin{lemma}
Let $X$ satisfies Assumptions 1-4. Let $L_t:= \int_0^t \int_\mathbb{R} z \tilde{\mu}(ds, dz)$ and let $\mathcal{F}_s := \sigma \left \{ (W_u)_{0 < u \le s}, (L_u)_{0 < u \le s}, X_0 \right \}$. \\
Then, for all $t > s$, \\
1) for all $p \ge 2$, $\mathbb{E}[|X_t - X_s|^p]^\frac{1}{p} \le c |t-s|^\frac{1}{p}$, \\
2) for all $p \ge 2$, $p \in \mathbb{N}$, $\mathbb{E}[|X_t - X_s|^p|\mathcal{F}_s] \le c|t-s|(1 + |X_s|^p)$. \\
3) for all $p \ge 2$, $p \in \mathbb{N}$, $\sup_{h \in [0,1]} \mathbb{E}[|X_{s+h}|^p|\mathcal{F}_s] \le c(1 + |X_s|^p)$.
\label{lemma: Moment inequalities} 

\end{lemma}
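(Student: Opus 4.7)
The three estimates are classical moment bounds for SDEs with jumps, and I would prove them by combining Itô's formula with the Burkholder--Davis--Gundy (BDG) and Kunita inequalities. The plan is to establish (3) first (it plays the role of a Lyapunov-type estimate), deduce (2) from the SDE decomposition of the increment $X_t-X_s$, and finally obtain (1) by integrating (2) and invoking a uniform-in-$s$ bound on $\mathbb{E}|X_s|^p$.

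For (3), I apply Itô's formula to the $C^2$ function $f(x)=(1+x^2)^{p/2}$, which dominates $|x|^p$. Writing $f(X_{s+h})=f(X_s)+\int_s^{s+h}\mathcal{L}f(X_u)\,du + (\textrm{martingale})$ and taking conditional expectation given $\mathcal{F}_s$, the martingale part drops out after localization. Using the Lipschitz growth of $b,a,\gamma$ from Assumption 1 together with a second-order Taylor expansion of the jump compensator, Assumption 3.1 and Assumption 4.3 (which guarantees $\int_{|z|\le 1}|z|^2 F(dz)<\infty$ since $\alpha<2$, and $\int_{|z|\ge 1}|z|^q F(dz)<\infty$ for every $q$) yield $\mathcal{L}f(x)\le c(1+f(x))$. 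Gronwall's lemma on $h\in[0,1]$ then produces (3).

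For (2), I decompose $X_t-X_s = I_1+I_2+I_3$ into drift, Brownian, and compensated-jump integrals and estimate each piece using $|a+b+c|^p\le C(|a|^p+|b|^p+|c|^p)$. Hölder's inequality applied to $I_1$ gives a contribution of order $|t-s|^p(1+|X_s|^p)$; the conditional BDG inequality applied to $I_2$ gives $|t-s|^{p/2}(1+|X_s|^p)$; and Kunita's inequality applied to $I_3$ yields
\begin{equation*}
\mathbb{E}[|I_3|^p\mid\mathcal{F}_s] \le c\bigl(|t-s|^{p/2}+|t-s|\bigr)(1+|X_s|^p),
\end{equation*}
the two summands coming from the quadratic-variation and the $L^p$-compensator parts of Kunita's bound. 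Finiteness of $\int |z|^p F(dz)$ near $0$ used in the second summand is ensured by $p\ge 2>\alpha$ together with Assumption 4.3. For $|t-s|\le 1$ and $p\ge 2$ one has $|t-s|^p\le|t-s|^{p/2}\le|t-s|$, so all three pieces collapse into $c|t-s|(1+|X_s|^p)$; for $|t-s|>1$ the claim follows trivially from (3) applied on each of finitely many unit subintervals.

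Finally, (1) is obtained by taking expectation in (2) and combining it with the uniform estimate $\sup_{s\ge 0}\mathbb{E}|X_s|^p<\infty$. The latter comes from the recurrence condition $xb(x,\theta)\le -C|x|^2$ of Assumption 3.2, which upgrades the bound on $\mathcal{L}f$ to a Lyapunov drift $\mathcal{L}f(x)\le C_1-C_2 f(x)$ and hence, via Gronwall, gives a $t$-uniform bound on $\mathbb{E}f(X_t)$. The main technical obstacle throughout is the treatment of the jump compensator when the Lévy measure has high activity near zero: smoothing $|x|^p$ by the $C^2$ function $f$ and using a second-order Taylor expansion (made effective by $\alpha<2$) is what keeps all the integrals against $F(dz)$ finite.
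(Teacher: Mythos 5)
Your proof is essentially correct, but it takes a genuinely different (and more self-contained) route than the paper. The paper does not prove points 1 and 2 at all: it simply cites Theorem 66 of Protter (2005) and Proposition 3.1 of Shimizu--Yoshida (2006) for them, and then derives point 3 from point 2 by the two-line triangle-inequality argument $\mathbb{E}[|X_{s+h}|^p\mid\mathcal{F}_s]\le c(\mathbb{E}[|X_{s+h}-X_s|^p\mid\mathcal{F}_s]+|X_s|^p)\le c(1+|X_s|^p)$. You reverse the logical order: you establish point 3 first by an It\^o/Lyapunov argument on $f(x)=(1+x^2)^{p/2}$ with Gronwall, then feed it into the drift/Brownian/jump decomposition of $X_t-X_s$ (H\"older, conditional BDG, Kunita) to get point 2, and finally integrate point 2 against the stationary-in-time moment bound to get point 1. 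This buys a fully self-contained proof and makes transparent exactly where each assumption enters (Assumption 4.3 with $\alpha<2$ for the small-jump integrals, Assumption 3.1 for the large jumps, Assumption 3.2 only for the uniform-in-time bound in point 1); the cost is length, where the paper buys brevity at the price of outsourcing the two substantive estimates.

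One soft spot: your treatment of the case $|t-s|>1$ in point 2 by "applying (3) on finitely many unit subintervals" does not quite work as stated, because iterating the bound $\mathbb{E}[|X_{s+1}|^p\mid\mathcal{F}_s]\le c(1+|X_s|^p)$ over $n\approx |t-s|$ unit steps produces a constant of order $c^{\,n}$, which grows exponentially in $|t-s|$ rather than linearly. To cover arbitrary $t>s$ you need the uniform conditional bound $\sup_{t>s}\mathbb{E}[|X_t|^p\mid\mathcal{F}_s]\le c(1+|X_s|^p)$, which follows from the Lyapunov drift $\mathcal{L}f\le C_1-C_2 f$ that you already invoke for point 1 --- so the fix is available with your own tools, but it should be the dissipativity of the drift, not iteration of (3), that closes this case. (In the paper's application the increments satisfy $t-s\le\Delta_n\to 0$, so this case never actually arises.)
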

The first two points follow from Theorem 66 of \hyperref[Protter GLM]{(Protter, 2005)} and Proposition 3.1 in \hyperref[csl:10]{(Shimizu \& Yoshida, 2006)}. The last point is a consequence of the second one: $\forall h \in [0,1]$,
$$\mathbb{E}[|X_{s+h}|^p|\mathcal{F}_s] = \mathbb{E}[|X_{s+h} - X_s + X_s|^p|\mathcal{F}_s] \le c(\mathbb{E}[|X_{s+h} - X_s|^p|\mathcal{F}_s] + \mathbb{E}[|X_s|^p|\mathcal{F}_s]),$$
where $c$ may change value line to line. Using the second point of Lemma \ref{lemma: Moment inequalities} and the measurability of $X_s$ with respect to $\mathcal{F}_s$, it is upper bounded by
$ c|h|(1 + |X_s|^p) + c |X_s|^p$. Therefore
$$\sup_{h \in [0,1]} \mathbb{E}[|X_{s+h}|^p|\mathcal{F}_s] \le \sup_{h \in [0,1]} c|h|(1 + |X_s|^p) + c |X_s|^p \le c(1 + |X_s|^p). $$

\subsection{Ergodic properties of solutions}
An important role is playing by ergodic properties of solution of equation (\ref{eq: model}) \\
The following Lemma states that Assumptions $1 - 4$ are sufficient for the existence of an invariant measure $\pi^\theta$ such that an ergodic theorem holds and moments of all order exist.

\begin{lemma}
Under assumptions 1 to 4,
for all $\theta \in \Theta$, $X^\theta$ admits a unique invariant distribution $\pi^\theta$ and the ergodic theorem holds:

\begin{enumerate}
\item For every measurable function $g: \mathbb{R} \rightarrow \mathbb{R}$ satisfying $\pi^\theta(g) < \infty$, we have a.s.
$$\lim_{t \rightarrow \infty} \frac{1}{t} \int_0^t g(X_s^\theta) ds = \pi^\theta (g). $$
\item For all $q > 0$, $\pi^\theta(|x|^q) < \infty $.
\item For all $q >0$, $\sup_{t \ge 0} \mathbb{E}[|X_t^\theta|^q] < \infty$.

\end{enumerate}
\label{lemma 2.1 GLM} 

\end{lemma}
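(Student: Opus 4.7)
The plan is to verify that the hypotheses of the ergodicity theory for jump SDEs developed in Masuda (2007) are satisfied under Assumptions 1--4, so that the three conclusions follow directly from his results. The two ingredients that need to be established are (i) irreducibility / open-set accessibility of the semigroup of $X^\theta$, and (ii) a Foster--Lyapunov drift inequality with a function that grows polynomially.

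For irreducibility, I would invoke Assumption 2. The existence of the density $p_t^\theta(x,y)$ (bounded in $y$ and locally bounded in $x$) combined with the support condition $\gamma(x)\,\mathrm{supp}(F)$ intersects every open set gives the strong Feller / open-set accessibility property needed to conclude that $X^\theta$ is a $\psi$-irreducible aperiodic $T$-process in the sense of Meyn--Tweedie, exactly as worked out in Masuda (2007).

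For the drift condition, I would fix an arbitrary $q>0$ and consider the Lyapunov function $V_q(x)=1+|x|^q$. Applying the infinitesimal generator
\begin{equation*}
\mathcal{L}^\theta V(x) = b(\theta,x)V'(x)+\tfrac{1}{2}a^2(x)V''(x)+\int_{\mathbb{R}\setminus\{0\}}\bigl[V(x+\gamma(x)z)-V(x)-\gamma(x)zV'(x)\bigr]F(dz),
\end{equation*}
and combining Assumption 3.2 ($xb(\theta,x)\le -C|x|^2$ for large $|x|$, producing a leading term $-qC|x|^q$), the sublinearity of $a$ and $\gamma$ (Assumptions 3.3--3.4, making the Brownian and jump contributions $o(|x|^q)$), and Assumption 3.1 controlling large-jump moments (together with Assumption 4.3 for small jumps via a second-order Taylor expansion), one obtains
\begin{equation*}
\mathcal{L}^\theta V_q(x)\le -c_1 V_q(x)+c_2,\qquad x\in\mathbb{R},
\end{equation*}
for some positive constants depending on $q$ and $\theta$. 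Combined with the irreducibility above and petiteness of compacts, Masuda's theorem yields a unique invariant distribution $\pi^\theta$, exponential $\beta$-mixing, and the ergodic theorem of point~1. Point~2 then follows by integrating the drift inequality against $\pi^\theta$, which gives $\pi^\theta(V_q)\le c_2/c_1<\infty$ for every $q$. Point~3 follows from Dynkin's formula: the inequality $\frac{d}{dt}\mathbb{E}[V_q(X_t^\theta)]\le -c_1\mathbb{E}[V_q(X_t^\theta)]+c_2$, combined with Assumption 3.5 guaranteeing $\mathbb{E}[V_q(X_0^\theta)]<\infty$, yields $\sup_{t\ge 0}\mathbb{E}[V_q(X_t^\theta)]<\infty$ by a Gronwall-type comparison.

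The only delicate step is the verification of the drift inequality: one has to show that the nonlocal jump term does not destroy the dissipativity produced by $b$. This is where Assumption 3.3 (the sublinearity $|\gamma(x)|=o(|x|)$) is essential, since after a Taylor expansion in $z$ and a splitting $\{|z|\le 1\}\cup\{|z|>1\}$, the small-jump part produces a term controlled by $|\gamma(x)|^2$ times a finite second moment (using Assumption 4.3 with $\alpha<2$) while the large-jump part produces a term controlled by $|\gamma(x)|^q$ times the $q$-th moment of $F$ on $\{|z|>1\}$ (finite by Assumption 3.1); both are of order $o(|x|^q)$ and therefore absorbed by the $-qC|x|^q$ coming from the drift.
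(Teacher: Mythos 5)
Your proposal follows essentially the same route as the paper: the paper also reduces the statement to Masuda's (2007) ergodicity framework (irreducibility from Assumption 2, petiteness of compacts) together with a Foster--Lyapunov drift inequality $A f^\star \le -c_1 f^\star + c_2$ for the polynomial Lyapunov function $f^\star(x)=|x|^q$, verified by splitting the generator into $A_c$ (where Assumption 3.2 produces the dissipative term $-cq|x|^q$) and $A_d$ (controlled via a second-order Taylor expansion and shown to be $o(|x|^q)$). Your write-up is somewhat more explicit about the small-jump/large-jump splitting and about deducing points 2 and 3 from the drift inequality, but the underlying argument is the one the paper gives.
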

A proof is in \hyperref[csl:16]{(Gloter, Loukianova, \& Mai, 2018)} (Section 8 of Supplement) in the case $\alpha \in (0,1)$, the proof relies on \hyperref[18 GLM]{(Masuda, 2007)}. In order to use it also in the case $\alpha \ge 1$ we have to show that, taken $q > 2$ $q$ even and $f^\star (x) = |x|^q $, $f^\star$ satisfies the drift condition $A f^\star = A_df^\star + A_c f^\star \le -c_1 f^\star + c_2 $, where $c_1 >0$ and $c_2 >0$. \\
Using Taylor's formula up to second order we have 
$$|A_d f^\star(x)| \le c \int_{\mathbb{R}} \int_0^1 |z|^2 \left \| \gamma \right \|_\infty|f''^\star(x + s z \gamma(y))| F(z) ds dz = $$
\begin{equation}
= c \int_{\mathbb{R}} \int_0^1 |z|^2 \left \| \gamma \right \|_\infty q(q-1)|x + s z \gamma(y)|^{q - 2} F(z) ds dz  =  o(|x|^q).
\label{eq: Adfstar} 
\end{equation}
Concerning the generator's continuous part, we use the second point of Assumption 3 to get
\begin{equation}
A_c f^\star (x) = \frac{1}{2} \sigma^2(x) q (q- 1)x^{q - 2} + b(\theta,x)q \, x\, x^{q - 2} \le o(|x|^q) - c q|x|^2x^{q - 2} \le o(|x|^q) - c f^\star(x). 
\label{eq: Acfstar} 
\end{equation}
By \eqref{eq: Adfstar} and \eqref{eq: Acfstar}, the drift condition holds.

\section{Construction of the estimator and main results}\label{S:Construction_and_main}
We exhibit a contrast function for the estimation of a parameter in the drift coefficient. We prove that the derived estimator is consistent and asymptotically normal.

\subsection{Construction of the estimator}\label{S:Construction}
Let $X^\theta$ be the solution to (\ref{eq: model}). Suppose that we observe a finite sample 
$$X_{t_0},  ... , X_{t_n}; \qquad 0=t_0 \le t_1 \le ... \le t_n,$$
where $X$ is the solution to \eqref{eq: model} with $\theta = \theta_0$.  
Every observation time point depends also on $n$, but to simplify the notation we suppress this index. We will be working in a high-frequency setting, i.e.
$$\Delta_n := \sup_{i =0, ... , n-1} \Delta_{n,i}\longrightarrow 0, \quad n \rightarrow \infty,$$
with $\Delta_{n,i}: = (t_{i+1} - t_i) $. \\
We assume $\lim_{n \rightarrow \infty} t_n = \infty$ and $n\Delta_n = O(t_n)$ as $n \rightarrow \infty$. \\
We introduce a jump filtered version of the gaussian quasi-likelihood. This leads to the following contrast function:
\begin{definition}
 \label{D:definition_contrast} 
For $\beta \in (0, \frac{1}{2})$ and $k >0$, we define the contrast function $U_n(\theta)$ as follows:
\begin{equation}
U_n(\theta):= \sum_{i =0}^{n -1} \frac{(X_{t_{i+1}} - m_{\theta, t_i,t_{i+ 1}}(X_{t_i}))^2}{a^2(X_{t_i})(t_{i+ 1} - t_i)} \, \varphi_{\Delta_{n,i}^\beta}(X_{t_{i+1}} - X_{t_i}) 1_{\left \{|X_{t_i}| \le \Delta_{n,i}^{-k} \right \} }
\label{eq: contrast function} 
\end{equation}
where
\begin{equation}
m_{\theta, t_i,t_{i+ 1}}(x): =\frac{\mathbb{E}[X_{t_{i+1}}^\theta \varphi_{\Delta_{n,i}^\beta}(X_{t_{i+1}}^\theta - X_{t_i}^\theta)|X_{t_i}^\theta = x]}{\mathbb{E}
[\varphi_{\Delta_{n,i}^\beta}(X_{t_{i+1}}^\theta - X_{t_i}^\theta)|X_{t_i}^\theta = x]}
\label{eq: definition m} 
\end{equation}
and 
$$\varphi_{\Delta_{n,i}^\beta}(X_{t_{i+1}} - X_{t_i}) = \varphi( \frac{X_{t_{i+1}} - X_{t_i}}{\Delta_{n,i}^\beta}),$$ with $\varphi$ a smooth version of the indicator function, such that
$\varphi(\zeta) = 0$ for each $ \zeta$, with $|\zeta| \ge 2$ and $\varphi(\zeta) = 1$ for each $ \zeta $, with $ |\zeta| \le 1$. \\
The last indicator aims to avoid the possibility that $|X_{t_i}|$ is big. The constant $k$ is positive and it will be choosen later, related to the development of $m_{\theta, t_i,t_{i+ 1}}(x)$ (cf. Remark $2$ below). \\
Moreover we define 
$$m_{\theta, h}(x) : = \frac{\mathbb{E}[X_h^\theta \varphi_{h^\beta}(X_h^\theta - X_0^\theta)|X_0^\theta = x]}{\mathbb{E}
[\varphi_{h^\beta}(X_h^\theta - X_0^\theta)|X_0^\theta = x]}.$$
By the homogeneity of the equation we get that $m_{\theta, t_i,t_{i+ 1}}(x)$ depends only on the difference $t_{i+1} - t_i$ and so $m_{\theta, t_i,t_{i+ 1}}(x) = m_{\theta, t_{i+ 1} - t_i}(x)$ that we may denote simply as $m_\theta(x)$, in order to make the notation easier.
\end{definition}
We define an estimator $\hat{\theta}_n$ of $\theta_0$ as
\begin{equation}
\hat{\theta}_n \in \mbox{arg}\min_{\theta \in \Theta} U_n(\theta).
\label{eq: def thetan} 
\end{equation}
The idea, with a finite intensity, is to use the size of $X_{t_{i+1}} - X_{t_i}$ in order to judge the existence of a jump in an interval $[t_i, t_{i + 1})$. The increment of $X$ with continuous transition  could hardly exceed the threshold $\Delta_{n,i}^\beta$ with $\beta \in (0, \frac{1}{2})$. Therefore we can judge a jump occurred if $|X_{t_{i+1}} - X_{t_i}| > \Delta_{n,i}^\beta $. We keep the idea even when the intensity is no longer finite. \\
With a such defined $m_\theta(X_{t_i})$, using the true parameter value $\theta_0$, we have that
$$\mathbb{E}[(X_{t_{i+1}} - m_{\theta_0, t_i, t_{i+1}}(X_{t_i}))\varphi_{\Delta_{n,i}^\beta}(X_{t_{i+1}} - X_{t_i})| X_{t_i} = x ] = \mathbb{E}[X_{t_{i+1}}\varphi_{\Delta_{n,i}^\beta}(X_{t_{i+1}} - x)| X_{t_i} = x ] + $$ 
$$ - \frac{\mathbb{E}[X_{t_{i+1}} \varphi_{\Delta_{n,i}^\beta}(X_{t_{i+1}}  - X_{t_i})|X_{t_i}= x]}{\mathbb{E}[\varphi_{\Delta_{n,i}^\beta}(X_{t_{i+1}} - X_{t_i})|X_{t_i} = x]} \mathbb{E}[\varphi_{\Delta_{n,i}^\beta}(X_{t_{i+1}} - X_{t_i})|X_{t_i} = x] = 0 ,$$
where we have just used the definition and the measurability of $m_{\theta_0, t_i, t_{i+1}}(X_{t_i})$. \\
But, as the transition density is unknown, in general there is no closed expression for $m_{\theta,h} (x)$, hence the contrast is not explicit.  However, in the proof of our results we will need an explicit development of (\ref{eq: contrast function}).  \\ \\
In the sequel, for $\delta \ge 0$, we will denote $R(\theta, \Delta_{n,i}^\delta, x)$ for any function $R(\theta, \Delta_{n,i}^\delta, x)= R_{i,n}(\theta, x)$, where
$R_{i,n}: \Theta \times \mathbb{R} \longrightarrow \mathbb{R}$, $(\theta, x) \mapsto R_{i,n}(\theta, x) $ is such that
\begin{equation}
\exists c > 0 \qquad |R_{i,n}(\theta,x)| \le c(1 + |x|^c)\Delta_{n,i}^\delta
\label{eq: definition R} 
\end{equation}
uniformly in $\theta$ and with $c$ independent of $i,n$.  \\
The functions $R$ represent the term of rest and have the following useful property, consequence of the just given definition:
\begin{equation}
R(\theta, \Delta_{n,i}^\delta, x)= \Delta_{n,i}^\delta R(\theta, \Delta_{n,i}^0, x).
\label{propriety power R} 
\end{equation}
We point out that it does not involve the linearity of $R$, since the functions $R$ on the left and on the right side are not necessarily the same but only two functions on which the control (\ref{eq: definition R}) holds with $\Delta_{n,i}^\delta $ and $\Delta_{n,i}^0$, respectively. \\ \\
We state asymptotic expansions for $m_{\theta, \Delta_{n,i}}$. The cases $\alpha < 1$ and $\alpha  \ge 1$ yield to different magnitude for the rest term. \\ \\
\textbf{Case $\alpha \in (0,1)$:}

\begin{theorem}
Suppose that Assumptions 1 to 4 hold and that $\beta \in (0, \frac{1}{2})$ and $\alpha \in (0,1)$ are given in definition $1$ and the third point of Assumption $4$, respectively. Then
\begin{equation}
\mathbb{E}[\varphi_{\Delta_{n,i}^\beta}(X^\theta_{t_{i+1}} - X_{t_i}^\theta)|X_{t_i}^\theta = x] = 1 + R(\theta,\Delta_{n,i}^{(1 - \alpha \, \beta) \land (2 - 3 \beta)}, x).
\label{eq: dl phi} 
\end{equation}
\label{th: dl den alpha <1} 
\end{theorem}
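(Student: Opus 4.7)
The plan is to apply Itô's formula to $f(X_s^\theta):=\varphi((X_s^\theta-x)/\Delta_{n,i}^\beta)$ on $[t_i,t_{i+1}]$, using the representation of $X^\theta$ with the non-compensated measure from Remark \ref{R:alpha_smaller_1} (available because $\alpha<1$). Since $f(x)=\varphi(0)=1$ and the stochastic integrals are martingales (using boundedness of $f,f',f''$ and the moment bounds on $X$), taking conditional expectation yields
\[
\mathbb{E}[\varphi_{\Delta_{n,i}^\beta}(X_{t_{i+1}}^\theta-X_{t_i}^\theta)\mid X_{t_i}^\theta=x] - 1 = \int_{t_i}^{t_{i+1}} \mathbb{E}\bigl[\mathcal{L}^\theta f(X_s^\theta)\bigm| X_{t_i}^\theta=x\bigr]\,ds,
\]
with $\mathcal{L}^\theta f(y)=\bar b(\theta,y)f'(y)+\tfrac12 a^2(y)f''(y)+\int[f(y+\gamma(y)z)-f(y)]F(dz)$. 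I would estimate the three pieces separately, aiming for $C(1+|x|^c)\Delta_{n,i}^{2-3\beta}$ from the first two and $C(1+|x|^c)\Delta_{n,i}^{1-\alpha\beta}$ from the jump one.

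For the drift and diffusion pieces, $f'$ and $f''$ are supported on the annulus $\{\Delta_{n,i}^\beta\le|y-x|\le 2\Delta_{n,i}^\beta\}$ with $\|f'\|_\infty\le C\Delta_{n,i}^{-\beta}$ and $\|f''\|_\infty\le C\Delta_{n,i}^{-2\beta}$. Combined with the polynomial growth of $\bar b$ and $a^2$ from Assumption 7, both contributions reduce to estimating $p_s:=\mathbb{P}(|X_s^\theta-x|\ge\Delta_{n,i}^\beta\mid X_{t_i}^\theta=x)$. The crucial sharp bound is
\[
p_s\le C(1+|x|^c)(s-t_i)\Delta_{n,i}^{-\alpha\beta}.
\]
I would prove it by decomposing $X_s^\theta-x$ into its continuous part $Y_s^c$ (with Gaussian-type moments $\mathbb{E}|Y_s^c|^p\le C(s-t_i)^{p/2}(1+|x|^c)$, so high-moment Markov produces an $o(\Delta_{n,i}^K)$ contribution for every $K$) and its jump part $Y_s^j$, splitting the L\'evy measure at cutoff $u\asymp\Delta_{n,i}^\beta$. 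Since $\alpha<1$, the small-jump part satisfies $\mathbb{E}|Y_s^{j,\le u}|\le C(s-t_i)u^{1-\alpha}$, so Markov gives a probability $\le C(s-t_i)\Delta_{n,i}^{-\alpha\beta}$; the probability of at least one jump of size $>u$ is bounded via Assumption 4.3 by $C(s-t_i)u^{-\alpha}\le C(s-t_i)\Delta_{n,i}^{-\alpha\beta}$. Substituting $p_s$ and integrating in $s$ gives bounds of order $\Delta_{n,i}^{2-(1+\alpha)\beta}$ and $\Delta_{n,i}^{2-(2+\alpha)\beta}$, both dominated by $\Delta_{n,i}^{2-3\beta}$ since $\alpha<1$.

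For the jump piece of $\mathcal{L}^\theta f$, I would split the $z$-integral at $|\gamma(X_s)z|=\Delta_{n,i}^\beta$: on the inner region a first-order Taylor estimate with $|\varphi'|\le C$ gives integrand $\le C|\gamma(X_s)||z|/\Delta_{n,i}^\beta$, which after integration against $F$ and the use of $\alpha<1$ produces $\le C|\gamma(X_s)|^\alpha\Delta_{n,i}^{-\alpha\beta}$; on the outer region the trivial bound $|f|\le 1$ combined with the stable-like tail of $F$ gives the same order. Integrating in $s$ and using moments of $|\gamma(X_s)|^\alpha$ from Lemma \ref{lemma 2.1 GLM} yields the $\Delta_{n,i}^{1-\alpha\beta}$ contribution. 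Summing all three pieces produces the desired remainder $R(\theta,\Delta_{n,i}^{(1-\alpha\beta)\wedge(2-3\beta)},x)$. The hard part will be the sharp bound on $p_s$: a direct second-moment Markov via Lemma \ref{lemma: Moment inequalities} only yields $p_s\lesssim(s-t_i)\Delta_{n,i}^{-2\beta}$, which would degrade the diffusion contribution to $\Delta_{n,i}^{2-4\beta}$; it is precisely the finite-variation structure of the jump part enabled by $\alpha<1$ that lets one replace $2\beta$ by $\alpha\beta$ in the exponent and recover the stated rate.
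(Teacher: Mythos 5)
Your proposal is correct, and it reaches the stated expansion by a route that differs in structure from the paper's. The paper iterates Dynkin's formula \eqref{eq: Dynkin formula beginning} once more than you do: it writes the left-hand side as $1+\Delta_{n,i}A\varphi_{\Delta_{n,i}^\beta}(0)+\int\int\mathbb{E}[A^2 f_{i,n}(X_{u_2})]\,du_2du_1$, exploits the exact cancellation $A_cf_{i,n}(x)=0$ at the base point (all derivatives of $\varphi$ vanish at $0$) so that the first-order term is purely the jump tail $R(\theta,\Delta_{n,i}^{-\alpha\beta},x)$, and then bounds the four compositions $A_c^2$, $A_cA_d$, $A_dA_c$, $A_d^2$ of the second-order term, where only $A_c^2$ requires the sharp annulus estimate (Proposition \ref{prop: truc moche h}) and the cross terms are handled by crude sup-norm bounds $\|\varphi^{(j)}_{\Delta_{n,i}^\beta}\|_\infty\le c\Delta_{n,i}^{-\beta j}$, yielding $\Delta_{n,i}^2\cdot\Delta_{n,i}^{-3\beta}$. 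You instead stop at one application and must control $\mathbb{E}[\mathcal{L}^\theta f(X_s^\theta)]$ at a generic time $s$, which forces the sharp exit-probability bound $p_s\lesssim(1+|x|^c)(s-t_i)\Delta_{n,i}^{-\alpha\beta}$ already at first order; this bound is exactly the content of the paper's Lemma \ref{lemma: P(E)} together with the decomposition into the events $N_n^i$ and their complement used in Propositions \ref{prop: truc moche h}--\ref{prop: truc moche z} (and, as in the paper, the polynomial growth of the coefficients obliges you to pass through H\"older, so you collect $p_s^{1-\epsilon}$ rather than $p_s$ — harmless here since $2-(2+\alpha)\beta>2-3\beta$ when $\alpha<1$, leaving room for the $\epsilon$ loss, just as the paper absorbs its own $\epsilon$). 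What your route buys is a marginally sharper remainder for the continuous part, $\Delta_{n,i}^{2-(2+\alpha)\beta}$ in place of $\Delta_{n,i}^{2-3\beta}$, and a shorter computation (three generator pieces instead of four compositions); what it costs is that the hard probabilistic estimate can no longer be confined to a single subterm. Your closing diagnosis is also accurate: a plain second-moment Markov bound would only give $p_s\lesssim(s-t_i)\Delta_{n,i}^{-2\beta}$ and hence $\Delta_{n,i}^{2-4\beta}$, which is precisely the exponent that survives in the companion statement \eqref{eq: dl phi alpha >1} for $\alpha\ge1$, so the finite-variation structure of the jumps for $\alpha<1$ is indeed what rescues the stated rate. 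One cosmetic remark: the polynomial growth of $\bar b$ and $a^2$ already follows from the global Lipschitz property in Assumption 1, so you do not need to invoke Assumption 7, which is not among the hypotheses of this theorem.
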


\begin{theorem}
Suppose that Assumptions 1 to 4 hold and that $\beta \in (0, \frac{1}{2})$ and $\alpha \in (0,1)$ are given in definition $1$ and the third point of Assumption $4$, respectively. Then
\begin{equation}
\mathbb{E}[(X_{t_{i+1}}^\theta - x) \varphi_{\Delta_{n,i}^\beta}(X_{t_{i+1}}^\theta - X_{t_i}^\theta)|X_{t_i}^\theta = x]=\Delta_{n,i}\, b(x, \theta) +
\label{eq: dl num} 
\end{equation}
$$-  \Delta_{n,i} \, \int_{\mathbb{R} \backslash \left \{0 \right \}} z\, \gamma(x)\,[1 - \varphi_{\Delta_{n,i}^\beta}(\gamma(x)z)]\,F(z)dz \, + \, R(\theta,\Delta_{n,i}^{2 - 2 \beta}, x).$$
There exists $k_0 > 0$ such that, for $|x| \le \Delta_{n,i}^{- k_0}$,
\begin{equation}
m_{\theta, \Delta_{n,i}}(x) = x + \Delta_{n,i} \, b(x, \theta) + 
\label{eq: dl m} 
\end{equation}
$$-  \Delta_{n,i} \, \int_{\mathbb{R} \backslash \left \{0 \right \}} z\, \gamma(x)\,[1 - \varphi_{\Delta_{n,i}^\beta}(\gamma(x)z)]\,F(z)dz \, + \, R(\theta,\Delta_{n,i}^{2 - 2 \beta}, x). $$
\label{th: dL mtheta alpha <1} 
\end{theorem}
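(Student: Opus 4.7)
The plan is to prove the two displayed expansions in order: first the numerator-type expansion (\ref{eq: dl num}), which is the analytic heart of the statement, and then deduce (\ref{eq: dl m}) by combining it with Theorem \ref{th: dl den alpha <1}.

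For (\ref{eq: dl num}) I would exploit $\alpha<1$ to switch to the non-compensated form of Remark \ref{R:alpha_smaller_1}, which introduces the drift $\bar b(\theta,x)=b(\theta,x)-\int z\gamma(x)F(z)\,dz$ but replaces the compensated measure $\tilde\mu$ by $\mu$. Setting $g(\xi):=\xi\,\varphi(\xi/\Delta_{n,i}^\beta)$, one has $g(0)=0$, $g'(0)=\varphi(0)=1$, and $g''(0)=2\varphi'(0)/\Delta_{n,i}^\beta=0$ (because $\varphi\equiv1$ near the origin). I would apply Itô's formula to $g(X_s^\theta-x)$ on $[t_i,t_{i+1}]$, kill the Brownian martingale by taking conditional expectation, and keep the three integrated drift, diffusion and jump generator terms. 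Replacing $X_{s^-}$ by $x$ in each integrand produces the leading contributions
\[
\Delta_{n,i}\,\bar b(\theta,x)\cdot g'(0)\;+\;\tfrac{1}{2}\Delta_{n,i}\,a^2(x)\,g''(0)\;+\;\Delta_{n,i}\!\int \big[g(\gamma(x)z)-g(0)\big]F(z)\,dz,
\]
which, after simplification and absorption of $\int z\gamma(x)F(z)\,dz$ into $b(\theta,x)$, gives exactly the announced main terms $\Delta_{n,i}b(\theta,x)-\Delta_{n,i}\int z\gamma(x)[1-\varphi_{\Delta_{n,i}^\beta}(\gamma(x)z)]F(z)\,dz$.

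The rest term is then the sum of three replacement errors. The continuous-generator errors are treated by a first-order Taylor expansion together with Lemma \ref{lemma: Moment inequalities}, which provides the $\sqrt{\Delta_{n,i}}$ or $\Delta_{n,i}$ moment factors while keeping the polynomial-in-$x$ control encoded by the $R(\theta,\Delta_{n,i}^\delta,x)$ notation. The jump-generator error is the delicate one: I would split the integrand in $z$ according to whether $|\gamma(X_{s^-})z|$ is below or above $\Delta_{n,i}^\beta$; in the first region I would Taylor-expand $g(X_{s^-}-x+\gamma(X_{s^-})z)-g(X_{s^-}-x)$ using the bounds $\|g^{(k)}\|_\infty=O(\Delta_{n,i}^{-(k-1)\beta})$, and in the second region one can use $|z|^{1+\alpha}F(z)\le c$ together with Assumption 3.1 to control large-jump tails. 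The singular integrals $\int_{|z|\le c\Delta_{n,i}^\beta}|z|^k F(z)\,dz\lesssim \Delta_{n,i}^{\beta(k-\alpha)}$ combine with the derivative blow-ups so that the final rest obeys the claimed $\Delta_{n,i}^{2-2\beta}$ bound.

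For the quotient expansion (\ref{eq: dl m}) I would use the identity
\[
m_{\theta,\Delta_{n,i}}(x)\;=\;x\;+\;\frac{\mathbb E[(X_{t_{i+1}}^\theta-x)\varphi_{\Delta_{n,i}^\beta}(X_{t_{i+1}}^\theta-X_{t_i}^\theta)\mid X_{t_i}^\theta=x]}{\mathbb E[\varphi_{\Delta_{n,i}^\beta}(X_{t_{i+1}}^\theta-X_{t_i}^\theta)\mid X_{t_i}^\theta=x]},
\]
plug in (\ref{eq: dl num}) for the numerator and (\ref{eq: dl phi}) for the denominator, and expand $(1+R(\theta,\Delta_{n,i}^\delta,x))^{-1}=1-R(\theta,\Delta_{n,i}^\delta,x)+O(R^2)$ with $\delta=(1-\alpha\beta)\wedge(2-3\beta)$. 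The only cross term to check is of order $\Delta_{n,i}\cdot\Delta_{n,i}^\delta$, and for $\beta<1/2$, $\alpha<1$ one has $\delta\ge 1-2\beta$, so $1+\delta\ge 2-2\beta$ as required. The restriction $|x|\le\Delta_{n,i}^{-k_0}$ with $k_0$ chosen small enough in terms of the polynomial degrees appearing in the $R$-notation ensures simultaneously that $|R|<1/2$ (so the geometric expansion is valid) and that the polynomial factors $(1+|x|^c)$ can be absorbed into a uniform $\Delta_{n,i}^{2-2\beta}$ rate.

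The main obstacle is the sharp control of the jump-remainder in (\ref{eq: dl num}): the derivatives of $\varphi_{\Delta_{n,i}^\beta}$ scale like $\Delta_{n,i}^{-\beta}$ and the Lévy density is singular at zero, so the book-keeping between the Taylor order used inside the threshold, the $|z|^\alpha$ blow-up of $F$, and the target rate $2-2\beta$ must be done with care — this is where the constraint $\beta<1/2$ (and the different regimes $\alpha\lessgtr 1$) becomes essential and explains why the companion statement for $\alpha\ge 1$ has to be handled separately.
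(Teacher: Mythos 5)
Your overall architecture matches the paper's: expand the numerator $\mathbb{E}[(X^\theta_{t_{i+1}}-x)\varphi_{\Delta_{n,i}^\beta}(\cdot)]$ via the generator, read off the leading terms $\Delta_{n,i}b(x,\theta)-\Delta_{n,i}\int z\gamma(x)[1-\varphi_{\Delta_{n,i}^\beta}(\gamma(x)z)]F(z)dz$, then divide by the denominator expansion of Theorem \ref{th: dl den alpha <1} using the geometric series; your bookkeeping for the quotient step (the check that $1+\delta\ge 2-2\beta$ with $\delta=(1-\alpha\beta)\wedge(2-3\beta)$, and the role of $k_0$ in keeping the denominator away from $0$) is correct and essentially identical to the paper's. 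Your switch to the non-compensated form of Remark \ref{R:alpha_smaller_1} is a legitimate simplification for $\alpha<1$ and recombines correctly into the stated leading terms.

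The genuine gap is in the remainder of \eqref{eq: dl num}. You use a \emph{single} application of It\^o/Dynkin and then control the error of freezing $X_{s^-}$ at $x$ by ``first-order Taylor expansion together with Lemma \ref{lemma: Moment inequalities}, which provides the $\sqrt{\Delta_{n,i}}$ or $\Delta_{n,i}$ moment factors.'' For the drift contribution this error contains $\int_{t_i}^{t_{i+1}}\mathbb{E}[(b(X_s,\theta)-b(x,\theta))\varphi_{\Delta_{n,i}^\beta}(X_s-x)\mid X_{t_i}=x]\,ds$, and the bound $\mathbb{E}[|X_s-x|\mid X_{t_i}=x]\le c\sqrt{s-t_i}\,(1+|x|)$ only yields a remainder of order $\Delta_{n,i}^{3/2}$ (the Brownian part of $X_s-x$ genuinely has $L^1$ size $\sqrt{s-t_i}$). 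Since the theorem claims $R(\theta,\Delta_{n,i}^{2-2\beta},x)$ and $2-2\beta>3/2$ whenever $\beta<1/4$, your argument as sketched proves a strictly weaker rate on part of the admissible range of $\beta$. The same issue recurs in the jump-generator freezing error, whose Lipschitz-in-$y$ bound again produces a factor $|X_s-x|$. To reach $\Delta_{n,i}^{2-2\beta}$ you must exploit the cancellation $\mathbb{E}[(X_s-x)\varphi_{\Delta_{n,i}^\beta}(X_s-x)\mid X_{t_i}=x]=O(s-t_i)$ rather than bound $|X_s-x|$ in absolute value; this is exactly what the paper's second-order iterated Dynkin formula \eqref{eq: Dynkin formula beginning} does, since the first-order term $\Delta_{n,i}Ag_{i,n}(x)$ is then exact and the whole error sits in the double integral $\int\int\mathbb{E}[A^2g_{i,n}(X_{u_2})]$, which carries an explicit prefactor $\Delta_{n,i}^2$ against the blow-up $R(\theta,\Delta_{n,i}^{-2\beta},x)$ of $A^2g_{i,n}$. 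Either iterate your It\^o expansion once more, or bootstrap the first-order estimate $\mathbb{E}[(X_s-x)\varphi]=O(s-t_i)$ into the drift error term; as written the claimed exponent $2-2\beta$ is not established for small $\beta$.
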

. \\
\textbf{Case $\alpha \in [1,2)$:}
\begin{theorem}
Suppose that Assumptions 1 to 4 hold and that $\beta \in (0, \frac{1}{2})$ and $\alpha \in [1,2)$ are given in definition $1$ and the third point of Assumption $4$, respectively. Then
\begin{equation}
\mathbb{E}[\varphi_{\Delta_{n,i}^\beta}(X^\theta_{t_{i+1}} - X_{t_i}^\theta)|X_{t_i}^\theta = x] = 1 + R(\theta,\Delta_{n,i}^{(1 - \alpha\beta) \land (2-4\beta)}, x).
\label{eq: dl phi alpha >1} 
\end{equation}
\label{th: dl den alpha >1} 
\end{theorem}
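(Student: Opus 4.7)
The plan is to reduce the estimate to a tail-probability bound for the increment $X^\theta_{t_{i+1}}-X^\theta_{t_i}$, and then to control that tail through a Lévy-driver decomposition at a threshold commensurate with $\Delta_{n,i}^\beta$.

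\emph{Reduction to a tail bound.} Set $\psi(\zeta):=1-\varphi(\zeta)$, so that $\psi$ is bounded and vanishes on $[-1,1]$. Then
\[
1 - \mathbb{E}[\varphi_{\Delta_{n,i}^\beta}(X^\theta_{t_{i+1}}-X^\theta_{t_i})\mid X^\theta_{t_i}=x]
= \mathbb{E}[\psi((X^\theta_{t_{i+1}}-X^\theta_{t_i})/\Delta_{n,i}^\beta)\mid X^\theta_{t_i}=x],
\]
whose absolute value is bounded by $\|\psi\|_\infty\cdot \mathbb{P}(|X^\theta_{t_{i+1}}-X^\theta_{t_i}|>\Delta_{n,i}^\beta\mid X^\theta_{t_i}=x)$. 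It therefore suffices to show that this conditional probability is at most $c(1+|x|^c)\Delta_{n,i}^{(1-\alpha\beta)\wedge(2-4\beta)}$.

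\emph{Lévy-driver decomposition.} I choose a threshold $u_n := \Delta_{n,i}^\beta/(c(1+|x|))$ adapted to the local size of $\gamma$ along the trajectory of $X^\theta$ (using the boundedness of $\gamma'$ in Assumption 7 and the moment control of Lemma 1), split the compensated Poisson measure as $\tilde\mu=\tilde\mu^{(b)}+\tilde\mu^{(s)}$ according to $|z|>u_n$ versus $|z|\le u_n$, and introduce
\[
Y^\theta_t := X^\theta_0 + \int_0^t \bar b(\theta, X^\theta_s)\,ds + \int_0^t a(X^\theta_s)\,dW_s + \int_0^t\!\!\int_{|z|\le u_n}\!\gamma(X^\theta_{s^-})\,z\,\tilde\mu(ds,dz),
\]
with drift correction $\bar b(\theta,x):=b(\theta,x)-\gamma(x)\int_{|z|>u_n}z\,F(dz)$, and $J^{(b)}$ collecting the remaining big-jump contributions, so that $X^\theta = Y^\theta + J^{(b)}$. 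On the event $\{\mu([t_i,t_{i+1}]\times\{|z|>u_n\})=0\}$ the increments of $X^\theta$ and $Y^\theta$ coincide, so the tail bound reduces to controlling two pieces.

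\emph{Two tail bounds.} The big-jump part is handled directly by Assumption 4.3:
\[
\mathbb{P}(\mu([t_i,t_{i+1}]\times\{|z|>u_n\})\ge 1\mid X^\theta_{t_i}=x)\le \Delta_{n,i}\!\int_{|z|>u_n}\!F(dz)\le c(1+|x|)^\alpha \Delta_{n,i}^{1-\alpha\beta}.
\]
On the complementary event, a fourth-moment bound on $Y^\theta-Y^\theta_{t_i}$ via Burkholder--Davis--Gundy, combined with the truncated-moment estimates $\int_{|z|\le u_n}z^2F(dz)\le c\, u_n^{2-\alpha}$ and $\int_{|z|\le u_n}z^4F(dz)\le c\, u_n^{4-\alpha}$ (valid for $\alpha\in[1,2)$), gives
\[
\mathbb{E}[|Y^\theta_{t_{i+1}}-Y^\theta_{t_i}|^4\mid X^\theta_{t_i}=x]\le c(1+|x|^c)\bigl(\Delta_{n,i}^2+\Delta_{n,i}^{1+(4-\alpha)\beta}+\Delta_{n,i}^{2+2(2-\alpha)\beta}\bigr),
\]
so Markov's inequality at level $\Delta_{n,i}^\beta$ yields the bound $c(1+|x|^c)(\Delta_{n,i}^{2-4\beta}+\Delta_{n,i}^{1-\alpha\beta})$. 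Since $\alpha<2$, the last term in the fourth-moment estimate is dominated by the first, and combining with the big-jump bound produces the announced exponent $(1-\alpha\beta)\wedge(2-4\beta)$.

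\emph{Main obstacle.} The hardest part is dealing with the unboundedness of $\gamma$, which prevents a deterministic threshold on $z$ from translating cleanly into a threshold on the jump sizes of $X^\theta$. I would address this by localizing $X_{s^-}$ near $x$ on the short interval $[t_i,t_{i+1}]$ using Lemma 1, paying only a polynomial factor in $x$ that is absorbed into the $R(\theta,\Delta^\delta,x)$ notation. A secondary subtlety, specific to the regime $\alpha\ge 1$ and absent from Theorem 1, is that the small jumps are no longer absolutely summable, so the small-jump integral must be kept in compensated form and controlled through BDG at order $4$; this is precisely what replaces the exponent $2-3\beta$ of the $\alpha<1$ case by $2-4\beta$ here.
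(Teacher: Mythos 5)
Your proof is correct in its conclusions but takes a genuinely different route from the paper. The paper applies Dynkin's formula twice to $f_{i,n}(y)=\varphi_{\Delta_{n,i}^\beta}(y-x)$: the exponent $1-\alpha\beta$ comes from the first-order term $\Delta_{n,i}A_d f_{i,n}(x)=\Delta_{n,i}\int(f_{i,n}(x+\gamma(x)z)-1)F(z)dz=R(\theta,\Delta_{n,i}^{1-\alpha\beta},x)$, and the exponent $2-4\beta$ from the double time integral of $\sup_{u}\mathbb{E}[A^2f_{i,n}(X^\theta_u)\mid X^\theta_{t_i}=x]$, which requires second-order Taylor bounds on the four compositions $A_c^2$, $A_cA_d$, $A_dA_c$, $A_d^2$ (each of order $\Delta_{n,i}^{-4\beta}$ here because $\varphi_{\Delta_{n,i}^\beta}$ must be differentiated up to fourth order). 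You instead reduce $1-\mathbb{E}[\varphi_{\Delta_{n,i}^\beta}(\cdot)]$ to the tail probability $\mathbb{P}(|X^\theta_{t_{i+1}}-X^\theta_{t_i}|>\Delta_{n,i}^\beta\mid X^\theta_{t_i}=x)$ and control it by a big-jump/small-jump splitting of the driver at level $u_n\asymp\Delta_{n,i}^\beta/(1+|x|)$: the big-jump probability gives $\Delta_{n,i}^{1-\alpha\beta}$, and a fourth-moment Chebyshev bound on the remaining part gives $\Delta_{n,i}^{2-4\beta}$ (your truncated-moment computations and the domination of the cross terms check out, including the compensator drift $\gamma\int_{|z|>u_n}zF(dz)$ of order $u_n^{1-\alpha}$, whose fourth power times $\Delta_{n,i}^{4-4\beta}$ is indeed below $2-4\beta$). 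This is close in spirit to the paper's own auxiliary machinery (the set $N_n^i$, Lemma 7 and Proposition 6 in the Appendix use exactly this splitting), and for the denominator it is arguably more elementary, since here one only needs a bound on the deviation from $1$ and not an identified leading term; the price is that your method would not extend to Theorems \ref{th: dL mtheta alpha <1} and \ref{th: dL mtheta alpha  > 1}, where the Dynkin expansion is needed to extract the drift term explicitly.

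Two minor points. First, you invoke Assumption 7 (boundedness of $\gamma'$) to calibrate the threshold, but the theorem only assumes Assumptions 1 to 4; the linear growth of $\gamma$ implied by the global Lipschitz property in Assumption 1, combined with point 3 of Lemma \ref{lemma: Moment inequalities} to control $\sup_{s\in[t_i,t_{i+1}]}\mathbb{E}[|\gamma(X^\theta_s)|^p\mid X^\theta_{t_i}=x]$, is all you need, so this is cosmetic. Second, at $\alpha=1$ the compensator integral $\int_{|z|>u_n}|z|F(dz)$ acquires a logarithmic factor; the paper explicitly sidesteps $\alpha=1$ by perturbing to $1+\epsilon$, and you should either do the same or note that the logarithm is absorbed by the strict inequality $4-4\beta(\alpha-1)>2$. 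Neither point affects the validity of the argument.
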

\begin{theorem}
Suppose that Assumptions 1 to 4 hold and that $\beta \in (0, \frac{1}{3})$ and $\alpha \in [1,2)$ are given in definition $1$ and the third point of Assumption $4$, respectively. Then
\begin{equation}
\mathbb{E}[(X_{t_{i+1}}^\theta - x) \varphi_{\Delta_{n,i}^\beta}(X_{t_{i+1}}^\theta - X_{t_i}^\theta)|X_{t_i}^\theta = x]=\Delta_{n,i}\, b(x, \theta) +
\label{eq: dl num alpha >1} 
\end{equation}
$$-  \Delta_{n,i} \, \int_{\mathbb{R} \backslash \left \{0 \right \}} z\, \gamma(x)\,[1 - \varphi_{\Delta_{n,i}^\beta}(\gamma(x)z)]\,F(z)dz \, + \, R(\theta,\Delta_{n,i}^{2 - 3 \beta}, x).$$
There exists $k_0 > 0$ such that, for $|x| \le \Delta_{n,i}^{- k_0}$,
\begin{equation}
m_{\theta, \Delta_{n,i}}(x) = x + \Delta_{n,i} \, b(x, \theta) + 
\label{eq: dl m alpha>1} 
\end{equation}
$$-  \Delta_{n,i} \, \int_{\mathbb{R} \backslash \left \{0 \right \}} z\, \gamma(x)\,[1 - \varphi_{\Delta_{n,i}^\beta}(\gamma(x)z)]\,F(z)dz \, + \, R(\theta,\Delta_{n,i}^{2 - 3 \beta}, x). $$
\label{th: dL mtheta alpha  > 1} 
\end{theorem}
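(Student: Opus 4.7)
The plan is to prove \eqref{eq: dl num alpha >1} by an It\^o-based Taylor expansion of order two, and then derive \eqref{eq: dl m alpha>1} by combining this with the denominator expansion \eqref{eq: dl phi alpha >1} of Theorem \ref{th: dl den alpha >1}. Set $f(y) := (y-x)\varphi((y-x)/\Delta_{n,i}^\beta)$ and apply It\^o's formula to $f(X_t^\theta)$ on $[t_i, t_{i+1}]$; conditioning on $X_{t_i}^\theta = x$ annihilates the martingale parts and gives
\begin{equation*}
\mathbb{E}[(X_{t_{i+1}}^\theta - x)\varphi_{\Delta_{n,i}^\beta}(X_{t_{i+1}}^\theta - X_{t_i}^\theta)\mid X_{t_i}^\theta = x] = \int_{t_i}^{t_{i+1}} \mathbb{E}[L^\theta f(X_s^\theta)\mid X_{t_i}^\theta = x]\, ds,
\end{equation*}
where $L^\theta = L_c^\theta + L_d$ is the full generator of \eqref{eq: model}, with compensated jump part $L_d f(y) = \int [f(y+\gamma(y)z) - f(y) - \gamma(y) z f'(y)] F(z)\,dz$ (which is forced on us here because the small jumps are not summable for $\alpha\ge 1$). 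Since $\varphi\equiv 1$ near $0$, direct computation gives $f(x)=0$, $f'(x)=1$, $f''(x)=0$, whence
\begin{equation*}
L^\theta f(x) = b(\theta,x) - \int_{\mathbb{R}\setminus\{0\}} \gamma(x)\, z\,[1-\varphi_{\Delta_{n,i}^\beta}(\gamma(x) z)]\,F(z)\,dz,
\end{equation*}
which integrated on $[t_i,t_{i+1}]$ yields the leading part of \eqref{eq: dl num alpha >1}.

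The residual is $\int_{t_i}^{t_{i+1}} \mathbb{E}[L^\theta f(X_s^\theta) - L^\theta f(x)\mid X_{t_i}^\theta = x]\,ds$. A second application of It\^o's formula, this time to $L^\theta f$ on $[t_i,s]$, reduces its control to proving the estimate
\begin{equation*}
|\mathbb{E}[(L^\theta)^2 f(X_u^\theta)\mid X_{t_i}^\theta = x]| \le c(1+|x|^c)\,\Delta_{n,i}^{-3\beta}, \qquad u\in[t_i,t_{i+1}],
\end{equation*}
since a double integration in time then produces the claimed rest $R(\theta,\Delta_{n,i}^{2-3\beta},x)$.

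This bound on $(L^\theta)^2 f$ is the main obstacle. Using $\mathrm{supp}(\varphi)\subset[-2,2]$, one checks $|f^{(k)}(y)|\le c\,\Delta_{n,i}^{-(k-1)\beta}$ for $1\le k\le 4$. Two iterations of the continuous part $L_c^\theta = b\,\partial_y + \tfrac12 a^2\partial_y^2$ produce derivatives of $f$ up to order four and hence a bound of order $\Delta_{n,i}^{-3\beta}$ times a polynomial in $|y|$. The delicate terms involve $L_d$: for $\alpha\in[1,2)$ one must exploit the compensation. Taylor expanding the integrand $f(y+\gamma(y)z) - f(y) - \gamma(y)z f'(y)$ to second order, the small-$|z|$ contribution is controlled by $\int_{|z|\le 1} z^2 F(z)\,dz<\infty$ (valid for $\alpha<2$) combined with $\|f''\|_\infty\lesssim \Delta_{n,i}^{-\beta}$, while the large-$|z|$ contribution uses the compact support of $f$ and the bound $\int_{|z|>1}|z|^q F(z)\,dz<\infty$ from Assumption 3.1. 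Iterating this reasoning for the mixed terms $L_d L_c^\theta f$, $L_c^\theta L_d f$, and $L_d L_d f$, and noting that every extra differentiation of $f$ inside $L_d$ costs one additional factor $\Delta_{n,i}^{-\beta}$, one reaches the global bound $|(L^\theta)^2 f(y)|\le c(1+|y|^c)\,\Delta_{n,i}^{-3\beta}$; invoking the moment bounds in Lemma \ref{lemma: Moment inequalities} to take the conditional expectation then closes the estimate.

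For the second part of the theorem, write $m_{\theta,\Delta_{n,i}}(x) = x + N(x)/D(x)$ where $N(x) := \mathbb{E}[(X^\theta_{t_{i+1}} - x)\varphi_{\Delta_{n,i}^\beta}(X^\theta_{t_{i+1}}-X^\theta_{t_i})\mid X_{t_i}^\theta = x]$ and $D(x) := \mathbb{E}[\varphi_{\Delta_{n,i}^\beta}(X^\theta_{t_{i+1}}-X^\theta_{t_i})\mid X_{t_i}^\theta = x]$, whose expansions are supplied by \eqref{eq: dl num alpha >1} and \eqref{eq: dl phi alpha >1}. The localisation $|x|\le \Delta_{n,i}^{-k_0}$ with $k_0$ chosen sufficiently small relative to $(1-\alpha\beta)\wedge(2-4\beta)$ guarantees $|D(x)-1|<1/2$ for $n$ large, so that $1/D(x) = 1 - (D(x)-1) + O((D(x)-1)^2)$. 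Distributing the product, all cross terms beyond the leading $\Delta_{n,i} L^\theta f(x)$ are of order at most $\Delta_{n,i}^{1+(1-\alpha\beta)\wedge(2-4\beta)}$ with polynomial growth in $|x|$, which is dominated by $\Delta_{n,i}^{2-3\beta}$ under $\beta<1/3$ and $\alpha<2$; this yields \eqref{eq: dl m alpha>1}.
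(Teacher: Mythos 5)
Your proof follows essentially the same route as the paper's: an order-two iterated Dynkin/It\^o expansion applied to $g_{i,n}(y)=(y-x)\varphi_{\Delta_{n,i}^\beta}(y-x)$, identification of the leading term $A g_{i,n}(x)=b(x,\theta)-\int_{\mathbb{R}\setminus\{0\}} z\,\gamma(x)[1-\varphi_{\Delta_{n,i}^\beta}(\gamma(x)z)]F(z)dz$, a bound of order $\Delta_{n,i}^{-3\beta}$ (times polynomial growth) on the conditional expectation of $A^2 g_{i,n}$ obtained by second-order Taylor expansion of the compensated jump operator together with $\int(|z|^2+|z|^3)F(z)dz<\infty$, and finally division by the denominator expansion of Theorem \ref{th: dl den alpha >1} on the event $|x|\le\Delta_{n,i}^{-k_0}$. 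The only, immaterial, differences are that for the $A_c^2 g_{i,n}$ term you use the crude pointwise bound $\Delta_{n,i}^{-3\beta}$ where the paper invokes Proposition \ref{prop: truc moche h} (both suffice, since the mixed and double-jump terms already contribute at that order), and that your stated order $\Delta_{n,i}^{1+(1-\alpha\beta)\wedge(2-4\beta)}$ for the cross terms in the ratio slightly overlooks that the compensator part of the numerator is only $O(\Delta_{n,i}^{1+\beta(1-\alpha)})$ for $\alpha>1$ — though the resulting exponent $1+\beta(1-\alpha)+(1-\alpha\beta)\wedge(2-4\beta)$ still exceeds $2-3\beta$ for $\alpha<2$, $\beta<1/3$, so the conclusion stands.
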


\begin{remark}
The constant $k$ in the definition \eqref{eq: contrast function} of contrast function can be taken in the interval $(0, k_0]$. In this way $\Delta_{n,i}^{-k} \le \Delta_{n,i}^{-k_0} $ and so \eqref{eq: dl m} or \eqref{eq: dl m alpha>1} holds for $|x| = |X_{t_i}| $ smaller than $\Delta_{n,i}^{-k}$. \\
If it is not the case the contribution of the observation $X_{t_i}$ in the contrast function is just $0$. However we will see that suppressing the contribution of too big $|X_{t_i}| $ does not effect the efficiency property of our estimator.
\end{remark}

\begin{remark}
In the development \eqref{eq: dl num} or \eqref{eq: dl num alpha >1} the term $\Delta_{n,i} \, \int_{\mathbb{R} \backslash \left \{0 \right \}} z\, \gamma(x)\,[1 - \varphi_{\Delta_{n,i}^\beta}(\gamma(x)z)]\,F(z)dz$ is independent of $\theta$, hence it will disappear in the difference $m_\theta(x) - m_{\theta_0}(x)$, but it is not negligible compared to $ \Delta_{n,i} \, b(x, \theta) $ since its order is $ \Delta_{n,i}$ if $\alpha \in (0,1)$ and at most $\Delta_{n,i}^{\frac{1}{2}}$ if $\alpha \in [1,2)$. Indeed, by the definition of the function $\varphi$, we know that we can consider as support of $\varphi_{\Delta_{n,i}^\beta}(0) - \varphi_{\Delta_{n,i}^\beta}(\gamma(x)z)$ the interval $ c \times [- \frac{\Delta^\beta_{n,i}}{\left \| \gamma \right \|_\infty}, \frac{\Delta^\beta_{n,i}}{\left \| \gamma \right \|_\infty} ]^c$. If $\alpha < 1$, using moreover the third point of Assumption 4 we get the following estimation:
\begin{equation}
|\Delta_{n,i} \, \int_{\mathbb{R} \backslash \left \{0 \right \}} z\, \gamma(x)\,[1 - \varphi_{\Delta_{n,i}^\beta}(\gamma(x)z)]\,F(z)dz |\le R(\theta_0, \Delta_{n,i}^1, X_{t_i}).
\label{eq: preuve ordre 1/2, alpha <1 } 
\end{equation}
Otherwise, if $\alpha \ge 1$, we have
$$|\Delta_{n,i} \, \int_{\mathbb{R} \backslash \left \{0 \right \}} z\, \gamma(x)\,[1 - \varphi_{\Delta_{n,i}^\beta}(\gamma(x)z)]\,F(z)dz | \le c |\Delta_{n,i}| \int_{c \times [- \frac{\Delta^\beta_{n,i}}{\left \| \gamma \right \|_\infty}, \frac{\Delta^\beta_{n,i}}{\left \| \gamma \right \|_\infty} ]^c} |z|^{- \alpha}= R(\theta, \Delta_{n,i}^{1 + \beta (1 - \alpha)}, x), $$
with $\beta \in (0, \frac{1}{2})$ and $\alpha \in [1,2)$, hence the exponent on $\Delta_{n,i}$ is always more than $\frac{1}{2}$. \\
We can therefore write in the first case
\begin{equation}
m_{\theta, \Delta_{n,i}}(x)= x + R(\theta, \Delta_{n,i}, x)= R(\theta, \Delta_{n,i}^0, x)
\label{eq: mtheta as R(theta,1) alpha >1} 
\end{equation}
and in the second
\begin{equation}
m_{\theta, \Delta_{n,i}}(x)= x + R(\theta, \Delta_{n,i}^{1 + \beta(1 - \alpha)}, x)= R(\theta, \Delta_{n,i}^0, x).
\label{eq: mtheta as R(theta,1) alpha <1} 
\end{equation}
\end{remark}

\begin{remark}
In Theorems \ref{th: dl den alpha <1} - \ref{th: dl den alpha >1} we do not need conditions on $\beta$ because, for each $\beta \in (0, \frac{1}{2})$ and for each $\alpha \in (0,2)$ the exponent on $\Delta_{n,i}$ is positive and therefore the last term of \eqref{eq: dl phi alpha >1} is negligible compared to $1$. In Theorem \ref{th: dL mtheta alpha  > 1}, instead, $R$ is a negligible function if and only if $2- 3 \beta \ge 1$, it means that it must be $\beta \le \frac{1}{3}$.
We have taken $\beta \in (0, \frac{1}{3})$ and so such a condition is always respected.
\end{remark}

\subsection{Main results}\label{S:Main}
Let us introduce the Assumption $A_\beta$ that turns out starting from Theorems \ref{th: dl den alpha <1}, \ref{th: dL mtheta alpha <1}, \ref{th: dl den alpha >1} and \ref{th: dL mtheta alpha  > 1}:\\ \\
ASSUMPTION $A_\beta$: We choose $\beta \in (0, \frac{1}{2})$ if $\alpha \in (0,1)$. If on the contrary $\alpha \in [1,2)$, then we take $\beta$ in $(0, \frac{1}{3})$. \\

The following theorems give a general consistency result and the asymptotic normality of the estimator $\hat{\theta}_n$, that hold without further assumptions on $n$ and $\Delta_n$.

\begin{theorem}
{(Consistency)} \\
Suppose that Assumptions 1 to 7 and $A_\beta$ hold and let $k$ of the definition of the contrast function \eqref{eq: contrast function} be in $(0, k_0)$. Then the estimator $\hat{\theta}_n$ is consistent in probability:
$$\hat{\theta}_n \xrightarrow{\mathbb{P}} \theta_0, \qquad n \rightarrow \infty.$$
\end{theorem}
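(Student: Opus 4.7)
The plan is the standard M--estimator route: exhibit a deterministic limit $\mathcal{U}(\theta)$ such that $\frac{1}{t_n}(U_n(\theta) - U_n(\theta_0)) \to \mathcal{U}(\theta)$ in probability, uniformly in $\theta$ on the compact set $\Theta$, with $\mathcal{U}(\theta_0)=0$ and $\mathcal{U}(\theta)>0$ for $\theta\ne\theta_0$. The natural candidate is
$$\mathcal{U}(\theta) := \int_{\mathbb{R}} \frac{(b(x,\theta)-b(x,\theta_0))^2}{a^2(x)}\,\pi^{\theta_0}(dx),$$
which is strictly positive away from $\theta_0$ by the identifiability Assumption~6.

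First I would write $X_{t_{i+1}}-m_\theta(X_{t_i}) = (X_{t_{i+1}}-m_{\theta_0}(X_{t_i})) + (m_{\theta_0}(X_{t_i})-m_\theta(X_{t_i}))$, so that $U_n(\theta)-U_n(\theta_0)$ splits into a quadratic piece $A_n(\theta)$ involving $(m_{\theta_0}-m_\theta)^2$ and a cross piece $B_n(\theta)$ linear in $X_{t_{i+1}}-m_{\theta_0}(X_{t_i})$. The crucial point is that, in the expansions given by Theorems~\ref{th: dL mtheta alpha <1} and \ref{th: dL mtheta alpha  > 1}, the jump correction $\Delta_{n,i}\int z\gamma(x)[1-\varphi_{\Delta_{n,i}^\beta}(\gamma(x)z)]F(z)dz$ does not depend on $\theta$ and therefore disappears in the difference, leaving
$$m_{\theta_0,\Delta_{n,i}}(X_{t_i}) - m_{\theta,\Delta_{n,i}}(X_{t_i}) = \Delta_{n,i}\bigl(b(X_{t_i},\theta_0)-b(X_{t_i},\theta)\bigr) + R(\theta,\Delta_{n,i}^{2-3\beta},X_{t_i}),$$
on the set $\{|X_{t_i}|\le \Delta_{n,i}^{-k}\}$ where the expansions are valid (Remark~2).

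For the main term, substituting this expansion into $A_n(\theta)/t_n$ and using $\varphi_{\Delta_{n,i}^\beta}\le 1$ together with Lemma~\ref{lemma: Moment inequalities} shows that the remainder contributions are $o_{\mathbb{P}}(1)$ uniformly on $\Theta$ (thanks to the polynomial growth of $\partial_\theta b$ from Assumption~7 and the uniform boundedness of moments of $X$). The dominant part is then
$$\frac{1}{t_n}\sum_{i=0}^{n-1} \Delta_{n,i}\frac{(b(X_{t_i},\theta)-b(X_{t_i},\theta_0))^2}{a^2(X_{t_i})}\varphi_{\Delta_{n,i}^\beta}(X_{t_{i+1}}-X_{t_i})\mathbf{1}_{\{|X_{t_i}|\le\Delta_{n,i}^{-k}\}},$$
which is a Riemann--type discretization of $\frac{1}{t_n}\int_0^{t_n} g_\theta(X_s)\,ds$ with $g_\theta(x) := (b(x,\theta)-b(x,\theta_0))^2/a^2(x)$, up to errors coming from the filter $\varphi$ (active only on the non--jump increments, whose time proportion tends to $1$) and from the truncation indicator (whose complement has vanishing probability by Markov combined with Lemma~\ref{lemma 2.1 GLM}(iii)). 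The ergodic theorem (Lemma~\ref{lemma 2.1 GLM}(i)) thus drives $A_n(\theta)/t_n$ to $\mathcal{U}(\theta)$; uniformity in $\theta$ is obtained by a finite covering of $\Theta$ combined with the Lipschitz-in-$\theta$ regularity of $g_\theta$ from Assumption~7.

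For the cross term $B_n(\theta)/t_n$, the key observation is that by the very definition of $m_{\theta_0}$,
$$\mathbb{E}\bigl[(X_{t_{i+1}}-m_{\theta_0}(X_{t_i}))\varphi_{\Delta_{n,i}^\beta}(X_{t_{i+1}}-X_{t_i})\,\big|\,\mathcal{F}_{t_i}\bigr]=0,$$
so that $B_n(\theta)$ is (after inserting the $\theta$--dependent weight) a sum of martingale increments. A direct variance bound based on Lemma~\ref{lemma: Moment inequalities}(2) shows its conditional second moment is $O(t_n)$, hence $B_n(\theta)/t_n = o_{\mathbb{P}}(1)$; uniformity in $\theta$ again follows from a covering argument. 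Combining these two facts with continuity of $\mathcal{U}$ and the uniqueness of its minimum at $\theta_0$, a standard argmin--continuity argument yields $\hat\theta_n\xrightarrow{\mathbb{P}}\theta_0$. The main obstacles I expect are (i) controlling the remainder $R(\theta,\Delta_{n,i}^{2-3\beta},X_{t_i})$ after division by $\Delta_{n,i}$ in the denominator, which is where the exponents $2-3\beta>1$ enforced by Assumption~$A_\beta$ are essential, and (ii) handling the truncation $\mathbf{1}_{\{|X_{t_i}|\le\Delta_{n,i}^{-k}\}}$ so that it does not destroy the Riemann--sum limit, which uses the polynomial moments from Lemma~\ref{lemma 2.1 GLM}(iii).
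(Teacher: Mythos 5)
Your proposal is correct and follows essentially the same strategy as the paper: expand $m_{\theta,\Delta_{n,i}}$ via Theorems \ref{th: dL mtheta alpha <1}/\ref{th: dL mtheta alpha  > 1}, note that the $\theta$-independent jump correction cancels in $U_n(\theta)-U_n(\theta_0)$, identify the quadratic term $\frac{1}{t_n}\sum_i \Delta_{n,i}(b(X_{t_i},\theta)-b(X_{t_i},\theta_0))^2/a^2(X_{t_i})$ converging to the identifiability functional by the ergodic limit theorem (Proposition \ref{prop: LT1}), kill the cross terms, establish uniformity on the compact $\Theta$, and conclude by the standard argmin argument with Assumption 6. Two cosmetic differences from the paper: you center the cross term on $X_{t_{i+1}}-m_{\theta_0}(X_{t_i})$, which is exactly conditionally unbiased against $\varphi_{\Delta_{n,i}^\beta}$ (cf.\ \eqref{eq: mtg X-mtheta}), whereas the paper substitutes the SDE dynamics and works with $\zeta_i$ of \eqref{eq: def zeta}, which is only approximately centered and requires Lemma \ref{lemma: conditional expected value zeta} to quantify the bias — your choice is arguably cleaner for this step; and for uniformity you invoke a covering argument where the paper proves tightness of $S_n$ in $(C(\Theta),\|\cdot\|_\infty)$ via a derivative bound and the Ibragimov--Has'minskii second-moment criterion \eqref{eq: 1 criterion tightness}--\eqref{eq: 2 criterion tightness}; these are equivalent in substance. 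The one place your sketch is thinner than it should be is precisely that uniformity step for the martingale/cross piece: a covering argument there needs an increment bound of the form $\mathbb{E}[(B_n(\theta_1)-B_n(\theta_2))^2]\le H(\theta_1-\theta_2)^2$, and since the remainder $r(\theta,x)$ in the expansion of $m_{\theta,\Delta_{n,i}}$ depends on $\theta$, Assumption 7 on $\partial_\theta b$ alone does not suffice — one also needs $\sup_\theta|\dot r(\theta,x)|\le c\,\Delta_{n,i}(1+|x|^c)$, which the paper extracts from the expansion of $\dot m_{\theta,h}$ in Proposition \ref{P:expansion_m_dot} (see \eqref{eq: estimation dot r}). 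This is an additional ingredient to supply, not a flaw in the plan.
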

Recalling that the Fisher information I is given by (\ref{eq: 2.2 GLM, Fisher}), we give the following theorem.

\begin{theorem}
{(Asymptotic normality)} \label{th:Asymptotic_normality}  \\
Suppose that Assumptions 1 to 8 and $A_\beta$ hold, and $0 < k < k_0$. \\
Then the estimator $\hat{\theta}_n$ is asymptotically normal:
$$\sqrt{t_n}(\hat{\theta}_n - \theta_0) \xrightarrow{\mathcal{L}} N(0, I^{-1}(\theta_0)), \qquad n\rightarrow\infty.$$
\end{theorem}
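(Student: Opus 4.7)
The plan is to follow the standard M-estimator scheme: Taylor expand the score $\partial_\theta U_n$ around $\theta_0$, prove a martingale CLT for the normalized score $t_n^{-1/2}\partial_\theta U_n(\theta_0)$, prove convergence in probability of the normalized Hessian $t_n^{-1}\partial^2_\theta U_n(\tilde\theta_n)$ to $2I(\theta_0)$, and combine. Consistency of $\hat\theta_n$ from the previous theorem lets us work locally around $\theta_0$.

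First, I would differentiate the contrast to obtain
\begin{equation*}
\partial_\theta U_n(\theta_0)=-2\sum_{i=0}^{n-1}\frac{X_{t_{i+1}}-m_{\theta_0,\Delta_{n,i}}(X_{t_i})}{a^2(X_{t_i})\Delta_{n,i}}\,\partial_\theta m_{\theta_0,\Delta_{n,i}}(X_{t_i})\,\varphi_{\Delta_{n,i}^\beta}(X_{t_{i+1}}-X_{t_i})\,1_{\{|X_{t_i}|\le\Delta_{n,i}^{-k}\}}.
\end{equation*}
The key point is that, by the very definition of $m_{\theta_0,\Delta_{n,i}}$ in \eqref{eq: definition m}, the summand has zero conditional expectation given $\mathcal{F}_{t_i}$, so this is a sum of martingale increments $\xi_{i,n}$. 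Using Theorem \ref{th: dL mtheta alpha <1} (resp.\ Theorem \ref{th: dL mtheta alpha  > 1}) and differentiating in $\theta$, one obtains $\partial_\theta m_{\theta_0,\Delta_{n,i}}(x)=\Delta_{n,i}\dot b(\theta_0,x)+R(\theta_0,\Delta_{n,i}^{2-2\beta}\text{ or }\Delta_{n,i}^{2-3\beta},x)$ on the event $\{|x|\le\Delta_{n,i}^{-k}\}$. The plan is then to show via a limit theorem from Section \ref{S:Limit} that
\begin{equation*}
\sum_{i=0}^{n-1}\mathbb{E}[\xi_{i,n}^2\mid\mathcal{F}_{t_i}]\;\xrightarrow{\mathbb{P}}\;4\,t_n\,I(\theta_0)\quad\text{(after normalizing by } t_n\text{)},
\end{equation*}
by replacing $(X_{t_{i+1}}-m_{\theta_0,\Delta_{n,i}}(X_{t_i}))^2\varphi_{\Delta_{n,i}^\beta}$ in conditional expectation by $a^2(X_{t_i})\Delta_{n,i}(1+o(1))$ (the jump-filtered second moment is dominated by the Brownian contribution since $\beta<1/2$), and then applying the ergodic theorem of Lemma \ref{lemma 2.1 GLM} to the sum $\sum_i\Delta_{n,i}\dot b^2(\theta_0,X_{t_i})/a^2(X_{t_i})$. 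A Lyapunov/Lindeberg condition, e.g.\ $\sum_i\mathbb{E}[\xi_{i,n}^4\mid\mathcal{F}_{t_i}]\to 0$ after normalization, follows from Lemma \ref{lemma: Moment inequalities} together with the smallness of $\Delta_{n,i}$ and the truncation by $\varphi$. One must also check the negligibility of the bracket $\langle M^n,W\rangle$ and the bracket against jumps to apply the stable CLT from Section \ref{S:Limit}, giving
\begin{equation*}
\frac{1}{\sqrt{t_n}}\partial_\theta U_n(\theta_0)\;\xrightarrow{\mathcal{L}}\;\mathcal{N}\bigl(0,\,4I(\theta_0)\bigr).
\end{equation*}

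Next, for the Hessian, I would write $\partial_\theta^2 U_n(\theta)$ as the sum of two pieces: one proportional to $(\partial_\theta m_\theta)^2/(a^2\Delta_{n,i})\,\varphi_{\cdot}\,1_{\{\cdot\}}$, and a second piece containing $\partial_\theta^2 m_\theta$ multiplied by the (martingale) residual $X_{t_{i+1}}-m_\theta(X_{t_i})$. The second piece, evaluated at $\theta_0$ or at $\tilde\theta_n\to\theta_0$, is $o_{\mathbb P}(t_n)$ by the same martingale arguments as for the score. For the first piece, the expansion of $\partial_\theta m_{\theta_0,\Delta_{n,i}}$ gives a leading term $\Delta_{n,i}\dot b(\theta_0,X_{t_i})$, so after dividing by $a^2(X_{t_i})\Delta_{n,i}$ one gets $\dot b^2(\theta_0,X_{t_i})/a^2(X_{t_i})$ times $\Delta_{n,i}$, and by Lemma \ref{lemma 2.1 GLM} and the assumption $n\Delta_n=O(t_n)$ one obtains $t_n^{-1}\partial_\theta^2 U_n(\tilde\theta_n)\xrightarrow{\mathbb{P}}2I(\theta_0)$. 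Uniform control over $\theta$ in a small neighborhood of $\theta_0$ uses Assumption 7 (bounded derivatives of $b$ in $\theta$) and the fact that $\hat\theta_n\to\theta_0$ in probability.

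Finally, combining via a standard Taylor argument: since $\hat\theta_n$ is an interior minimum with probability tending to $1$, $\partial_\theta U_n(\hat\theta_n)=0$, and
\begin{equation*}
\sqrt{t_n}(\hat\theta_n-\theta_0)=-\Bigl[\tfrac{1}{t_n}\partial_\theta^2 U_n(\tilde\theta_n)\Bigr]^{-1}\tfrac{1}{\sqrt{t_n}}\partial_\theta U_n(\theta_0)\;\xrightarrow{\mathcal{L}}\;-\tfrac{1}{2I(\theta_0)}\mathcal{N}\bigl(0,4I(\theta_0)\bigr)=\mathcal{N}\bigl(0,I^{-1}(\theta_0)\bigr),
\end{equation*}
which is the claim. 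I expect the main technical obstacle to be proving the variance identification for the martingale CLT: one must carefully track how the $\theta$-independent jump correction $\Delta_{n,i}\int z\gamma[1-\varphi_{\Delta_{n,i}^\beta}]F(z)dz$ appearing in $m_{\theta_0,\Delta_{n,i}}$ cancels in $X_{t_{i+1}}-m_{\theta_0,\Delta_{n,i}}(X_{t_i})$ against the actual jump increment, so that the remaining fluctuation in this residual has conditional variance equivalent to $a^2(X_{t_i})\Delta_{n,i}$ up to negligible terms; the filtering by $\varphi_{\Delta_{n,i}^\beta}$ is exactly what kills the large jump part, and checking this rigorously for $\alpha$ close to $2$ (requiring $\beta<1/3$ as in Assumption $A_\beta$) is where the most delicate estimates live.
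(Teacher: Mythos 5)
Your architecture is exactly the paper's (Sections \ref{Ss:Contrast_conv}--\ref{Ss:Asymptotic_nor}): the score is a sum of martingale increments because $\mathbb{E}[(X_{t_{i+1}}-m_{\theta_0,\Delta_{n,i}}(X_{t_i}))\varphi_{\Delta_{n,i}^\beta}(X_{t_{i+1}}-X_{t_i})\mid\mathcal{F}_{t_i}]=0$ by construction of $m$; Proposition \ref{prop: LT3} gives $t_n^{-1/2}\dot U_n(\theta_0)\xrightarrow{\mathcal{L}}N(0,4I(\theta_0))$; the normalized Hessian converges to $2I(\theta_0)$ (up to the paper's internal sign convention) by Proposition \ref{prop: LT1} and martingale arguments for the $\ddot m$ term; and the Taylor/ratio step concludes. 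Your diagnosis of the delicate point --- identifying the conditional variance of the filtered residual with $a^2(X_{t_i})\Delta_{n,i}$, the jump contribution being killed by the truncation (Lemma \ref{lemma: lemma per mostrare prop LT3}) --- also matches the paper.

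There is, however, one step that fails as stated. You claim the expansion $\partial_\theta m_{\theta_0,\Delta_{n,i}}(x)=\Delta_{n,i}\dot b(\theta_0,x)+R(\cdot)$ follows from Theorems \ref{th: dL mtheta alpha <1} and \ref{th: dL mtheta alpha  > 1} ``by differentiating in $\theta$''. The remainder $R(\theta,\Delta_{n,i}^\delta,x)$ in those theorems is controlled only in absolute value, uniformly in $\theta$ (see \eqref{eq: definition R}); nothing bounds $\partial_\theta R$, so term-by-term differentiation of the expansion is not legitimate. The paper must prove the expansions of $\dot m_{\theta,h}$ and $\ddot m_{\theta,h}$ --- and the bound $|\dddot m_{\theta,h}|=R(\theta,h,\cdot)$, which is what actually delivers your ``uniform control over $\theta$ in a neighborhood of $\theta_0$'' for the Hessian (Lemma \ref{lemma: conditional expected value zeta} is not enough there; see the tightness argument around \eqref{eq: accroissements ddotm}) --- as a separate result, Proposition \ref{P:expansion_m_dot}. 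This is done by differentiating the representation $m_{\theta,h}(x)=\mathbb{E}[X_h^\theta\varphi_{h^\beta}(X_h^\theta-x)]/\mathbb{E}[\varphi_{h^\beta}(X_h^\theta-x)]$ directly, which produces terms involving the derivative of the flow $\dot X_h^{\theta,x}$ multiplied by $\varphi'_{h^\beta}$; these are controlled via the moment bounds $\mathbb{E}[|\dot X_h^{\theta,x}/h|^p]\le c(1+|x|^c)$ of Lemma \ref{lemma: estimation dotx et ddotx} combined with Proposition \ref{prop: truc moche z}, which quantifies the probability that the increment lands where $\varphi'$ is active. Without this block (all of Appendix \ref{S:Proof_derivatives}), neither the CLT for the score nor the identification of the Hessian limit is justified; the rest of your plan is sound.
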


\begin{remark}
Furthermore, the estimator $\hat{\theta}_n$ is asymptotically efficient in the sense of the H\'ajek-Le Cam convolution theorem. \\
The H\'ajek$-$LeCam convolution theorem states that any regular estimator in a parametric model which satisfies LAN property is asymptotically equivalent to a sum of two independent random variables, one of which is normal with asymptotic variance equal to the inverse of Fisher information, and the other having arbitrary distribution. The efficient estimators are those with the second component identically equal to zero. \\
The model \eqref{eq: model} is LAN with Fisher information $I(\theta)= \int_\mathbb{R} \frac{(\dot{b}(\theta,x))^2}{a^2(x)} \pi^\theta (dx)$ (see \hyperref[csl:16]{(Gloter, Loukianova, \& Mai, 2018)}) and thus $\hat{\theta}_n$ is efficient. 
\end{remark}

\begin{remark}
We point out that, contrary to the papers \hyperref[csl:16]{(Gloter, Loukianova, \& Mai, 2018)} and \hyperref[csl:10]{(Shimizu \& Yoshida, 2006)}, in this case there is not any condition on the sampling, that can be irregular and with $\Delta_n$ that goes slowly to zero. 
 On the other hand, our contrast function relies on the quantity $m_{\theta,h}(x)$ which is not explicit in general.
\end{remark}

\section{Practical implementation of the contrast method}\label{S: Practical implementation}
In order to use in practice the contrast function \eqref{eq: contrast function}, one need to know the values of the quantities
$m_{\theta,t_i,t_{i+1}}(X_{t_i})$. 
In most cases, it seems impossible to find an explicit expression for the function $m_{\theta,h}$ appearing in Definition \ref{D:definition_contrast}. However, explicit or numerical approximations of this function seem available in many situations.

\subsection{Approximate contrast function}\label{S: Approximate contrast}
Let us assume that one has at disposal an approximation of the function $m_{\theta,h}(x)$, denoted by $\widetilde{m}_{\theta,h}(x)$ which satisfies, for $|x|\le h^{-k_0}$,
\begin{equation*}
| \widetilde{m}_{\theta,h}(x) -m_{\theta,h}(x) | \leq R(\theta,h^{\rho},x)
\end{equation*}
where the constant $\rho>1$  assesses the quality of the approximation.  
 We assume that the first three derivatives of $\tilde{m}_{h,\theta}$ with respect to the parameter provide approximation of the derivatives of ${m}_{h,\theta}$, in the following way
\begin{align}
 \label{E:approx_m_mtilde_dot} 
&| \frac{\partial^i \widetilde{m}_{\theta, h} (x) }{\partial \theta^i}  -
\frac{\partial^i  m_{\theta, h}(x)}{\partial \theta^i}| \le R(\theta,h^{1+\epsilon},x), \quad \text{for $i=1,2$},
\\ \label{E:approx_m_mtilde_3dot} 
&| \frac{\partial^3\widetilde{m}_{\theta, h}(x)}{\partial \theta^3}  -
\frac{\partial^3m_{\theta, h}(x)}{\partial \theta^3} | \le R(\theta,h,x) ,
\end{align}
for all $|x|\le h^{-k_0}$ and where $\epsilon>0$.
Let us stress that from Proposition \ref{P:expansion_m_dot} below, we know the derivatives with respect to $\theta$ of the quantity $m_{h,\theta}$.

Now, we consider $\widetilde{\theta}_n$ the estimator obtained from minimization of the contrast function \eqref{eq: contrast function} where one has replaced $m_{\theta,t_i,t_{i+1}}(X_{t_i})$ by its approximation $\widetilde{m}_{\theta,\Delta_{n,i}}(X_{t_i})$. Then, the result of  Theorem \ref{th:Asymptotic_normality} can be extended as follows. \\

\begin{proposition}
\label{P:normality_approximate_contrast} 
Suppose that Assumptions 1 to 8 and $A_\beta$ hold, with  $0 < k < k_0$, and that $\sqrt{n}\Delta_n^{\rho -1/2} \rightarrow  0$ as $n \rightarrow \infty$. \\
Then, the estimator $\widetilde{\theta}_n$ is asymptotically normal:
$$\sqrt{t_n}(\widetilde{\theta}_n - \theta_0) \xrightarrow{\mathcal{L}} N(0, I^{-1}(\theta_0)), \qquad n\rightarrow\infty.$$
\end{proposition}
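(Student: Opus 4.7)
The plan is to transfer the asymptotics of $\hat\theta_n$ from Theorem \ref{th:Asymptotic_normality} to $\widetilde\theta_n$ via a standard M-estimator perturbation argument. First I would verify that $\widetilde\theta_n$ is consistent: writing each summand of $\widetilde U_n-U_n$ as $\varphi\cdot\mathbf 1/(a^2\Delta_{n,i})\cdot(m-\widetilde m)(2X_{t_{i+1}}-\widetilde m-m)$ and using $|m-\widetilde m|\le R(\theta,\Delta_{n,i}^\rho,x)$ together with Lemma \ref{lemma 2.1 GLM}, one gets $(\widetilde U_n-U_n)/t_n\to 0$ uniformly on $\Theta$ since $\rho>1$, so consistency of $\widetilde\theta_n$ follows from that of $\hat\theta_n$. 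The Taylor expansion
\begin{equation*}
0=\partial_\theta \widetilde U_n(\widetilde\theta_n)=\partial_\theta \widetilde U_n(\theta_0)+\partial_\theta^2 \widetilde U_n(\bar\theta_n)(\widetilde\theta_n-\theta_0)
\end{equation*}
then reduces the proposition to two facts: (a) $[\partial_\theta\widetilde U_n(\theta_0)-\partial_\theta U_n(\theta_0)]/\sqrt{t_n}\xrightarrow{\mathbb P}0$, and (b) $\partial_\theta^2 \widetilde U_n(\bar\theta_n)/t_n$ converges in probability to the same limit $2I(\theta_0)$ as $\partial_\theta^2 U_n(\theta_0)/t_n$. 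Slutsky combined with Theorem \ref{th:Asymptotic_normality} will then yield the conclusion.

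The heart of the argument is (a). Since $\partial_\theta(X_{t_{i+1}}-m_\theta)^2=-2(X_{t_{i+1}}-m_\theta)\partial_\theta m_\theta$, the score difference at $\theta_0$ equals
\begin{equation*}
-2\sum_{i=0}^{n-1} \frac{\varphi_{\Delta_{n,i}^\beta}(\Delta_i X)\,\mathbf 1_{\{|X_{t_i}|\le\Delta_{n,i}^{-k}\}}}{a^2(X_{t_i})\Delta_{n,i}}\Bigl[(X_{t_{i+1}}-m)(\partial_\theta\widetilde m-\partial_\theta m)+(m-\widetilde m)\,\partial_\theta\widetilde m\Bigr]_{\theta=\theta_0}.
\end{equation*}
For the second bracket, Proposition \ref{P:expansion_m_dot} together with \eqref{E:approx_m_mtilde_dot} yields $|\partial_\theta\widetilde m|=O(\Delta_{n,i})$ and $|m-\widetilde m|\le R(\theta_0,\Delta_{n,i}^\rho,X_{t_i})$, so after cancellation with $1/\Delta_{n,i}$ each term is a remainder of order $\Delta_{n,i}^\rho$; summing and dividing by $\sqrt{t_n}$ produces $\sqrt n\,\Delta_n^{\rho-1/2}\to 0$, which is precisely the standing hypothesis. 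For the first bracket, $(\partial_\theta\widetilde m-\partial_\theta m)|_{\theta_0}$ is $\mathcal F_{t_i}$-measurable and $\mathbb E[(X_{t_{i+1}}-m_{\theta_0}(X_{t_i}))\varphi_{\Delta_{n,i}^\beta}(\Delta_i X)\mid\mathcal F_{t_i}]=0$ by the very definition of $m_{\theta_0}$, so the partial sums form a martingale array. I would control its predictable bracket by combining \eqref{E:approx_m_mtilde_dot} with the estimate $\mathbb E[(X_{t_{i+1}}-m_{\theta_0})^2\varphi^2\mid\mathcal F_{t_i}]=O(\Delta_{n,i})$ available from the limit theorems of Section \ref{S:Limit}; the bracket is then of order $n\Delta_n^{1+2\epsilon}=o(t_n)$, and Lenglart's inequality closes (a).

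For (b) I would apply the analogous decomposition to $\partial_\theta^2\widetilde U_n-\partial_\theta^2 U_n$: contributions containing one factor of $\partial_\theta^i\widetilde m-\partial_\theta^i m$ for $i=1,2$ are controlled by \eqref{E:approx_m_mtilde_dot}, while the cubic term appearing after the second differentiation requires the weaker bound \eqref{E:approx_m_mtilde_3dot}, which is the reason for hypothesizing approximation of the third derivative. Uniform continuity of $\partial_\theta^2 U_n/t_n$ in a neighborhood of $\theta_0$, already present in the proof of Theorem \ref{th:Asymptotic_normality}, lets us replace $\bar\theta_n$ by $\theta_0$ in the limit. The main obstacle I anticipate lies in the martingale summand of step (a): the naive deterministic bound $|X_{t_{i+1}}-m|\le c\Delta_{n,i}^\beta$ on $\mathrm{supp}\,\varphi$ is not sharp enough to conclude without imposing an additional condition on $\beta$ or on $\epsilon$, and one really must exploit the centering of $(X_{t_{i+1}}-m_{\theta_0})\varphi$ under $\mathbb E[\cdot\mid\mathcal F_{t_i}]$ together with the sharp $L^2$ estimate to keep the regime of convergence exactly $\sqrt n\,\Delta_n^{\rho-1/2}\to 0$.
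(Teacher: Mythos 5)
Your overall strategy coincides with the route the paper intends: its own proof of Proposition \ref{P:normality_approximate_contrast} is a two-sentence remark saying that one reruns Sections \ref{Ss:Contrast_conv}--\ref{Ss:Asymptotic_nor} with $\widetilde m$ in place of $m$, the only new quantitative input being that Proposition \ref{prop: LT3} survives the replacement of $m_{\theta_0}$ by $\widetilde m_{\theta_0}$ under $\sqrt n\,\Delta_n^{\rho-1/2}\to 0$. Your step (a) is precisely that input: the term $(m-\widetilde m)\partial_\theta\widetilde m/\Delta_{n,i}$, summed and normalised by $\sqrt{t_n}$, produces exactly $\sqrt n\,\Delta_n^{\rho-1/2}$, and the term with $\partial_\theta\widetilde m-\partial_\theta m$ is handled by the same centering-plus-conditional-second-moment argument (via \eqref{eq: x - m prop 2}) that the paper uses throughout. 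Your reading of why \eqref{E:approx_m_mtilde_dot}--\eqref{E:approx_m_mtilde_3dot} are hypothesised — the first two for the score and Hessian limits, the third for the uniformity step replacing $\bar\theta_n$ by $\theta_0$, as in \eqref{eq: conv unif ddot Un} — is also correct.

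The one step that does not close as written is consistency. Bounding each summand of $\widetilde U_n-U_n$ by $\frac{1}{a^2\Delta_{n,i}}|m-\widetilde m|\,|2X_{t_{i+1}}-m-\widetilde m|$ with only $|m-\widetilde m|\le R(\theta,\Delta_{n,i}^{\rho},x)$ and moment bounds from Lemma \ref{lemma 2.1 GLM} gives, after division by $t_n\asymp n\Delta_n$, a quantity of order $\Delta_n^{\rho-2}$ (or $\Delta_n^{\rho+\beta-2}$ if one uses $|X_{t_{i+1}}-X_{t_i}|\le 2\Delta_{n,i}^{\beta}$ on the support of $\varphi$). This does not vanish for $\rho$ just above $1$, and the hypothesis $\sqrt n\,\Delta_n^{\rho-1/2}\to 0$ does not rescue it, since $\Delta_n^{\rho+\beta-2}=\Delta_n^{\rho-1}\Delta_n^{\beta-1}$ with $\Delta_n^{\beta-1}\to\infty$; so "since $\rho>1$" is not justified by the bound you state. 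The repair is the same centering you yourself invoke in step (a): write $2X_{t_{i+1}}-m_{\theta}-\widetilde m_{\theta}=2\zeta_i+O(\Delta_{n,i})+R(\theta,\Delta_{n,i}^{\rho},X_{t_i})$ using \eqref{eq: link betwen m and zeta}, and treat the cross term $\frac{1}{t_n}\sum_i f_{i,n}(X_{t_i},\theta)\,\zeta_i\,\varphi_{\Delta_{n,i}^{\beta}}(X_{t_{i+1}}-X_{t_i})1_{\{|X_{t_i}|\le\Delta_{n,i}^{-k}\}}$ with $f_{i,n}=2(m_{\theta}-\widetilde m_{\theta})/(a^2\Delta_{n,i})$, which has polynomial growth uniformly in $i,n$ precisely because $\rho>1$, so Proposition \ref{prop: LT2} applies; the remaining drift and squared-remainder contributions are $O(\Delta_n^{\rho-1})$ and $O(\Delta_n^{2\rho-2})$. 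A trivial final point: the limit of $\ddot U_n(\theta_0)/t_n$ is $-2I(\theta_0)$, not $2I(\theta_0)$; this sign does not affect the resulting variance $I^{-1}(\theta_0)$.
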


We give below several examples of approximations of $m_{\theta,h}$. Let us stress that, in general, 
Theorem \ref{th: dL mtheta alpha <1} (resp. Theorem \ref{th: dL mtheta alpha  > 1}) provides  an explicit approximation of $m_{\theta,\Delta_{n,i}}(x)$ with an error of order $\Delta_{n,i}^{2-2\beta}$  (resp. of order $\Delta_{n,i}^{2-3\beta}$). They can be used to construct an explicit contrast function.
In the next section we show that when the intensity is finite, it is possible to construct an explicit approximation of $m_{\theta,h}$ with arbitrarily high order.

\subsection{Explicit contrast in the finite intensity case.}\label{S:Explicit}
In the case with finite intensity it is possible to make the contrast explicit, using the development of $m_{\theta, \Delta_{n,i}}$ proved in the next proposition. We need the following assumption: \\ \\
ASSUMPTION $A_f$:
\begin{enumerate}
\item We have $F(z) = \lambda F_0(z)$, $\int_\mathbb{R} F_0(z)dz= 1$ and $F$ is a $\mathcal{C}^\infty$ function.
\item We assume that $x \mapsto a(x)$, $x \mapsto b(x, \theta)$ and $x \mapsto \gamma(x)$ are $\mathcal{C}^\infty$ functions, they have at most uniform in $\theta$ polynomial growth as well as their derivatives.
\end{enumerate}
Let us define $A_K^{(k)}(x) = \bar{A}_c^k(g)(x)$, with $g(y) = (y - x)$ and $\bar{A}_c(f) = \bar{b} f' + \frac{1}{2} a^2 f''$; $\bar{b}(\theta, y) = b( \theta, y) - \int_{\mathbb{R}} \gamma (y) z F(z) dz$ as in the Remark $1$. \\

\begin{proposition}
Assume that $A_f$ holds and let $\varphi$ be a $\mathcal{C}^\infty$ function that has compact support and such that $\varphi \equiv 1$ on $[-1, 1]$ and $\forall k \in \left \{ 0, ... , M \right \}$, $\int_\mathbb{R} x^k \varphi(x) dx = 0$ for $M \ge 0$. Then, for  $|x| \le \Delta_{n,i}^{   -k_0} $ with some $k_0 > 0$,
\begin{equation}
m_{\theta, \Delta_{n,i}}(x) = x + \sum_{ k = 1}^{\lfloor \beta  (M+2)  \rfloor} A_K^{(k)}(x) \frac{\Delta_{n,i}^k}{k!} + R(\theta, \Delta_{n,i}^{\beta  (M+2) }, x).
\label{eq: dl m intensita finita} 
\end{equation}
\label{prop: dl m intensita finita} 
\end{proposition}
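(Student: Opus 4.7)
The plan is to exploit the finite-intensity assumption $A_f$ to decompose the expectation according to the Poisson number of jumps in $[0,h]$ (where $h=\Delta_{n,i}$), and to combine a Kessler-style iterated It\^o expansion on the no-jump event with oscillatory-integral estimates produced by the $M$ vanishing moments of $\varphi$ on the jump events. Writing $m_{\theta,h}(x)-x = N(x)/D(x)$ with $N(x) = \mathbb{E}[(X_h^\theta-x)\,\varphi_{h^\beta}(X_h^\theta-x)\mid X_0^\theta=x]$ and $D(x) = \mathbb{E}[\varphi_{h^\beta}(X_h^\theta-x)\mid X_0^\theta=x]$, I split each as $\sum_{n\ge 0}\mathbb{P}(N_h=n)\,\mathbb{E}_n[\cdot]$, where $\mathbb{E}_n$ conditions on exactly $n$ jumps occurring in $[0,h]$ and $\mathbb{P}(N_h=n)=e^{-\lambda h}(\lambda h)^n/n!$.

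On $\{N_h=0\}$ the process $X^\theta$ solves the purely continuous SDE with drift $\bar b$ and diffusion $a$ as in Remark~\ref{R:alpha_smaller_1} (the representation holds under $A_f$ since $\int|z|F_0(z)\,dz<\infty$). Because $\beta<1/2$, the continuous increment has standard-deviation scale $\sqrt h\ll h^\beta$, so Gaussian-type tail estimates combined with Lemma~\ref{lemma: Moment inequalities} give that $\mathbb{P}(|X_h^c-x|>h^\beta\mid X_0^\theta=x)$ is smaller than any power of $h$; hence $\varphi_{h^\beta}(X_h^c-x)$ can be replaced by $1$ inside the expectation up to a remainder $R(\theta, h^{\beta(M+2)},x)$. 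I then apply iterated It\^o/Dynkin in the spirit of Kessler (1997), valid because $A_f$.2 provides $C^\infty$ coefficients with polynomial growth: choosing $K=\lfloor\beta(M+2)\rfloor$,
\begin{equation*}
\mathbb{E}_0[X_h^c - x] = \sum_{k=1}^{K} \frac{h^k}{k!}\,\bar A_c^k(g)(x) + R(\theta, h^{K+1},x), \qquad g(y)=y-x,
\end{equation*}
and since $K+1>\beta(M+2)$ this produces precisely the main terms $A_K^{(k)}(x)=\bar A_c^k(g)(x)$ of (\ref{eq: dl m intensita finita}).

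For $n\ge 1$ I use the oscillatory property of $\varphi$. Conditional on $\{N_h=n\}$ the jump times are an ordered uniform sample on $[0,h]$ and the amplitudes $Z_1,\dots,Z_n$ are i.i.d.\ with smooth density $F_0$, so a flow argument gives $X_h^\theta-x=\sum_{j=1}^n\gamma(x)Z_j+O_{L^p}(h^{1/2})$. The decisive single-jump estimate is obtained by the change of variable $u=\gamma(x)z/h^\beta$ followed by a Taylor expansion of $F_0$ at $0$:
\begin{equation*}
\int_{\mathbb{R}}\varphi\!\left(\tfrac{\gamma(x)z}{h^\beta}\right)F_0(z)\,dz
= \frac{h^\beta}{\gamma(x)}\sum_{j=0}^{M}\frac{F_0^{(j)}(0)\,h^{j\beta}}{j!\,\gamma(x)^j}\!\int_{\mathbb{R}}\!u^j\varphi(u)\,du + R(\theta,h^{\beta(M+2)},x),
\end{equation*}
and each integral in the sum vanishes by the assumption $\int u^j\varphi(u)\,du=0$, leaving $R(\theta, h^{\beta(M+2)}, x)$. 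An identical computation with the extra factor $\gamma(x)z$ yields the same order for the numerator. Since $\mathbb{P}(N_h=1)=O(h)$ the one-jump contribution is $O(h^{1+\beta(M+2)})$; for $n\ge 2$ the combination of $|(X_h-x)\varphi_{h^\beta}(X_h-x)|\le 2h^\beta$ with $\mathbb{P}(N_h\ge 2)=O(h^2)$ handles all large $n$, while the finitely many intermediate values of $n$ require iterating the oscillatory estimate over the $n$ jump variables via the product form of the conditional law. Combining these expansions and inverting $D=1+R(\theta,h^{\beta(M+2)},x)$ yields the ratio (\ref{eq: dl m intensita finita}).

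The main technical obstacle is the multi-jump bookkeeping when $n\ge 2$: propagating the $M$ vanishing moments of $\varphi$ through the approximation $X_h^\theta-x\approx\sum_j\gamma(x)Z_j$. The cleanest route is Fourier: since $\int u^j\varphi=0$ for $j\le M$ one has $\widehat\varphi(\xi)=O(|\xi|^{M+1})$ near $0$, so integrating each jump amplitude against the smooth $F_0$ produces a product of characteristic-function factors $\widehat F_0(\cdot)$ whose decay replaces the single-jump moment estimate. Care is also needed to ensure the localization $|x|\le h^{-k_0}$ yields rest terms satisfying \eqref{eq: definition R}, which follows from the polynomial growth of $b,a,\gamma$ and their derivatives under $A_f$.2.
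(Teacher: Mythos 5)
Your proposal is correct in its essential strategy, and it takes a genuinely different route from the paper. The paper never conditions on the number of jumps: it works entirely inside the iterated Dynkin formula, introducing the function classes $\mathcal{F}^p$ and proving by recurrence (equation \eqref{eq: recurrence}) that on these classes the discrete part of the generator acts as $-\lambda\,\mathrm{Id}$ up to an $r(x,y,\Delta_{n,i}^{\beta(M+2-p)})$ error; the Kessler coefficients then emerge from $A_c^l\bar g(x)=A_K^{(l)}(x)$ and the factors $\sum_k(-\lambda)^k\Delta_{n,i}^k/k!$ cancel between numerator and denominator by an explicit binomial resummation. Your decomposition over $\{N_h=n\}$ reaches the same cancellation probabilistically (the $e^{-\lambda h}$ from $\mathbb{P}(N_h=0)$ cancels in the ratio), and the two proofs share their analytic core: the change of variable $u=\gamma(\cdot)z/h^\beta$ followed by a Taylor expansion of the jump density and the vanishing moments of $\varphi$ is exactly the paper's computation \eqref{eq: 48.5 dans le papier modifié}. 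What your route buys is transparency — it isolates why the answer is Kessler's continuous expansion (with overwhelming probability there is no jump and the truncation is inactive) and why each jump event contributes only $O(h^{1+\beta(M+2)})$. What the paper's route buys is that the operator $A_d g(y)=\lambda\int[g(y+\gamma(y)z)-g(y)]F_0(z)dz$ acts pointwise, so the change of variables in $z$ is exact; in your version the last jump amplitude $Z_n$ enters $X_h$ through the post-jump continuous flow, so the map $z\mapsto X_h$ is only approximately affine, and the oscillatory estimate requires a $\mathcal{C}^{M+1}$ change of variables through that flow with controlled Jacobian (available from $A_f$.2 and $\gamma\ge\gamma_{\min}>0$, but this is the main technical debt your sketch leaves outstanding). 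Two smaller points: the crude bound $\mathbb{P}(N_h\ge 2)\cdot 2h^\beta=O(h^{2+\beta})$ does not suffice when $\beta(M+2)>2+\beta$, as you note, but a single application of the oscillatory estimate in one jump variable already gives $O(h^{\beta(M+2)})$ for every $n\ge 1$, so no genuine iteration over all $n$ variables is needed; and for the numerator the relevant kernel is $u\mapsto u\varphi(u)$, which has one fewer vanishing moment, but the extra factor $h^\beta$ from the rescaling restores the order $h^{\beta(M+2)}$, so the final rate is unchanged.
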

In order to say that \eqref{eq: dl m intensita finita} holds, we have to prove the existence of a function $\varphi$ with a compact support such that $\varphi \equiv 1$ on $[-1, 1]$ and, $\forall k \in \left \{ 0, ..., M \right \}$, $\int_\mathbb{R} x^k \varphi (x) dx$. 
We build it through $\psi$, a function with compact support, $\mathcal{C}^\infty$, such that
$\psi|_{[-1, 1]}(x)= \frac{x^M}{M !}$. We then define $\varphi(x) : = \frac{\partial^M}{\partial x^M} \psi (x)$. \\
In this way we have $\varphi \equiv 1$ on $[-1, 1]$, $\varphi$ is $\mathcal{C}^\infty$, with compact support and such that for each $l \in \left \{ 0, ... M \right \}$, using the integration by parts,
$\int_\mathbb{R} x^l \varphi(x) dx = 0$, as we wanted. \\

\begin{remark}
The development \eqref{eq: dl m intensita finita} is the same found in Kessler \hyperref[csl:15]{(Kessler, 1997)} in the case without jumps and it is obtained by the iteration of the continuous generator $\bar{A}_c$.  Hence, it is completely explicit. Let us stress that in Kessler \hyperref[csl:15]{(Kessler, 1997)} the right hand side of \eqref{eq: dl m intensita finita} stands for an approximation of $E[\bar{X}^\theta_{\Delta_{n,i}} \mid \bar{X}^\theta_0=x]$ where $\bar{X}^\theta$ is the continuous diffusion solution of
$d \bar{X}_t^\theta=\bar{b}(\theta,\bar{X}_s^\theta) ds + \sigma(\bar{X}_s^\theta) dW_s$. From Proposition \ref{prop: dl m intensita finita}, the right hand side of \eqref{eq: dl m intensita finita} is also an approximation of
$m_{\theta,\Delta_{n,i}}(x)= \displaystyle \frac{ E[X^\theta_{\Delta_{n,i}} \varphi_{\Delta^\beta_{n,i}}(X^\theta_{\Delta_{n,i}}-x) \mid {X}^\theta_0=x]}{E[\varphi_{\Delta^\beta_{n,i}}(X^\theta_{\Delta_{n,i}}-x)  \mid {X}^\theta_0=x]} $ 
in the case of finite activity jumps, and for a truncation kernel $\varphi$ satisfying $\forall k \in \left \{ 0, ... , M \right \}$, $\int_\mathbb{R} x^k \varphi(x) dx = 0$. We emphasize that
in the expansion of $m_{\theta,\Delta_{i,n}}$ given in Proposition \ref{prop: dl m intensita finita}, the contribution of the discontinuous part of the generator disappears only thanks to the choice of an oscillating function $\varphi$.
\end{remark} 
\begin{remark}
In the definition of the contrast function \eqref{eq: contrast function} we can replace  $m_{\theta, \Delta_{i,n}}(x)$ with the explicit approximation $\widetilde{m}_{\theta,\Delta_{n,i}}^k(x): = x + \sum_{h = 1}^k \frac{\Delta_{n,i}^h}{h!} A_K^{(h)}(x) $, with an error $R(\theta, \Delta_{n,i}^k, x)$, for $k \le \lfloor 2(M+1) \beta \rfloor$. 
Using $A_K^{(1)}(x)=\Delta_{n,i} [ b(\theta,x) - \int_{\mathbb{R}} \gamma (y) z F(z) dz ] $ and the expansions
\eqref{eq: dl dot mtheta}--\eqref{eq: estimation 3dot m } we deduce that the conditions \eqref{E:approx_m_mtilde_dot} --
\eqref{E:approx_m_mtilde_3dot} are valid.  
Then, by application of Proposition \ref{P:normality_approximate_contrast}, we can see that the associated estimator is efficient under the assumption $\sqrt{n}\Delta^{k - \frac{1}{2}}_n \rightarrow 0$ for $n \rightarrow \infty$. As $M$, and thus $k$, can be chosen arbitrarily large, we see that the sampling step $\Delta_n$ is allowed to converge to zero in a arbitrarily slow polynomial rate as a function of $n$. It turns out that a slow sampling step necessitates to choose a truncation function with more vanishing moments.
\end{remark}

\section{Numerical experiments}\label{S: Numerical}

\subsection{Finite jump activity}
Let us consider the model
\begin{equation}
 \label{E:OU_in_simulation} 
X_t=X_0+ \int_0^t (\theta_1 X_s + \theta_2) ds + \sigma W_t + \gamma
\int_0^t \int_{\mathbb{R}\backslash \left \{0 \right \}}  z \tilde{\mu}(ds,dz),
\end{equation}
where the compensator of the jump measure is $\overline{\mu}(ds,dz)=\lambda F_0(z)ds dz$ for $F_0$ the probability density of 
the law $\mathcal{N}(\mu_J,\sigma_J^2)$ with $\mu_J \in \mathbb{R}$, $\sigma_J >0$, $\sigma>0$,  $\theta_1<0$, $\theta_2 \in \mathbb{R}$, $\gamma \ge 0$, $\lambda \ge 0$. Since the jump activity is finite, we know from Section \ref{S:Explicit} that the function $m_{(\theta_1,\theta_2),\Delta_{n,i}}(x)$ can be approximated at any order using
\eqref{eq: dl m intensita finita}. As the latter is also the asymptotic expansion of the first conditional moment for the continuous S.D.E. 
$ \bar{X}_t=\bar{X}_0+ \int_0^t (\theta_1 \bar{X}_s + \theta_2 - \gamma \lambda \mu_J) ds + \sigma W_t$ , which is explicit due to the linearity of the model, we decide to directly use the expression of the conditional moment and set
\begin{equation}
 \label{E:m_tilde_OU}  
\widetilde{m}_{(\theta_1,\theta_2),\Delta_{n,i}}(x)= (x + \frac{\theta_2}{\theta_1} - \frac{\gamma \lambda \mu_J}{\theta_1}) e^{\theta_1 \Delta_{n,i}}+\frac{\gamma \lambda \mu_J-\theta_2}{\theta_1}.
\end{equation}
Following Nikolskii \hyperref[csl:20]{(Nikolskii, 1977)}, we construct oscillating truncation functions in the following way. First, we choose $\varphi^{(0)}: \mathbb{R} \to [0,1]$ a $\mathcal{C}^\infty$ symmetric function with support on $[-2,2]$ such that $\varphi^{(0)}(x)=1$ for $|x| \leq 1$. We let, for $d>1$, $\varphi^{(1)}_d(x)= (d\varphi^{(0)}(x)-\varphi^{(0)}(x/d))/(d-1)$, which is a function equal to $1$ on $[-1,1]$,  vanishing on $[-d,d]^c$ and such that $\int_{\mathbb{R}} \varphi^{(1)}_d(x) dx=0$. For $l \in \mathbb{N}$, $l \ge 1$, and $d>1$, we set 
$\varphi_d^{(l)}(x)=c_d^{-1}\sum_{k=1}^l C_l^k (-1)^{k+1} \frac{1}{k} \varphi^{(1)}_d(x/k)$, where
$c_d=\sum_{k=1}^l C_l^k (-1)^{k+1} \frac{1}{k}$.  One can check that $\varphi_d^{(l)}$ is compactly supported, equal to $1$ on $[-1,1]$,  and that for all 
$k \in  \{0, \dots, l \}, \int_{\mathbb{R}} x^k  \varphi_d^{(l)}(x) dx =0$, for $l \ge 1$. 
With these notations, we estimate the parameter $\theta=(\theta_1,\theta_2)$ by minimization of the contrast function
\begin{equation}
 \label{E:contrast_OU} 
U_n(\theta)=\sum_{i=0}^{n-1} (X_{t_{i+1}}-
\widetilde{m}_{(\theta_1,\theta_2),\Delta_{n,i}}(X_{t_i}))^2
\varphi^{(l)}_{c \Delta_{n,i}^\beta}(X_{t_{i+1}}-X_{t_i}),
\end{equation}
where $l \in \mathbb{N}$ and $c>0$ will be specified latter.

For numerical simulations, we choose $T=2000$, $n=10^4$, $\Delta_{i,n}=\Delta_n=1/5$, $\theta_1=-0.5$, $\theta_2=2$ and $X_0=x_0=4$. We estimate the bias and standard deviation of our estimators using a Monte Carlo method based on 5000 replications. 
As a start, we consider a situation without jumps $\lambda=0$ , in which we remove the truncation function $\varphi$
 in the contrast, as it is useless in absence of jumps. 
 In Table \ref{T:no_jumps}, we compare the estimator $\widetilde{\theta}_n$ which uses the Kessler exact bias correction given by \eqref{E:m_tilde_OU}, with an estimator based on the Euler scheme approximation where one uses the 
 approximation $\widetilde{m}^{\text{Euler}}_{(\theta_1,\theta_2),{\Delta_{n,i}}}(x)=x+\Delta_{n,i}(\theta_1 x + \theta_2)$.
From Table \ref{T:no_jumps} we see that the estimator $\tilde{\theta}^{\text{Euler}}_n$ based on Euler contrast exhibits some bias  which is completely removed using Kessler's correction.   
Next, we set a jump intensity $\lambda=0.1$, with jumps size whose common law is 
  $\mathcal{N}(0,2)$ and set $\gamma=1$. We use the contrast function relying on \eqref{E:m_tilde_OU}. Results are given for three choices of truncation function, $\varphi^{(0)}$, $\varphi^{(2)}_d$ and $\varphi^{(3)}_d$ where $d=3$. Plots of these functions are given in Figure
\ref{Fig:phi}. 
We choose $\beta=0.49$ and $c=1$. As the true value of the volatility is $\sigma=0.3$, this choice enables most of the increments without jumps of $X$ on $[t_{i},t_{i+1}]$ to be such that $\varphi^{(l)}_{c \Delta_{n,i}^\beta}(X_{t_{i+1}}-X_{t_i})=1$.
  Let us stress that, if $\sigma$ is unknown, it is possible to estimate, 
    even roughly, the local value of the volatility in order to choose $c$ accordingly (see \hyperref[csl:16]{(Gloter, Loukianova, \& Mai, 2018)} for analogous discussion). Results in Table \ref{T:few_jumps} show that the estimator works well, with a reduced bias for all choices of truncation function. Especially the bias is much smaller than the one of the Euler scheme contrast in absence of jumps.
 It shows the benefit of using \eqref{E:m_tilde_OU} in the contrast function, even if the truncation function is not oscillating as is it when we consider $\varphi^{(0)}$. We remark that by the choice of a symmetric truncation function one has $\int_{\mathbb{R}} u \varphi^{(0)}(u) d u=0$ and inspecting the proof of Proposition \ref{P:expansion_m_dot} it can be seen that this conditions is sufficient, in the expansion of $m_{\theta,\Delta_{n,i}}$, to suppress the largest contribution of the discrete part of the generator.
 
  If the number of jumps is greater, e.g. for $\lambda=1$, we see in Table \ref{T:many_jumps} that using the oscillating kernels $\varphi^{(2)}_d, ~\varphi^{(3)}_d$ yields to a smaller bias than using $\varphi^{(0)}$, whereas it tends to increase the standard deviation of the estimator. The estimator we get using $\varphi^{(3)}_d$ performs well in this situation, it has a negligible bias and a standard deviation comparable to the one in the case where the process has no jump.
\begin{table}[ht]
\begin{center}
\begin{tabular}{|c|c|c|}
\hline 
& Mean (std) for $\theta_1=-0.5$& Mean (std) for $\theta_2=2$ \\ 
\hline 
\rule{0pt}{1.3em}
$\tilde{\theta}^{\text{Euler}}_n$ &-0.4783 (0.0213) &  1.9133 (0.0856)  \\ 
\hline 
\rule{0pt}{1.3em} $\displaystyle \widetilde{\theta}_n$& -0.5021 (0.0236) &  2.0084 (0.0947)  \\ 
\hline 
\end{tabular} 
\caption{Process without jump \label{T:no_jumps}} 
\end{center}
\end{table}
\begin{table}[ht]
\begin{center}
\begin{tabular}{|c|c|c|}
\hline 
& Mean (std) for $\theta_1=-0.5$& Mean (std) for $\theta_2=2$ \\ 
\hline 
\rule{0pt}{1.3em} $\widetilde{\theta_n}$ using $\varphi^{(0)}$ &  -0.4967 (0.0106) &     1.9869 (0.0430)  \\ 
\hline 
\rule{0pt}{1.3em} $\widetilde{\theta_n}$ using $\varphi^{(2)}_d$& -0.4990 (0.0153) &   1.9959 (0.0622)  \\ 
\hline 
\rule{0pt}{1.3em} $\widetilde{\theta_n}$ using $\varphi^{(3)}_d$& -0.5006 (0.0196) &    2.0023 (0.0798)  \\ 
\hline 
\end{tabular} 
\caption{Gaussian jumps with $\lambda=0.1$ \label{T:few_jumps}}
\end{center} 
\end{table}
\begin{table}[ht]
\begin{center}
\begin{tabular}{|c|c|c|}
\hline 
& Mean (std) for $\theta_1=-0.5$& Mean (std) for $\theta_2=2$ \\ 
\hline 
    \rule{0pt}{1.3em} $\widetilde{\theta_n}$ using $\varphi^{(0)}$ &  -0.4623 (0.0059) &  1.8495 (0.0256)  \\ 
\hline 
\rule{0pt}{1.3em}$\widetilde{\theta_n}$ using $\varphi^{(2)}_d$& -0.4886  (0.0161)&    1.9549 (0.0710)  \\ 
\hline 
\rule{0pt}{1.3em}$\widetilde{\theta_n}$ using $\varphi^{(3)}_d$& -0.5033 (0.0243) & 2.0136 (0.1059)  \\ 
\hline 
\end{tabular} 
\caption{Gaussian jumps with $\lambda=1$ \label{T:many_jumps}} 
\end{center}
\end{table}

\begin{figure}[ht]
	\centering
	\begin{subfigure}[b]{0.3\linewidth}
		\includegraphics[width=\linewidth]{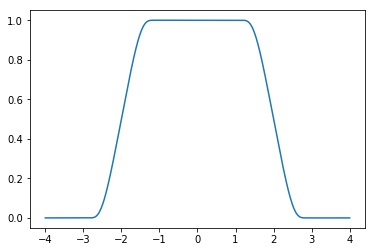}
		\caption{$\varphi^{(0)}$}
	\end{subfigure}
	\begin{subfigure}[b]{0.3\linewidth}
		\includegraphics[width=\linewidth]{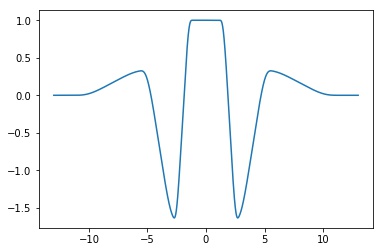}
		\caption{$\varphi^{(2)}_d$ with $d=3$}
	\end{subfigure}
	\begin{subfigure}[b]{0.3\linewidth}
		\includegraphics[width=\linewidth]{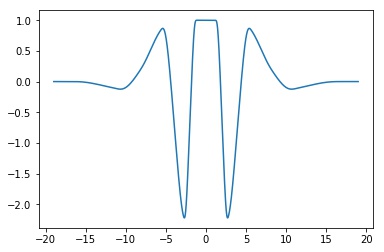}
		\caption{$\varphi^{(3)}_d$ with $d=3$}
	\end{subfigure}
\caption{Plot of the truncation functions}
\label{Fig:phi}
\end{figure}

\subsection{Infinite jumps activity} \label{Ss:Num_infinite}
Let us consider $X$ solution to the stochastic differential equation \eqref{E:OU_in_simulation}, where the compensator of the jump measure is $\overline{\mu}(ds,dz)= \frac{e^{-z}}{z^{1+\alpha}} 1_{(0,\infty)}(z) ds dz$ with $\alpha \in (0,1)$. This situation corresponds to the choice of the Levy process 
$(\int_0^t \int_{\mathbb{R}\backslash \left \{0 \right \}}  z \tilde{\mu}(ds,dz))_t$ being a tempered $\alpha$-stable jump process. 
In the case of infinite jump activity, we have no result providing approximations at any arbitrary order of $m_{\theta, \Delta_{n,i}}$. However, we can use Theorem \ref{th: dL mtheta alpha <1} to find some useful explicit approximation.

According to \eqref{eq: dl m} and taking into account that the threshold level is $c\Delta_{n,i}^\beta$ for some $c>0$, we have
\begin{align*}
m_{\theta,\Delta_{n,i}}(x)
 &= x + \Delta_{n,i}(\theta_1 x+ \theta_2) - \Delta_{n,i} \gamma \int_0^\infty \frac{e^{- z}}{z^{\alpha}}  d z +\Delta_{n,i} \gamma \int_0^\infty \varphi_{c \Delta_{n,i}^\beta}(\gamma z)   \frac{e^{- z}}{z^{\alpha}} d z + R(\theta,\Delta_{n,i}^{2 - 2 \beta}, x)
 \\
 &= x +  \Delta_{n,i}  \overline{b}(x,\theta_1,\theta_2) +  \Delta_{n,i}^{1+\beta(1-\alpha)}c^{1-\alpha} \gamma^\alpha \int_0^\infty \varphi(v)   \frac{e^{- \frac{c v \Delta_{n,i}^\beta}{\gamma}} }{v^\alpha} d v + R(\theta,\Delta_{n,i}^{2 - 2 \beta}, x),
\end{align*}
where in the last line, following the notation of Remark \ref{R:alpha_smaller_1}, we have set $\overline{b}(x,\theta_1,\theta_2)=
(\theta_1 x+ \theta_2) - \gamma \int_0^\infty \frac{e^{- z}}{z^{\alpha}}  d z$, and we
 make the change of variable $v=\frac{\gamma z}{c \Delta_{n,i}^\beta}$.
This leads us to consider the approximation
\begin{equation}
\label{E:m_tilde_stable} 
\widetilde{m}_{\theta,\Delta_{n,i}}(x)= x + \Delta_{n,i}\overline{b}(x,\theta_1,\theta_2)
+ \Delta_{n,i}^{1+\beta(1-\alpha)}c^{1-\alpha} \gamma^\alpha \int_0^\infty \varphi(v)   \frac{1}{v^\alpha} d v,
\end{equation}
which is such that $| \widetilde{m}_{\theta,\Delta_{n,i}}(x)-{m}_{\theta,\Delta_{n,i}}(x) |  
\le R(\theta,\Delta_{n,i}^{(2-2\beta) \wedge (1+\beta(2-\alpha))},x)$.

For numerical simulations, we choose $T=100$, $n=10^4$, $\Delta_{i,n}=\Delta_n=1/100$, $\theta_1=-0.5$, $\theta_2=2$, $X_0=x_0=4$, $\gamma=1$, $\sigma=0.3$ and $\alpha \in \{0.1,~0.3,~0.5\}$. To illustrate the estimation method, we focus on the estimation of the parameter $\theta_2$ only, as the minimisation of the contrast defined by \eqref{E:contrast_OU}--\eqref{E:m_tilde_stable} yields to the explicit estimator,
\begin{align}
 \nonumber
\widetilde{\theta}_{2,n}&=
\frac{ \sum_{i=0}^{n-1} (X_{t_{i+1}} - X_{t_i}- \Delta_n \theta_1 X_{t_i}) \varphi_{c \Delta_{n}^\beta} (X_{t_{i+1}}-X_{t_{i}})  }
{\Delta_n \sum_{i=0}^{n-1} \varphi_{c \Delta_{n}^\beta} (X_{t_{i+1}}-X_{t_{i}}) } 
- \gamma \int_0^\infty \frac{e^{-z}}{z^\alpha} dz \\
\nonumber
& \quad \quad \quad \quad \quad \quad \quad \quad \quad \quad \quad \quad
-\Delta_{n}^{\beta(1-\alpha)} c^{1-\alpha} \gamma^\alpha \int_0^\infty \varphi(v) \frac{1}{v^\alpha} dv
\\ \label{E:theta_tilde_theat_Euler} 
&=: \widetilde{\theta}_{2,n}^{\text{Euler}} - \Delta_{n}^{\beta(1-\alpha)} c^{1-\alpha} \gamma^\alpha \int_0^\infty \varphi(v) \frac{1}{v^\alpha} dv. 
\end{align}
We can see that the estimator $\widetilde{\theta}_{2,n}$ is a corrected version of the estimator $\widetilde{\theta}_{2,n}^{\text{Euler}}$, that would result from the choice of the approximation $m_{\theta,\Delta_n}(x)
\approx x + \Delta_{n}\overline{b}(x,\theta_1,\theta_2)$ in the definition of the contrast function. 
Comparing with estimators of earlier works (e.g. \hyperref[csl:16]{(Gloter, Loukianova, \& Mai, 2018)}, \hyperref[csl:11]{(Shimizu, 2006)}), the presence of this correction term appears new.
In lines 2--3 of Table \ref{T:stable}, we compare the mean and standard deviation of  $\widetilde{\theta}_{2,n}$ and $\widetilde{\theta}_{2,n}^{\text{Euler}}$ for $\alpha \in \{0.1,~0.3,~0.5\}$ and with the choice
$c=1$, $\beta=0.49$ and $\varphi=\varphi^{(0)}$ (see Figure \ref{Fig:phi}).
We see that the estimator $\widetilde{\theta}_{2,n}$ performs well  and the correction term in \eqref{E:theta_tilde_theat_Euler} drastically reduces the bias present in $\widetilde{\theta}_{2,n}^{\text{Euler}}$, especially when the jump activity near $0$ is high, corresponding to larger values of $\alpha$.
If we take a threshold level $c=1.5$ higher, we see in line 5 of Table \ref{T:stable} that the bias of the estimator $\widetilde{\theta}_{2,n}^{\text{Euler}}$ increases, since the estimator $\widetilde{\theta}_{2,n}^{\text{Euler}}$ keeps more jumps that induce a stronger bias. On the other hand, the bias of the estimator $\widetilde{\theta}_{2,n}$ remains small (see  line 4 of Table \ref{T:stable}), as the correction term in \eqref{E:theta_tilde_theat_Euler} increases with $c$.
\begin{table}[ht]
\begin{center}
\begin{tabular}{|c|c|c|c|c|}
\hline 
& & $\alpha=0.1$ & $\alpha=0.3$ & $\alpha=0.5$  \\  
\hline 
\rule{0pt}{1.3em}
c=1& $\widetilde{\theta}_{2,n}$ & 1.99 (0.0315) &  1.98 (0.0340) & 1.97 (0.0367) \\ 
 \cline{2-5}
\rule{0pt}{1.3em} 
& $\widetilde{\theta}_{2,n}^{\text{Euler}}$& 2.20 (0.0315) & 2.37 (0.0340)  & 2.76 (0.0367) \\ 
\hline 
\rule{0pt}{1.3em}
c=1.5& $\widetilde{\theta}_{2,n}$ & 1.97 (0.0340) &  1.96 (0.0363) & 1.94 (0.0397) \\ 
 \cline{2-5}
\rule{0pt}{1.3em} 
& $\widetilde{\theta}_{2,n}^{\text{Euler}}$& 2.28 (0.0340) & 2.48 (0.0363)  & 2.90 (0.0397) \\ 
\hline
\end{tabular} 
\caption{Mean (std) for the estimation of $\theta_2=2$ \label{T:stable}} 
\end{center}
\end{table}

\subsection{Conclusion and perspectives for practical applications}
In this paper, we have shown that it is theoretically possible to estimate the drift parameter efficiently under the sole condition of a sampling step converging to zero. However, the contrast function relies on the quantity $m_{h,\theta}(x)$ which is usually not explicit. For practical implementation, the question of approximation of $m_{h,\theta}(x)$ is crucial, and one also has face the question of choosing the threshold level, characterized here by $c$, $\beta$ and $\varphi$. On contrary to more conventional threshold methods, it appears here that the estimation quality seems less sensitive to choice of the threshold level, as the quantity $m_{h,\theta}(x)$ depends by construction on this threshold level and may compensate for too large threshold.
On the other hand, the quantity $m_{h,\theta}(x)$ can be numerically very far from the approximation derived form the Euler scheme approximation. This can be seen in the example of Section \ref{Ss:Num_infinite}, where the correction term of the estimator is, on this finite sample example, essentially of the same magnitude as the estimated quantity.  A perspective, in the situation of infinite jump activity, would be to numerically approximate the function $x\mapsto m_{h,\theta}(x)$, using for instance a Monte Carlo approach, and provide more precise corrections than the explicit correction used in Section \ref{Ss:Num_infinite}.

In the specific situation of finite activity, we proposed an explicit approximation of $m_{h,\theta}(x)$ with arbitrary order. This approximation is the same one as Kessler's approximation in absence of jumps, and it relies on the choice of oscillating truncation functions. A crucial point in the proof of the expansion of $m_{h,\theta}(x)$ given in Proposition \ref{prop: dl m intensita finita} is that the support of the truncation function 
$\varphi_{c \Delta_{n,i}^\beta}$
is small compared to the typical scale where the density of the jumps law varies. 
However, our construction of oscillating function is such that the support of $\varphi=\varphi_d^{(l)}$ tends to be larger as the number of oscillations $l$ is larger, which yields to restrictions for the choice of $l$ on finite sample. Moreover, the truncation function takes large negative values as well, which makes the minimization of the contrast function unstable if the parameter set is too large. Perspective for further works would be to extend  Proposition \ref{prop: dl m intensita finita} for a non oscillating function $\varphi$. We expect that the resulting asymptotic expansion would involve additional terms related to the quantities $\int u^k \varphi (u) d u$.

\section{Limit theorems}\label{S:Limit}
The asymptotic properties of estimators are deduced from the asymptotic behavior of the contrast function. We therefore prepare some limit theorems for triangular arrays of the data, that we will prove in the Appendix. \\

\begin{proposition}
Suppose that Assumptions 1 to 4 hold, $\Delta_n \rightarrow 0$ and $t_n \rightarrow \infty$. \\
Moreover suppose that $f$ is a differentiable function $\mathbb{R} \times \Theta \rightarrow \mathbb{R}$ such that $|f(x,\theta)|\le c(1 + |x|)^c$, $|\partial_xf(x, \theta)| \le c(1 + |x|)^c$ and $|\partial_\theta f(x, \theta)| \le c(1+ |x|)^c$. \\
Then, $ x \mapsto f(x, \theta)$ is a $\pi$-integrable function for any $\theta \in \Theta$ and the following convergence result holds as $n \rightarrow \infty$:
\\
(i) $\sup_{\theta \in \Theta} |\frac{1}{t_n} \sum_{i=0}^{n-1}\Delta_{n,i}f(X_{t_i}, \theta)1_{\left \{|X_{t_i}| \le \Delta_{n,i}^{-k} \right \} } - \int_\mathbb{R} f(x, \theta) \pi(dx)| \xrightarrow{\mathbb{P}} 0,$ \\
(ii) $\sup_{\theta \in \Theta} |\frac{1}{t_n} \sum_{i=0}^{n-1}\Delta_{n,i}f(X_{t_i}, \theta)\varphi_{\Delta_{n,i}^\beta}(X_{t_{i+1}} - X_{t_i})1_{\left \{|X_{t_i}| \le \Delta_{n,i}^{-k} \right \} } - \int_\mathbb{R} f(x, \theta) \pi(dx)| \xrightarrow{\mathbb{P}} 0.$
\label{prop: LT1} 
\end{proposition}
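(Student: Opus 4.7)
\textbf{Proof plan for Proposition \ref{prop: LT1}.}

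The overall strategy is to reduce the triangular-array Riemann sum to the continuous occupation integral $\frac{1}{t_n}\int_0^{t_n} f(X_s,\theta)\,ds$, for which the ergodic theorem in Lemma \ref{lemma 2.1 GLM} gives the desired a.s.\ limit $\int_\mathbb{R} f(x,\theta)\pi(dx)$ (applicable since the polynomial bound on $f$ together with $\pi(|x|^q)<\infty$ ensures integrability). For fixed $\theta$, I would write
\[
\frac{1}{t_n}\sum_{i=0}^{n-1}\Delta_{n,i}\,f(X_{t_i},\theta) - \frac{1}{t_n}\int_0^{t_n} f(X_s,\theta)\,ds = \frac{1}{t_n}\sum_{i=0}^{n-1}\int_{t_i}^{t_{i+1}}\bigl[f(X_{t_i},\theta)-f(X_s,\theta)\bigr]\,ds,
\]
and estimate the right-hand side in $L^1$ using the mean-value theorem together with the polynomial bound on $\partial_x f$: this yields the bound $c(1+|X_s|^{c'}+|X_{t_i}|^{c'})|X_s-X_{t_i}|$, whose expectation is $O(|s-t_i|^{1/2})$ by Cauchy--Schwarz and the second and third points of Lemma \ref{lemma: Moment inequalities}. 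Summing, the error is controlled by $\frac{c}{t_n}\sum_i \Delta_{n,i}^{3/2}\le c\Delta_n^{1/2}\to 0$.

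Next, I would dispose of the cutoff $\mathbf{1}_{\{|X_{t_i}|\le \Delta_{n,i}^{-k}\}}$ by a Markov-type argument: since $f$ grows polynomially and, by Lemma \ref{lemma 2.1 GLM}, $\sup_t \mathbb{E}|X_t|^q<\infty$ for every $q>0$, one has $\mathbb{P}(|X_{t_i}|>\Delta_{n,i}^{-k})\le c\Delta_{n,i}^{qk}$, so the omitted contribution is $\frac{1}{t_n}\sum_i \Delta_{n,i}^{1+qk/2}\le c\Delta_n^{qk/2}\to 0$ upon choosing $q$ large. To obtain the supremum over $\theta$, I would combine the pointwise statement with an equicontinuity argument: the bound $|\partial_\theta f(x,\theta)|\le c(1+|x|)^c$ and the uniform moment estimates of Lemma \ref{lemma 2.1 GLM} make the map $\theta\mapsto \frac{1}{t_n}\sum_i \Delta_{n,i} f(X_{t_i},\theta)\mathbf{1}_{\{\cdots\}}$ Lipschitz in $\theta$ with constants bounded in probability uniformly in $n$, so that finite-net plus compactness of $\Theta$ upgrades pointwise to uniform convergence. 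This completes (i).

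For (ii), I would write the difference with the sum in (i) as
\[
\frac{1}{t_n}\sum_{i=0}^{n-1}\Delta_{n,i}\,f(X_{t_i},\theta)\bigl[1-\varphi_{\Delta_{n,i}^\beta}(X_{t_{i+1}}-X_{t_i})\bigr]\mathbf{1}_{\{|X_{t_i}|\le \Delta_{n,i}^{-k}\}},
\]
and bound $|1-\varphi_{\Delta_{n,i}^\beta}(\cdot)|\le \mathbf{1}_{\{|X_{t_{i+1}}-X_{t_i}|>\Delta_{n,i}^\beta\}}$. Conditionally on $\mathcal{F}_{t_i}$, I would split this event according to whether $L$ has a jump of size exceeding $c\Delta_{n,i}^\beta$ in $[t_i,t_{i+1}]$: the probability of such a large jump is at most $\Delta_{n,i}\int_{|z|>c\Delta_{n,i}^\beta}F(dz)\le c\Delta_{n,i}^{1-\alpha\beta}$ by Assumption 4.3, while on the complementary event the increment is a continuous martingale plus drift plus small-jump martingale, and Bernstein-type exponential concentration yields a bound of smaller polynomial order in $\Delta_{n,i}$. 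Multiplying by $|f(X_{t_i},\theta)|\mathbf{1}_{\{|X_{t_i}|\le \Delta_{n,i}^{-k}\}}$ (polynomially bounded uniformly) and summing gives a total contribution of order $\Delta_n^{1-\alpha\beta}\to 0$, since $\alpha\beta<1$ under Assumption $A_\beta$ (and a fortiori under $\beta<1/2$ for any $\alpha<2$). Uniformity in $\theta$ is obtained exactly as before.

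The main obstacle is the step controlling the truncation $1-\varphi_{\Delta_{n,i}^\beta}$ in (ii), because the moment inequalities of Lemma \ref{lemma: Moment inequalities} alone give $\mathbb{P}(|X_{t_{i+1}}-X_{t_i}|>\Delta_{n,i}^\beta)\le c\Delta_{n,i}^{1-p\beta}$, and no choice of $p\ge 2$ makes the resulting sum negligible compared to $t_n$. This forces the separate treatment of the Gaussian/small-jump part and the large-jump part, in which the Lévy measure tail bound from Assumption 4.3 plays a crucial quantitative role and dictates the precise range of $\beta$ allowed in Assumption $A_\beta$.
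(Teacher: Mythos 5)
Your proposal is correct in substance but takes a genuinely different route from the paper, and it contains one misdiagnosis worth correcting. For (i), the paper simply invokes Lemma 4.4 of Gloter, Loukianova and Mai (2018) (noting it extends to $\alpha\in(0,2)$) together with the ergodic theorem, and disposes of the indicator $1_{\{|X_{t_i}|\le \Delta_{n,i}^{-k}\}}$ via the $L^1$ convergence to zero of $\frac{1}{n}\sum_i(1+|X_{t_i}|)^c 1_{\{|X_{t_i}|>\Delta_{n,i}^{-k}\}}$; your self-contained Riemann-sum comparison with the occupation integral, the Markov bound for the cutoff, and the equicontinuity-in-$\theta$ argument reproduce essentially what that cited lemma delivers, at the cost of more work but with the benefit of not outsourcing the uniformity.

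For (ii), however, your stated ``main obstacle'' is not an obstacle. You claim that the moment inequalities alone give $\mathbb{P}(|X_{t_{i+1}}-X_{t_i}|>\Delta_{n,i}^\beta)\le c\Delta_{n,i}^{1-p\beta}$ and that no choice of $p\ge 2$ makes the resulting sum negligible. Taking $p=2$ gives $\mathbb{P}(|X_{t_{i+1}}-X_{t_i}|>\Delta_{n,i}^\beta)\le c\Delta_{n,i}^{1-2\beta}$ with $1-2\beta>0$ since $\beta<1/2$, and this is exactly what the paper does: by Cauchy--Schwarz,
\begin{equation*}
\mathbb{E}\Bigl[\sup_{\theta}|f(X_{t_i},\theta)|\,1_{\{|X_{t_{i+1}}-X_{t_i}|>\Delta_{n,i}^\beta\}}\Bigr]\le \mathbb{E}\bigl[\sup_\theta|f(X_{t_i},\theta)|^2\bigr]^{1/2}\,\mathbb{P}\bigl(|X_{t_{i+1}}-X_{t_i}|>\Delta_{n,i}^\beta\bigr)^{1/2}\le c\,\Delta_{n,i}^{\frac{1}{2}-\beta},
\end{equation*}
so that $\frac{1}{t_n}\sum_i\Delta_{n,i}\cdot c\Delta_{n,i}^{1/2-\beta}\le c\Delta_n^{1/2-\beta}\to 0$. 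No splitting into large-jump and small-jump events, and no Bernstein-type concentration, is needed here; the L\'evy tail bound of Assumption 4.3 plays no quantitative role in this proposition (it is needed for the finer estimates of Lemma \ref{lemma: lemma per mostrare prop LT3} and Propositions \ref{prop: truc moche h}--\ref{prop: truc moche z}, where the event of interest forces the increment into an annulus of width $\Delta_{n,i}^\beta$ rather than merely exceeding $\Delta_{n,i}^\beta$). Your jump-splitting argument does yield a valid (indeed sharper) bound of order $\Delta_n^{1-\alpha\beta}$, so your proof goes through, but the justification you give for abandoning the elementary estimate is incorrect.
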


The next proposition will be used in order to prove the consistency. \\
First, we prepare some notations. We define
\begin{equation}
\zeta_i := \int_{t_i}^{t_{i+1}} a(X_s)dW_s + \int_{t_i}^{t_{i+1}} \int_{\mathbb{R} \backslash \left \{0 \right \}} \gamma(X_{s^-})z \tilde{\mu}(ds,dz) +  \Delta_{n,i}\, \int_{\mathbb{R} \backslash \left \{0 \right \}} z \, \gamma(X_{t_i})\,[1 - \varphi_{\Delta_{n,i}^\beta}(\gamma(X_{t_i})z)]\,F(z)dz.
\label{eq: def zeta} 
\end{equation}
We now observe that using the dynamic of the process $X$ and the development (\ref{eq: dl m}) of $m$ we get
\begin{equation}
X_{t_{i+1}} - m_\theta(X_{t_i}) + R(\theta,\Delta_{n,i}^{2 - 2 \beta}, X_{t_i}) =  ( \int_{t_i}^{t_{i+1}} b(X_s, \theta_0) ds -  \Delta_{n,i} b (X_{t_i}, \theta) ) + \zeta_i,
\label{eq: link betwen m and zeta} 
\end{equation}
if $\alpha < 1$ and the same but with the different rest term $R(\theta,\Delta_{n,i}^{2 - 3 \beta}, X_{t_i})$ if $\alpha \ge 1$. From the choice that we have made on $\alpha$ and $\beta$ in Theorems \ref{th: dL mtheta alpha <1} and \ref{th: dL mtheta alpha  > 1}, the exponent on $\Delta_{n,i}$ in the rest function is always more than $1$. Hence, from now on, we will call it simply $R(\theta, \Delta_{n,i}^{1 + \delta}, X_{t_i})$, with $\delta > 0$.   
That is the reason why we choose such a definition for $\zeta_i$. \\
\begin{proposition}
Suppose that Assumptions 1 to 4 and $A_\beta$ hold, $\Delta_n \rightarrow 0$ and $t_n \rightarrow \infty$ and, $\forall i \in \left \{ 0, ... , n-1 \right \} $,  $f_{i,n}$: $\mathbb{R} \times \Theta \rightarrow \mathbb{R}$. Moreover we suppose that $\exists c$: $|f_{i,n}(x, \theta)| \le c (1 + |x|^c)$ $\forall i, n$. \\
Then, $\forall \theta \in \Theta$, 
$$\frac{1}{t_n} \sum_{i= 0}^{n-1} f_{i,n}(X_{t_i}, \theta) \, \zeta_i \, \varphi_{\Delta_{n,i}^\beta}(X_{t_{i+1}} - X_{t_i})1_{\left \{|X_{t_i}| \le \Delta_{n,i}^{-k} \right \} } \xrightarrow{\mathbb{P}} 0.$$
\label{prop: LT2} 
\end{proposition}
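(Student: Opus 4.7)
\textbf{Proof plan for Proposition \ref{prop: LT2}.}

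Set $M_{n,i} := f_{i,n}(X_{t_i},\theta)\,\zeta_i\,\varphi_{\Delta_{n,i}^\beta}(X_{t_{i+1}}-X_{t_i})\,1_{\{|X_{t_i}|\le \Delta_{n,i}^{-k}\}}$. The plan is the classical martingale decomposition: write
$\sum_i M_{n,i} = \sum_i \mathbb{E}[M_{n,i}\mid \mathcal{F}_{t_i}] + \sum_i (M_{n,i}-\mathbb{E}[M_{n,i}\mid \mathcal{F}_{t_i}])$
and show that after division by $t_n$ the first (bias) sum vanishes in $L^1$ while the second (martingale) sum vanishes in $L^2$.

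\emph{Bias step.} The key is to exploit the definition of $m_{\theta_0}$, namely the identity $\mathbb{E}[(X_{t_{i+1}}-m_{\theta_0}(X_{t_i}))\varphi_{\Delta_{n,i}^\beta}(X_{t_{i+1}}-X_{t_i})\mid \mathcal{F}_{t_i}]=0$ recalled just before the definition of $\zeta_i$. Specializing identity \eqref{eq: link betwen m and zeta} to $\theta=\theta_0$ rewrites, on the event $\{|X_{t_i}|\le \Delta_{n,i}^{-k}\}$,
\begin{equation*}
\zeta_i = \bigl(X_{t_{i+1}}-m_{\theta_0}(X_{t_i})\bigr) - \int_{t_i}^{t_{i+1}}\bigl(b(X_s,\theta_0)-b(X_{t_i},\theta_0)\bigr)ds + R(\theta_0,\Delta_{n,i}^{1+\delta},X_{t_i}),
\end{equation*}
with $\delta>0$ thanks to Assumption $A_\beta$. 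Multiplying by $\varphi_{\Delta_{n,i}^\beta}(X_{t_{i+1}}-X_{t_i})$ and taking $\mathbb{E}[\,\cdot\,\mid \mathcal{F}_{t_i}]$: the first term is annihilated by the defining property of $m_{\theta_0}$; the middle term is handled by Cauchy--Schwarz together with the second-order bound of Lemma \ref{lemma: Moment inequalities}, yielding $\le C\Delta_{n,i}^{3/2}(1+|X_{t_i}|)$; the $R$-remainder is already of order $\Delta_{n,i}^{1+\delta}$. Combined with the polynomial growth of $f_{i,n}$ and the moment bound of Lemma \ref{lemma 2.1 GLM}, I obtain $\mathbb{E}[|\mathbb{E}[M_{n,i}\mid \mathcal{F}_{t_i}]|]\le c\,\Delta_{n,i}^{1+\delta'}$ and hence $\tfrac{1}{t_n}\sum_i \mathbb{E}[M_{n,i}\mid \mathcal{F}_{t_i}]\to 0$ in $L^1$ since $t_n\ge c\sum_i \Delta_{n,i}$ and $\Delta_n\to 0$.

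\emph{Fluctuation step.} The compensated sequence $\tilde M_{n,i}:=M_{n,i}-\mathbb{E}[M_{n,i}\mid \mathcal{F}_{t_i}]$ is a martingale difference, so $\mathbb{E}[(\sum_i\tilde M_{n,i})^2]=\sum_i\mathbb{E}[\tilde M_{n,i}^2]\le \sum_i\mathbb{E}[M_{n,i}^2]$. Since $\varphi$ is bounded and $f_{i,n}$ has polynomial growth, it suffices to prove
$\mathbb{E}[\zeta_i^2\mid \mathcal{F}_{t_i}]\le c\,\Delta_{n,i}(1+|X_{t_i}|^c)$.
For the Brownian stochastic integral this is Itô isometry with the linear growth of $a$; for the compensated Poisson integral it is again Itô isometry, using that $\int z^2 F(z)dz<\infty$ (near zero $|z|^{1-\alpha}$ is integrable for $\alpha<2$, and large $|z|$ is controlled by Assumption 3.1 with $q=2$); for the deterministic correction term, Remark 3 gives an estimate of order at least $\Delta_{n,i}$. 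Summing over $i$ and using the uniform moment bounds from Lemma \ref{lemma 2.1 GLM} yields $\sum_i\mathbb{E}[M_{n,i}^2]\le c\sum_i\Delta_{n,i}\le c\,t_n$, so $t_n^{-2}\mathbb{E}[(\sum_i\tilde M_{n,i})^2]\le c/t_n\to 0$, giving $L^2$ and thus probability convergence of the fluctuation part.

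\emph{Main obstacle.} Besides assembling the two steps, the principal technical point is the conditional second moment bound on $\zeta_i$ in the regime $\alpha\in[1,2)$: the jump part then has no summable intensity, and the drift correction in the definition of $\zeta_i$ is not better than $O(\Delta_{n,i})$ (Remark 3), which is exactly borderline but still compatible with the desired $O(\Delta_{n,i})$ bound on $\mathbb{E}[\zeta_i^2\mid\mathcal{F}_{t_i}]$. Verifying that the cross-terms in $\zeta_i^2$ (between the two stochastic integrals and the deterministic piece) do not degrade this rate, via Cauchy--Schwarz and the moment estimates of Lemma \ref{lemma: Moment inequalities}, is where most of the care lies; everything else reduces to careful bookkeeping with the $R(\theta,\Delta_{n,i}^\delta,x)$ notation.
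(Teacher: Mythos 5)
Your proposal is correct and follows essentially the same route as the paper: the paper packages your bias/fluctuation decomposition as an appeal to Lemma 9 of Genon-Catalot and Jacod (checking $\frac{1}{t_n}\sum_i \mathbb{E}[M_{n,i}\mid\mathcal{F}_{t_i}]\to 0$ and $\frac{1}{t_n^2}\sum_i \mathbb{E}[M_{n,i}^2\mid\mathcal{F}_{t_i}]\to 0$), and isolates your two key estimates as Lemma \ref{lemma: conditional expected value zeta}, proving \eqref{eq: convergence expected value prop 2} exactly by your cancellation of $X_{t_{i+1}}-m_{\theta_0}(X_{t_i})$ plus the $\Delta_{n,i}^{3/2}$ drift bound, and \eqref{eq: conv squared prop 2} by It\^o isometry, Kunita's inequality and Remark 2. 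No gaps.
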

The proof relies on the following lemma: \\
\begin{lemma}
Suppose that Assumptions 1 to 4 and $A_\beta$ hold. Then
\begin{equation}
1. \quad \mathbb{E}[\zeta_i \varphi_{\Delta_{n,i}^\beta}(X_{t_{i+1}} - X_{t_i})1_{\left \{|X_{t_i}| \le \Delta_{n,i}^{-k} \right \} }| \mathcal{F}_{t_i}] = R(\theta_0, \Delta_{n,i}^{(1 + \delta) \land \frac{3}{2}}, X_{t_i} ),
\label{eq: convergence expected value prop 2}
\end{equation}
\begin{equation}
2. \quad \mathbb{E}[\zeta_i^2 \, \varphi^2_{\Delta_{n,i}^\beta}(X_{t_{i+1}} - X_{t_i})1_{\left \{|X_{t_i}| \le \Delta_{n,i}^{-k} \right \} }| \mathcal{F}_{t_i}] = R(\theta_0, \Delta_{n,i}, X_{t_i} ),
\label{eq: conv squared prop 2} 
\end{equation}
and
\begin{equation}
3. \quad \mathbb{E}[(X_{t_{i+1}} - m_{\theta_0}(X_{t_i}))^2 \, \varphi^2_{\Delta_{n,i}^\beta}(X_{t_{i+1}} - X_{t_i})1_{\left \{|X_{t_i}| \le \Delta_{n,i}^{-k} \right \} }| \mathcal{F}_{t_i}] = R(\theta_0, \Delta_{n,i}, X_{t_i} ),
\label{eq: x - m prop 2} 
\end{equation}
where $(\mathcal{F}_s)_s$ is the filtration defined in Lemma \ref{lemma: Moment inequalities} and $\delta$ is positive as defined above. 
\label{lemma: conditional expected value zeta} 
\end{lemma}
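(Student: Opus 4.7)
The plan is to use the identity \eqref{eq: link betwen m and zeta} as the single structural tool. Specialized to $\theta=\theta_0$ it reads
$$\zeta_i \;=\; \bigl(X_{t_{i+1}}-m_{\theta_0}(X_{t_i})\bigr) \;-\; \int_{t_i}^{t_{i+1}}\!\bigl[b(X_s,\theta_0)-b(X_{t_i},\theta_0)\bigr]\,ds \;+\; R\bigl(\theta_0,\Delta_{n,i}^{1+\delta},X_{t_i}\bigr).$$
Write $\varphi_i:=\varphi_{\Delta_{n,i}^\beta}(X_{t_{i+1}}-X_{t_i})$ and $\mathbf{1}_i:=\mathbf{1}_{\{|X_{t_i}|\le\Delta_{n,i}^{-k}\}}$. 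Since $\mathbf{1}_i$ is $\mathcal{F}_{t_i}$-measurable it factors out of every conditional expectation; its only role is that, on $\{\mathbf{1}_i=1\}$, the expansion of $m_{\theta_0}$ from Theorem~\ref{th: dL mtheta alpha <1} or Theorem~\ref{th: dL mtheta alpha  > 1} is valid, which is what legitimizes the displayed identity.

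For part~(1), multiply the identity by $\varphi_i\mathbf{1}_i$ and condition on $\mathcal{F}_{t_i}$. The first contribution vanishes by the very definition \eqref{eq: definition m} of $m_{\theta_0}$. The integrated drift-difference is controlled by Lipschitz continuity of $b$ together with Lemma~\ref{lemma: Moment inequalities}(2) applied with $p=2$, which yields $\mathbb{E}[|X_s-X_{t_i}|\mid\mathcal{F}_{t_i}]\le c|s-t_i|^{1/2}(1+|X_{t_i}|)$; combined with the boundedness of $\varphi$, integration in $s$ gives a bound of order $\Delta_{n,i}^{3/2}$. The explicit $R$-term, being $\mathcal{F}_{t_i}$-measurable, leaves $R(\theta_0,\Delta_{n,i}^{1+\delta},X_{t_i})\cdot\mathbb{E}[\varphi_i\mid\mathcal{F}_{t_i}]$, which by Theorem~\ref{th: dl den alpha <1} or~\ref{th: dl den alpha >1} is itself $R(\theta_0,\Delta_{n,i}^{1+\delta},X_{t_i})$. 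Summing produces the advertised exponent $(1+\delta)\wedge\tfrac32$.

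For part~(2), since $\varphi_i\mathbf{1}_i$ is bounded it is enough to show $\mathbb{E}[\zeta_i^2\mid\mathcal{F}_{t_i}]=R(\theta_0,\Delta_{n,i},X_{t_i})$. Decompose $\zeta_i=M_i^W+M_i^J+D_i$ (Brownian stochastic integral, compensated Poisson integral, deterministic $\Delta_{n,i}$-prefactored tail correction). The It\^o isometry and Lemma~\ref{lemma: Moment inequalities}(3) give $\mathbb{E}[(M_i^W)^2\mid\mathcal{F}_{t_i}]\le c\Delta_{n,i}(1+|X_{t_i}|^c)$; the isometry for compensated Poisson integrals together with finiteness of $\int z^2F(dz)$ (near zero from Assumption~4.3 since $\alpha<2$, at infinity from Assumption~3.1) yields the same bound for $M_i^J$; and $D_i$ is itself of the form $R(\theta_0,\Delta_{n,i},X_{t_i})$ by the estimates in Remark~2, so $D_i^2$ is of higher order. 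Cross terms are absorbed by Cauchy--Schwarz. For part~(3), the same displayed identity gives $(X_{t_{i+1}}-m_{\theta_0}(X_{t_i}))^2\le 3\zeta_i^2+3(\int_{t_i}^{t_{i+1}}[b(X_s)-b(X_{t_i})]ds)^2+3R^2$; the three resulting conditional expectations are $R(\theta_0,\Delta_{n,i},X_{t_i})$ by~(2), $R(\theta_0,\Delta_{n,i}^3,X_{t_i})$ by Cauchy--Schwarz and Lemma~\ref{lemma: Moment inequalities}(2), and $R(\theta_0,\Delta_{n,i}^{2+2\delta},X_{t_i})$ respectively, the first one being dominant.

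The only real subtlety lies in part~(1): the truncation by $\varphi_i$ forbids us from exploiting the sharper generator identity $\mathbb{E}[b(X_s)-b(X_{t_i})\mid\mathcal{F}_{t_i}]=O(s-t_i)$, which would produce $O(\Delta_{n,i}^2)$, because the correlation between $\varphi_i$ and $b(X_s)-b(X_{t_i})$ is not centered. Settling instead for the $L^2$-Lipschitz bound costs a $\Delta_{n,i}^{-1/2}$ factor, but the resulting $\Delta_{n,i}^{3/2}$ is still strictly better than the trivial $\Delta_{n,i}$ rate and, crucially, better than the $\Delta_{n,i}^{1+\delta}$ rate arising from the expansion of $m$; hence the exact threshold $(1+\delta)\wedge\tfrac32$ in the statement.
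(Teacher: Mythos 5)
Your proof is correct and follows essentially the same route as the paper's: part (1) via the identity linking $\zeta_i$ to $X_{t_{i+1}}-m_{\theta_0}(X_{t_i})$, the vanishing of the first conditional moment by the very definition of $m_{\theta_0}$, and the $\Delta_{n,i}^{3/2}$ Lipschitz/Cauchy--Schwarz bound on the drift increment; part (2) via the Brownian/jump/deterministic decomposition with the It\^o and compensated-Poisson isometries (the paper invokes Kunita's inequality, which reduces to the same second-moment identity); part (3) reduced to part (2). The only, harmless, imprecision is the claim that the deterministic correction $D_i$ is itself $R(\theta_0,\Delta_{n,i},X_{t_i})$: for $\alpha\in(1,2)$ Remark 2 only gives the exponent $1+\beta(1-\alpha)\in(\tfrac12,1)$, but since this exponent exceeds $\tfrac12$ its square is still $R(\theta_0,\Delta_{n,i},X_{t_i})$, which is all you actually use.
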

We now give an asymptotic normality result: \\
\begin{proposition}
Suppose that Assumptions 1 to 4 and $A_\beta$ hold, $\Delta_n \rightarrow 0$, $t_n \rightarrow \infty$. \\
Moreover suppose that $f$ is a continuous function $\Theta \times \mathbb{R} \rightarrow \mathbb{R}$ that satisfies conditions in Proposition \ref{prop: LT1}. Then for all $\theta$
$$\frac{1}{\sqrt[]{t_n }}\sum_{i=0}^{n-1} (X_{t_{i+1}} - m_{\theta_0}(X_{t_i})) f(X_{t_i}, \theta) \,  \varphi_{\Delta_{n,i}^\beta}(X_{t_{i+1}} - X_{t_i})1_{\left \{|X_{t_i}| \le \Delta_{n,i}^{-k} \right \} } \xrightarrow{\mathcal{L}} N(0, \int_\mathbb{R}f^2(x,\theta)\,a^2(x) \, \pi(dx)).$$
\label{prop: LT3} 
\end{proposition}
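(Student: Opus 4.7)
The plan is to apply a central limit theorem for triangular arrays of martingale differences to the variables
\begin{equation*}
\eta_{i,n} := \frac{1}{\sqrt{t_n}}\,(X_{t_{i+1}} - m_{\theta_0}(X_{t_i}))\, f(X_{t_i}, \theta)\, \varphi_{\Delta_{n,i}^\beta}(X_{t_{i+1}} - X_{t_i})\, 1_{\{|X_{t_i}|\le\Delta_{n,i}^{-k}\}}.
\end{equation*}
A first key observation is that $(\eta_{i,n})_i$ is already a martingale-difference array with respect to the discretely-sampled filtration $(\mathcal{F}_{t_i})_i$. Indeed, by the very definition of $m_{\theta_0,\Delta_{n,i}}$ in Definition \ref{D:definition_contrast}, one has $\mathbb{E}[(X_{t_{i+1}} - m_{\theta_0}(X_{t_i}))\varphi_{\Delta_{n,i}^\beta}(X_{t_{i+1}}-X_{t_i})\mid X_{t_i}]=0$, so that $\mathbb{E}[\eta_{i,n}\mid\mathcal{F}_{t_i}]=0$ and no extra centering is needed. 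I would then invoke the standard CLT for martingale difference arrays (e.g.\ Hall--Heyde Theorem 3.2): it suffices to verify the conditional variance convergence $\sum_i \mathbb{E}[\eta_{i,n}^2\mid\mathcal{F}_{t_i}]\xrightarrow{\mathbb{P}} \int f^2(x,\theta) a^2(x)\pi(dx)$, together with a Lyapunov-type condition $\sum_i \mathbb{E}[\eta_{i,n}^4\mid\mathcal{F}_{t_i}]\xrightarrow{\mathbb{P}} 0$.

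The main analytical step is a refinement of Lemma \ref{lemma: conditional expected value zeta}(3), namely
\begin{equation*}
\mathbb{E}\bigl[(X_{t_{i+1}} - m_{\theta_0}(X_{t_i}))^2 \varphi_{\Delta_{n,i}^\beta}^2 1_{\{|X_{t_i}|\le\Delta_{n,i}^{-k}\}} \,\big|\, \mathcal{F}_{t_i}\bigr] = a^2(X_{t_i})\Delta_{n,i} 1_{\{|X_{t_i}|\le\Delta_{n,i}^{-k}\}} + R(\theta_0, \Delta_{n,i}^{1+\epsilon}, X_{t_i}),
\end{equation*}
for some $\epsilon>0$. To establish this, I would use the identity \eqref{eq: link betwen m and zeta}, i.e.\ $X_{t_{i+1}} - m_{\theta_0}(X_{t_i}) = \zeta_i + \int_{t_i}^{t_{i+1}}(b(X_s,\theta_0)-b(X_{t_i},\theta_0))\,ds - R(\theta_0,\Delta_{n,i}^{1+\delta},X_{t_i})$, decompose $\zeta_i=M_i+D_i$ into its martingale part (Brownian integral plus compensated jump integral) and the deterministic compensator $D_i$, and use the Itô isometry on $\mathbb{E}[M_i^2\varphi^2\mid \mathcal{F}_{t_i}]$ to obtain $a^2(X_{t_i})\Delta_{n,i}$ at leading order. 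Once this refinement is in hand, Proposition \ref{prop: LT1} applied to the function $(x,\theta)\mapsto f^2(x,\theta) a^2(x)$ gives the desired convergence of the conditional variances to $\int f^2 a^2 d\pi$, while the $R$-terms sum to $o_p(1)$ because $\frac{1}{t_n}\sum_i\Delta_{n,i}^{1+\epsilon}(1+|X_{t_i}|^c)\to 0$ by ergodicity.

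For the Lyapunov condition I exploit the truncation geometrically: on the support of $\varphi_{\Delta_{n,i}^\beta}$, one has $|X_{t_{i+1}}-X_{t_i}|\le 2\Delta_{n,i}^\beta$, and by Remark 2 the quantity $|X_{t_i}-m_{\theta_0}(X_{t_i})|$ is $O(\Delta_{n,i})$ on $\{|X_{t_i}|\le\Delta_{n,i}^{-k}\}$. Hence $(X_{t_{i+1}}-m_{\theta_0}(X_{t_i}))^2\varphi^2 \le C\Delta_{n,i}^{2\beta}$ on the truncation set, and combining with Lemma \ref{lemma: conditional expected value zeta}(3) gives $\mathbb{E}[\eta_{i,n}^4\mid\mathcal{F}_{t_i}]\le C\Delta_{n,i}^{1+2\beta}(1+|X_{t_i}|^c)/t_n^2$; summing yields $O_p(\Delta_n^{2\beta}/t_n)\to 0$ thanks once more to ergodicity.

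The hard part will be the refined conditional variance estimate: Lemma \ref{lemma: conditional expected value zeta} delivers only the $O(\Delta_{n,i})$ upper bound, whereas what is now needed is the precise leading coefficient $a^2(X_{t_i})$. The delicate point is that the naive Itô-isometry computation on $\mathbb{E}[M_i^2\mid\mathcal{F}_{t_i}]$ contributes both a Brownian term $a^2(X_{t_i})\Delta_{n,i}$ and a jump term $\gamma^2(X_{t_i})\Delta_{n,i}\int z^2 F(z)\,dz$ of the same order, and one must show that the latter is annihilated by $\varphi_{\Delta_{n,i}^\beta}^2$. The standard route is to condition on the event $\{\text{no jump of size exceeding } \Delta_{n,i}^\beta/\gamma_{\min} \text{ occurs in }[t_i,t_{i+1}]\}$, whose complement has probability $O(\Delta_{n,i}^{1-\alpha\beta})$ and contributes only a negligible $R(\theta_0,\Delta_{n,i}^{1+\epsilon},X_{t_i})$ term under Assumption $A_\beta$; on this event $\varphi$ equals $1$ almost surely and only the continuous-diffusion contribution survives. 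The cross-terms with the drift-difference integral and the remainder $R(\theta_0,\Delta_{n,i}^{1+\delta},X_{t_i})$ are controlled by Cauchy--Schwarz together with the moment inequalities of Lemma \ref{lemma: Moment inequalities}.
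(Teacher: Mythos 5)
Your outline follows essentially the same route as the paper's proof: the summands form a martingale difference array by the very definition of $m_{\theta_0}$, the limiting variance is obtained by isolating the Brownian quadratic variation through the It\^o isometry and then invoking Proposition \ref{prop: LT1} for the function $f^2a^2$, and a Lyapunov-type condition closes the argument. The paper verifies a $(2+r)$-moment condition (citing Theorem A2 of Shimizu--Yoshida) instead of your fourth-moment condition, and packages the variance computation as the direct expansion $(X_{t_{i+1}}-m_{\theta_0}(X_{t_i}))^2=(\int_{t_i}^{t_{i+1}}a\,dW_s)^2+2B_{i,n}\int_{t_i}^{t_{i+1}}a\,dW_s+B_{i,n}^2$ rather than as a ``refinement of Lemma \ref{lemma: conditional expected value zeta}'', but these are cosmetic differences; your Lyapunov argument via the almost sure bound $|X_{t_{i+1}}-X_{t_i}|\le 2\Delta_{n,i}^\beta$ on the support of $\varphi$ is, if anything, slightly more economical than the paper's.

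The one point where your justification does not hold as stated is the treatment of the jump contribution to the conditional variance. On the event $N_n^i$ that no jump of $L$ on $(t_i,t_{i+1}]$ exceeds $4\Delta_{n,i}^\beta/\gamma_{\min}$, it is \emph{not} true that ``$\varphi$ equals $1$ almost surely and only the continuous-diffusion contribution survives'': the Brownian part may still push the increment outside the window where $\varphi=1$, and, more importantly, the compensated small-jump integral does not vanish on $N_n^i$. What makes the step work --- and what Lemma \ref{lemma: lemma per mostrare prop LT3} of the paper actually proves --- is quantitative: by the third point of Assumption 4, the small-jump martingale has conditional second moment bounded, up to a polynomial factor in $X_{t_i}$, by $\Delta_{n,i}\int_{|z|\le 4\Delta_{n,i}^\beta/\gamma_{\min}}z^2F(z)\,dz=O(\Delta_{n,i}^{1+\beta(2-\alpha)})=o(\Delta_{n,i})$, while on $(N_n^i)^c$ the almost sure bound $|X_{t_{i+1}}-X_{t_i}|\le 2\Delta_{n,i}^\beta$ on the support of $\varphi$ together with $\mathbb{P}((N_n^i)^c\mid\mathcal{F}_{t_i})=O(\Delta_{n,i}^{1-\alpha\beta})$ again yields a contribution of order $\Delta_{n,i}^{1+\beta(2-\alpha)}$. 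You must insert this computation (together with the control of the compensator correction, whose square is $o(\Delta_{n,i})$ by Remark 3) to legitimize your claim that only $a^2(X_{t_i})\Delta_{n,i}$ survives at leading order; with that in place, the remainder of your proof is sound.
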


\section{Proof of main results}\label{S:Proof_main}
We state a proposition that will be used repeatedly in the proof of Theorems \ref{th: dl den alpha <1},\ref{th: dL mtheta alpha <1},\ref{th: dl den alpha >1} and \ref{th: dL mtheta alpha  > 1}. This proposition is an estimation of some expectations related to the event that increments of the process $X$ lies where $\varphi_{\Delta_{n,i}}$, that is the smooth version of the indicator function, becomes singular for $\Delta_n \rightarrow 0$. The proof is postponed to Section \ref{S:Proof_prop moche}. \\
\begin{proposition}
Suppose that Assumptions 1 to 4 and $A_\beta$ hold.
Moreover suppose that $h:\mathbb{R} \times \Theta \longrightarrow \mathbb{R}$ is a function for which $\exists c > 0 : \sup_{\theta \in \Theta}|h(x,\theta)| \le c(1 + |x|)^c$. Then $\forall k \ge 1$ $\forall \epsilon > 0$, we have
$$ \sup_{u \in [t_i, t_{i+1}]}\mathbb{E}[|h(X_{u}^\theta, \theta)||\varphi_{\Delta_{n,i}^\beta}^{(k)}(X_{u}^\theta - X^\theta_{t_i})||X^\theta_{t_i} = x] = R(\theta,\Delta_{n,i}^{1 - \alpha\beta - \epsilon}, x).$$
with $\alpha$ and $\beta$ given in the third point of Assumption 4 and Definition 1. We have used $\varphi_{\Delta_{n,i}^\beta}^{(k)}(y)$ in order to denote $\varphi^{(k)}(\frac{y}{\Delta_{n,i}^\beta})\Delta_{n,i}^{-\beta}$.
\label{prop: truc moche h} 
\end{proposition}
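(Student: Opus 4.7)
The plan is to exploit the support and vanishing properties of $\varphi^{(k)}$. Since $\varphi \equiv 1$ on $[-1,1]$ and $\varphi \equiv 0$ outside $[-2,2]$, for $k \ge 1$ the derivative $\varphi^{(k)}$ is bounded and supported in $[-2,-1]\cup[1,2]$, so
$$|\varphi^{(k)}_{\Delta_{n,i}^\beta}(y)| \le c\,\Delta_{n,i}^{-\beta}\,\mathbf{1}_{\{\Delta_{n,i}^\beta \le |y| \le 2\Delta_{n,i}^\beta\}}.$$
On this support, $|X_u^\theta - x| \le 2\Delta_{n,i}^\beta$ is bounded (for $\Delta_n$ small), so by the polynomial growth of $h$ one has $|h(X_u^\theta,\theta)| \le c(1+|x|)^c$ conditionally on $X_{t_i}^\theta = x$. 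The task reduces to estimating $\Delta_{n,i}^{-\beta}\, P(|X_u^\theta - X_{t_i}^\theta| \ge \Delta_{n,i}^\beta \mid X_{t_i}^\theta = x)$, up to the polynomial factor in $|x|$.

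To bound this probability, I would decompose the increment as $X_u^\theta - X_{t_i}^\theta = C_{u,i} + J_{u,i}$, where $C_{u,i}$ collects the drift and Brownian contribution and $J_{u,i}$ the compensated jump part. For $C_{u,i}$, Burkholder-Davis-Gundy combined with Lemma \ref{lemma: Moment inequalities} yields $\mathbb{E}[|C_{u,i}|^p \mid X_{t_i}=x] \le c_p \Delta_{n,i}^{p/2}(1+|x|^p)$ for any $p\ge 2$, so Markov gives $P(|C_{u,i}|\ge\Delta_{n,i}^\beta/2 \mid X_{t_i}=x) \le c_p(1+|x|^p)\Delta_{n,i}^{p(1/2-\beta)}$, which is super-polynomially small as $p\to\infty$ thanks to $\beta<1/2$. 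For $J_{u,i}$, I would split at threshold $c\Delta_{n,i}^\beta$: by Assumption 4.3 and the bound $\inf_x |\gamma(x)|=\gamma_{\min}>0$, the probability of at least one jump of size $\ge c\Delta_{n,i}^\beta$ in $[t_i,u]$ is at most $c\Delta_{n,i}\int_{|z|\ge c\Delta_{n,i}^\beta}|z|^{-(1+\alpha)}dz \le c\Delta_{n,i}^{1-\alpha\beta}$, while the compensated small-jump martingale is controlled via a Kunita-type $L^p$ inequality, yielding the same $\Delta_{n,i}^{1-\alpha\beta}$ order under Assumption $A_\beta$ (choosing $p$ large enough that the variance term $\Delta_{n,i}\int_{|z|\le c\Delta_{n,i}^\beta}|z|^p F(z) dz$ beats the threshold $\Delta_{n,i}^{p\beta}$). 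Combining, $P(|X_u^\theta - X_{t_i}^\theta|\ge\Delta_{n,i}^\beta \mid X_{t_i}^\theta=x) \le c(1+|x|^c)\Delta_{n,i}^{1-\alpha\beta}$ modulo super-polynomially small contributions.

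The main obstacle is absorbing the $\Delta_{n,i}^{-\beta}$ prefactor that comes from the rescaling of $\varphi^{(k)}$: a direct combination of the above estimates produces only $c(1+|x|)^c\Delta_{n,i}^{1-\alpha\beta-\beta}$, which implies the claim for $\epsilon \ge \beta$ but not for smaller $\epsilon$. To recover the stated rate $\Delta_{n,i}^{1-\alpha\beta-\epsilon}$ uniformly in $\epsilon>0$, one must refine the argument, for instance by exploiting finer density estimates of $X_u^\theta - X_{t_i}^\theta$ at the scale $\Delta_{n,i}^\beta$ (using that for $\beta<1/2$ the density $p(y)\lesssim \Delta_{n,i}F(y/\gamma)/\gamma$ remains valid away from the diagonal), or by a Malliavin-type integration by parts transferring the $\Delta_{n,i}^{-\beta}$ derivative onto weights scaling more favourably in $\Delta_{n,i}$. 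Balancing these refinements against the polynomial moments of $h(X_u^\theta,\theta)$, absorbing the residual Hölder-type losses into $\epsilon$, is the delicate technical step of the proof.
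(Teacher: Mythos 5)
Your probabilistic estimates are correct and coincide with the core of the paper's argument, but the route is organised differently and the ``obstacle'' you end on is not where the real content lies. The paper deduces Proposition \ref{prop: truc moche h} from the more general Proposition \ref{prop: truc moche z}: there one introduces the event $E_h$ of \eqref{eq: definition E}, that the \emph{jump} part of the increment lies in $[\tfrac{1}{2}h^\beta, 4h^\beta]$, observes that on the support of $\varphi^{(k)}$ (where $|X_h-x|\in[h^\beta,2h^\beta]$) the complement $E_h^c$ forces the continuous part to exceed $\tfrac{1}{2}h^\beta$ --- an event of super-polynomially small probability --- and on $E_h$ applies H\"older, $\mathbb{E}[|Z||\varphi^{(k)}|1_{E_h}]\le (\mathbb{E}|Z|^p)^{1/p}\,c\,\mathbb{P}(E_h)^{1/q}$, together with $\mathbb{P}(E_h)\le R(\theta,h^{1-\alpha\beta},x)$ (Lemma \ref{lemma: P(E)}, proved exactly by your large-jump/small-jump splitting with the Kunita bound). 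The loss $\epsilon$ in the exponent is precisely the H\"older exponent $1/q$. You instead localise $h(X_u^\theta,\theta)$ directly on the support of $\varphi^{(k)}$, where $|X_u^\theta|\le |x|+2$, and bound the full tail $\mathbb{P}(|X_u^\theta-X_{t_i}^\theta|\ge\Delta_{n,i}^\beta\mid X_{t_i}^\theta=x)\le c(1+|x|^c)\Delta_{n,i}^{1-\alpha\beta}$; this avoids H\"older altogether and in fact yields the estimate with $\epsilon=0$, which is stronger than both the statement and what Proposition \ref{prop: truc moche z} delivers. The paper needs the H\"older step only because Proposition \ref{prop: truc moche z} allows $Z$ to be an arbitrary $\mathcal{F}_h$-measurable family, not a function of $X_u^\theta$ that can be localised on the support of the truncation.

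The step you flag as delicate --- absorbing the prefactor $\Delta_{n,i}^{-\beta}$ --- is a red herring created by the notation. In the paper's own proof of Proposition \ref{prop: truc moche z} the factor $\varphi^{(k)}_{h^\beta}$ is majorised by a constant inside the H\"older factor, i.e.\ no $h^{-\beta}$ is carried; and in every application of Proposition \ref{prop: truc moche h} (the proofs of Theorems 1--4) the rescaling powers $\Delta_{n,i}^{-\beta j}$ are written out separately in front of the conditional expectations. So the quantity the proposition actually controls is $\mathbb{E}[|h(X_u^\theta,\theta)|\,|\varphi^{(k)}((X_u^\theta-X_{t_i}^\theta)/\Delta_{n,i}^\beta)|\,\big|\,X_{t_i}^\theta=x]$, for which your bound $c(1+|x|^c)\Delta_{n,i}^{1-\alpha\beta}$ is already more than sufficient; the refinements you sketch (density estimates at scale $\Delta_{n,i}^\beta$, Malliavin integration by parts) are neither needed nor used. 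Two small points to tidy: the large-jump probability requires an upper bound on $|\gamma|$ (a jump of $X$ of size $\ge c\Delta_{n,i}^\beta$ forces $|z|\ge c\Delta_{n,i}^\beta/\left\|\gamma\right\|_\infty$), whereas the lower bound $\gamma_{min}$ of Assumption 4.1 serves the converse implication; and the uniformity over $u\in[t_i,t_{i+1}]$ should be stated explicitly, following from the Markov property and Lemma \ref{lemma: Moment inequalities} as in the paper.
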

Proposition \ref{prop: truc moche h} is a consequence of the following more general proposition: \\
\begin{proposition}
Suppose that Assumption 1 to 4 and $A_\beta$ hold. For $c >0$, we define $$\mathcal{Z}_{h,c, p}:= \left \{ Z = (Z_\theta)_{\theta \in \Theta} \mbox{family of random variables } \mathcal{F}_h \mbox{ measurable such that } \sup_{\theta \in \Theta} \mathbb{E}[|Z_\theta|^p|X^\theta_0 = x] \le c(1 + |x|^c)  \right \}.$$ Then $\forall k \ge 1$ we have, $\forall \epsilon \ge \frac{1}{p}$,
$$\sup_{Z \in \mathcal{Z}_{h,c, p}}\mathbb{E}[|Z_\theta||\varphi_{h^\beta}^{(k)}(X_h^\theta - X_0^\theta)||X_0^\theta = x] \le R(\theta, h^{(1 - \alpha \beta)(1 - \epsilon)}, x) ,$$
where $R(\theta, h^\delta, x)$ denotes any function such that $\exists c > 0$: $|R(\theta, h^\delta, x)| \le c(1 + |x|^c) h^\delta$ uniformly in $\theta $, with $c$ independent of $h$.
\label{prop: truc moche z} 
\end{proposition}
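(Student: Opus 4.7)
The approach is to apply H\"older's inequality to separate the family $Z_\theta$ from the factor $\varphi_{h^\beta}^{(k)}(X_h^\theta-X_0^\theta)$, exploiting that for $k\ge 1$ the derivative $\varphi^{(k)}$ vanishes on $[-1,1]$ (where $\varphi\equiv 1$) and outside $[-2,2]$. Hence $\varphi_{h^\beta}^{(k)}(y)=\varphi^{(k)}(y/h^\beta)h^{-\beta}$ is supported in $\{h^\beta\le|y|\le 2h^\beta\}$ and bounded by $C h^{-\beta}$.

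Concretely, I would first apply H\"older with conjugate exponents $p$ and $q=p/(p-1)$:
\begin{equation*}
\mathbb{E}[|Z_\theta|\,|\varphi_{h^\beta}^{(k)}(X_h^\theta-X_0^\theta)|\mid X_0^\theta=x]\le \mathbb{E}[|Z_\theta|^p\mid X_0^\theta=x]^{1/p}\,\mathbb{E}[|\varphi_{h^\beta}^{(k)}(X_h^\theta-X_0^\theta)|^q\mid X_0^\theta=x]^{1/q}.
\end{equation*}
By the defining bound for $\mathcal{Z}_{h,c,p}$, the first factor is at most $(c(1+|x|^c))^{1/p}\le C(1+|x|^{c'})$ uniformly in $\theta$ and in $Z\in\mathcal{Z}_{h,c,p}$. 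Using the support and sup-norm of $\varphi_{h^\beta}^{(k)}$, the second factor is bounded above by
\begin{equation*}
\bigl(C h^{-\beta q}\,\mathbb{P}(|X_h^\theta-X_0^\theta|\ge h^\beta\mid X_0^\theta=x)\bigr)^{1/q}.
\end{equation*}

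The key ingredient is then a tail estimate for $\mathbb{P}(|X_h^\theta-X_0^\theta|\ge h^\beta\mid X_0^\theta=x)$, which I would obtain by an It\^o-L\'evy decomposition with jumps thresholded at $h^\beta$. The probability that at least one jump of size $\ge c\,h^\beta$ occurs in $[0,h]$ is, via Assumption~4.3, at most $h\,F(|z|\ge c\,h^\beta)\le C h^{1-\alpha\beta}$. On the complementary event the increment is driven only by the drift, the Brownian part and the compensated small-jump martingale; since $\beta<1/2$ by $A_\beta$, the Brownian contribution has sub-Gaussian tails at scale $h^\beta$, and a BDG/Kunita inequality applied with sufficiently high moment order (allowed by Assumption~3 and the moment bounds of Lemma~\ref{lemma: Moment inequalities}) shows that the compensated small-jump martingale contributes at most $C(1+|x|^c)h^{1-\alpha\beta}$ as well. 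Altogether,
\begin{equation*}
\mathbb{P}(|X_h^\theta-X_0^\theta|\ge h^\beta\mid X_0^\theta=x)\le C(1+|x|^c)\,h^{1-\alpha\beta}.
\end{equation*}

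Inserting this into the H\"older estimate and taking $\epsilon=1/p$ (so $1/q=1-\epsilon$), one gets the announced rate $R(\theta,h^{(1-\alpha\beta)(1-\epsilon)},x)$, the freedom $\epsilon\ge 1/p$ (rather than equality) allowing to absorb the $h^{-\beta}$ prefactor arising from the normalization of $\varphi_{h^\beta}^{(k)}$. The main obstacle is the tail probability estimate, in particular the regime $\alpha\in[1,2)$ where the compensated small-jump martingale has to be controlled by a moment of arbitrarily high order to match the rate $h^{1-\alpha\beta}$ produced by the large jumps; this is precisely where the tighter constraint $\beta<1/3$ imposed by $A_\beta$ plays its role, ensuring that $1-2\beta$ (from the Brownian/small-jump part) does not become the bottleneck in the overall estimate.
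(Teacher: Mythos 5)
Your overall strategy --- H\"older with conjugate exponents $p,q$, the observation that for $k\ge 1$ the function $\varphi^{(k)}(\cdot/h^\beta)$ is supported in $\{h^\beta\le |y|\le 2h^\beta\}$, and the tail estimate $\mathbb{P}(|X_h^\theta-x|\ge h^\beta\mid X_0^\theta=x)\le C(1+|x|^c)h^{1-\alpha\beta}$ obtained by separating the jumps of size of order $h^\beta$ (whose intensity is $O(h^{-\alpha\beta})$ by Assumption 4.3) from the drift, Brownian and compensated small-jump contributions (killed by arbitrarily high moments) --- is essentially the paper's. The paper packages the tail bound differently: it introduces the event $E_h=\{\tilde X^J_h\in[\tfrac12 h^\beta,4h^\beta]\}$ on the jump part alone, proves $\mathbb{P}(E_h)\le R(\theta,h^{1-\alpha\beta},x)$ in Lemma \ref{lemma: P(E)} by exactly your large-jump/small-jump dichotomy, and disposes of $E_h^c$ by noting that there the continuous part alone must exceed $\tfrac12 h^\beta$, an event of arbitrarily high polynomial order in $h$. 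Your direct one-sided bound on the full increment is an equivalent and slightly more streamlined route to the same exponent $1-\alpha\beta$.

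Two points need attention. First, your claim that the slack $\epsilon\ge 1/p$ ``absorbs the $h^{-\beta}$ prefactor'' is not correct: if the normalization $\varphi^{(k)}_{h^\beta}(y)=\varphi^{(k)}(y/h^\beta)h^{-\beta}$ is kept, your H\"older bound reads $Ch^{-\beta}h^{(1-\alpha\beta)/q}$, and $h^{-\beta}h^{(1-\alpha\beta)(1-1/p)}\le h^{(1-\alpha\beta)(1-\epsilon)}$ requires $(1-\alpha\beta)(\epsilon-1/p)\ge\beta$, which is not implied by $\epsilon\ge 1/p$ and fails for $\epsilon$ close to $1/p$. In the paper's own proof of Proposition \ref{prop: truc moche z} this factor is simply absent: there $\varphi^{(k)}_{h^\beta}$ is treated as bounded by a constant, the powers $h^{-\beta k}$ being carried separately at each place where Propositions \ref{prop: truc moche h}--\ref{prop: truc moche z} are invoked. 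You should therefore either drop the $h^{-\beta q}$ from your second H\"older factor (adopting the paper's convention) or weaken your conclusion by the extra factor $h^{-\beta}$; as written, the exponent you announce does not follow from the chain of inequalities you give. Second, a minor point: the restriction $\beta<1/3$ for $\alpha\in[1,2)$ is not what rescues the compensated small-jump martingale here. Kunita's inequality with truncation at $h^\beta$ gives $h^{-\beta r}\mathbb{E}[|M_h|^r]\le C\bigl(h^{\frac r2(1-\alpha\beta)}+h^{1-\alpha\beta}\bigr)$, so $\alpha\beta<1$ (guaranteed already by $\beta<1/2$ and $\alpha<2$) suffices for this proposition; the constraint $\beta<1/3$ in $A_\beta$ is needed for the expansion of $m_{\theta,h}$ when $\alpha\ge 1$, not for this tail estimate.
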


\subsection{Development of $m_{\theta, \Delta_{n,i}}(x)$}
In order to study the asymptotic behavior of the contrast function we need some explicit approximation of $m_{\theta, \Delta_{n,i}}$. We study the asymptotic expansion of $m_{\theta, \Delta_{n,i}}(x)$ as $\Delta_{n,i} \rightarrow 0$. The main tools is the iteration of the Dynkin's formula that provides us the following expansion for every function $f$: $\mathbb{R} \rightarrow \mathbb{R}$ such that $f$ is in $C^{2(k+1)}$:
\begin{equation}
\mathbb{E}[f(X^\theta_{t_{i+1}})| X^\theta_{t_i} = x] = \sum_{j=0}^k \frac{\Delta_{n,i}^j}{j!}A^jf(x) + \int_{t_i}^{t_{i+1}} \int_{t_i}^{u_1} ... \int_{t_i}^{u_k} \mathbb{E}[A^{k+1}f(X^\theta_{u_{k+1}})| X^\theta_{t_i} = x]\, du_{k+1} ... du_2\, du_1 
\label{eq: Dynkin formula beginning} 
\end{equation}
where $A$ denotes the generator of the diffusion. $A$ is the sum of the continuous and discrete part: $A: = A_c + A_d$, with
$$A_cf(x)= \frac{1}{2}a^2(x)f''(x) + b(x, \theta)f'(x)$$
and
$$A_df(x)= \int_{\mathbb{R}} (f(x + \gamma(x)z) - f(x) - z\gamma(x)f'(x))\, F(z) dz. $$
We set $A^0 = Id$.

\subsubsection{Proof of Theorem \ref{th: dl den alpha <1}:}
\begin{proof}
We have to show (\ref{eq: dl phi}). Using the formula \eqref{eq: Dynkin formula beginning} in the case $k =1$, we get
$$\mathbb{E}[\varphi_{\Delta_{n,i}^\beta}(X_{t_{i+1}}^\theta - X_{t_i}^\theta)|X_{t_i}^\theta = x] =$$
\begin{equation}
= A^0\varphi_{\Delta_{n,i}^\beta}(0) + (t_{i+1} - t_i)A\varphi_{\Delta_{n,i}^\beta}(0) + \int_{t_i}^{t_{i +1}} \int_{t_i}^{u_1} \mathbb{E}[A^2\varphi_{\Delta_{n,i}^\beta}(X^\theta_{u_2})| X^\theta_{t_i} = x]du_2 du_1.
\label{eq: Dynkin on varphi} 
\end{equation}
We have defined $\varphi$ as a smooth version of the indicator function, it means that in a neighborhood of $0$ its value is $1$ and so that $\varphi^{(k)}(0) =0$ for each $k \ge 1$. \\
We denote $f_{i,n}(y):= \varphi_{\Delta_{n,i}^\beta}(y - x) = \varphi(\frac{y-x}{\Delta_{n,i}^\beta})$, with $\beta \in (0, \frac{1}{2})$. 
By the building, $f_{i,n}(x)=1$ and $f_{i,n}^{(k)}(x) =0$ for each $k \ge 1$, so we get $A_cf_{i,n}(x)= 0$ and $A_df_{i,n}(x)=\int_{\mathbb{R} \backslash \left \{0 \right \}} [f_{i,n}(x + \gamma(x)z) - 1]\, F(z) dz $. \\
In the sequel the constant $c > 0$ may change from line to line.\\
From the definition of $f_{i,n}$ and the fact that $\varphi = 1$ on $[-1, 1]$ we have that $f_{i,n}(y)= 1$ for $|y-x| \le \Delta_{n,i}^\beta$. Thus
$$|A_df_{i,n}(x)| \le \, 2 {\left \| \varphi_{\Delta_{n,i}^\beta}\right \|}_{\infty} \int_{\left \{ z : |z \gamma(x)| \ge \Delta_{n,i}^\beta \right \}} F(z) dz \le $$
$$\le  2 {\left \| \varphi_{\Delta_{n,i}^\beta}\right \|}_{\infty} \int_{\left \{ z : |z| \ge \frac{\Delta_{n,i}^\beta}{|\gamma(x)|} \right \}} |z|^{-1 - \alpha} dz \le c {\left \| \varphi_{\Delta_{n,i}^\beta}\right \|}_{\infty} |\gamma(x)|^{\alpha} \Delta_{n,i}^{-\beta\alpha}= R(\theta, \Delta_{n,i}^{- \alpha \beta}, x),  $$
where the second inequality follows from point 3 of Assumption 4.
Substituting in (\ref{eq: Dynkin on varphi}) we get
\begin{equation}
\mathbb{E}[\varphi_{\Delta_{n,i}^\beta}(X_{t_{i+1}}^\theta - X_{t_i}^\theta)|X_{t_i}^\theta = x] = 1 + \Delta_{n,i}R(\theta, \Delta_{n,i}^{-\alpha\beta}, x) + \int_{t_i}^{t_{i +1}} \int_{t_i}^{u_1} \mathbb{E}[A^2\varphi_{\Delta_{n,i}^\beta}(X^\theta_{u_2})| X^\theta_{t_i} = x]du_2 du_1 .
\label{eq: dynkin varphi avec A} 
\end{equation}
In order to prove (\ref{eq: dl phi}), we want to show that the last term is negligible. \\
We consider the generator's decomposition in discrete and continuous part $A = A_c + A_d$ that yields: $A^2f_{i,n}(y)=(A_c^2f_{i,n})(y) + A_c(A_df_{i,n})(y) + A_d(A_cf_{i,n})(y) + (A_d^2f_{i,n})(y)$. \\
We observe that we can write $(A_c^2f_{i,n})(y)$  as
 $$\sum_{j=1}^4 \Delta_{n,i}^{-\beta j} h_j(y, \theta) \varphi ^{(j)}_{\Delta_{n,i}^\beta} (y - x),$$
where $\varphi ^{(j)}_{\Delta_{n,i}^\beta} (y - x) = \varphi^{(j)}(\frac{(y-x)}{\Delta_{n,i}^\beta})$.
For each $j \in \left \{ 1, 2, 3, 4 \right \}$, $h_j$ is a function of $a$, $b$ and their derivatives up to second order: $h_1= \frac{1}{2} a^2b^{''} + bb'$, $h_2 = \frac{1}{2} a^2(a')^2 + \frac{1}{2} a^3a'' + a^2b' + aa'b + b^2 $, $h_3= a^3a' + a^2b$ and $h_4 = \frac{1}{4} a^4$.  \\
Using the Proposition \ref{prop: truc moche h} we get that $\sup_{u_2 \in [t_i, t_{i+1}]} |\mathbb{E}[(A_c^2f_{i,n})(X^\theta_{u_2})|X^\theta_{t_i} = x]|$ is upper bounded by 
$$\sup_{u_2 \in [t_i, t_{i+1}]} | \sum_{j=1}^4 \Delta_{n,i}^{-\beta j} \, \mathbb{E}[h_j(X^\theta_{u_2}, \theta) \varphi ^{(j)}_{\Delta_{n,i}^\beta} (X^\theta_{u_2} - X^\theta_{t_i})|X^\theta_{t_i} = x] | =$$
$$= | \sum_{j=1}^4 \Delta_{n,i}^{-\beta j} R(\theta,\Delta_{n,i}^{1 - \alpha\beta - \epsilon}, x) | = R(\theta,\Delta_{n,i}^{1 - \alpha\beta - \epsilon - 4\beta}, x).$$
Let us now consider $A_c(A_df_{i,n})(y)$. Substituting the definition of $A_df_{i,n}$ we get
\begin{equation}
A_c(A_df_{i,n})(y) = A_c(\int_\mathbb{R} g_n(\cdot, z) F(z)dz) (y),
\label{eq: Ac(Adf)} 
\end{equation}
where
\begin{equation}
g_n(y, z): = \varphi_{\Delta_{n,i}^\beta}(y - x + z \gamma(y)) - \varphi_{\Delta_{n,i}^\beta} (y -x) - \Delta_{n,i}^{-\beta}\varphi'_{\Delta_{n,i}^\beta}(y -x)\gamma(y) z 
\label{eq: definition g_n} 
\end{equation}
and where the notation used means that we are applying the differential operator $A_c$ with respect to the variable represented with a dot.
In order to estimate it we observe that
\begin{equation}
|g_n(y, z)| \le \Delta_{n,i}^{-\beta} \left \| \varphi' \right \|_\infty |z||\gamma(y)|,
\label{eq: estimation gn} 
\end{equation}
\begin{equation}
|\frac{\partial}{\partial y} g_n(y, z)| \le \Delta_{n,i}^{-2\beta} P(y)|z| \qquad \mbox{and}
\label{eq: estimation derivé gn} 
\end{equation}
\begin{equation}
|\frac{\partial ^2}{\partial y^2} g_n(y, z)| \le \Delta_{n,i}^{-3\beta} P(y)(|z|+ |z|^2);
\label{eq: estimation derivé deuxieme gn} 
\end{equation}
where $P(y)$ is a polynomial function in $y$, that may change from line to line. \\
Since the functions $a^2$ and $b$ have polynomial growth, we obtain
\begin{equation}
|A_cg_n(\cdot,z)(y)| \le \Delta_{n,i}^{-3\beta} P(y)(|z|+ |z|^2).
\label{eq: estimation Acgn} 
\end{equation}
Using the dominated convergence theorem we get
$$A_c(\int_\mathbb{R} g_n(\cdot,z) F(z)dz)_{(y)} = \int_\mathbb{R} (A_cg_n)(\cdot,z)_{(y)} F(z)dz,$$
Therefore, using (\ref{eq: estimation Acgn}),
$$|A_c(\int_\mathbb{R} g_n(\cdot,z) F(z)dz)_{(y)}| \le \Delta_{n,i}^{-3\beta}P(y)\int_\mathbb{R}(|z|+ |z|^2)F(z)dz,$$
that is upper bounded by $c \Delta_{n,i}^{-3\beta}P(y) $ since $\alpha$ is less than $1$. It turns
$$\sup_{u_2 \in [t_i, t_{i+1}]}|\mathbb{E}[(A_c(A_df_{i,n}))(X^\theta_{u_2})|X^\theta_{t_i} = x]| \le \sup_{u_2 \in [t_i, t_{i+1}]} |\mathbb{E}[c \Delta_{n,i}^{-3\beta}P(X^\theta_{u_2})|X^\theta_{t_i} = x]| =R(\theta,\Delta_{n,i}^{-3\beta}, x) $$
where, in the last equality, we have used the third point of Lemma \ref{lemma: Moment inequalities}. \\
We reason in the same way on $A_d(A_cf_{i,n})(y)$, which is equal to
\begin{equation}
\int_\mathbb{R} [A_cf_{i,n}(y + z \gamma(y)) - A_cf_{i,n}(y) - z \gamma(y)(A_cf_{i,n})'(y)]F(z)dz.
\label{eq: AdAcf} 
\end{equation}
It is, in module, upper bounded by
\begin{equation}
c \int_0^1 \int_\mathbb{R}[|(A_cf_{i,n})'(y + z \gamma(y)s)|+ |(A_cf_{i,n})'(y)|]|z||\gamma(y)|F(z)ds\,dz .
\label{eq: adacf up bound} 
\end{equation}
We observe that, $\forall y'$, $(A_cf_{i,n})'(y')= (b'f_{i,n}'+ bf_{i,n}'' + aa'f_{i,n}'' + \frac{1}{2}a^2f_{i,n}''')(y')$. \\
By the fact that $|\frac{\partial ^j}{\partial y^j} \varphi_{\Delta_{n,i}^\beta}(y)| \le c\Delta_{n,i}^{-\beta j}$ for $j = 1, 2, 3$ and recalling $f_{i,n}(y)= \varphi_{\Delta_{n,i}^\beta}(y - x) $, we get that
\begin{equation}
|(A_cf_{i,n})'(y')| \le c\,P(y')\Delta_{n,i}^{-3\beta},
\label{eq: estimation (Acf)'} 
\end{equation}
where we have used that $b$ and $a^2$ have polynomial growth. We obtain that \eqref{eq: adacf up bound} is upper bounded by
$$\Delta_{n,i}^{-3\beta} \int_0^1 \int_\mathbb{R} (P(y + z\gamma(y)s)+ P(y)) |z||\gamma(y)|F(z)ds\,dz \le \Delta_{n,i}^{-3\beta}\int_\mathbb{R} P(y)P(z)|z|F(z) dz \le c\Delta_{n,i}^{-3\beta} P(y), $$
where we have used the first point of Assumptions 3 and the third of Assumption 4, with $\alpha \in (0,1)$, in order to get $\int_\mathbb{R} P(z)|z|F(z) dz \le \infty$. \\
Considering the controls \eqref{eq: adacf up bound} and \eqref{eq: estimation (Acf)'} on \eqref{eq: AdAcf} it yields, using again the third point of Lemma \ref{lemma: Moment inequalities}, 
$$\sup_{u_2 \in [t_i, t_{i+1}]} |\mathbb{E}[(A_d(A_cf_{i,n}))(X^\theta_{u_2})|X^\theta_{t_i} = x] |= R(\theta,\Delta_{n,i}^{-3\beta}, x).$$
To conclude, we consider $A_d(A_df_{i,n})(y)$:
\begin{equation}
\int_\mathbb{R} [A_df_{i,n}(y + z \gamma(y)) - A_df_{i,n}(y) - z \gamma(y)(A_df_{i,n})'(y) ] F(z)dz.
\label{eq: AdAdf} 
\end{equation}
Again, (\ref{eq: AdAdf}) is, in module, upper bounded by
\begin{equation}
c \int_0^1 \int_\mathbb{R}[|(A_df_{i,n})'(y - x + z \gamma(y)s)|+ |(A_df_{i,n})'(y)|]|z||\gamma(y)|F(z)ds\,dz
\label{eq: estimation AdAdf} 
\end{equation}
But
\begin{equation}
A_df_{i,n}(y')= \int_\mathbb{R} g_n(y',z) F(z)dz,
\label{eq: Ad come int gin} 
\end{equation}
with $g_n(y', z)$ given in \eqref{eq: definition g_n}
Using control equation \eqref{eq: estimation derivé gn} and dominated convergence theorem, we get that its derivative is upper bounded by $c\Delta^{-2\beta}_{n,i}P(y')$. \\
Using also (\ref{eq: AdAdf}) and (\ref{eq: estimation AdAdf}),
$$|A_d^2f_{i,n}(y)| \le \Delta_{n,i}^{-2\beta} P(y) \int_\mathbb{R}|z| F(z) dz $$ and it turns, using third point of Lemma \ref{lemma: Moment inequalities},
$$\sup_{u_2 \in [t_i, t_{i+1}]} | \mathbb{E}[(A_d^2f_{i,n})(X^\theta_{u_2})|X^\theta_{t_i} = x] | = R(\theta,\Delta_n^{-2\beta}, x).$$
By the decomposition of the generator in $A_c$ and $A_d$ we get 
$$ \sup_{u_2 \in [t_i, t_{i+1}]} | \mathbb{E}[A^2f_{i,n}(X^\theta_{u_2})| X^\theta_{t_i} = x] | = R(\theta,\Delta_{n,i}^{1 - \alpha\beta - 4\beta - \epsilon}, x) + R(\theta, \Delta_{n,i}^{-3\beta}, x) + R(\theta, \Delta_{n,i}^{-2\beta}, x),$$
with $\alpha \in (0,1)$ and $\beta \in (0, \frac{1}{2})$, so it is 
$ R(\theta, \Delta_{n,i}^{-3\beta}, x) $, since the other $R$ functions are always negligible compared to it. \\
Using (\ref{eq: dynkin varphi avec A}) we get
$$\mathbb{E}[\varphi_{\Delta_{n,i}^\beta}(X_{t_{i+1}}^\theta - X_{t_i}^\theta)|X_{t_i}^\theta = x] = 1 + \Delta_{n,i}R(\theta, \Delta_{n,i}^{-\alpha\beta}, x) + \frac{\Delta_{n,i}^2}{2}R(\theta, \Delta_{n,i}^{-3\beta}, x). $$
We deduce, using the definition of $\Delta_{n,i}$ and \eqref{propriety power R}, that it is
$$1 + R(\theta, \Delta_{n,i}^{1-\alpha\beta}, x) + R(\theta, \Delta_{n,i}^{2-3\beta}, x)= 1 + R(\theta, \Delta_{n,i}^{(1-\alpha\beta) \land (2 - 3 \beta)}, x), $$
as we wanted.
\end{proof}

\subsection{Proof of Theorem \ref{th: dl den alpha >1}}

\begin{proof}
Let $\alpha$ now be in $[1,2)$. In the sequel we skip the study of the case $\alpha =1$ for simplicity, in order to avoid the appearance of logarithmic functions. However, such a specific case is embedded in the case $\alpha > 1$ by taking $\alpha = 1 + \epsilon$ with a choice of $\epsilon >0$ arbitrarily small.\\
Using again Dynkin formula, we have that \eqref{eq: dynkin varphi avec A} is still true. Considering the generator's decomposition, we act like in the case where $\alpha$ is less than $1$ to get that
\begin{equation}
\sup_{u_2 \in [t_i, t_{i+1}]} |\mathbb{E}[(A_c^2f_{i,n})(X^\theta_{u_2})|X^\theta_{t_i} = x]| = R(\theta, \Delta_{n,i}^{1 - \alpha \beta - \epsilon - 4 \beta}, x).
\label{eq: AcAcf alpha >1} 
\end{equation}
Concerning $A_c(A_df_{i,n})(y)$, we use \eqref{eq: Ac(Adf)} with $g_n$ defined in \eqref{eq: definition g_n}. Using Taylor development to the second order we get
\begin{equation}
|g_n(y,z)| \le \left \| \varphi_{\Delta_{n,i}^\beta}'' \right \|_\infty |\Delta_{n,i}|^{- 2 \beta} \frac{|z|^2 \gamma(y)^2}{2}. 
\label{eq: estim gn alpha >1} 
\end{equation}
In the same way we get the following two estimations:
$$
|\frac{\partial }{\partial y} g_n(y,z)| \le |\Delta_{n,i}|^{- 2 \beta}\left \| \varphi_{\Delta_{n,i}^\beta}'' \right \|_\infty |\gamma(y)|| \gamma'(y)||z|^2 +  |\frac{\Delta_{n,i}|^{- 3 \beta}}{2} \left \| \varphi_{\Delta_{n,i}^\beta}''' \right \|_\infty |z|^2 \gamma^2(y)|1+\gamma'(y)z |,$$
\begin{equation}
|\frac{\partial^2 }{\partial y^2} g_n(y,z)| \le |\Delta_{n,i}|^{- 2 \beta}|z|^2P(y) + |\Delta_{n,i}|^{- 3 \beta}| P(y)(|z|^2 + |z|^3) + |\Delta_{n,i}|^{- 4 \beta}P(y)(|z|^2 + |z|^3).
\label{eq: deriv secondq gn alpha >1} 
\end{equation}
Since $a^2$ and $b$ have polynomial growth, \eqref{eq: deriv secondq gn alpha >1} provides us an estimation on $|A_cg_n(\cdot,z)(y)|$. Using dominated convergence theorem, \eqref{eq: Ac(Adf)}, the estimation of $|A_cg_n(\cdot,z)(y)|$ obtained from \eqref{eq: deriv secondq gn alpha >1} and the fact that $\int_\mathbb{R} (|z|^2 + |z|^3) F(z) dz < \infty$, we get
\begin{equation}
\sup_{u_2 \in [t_i, t_{i+1}]} |\mathbb{E}[(A_c A_d f_{i,n})(X^\theta_{u_2})|X^\theta_{t_i} = x]| = R(\theta, \Delta_{n,i}^{- 2 \beta}, x)+ R(\theta, \Delta_{n,i}^{- 3 \beta}, x) +R(\theta, \Delta_{n,i}^{- 4 \beta}, x) = R(\theta, \Delta_{n,i}^{- 4 \beta}, x).
\label{eq: AcAdf alpha >1} 
\end{equation}
We now consider $A_d(A_c f_{i,n})(y)$. Using \eqref{eq: AdAcf} and the development to the second order of the function $A_cf_{i,n}(y + z\gamma(y))$ we obtain
\begin{equation}
|A_d(A_c f_{i,n})(y)| \le c \int_\mathbb{R}\int_0^1 |(A_cf_{i,n})''(y + s \, z \gamma(y))||z|^2 |\gamma^2(y)| F(z) ds dz.
\label{eq: estim AdAcf alpha >1} 
\end{equation}
We observe that $(A_cf_{i,n})''(y') = [b''f_{i,n}' + 2b'f_{i,n}'' + bf_{i,n}''' + (a')^2f_{i,n}'' + a(a''f_{i,n}'' + a' f_{i,n}''') + 2aa'f_{i,n}''' + \frac{1}{2} a^2 f_{i,n}^{(4)}](y')$. By the fact that $|\frac{\partial ^j}{\partial y^j} \varphi_{\Delta_{n,i}^\beta}(y)| \le c\Delta_{n,i}^{-\beta j}$ for $j = 1, 2, 3$ and recalling $f_{i,n}(y)= \varphi_{\Delta_{n,i}^\beta}(y - x) $, we get that
\begin{equation}
|(A_cf_{i,n})''(y')| \le c\,P(y')\Delta_{n,i}^{-4\beta}.
\label{eq: estimation (Acf)'' alpha >1} 
\end{equation}
Using \eqref{eq: estim AdAcf alpha >1} and \eqref{eq: estimation (Acf)'' alpha >1} it yields
\begin{equation}
\sup_{u_2 \in [t_i, t_{i+1}]} |\mathbb{E}[(A_d A_c f_{i,n})(X^\theta_{u_2})|X^\theta_{t_i} = x]| = R(\theta, \Delta_{n,i}^{- 4 \beta}, x).
\label{eq: AdAcf alpha >1} 
\end{equation}
To conclude, we consider $A_dA_df_{i,n}$. Using \eqref{eq: AdAdf} and the development up to the second order we get
$$|A_d(A_d f_{i,n})(y)| \le c \int_\mathbb{R} \int_0^1|(A_df)''(y + s \,z \gamma(y))||z|^2 |\gamma^2(y)| F(z) ds dz.
\label{eq: estim AdAdf alpha >1} $$
We recall that \eqref{eq: Ad come int gin} still holds, with $g_n$ defined in \eqref{eq: definition g_n}. In order to estimate $(A_df)''(y)$ in the case where $\alpha \in [1,2)$ we use therefore \eqref{eq: deriv secondq gn alpha >1} joint with dominated convergence theorem. It provides us
\begin{equation}
\sup_{u_2 \in [t_i, t_{i+1}]} |\mathbb{E}[(A_d A_d f_{i,n})(X^\theta_{u_2})|X^\theta_{t_i} = x]| = R(\theta, \Delta_{n,i}^{- 4 \beta}, x).
\label{eq: AdAdf alpha >1} 
\end{equation}
Using \eqref{eq: AcAcf alpha >1}, \eqref{eq: AcAdf alpha >1}, \eqref{eq: AdAcf alpha >1} and \eqref{eq: AdAdf alpha >1} we put the pieces together and so we obtain
$$ \sup_{u_2 \in [t_i, t_{i+1}]} | \mathbb{E}[A^2f_{i,n}(X^\theta_{u_2})| X^\theta_{t_i} = x] | = R(\theta,\Delta_{n,i}^{1 - \alpha\beta - 4\beta - \epsilon}, x) + R(\theta, \Delta_{n,i}^{-4\beta}, x).$$
We replace it in the Dynkin formula \eqref{eq: dynkin varphi avec A} getting
$$\mathbb{E}[\varphi_{\Delta_{n,i}^\beta}(X_{t_{i+1}}^\theta - X_{t_i}^\theta)|X_{t_i}^\theta = x] = 1 + \Delta_{n,i}R(\theta, \Delta_{n,i}^{-\alpha\beta}, x) + \frac{\Delta_{n,i}^2}{2}R(\theta, \Delta_{n,i}^{(1 - \alpha\beta - 4\beta - \epsilon) \land (-4\beta)}, x).$$
Using the definition of $\Delta_{n,i}$ and \eqref{propriety power R} it is
\begin{equation}
1 + R(\theta, \Delta_{n,i}^{(1 - \alpha\beta)\land (3 - \alpha\beta - 4\beta - \epsilon) \land (2-4\beta)}, x).
\label{eq: theorem 3 final} 
\end{equation}
Since $\epsilon$ is arbitrarily small, for each choice of $\alpha$ and $\beta$ there exists $\epsilon$ such that $3 - \alpha\beta - 4\beta - \epsilon$ is greater than $2-4\beta$ and \eqref{eq: dl phi alpha >1} follows.
\end{proof}

\subsection{Proof of Theorem \ref{th: dL mtheta alpha <1} }

\begin{proof}
We observe that
\begin{equation}
m_{\theta, \Delta_{n,i}} (x) : = \frac{\mathbb{E}[X_{t_{i+1}}^\theta \varphi_{\Delta_{n,i}^\beta}(X_{t_{i+1}}^\theta - X_{t_i}^\theta)|X_{t_i}^\theta = x]}{\mathbb{E}[\varphi_{\Delta_{n,i}^\beta}(X_{t_{i+1}}^\theta - X_{t_i}^\theta)|X_{t_i}^\theta = x]} = x + \frac{\mathbb{E}[g_{i,n}(X_{t_{i+1}}^\theta )|X_{t_i}^\theta = x]}{\mathbb{E}[\varphi_{\Delta_{n,i}^\beta}(X_{t_{i+1}}^\theta - X_{t_i}^\theta)|X_{t_i}^\theta = x]},
\label{eq: reformulation m} 
\end{equation}
with $g_{i,n}(y)= (y - x)\varphi_{\Delta_{n,i}^\beta}(y -x)$. \\ \\
We have already found a development for the denominator of (\ref{eq: reformulation m}) given by (\ref{eq: dl phi}), we use again the Dynkin's formula \eqref{eq: Dynkin formula beginning} for $k=1$ in order to find a development for the numerator.
By the building, $g_{i,n}(x)= 0$, $g_{i,n}'(x) = 1$ and $g_{i,n}''(x) =0$, so we get 
$$A_cg_{i,n}(x) = b(x, \theta)$$ and 
$$A_dg_{i,n}(x)= \int_{\mathbb{R} \backslash \left \{0 \right \}} [g_{i,n}(x+ z \gamma(x)) -z\gamma(x)] F(z) dz = \int_{\mathbb{R} \backslash \left \{0 \right \}} z \gamma(x)[\varphi_{\Delta_{n,i}^\beta}( z \gamma(x)) - 1] F(z) dz $$
where we have used, in the last equality, simply the definition of $g_{i,n}$. \\
Substituting in the Dynkin's formula we get 
$$\mathbb{E}[g_{i,n}(X_{t_{i+1}}^\theta )|X_{t_i}^\theta = x] = \Delta_{n,i}(b(x, \theta) + \int_{\mathbb{R} \backslash \left \{0 \right \}} z \gamma(x)[\varphi_{\Delta_{n,i}^\beta}( z \gamma(x)) - 1] F(z) dz ) + $$
\begin{equation}
+ \int_{t_i}^{t_{i +1}} \int_{t_i}^{u_1} \mathbb{E}[A^2g_{i,n}(X_{u_2})| X_{t_i} = x]du_2 du_1 . 
\label{eq: dynikin theorem 2} 
\end{equation}
In order to show that the last term is negligible, we have to estimate $(A^2g_{i,n})(y)$ using the decomposition in continuous and discrete part of the generator, as we have already done. \\
Since $g_{i,n}(y)= (y-x)\varphi_{\Delta_{n,i}^\beta}(y-x)$, we have 
$$g_{i,n}^{(h)}(y)= \sum_{k=0}^h \binom{h}{k} \frac{\partial ^k}{\partial y^k} (y-x) \frac{\partial ^{h-k}}{\partial y^{h-k}}(\varphi_{\Delta_{n,i}^\beta} (y-x)),$$
with $\binom{h}{k}$ binomial coefficients. So we get, observing that the derivatives of $(y-x)$ after the second order are zero, the following useful control for $h \ge 1$:
\begin{equation}
|g_{i,n}^{(h)}(y)| \le |\varphi_{\Delta_{n,i}^\beta}^{(h)} (y-x)| \Delta_{n,i}^{-\beta h} |y-x| + 
|\varphi_{\Delta_{n,i}^\beta}^{(h-1)} (y-x) |\Delta_{n,i}^{-\beta (h -1)}|h|.
\label{eq: estimation derivatives g} 
\end{equation}
By the definition of $\varphi$ as a smooth version of the indicator function, we know that it exists $c > 0$ such that if $\frac{|y-x|}{\Delta_{n,i}^\beta} > c$, then $\varphi$ and its derivatives are zero when evaluated at the point $\frac{(y-x)}{\Delta_{n,i}^\beta}$. \\
So we can say that $|\varphi_{\Delta_{n,i}^\beta}^{(h)} (y-x)||y-x| \le c |\varphi_{\Delta_{n,i}^\beta}^{(h)} (y-x)| \Delta_{n,i}^\beta $ 
and consequently
\begin{equation}
|g_{i,n}^{(h)}(y)| \le c |\varphi_{\Delta_{n,i}^\beta}^{(h)}(y-x)| \Delta_{n,i}^{-\beta (h -1)} +c |\varphi_{\Delta_{n,i}^\beta}^{(h-1)} (y-x)| \Delta_{n,i}^{-\beta (h -1)}. 
\label{eq: estim 2 derivatives g} 
\end{equation}
Reasoning as in the proof of Theorem \ref{th: dl den alpha <1}, we start with $(A^2_cg_{i,n})(y)$ and we get that it is $\sum_{j=1}^4 h_j(y,\theta) g_{i,n}^{(j)}(y)$ where again, for each $j \in \left \{ 1, 2, 3, 4 \right \}$, $h_j$ is a function of $a$, $b$ and their derivatives up to second order. \\
We substitute in $\mathbb{E}[(A^2_cg_{i,n})(X^\theta_{u_2})| X^\theta_{t_i} = x]$, getting
$\sum_{j=1}^4 \mathbb{E}[h_j(X^\theta_{u_2},\theta) g_{i,n}^{(j)}(X^\theta_{u_2})| X^\theta_{t_i} = x]$.
Using the estimation (\ref{eq: estimation derivatives g}) we obtain
$$\sup_{u_2 \in [t_i, t_{i+1}]}|\mathbb{E}[|A^2_cg_{i,n}(X^\theta_{u_2})|| X^\theta_{t_i} = x]| \le$$
$$\le \sup_{u_2 \in [t_i, t_{i+1}]}| \sum_{j=1}^4 c \Delta_{n,i}^{-\beta(j-1)}\mathbb{E}[|h_j(X^\theta_{u_2},\theta)|( |\varphi_{\Delta_{n,i}^\beta}^{(j)}(X^\theta_{u_2} - X^\theta_{t_i})| + |\varphi_{\Delta_{n,i}^\beta}^{(j- 1)}(X^\theta_{u_2} - X^\theta_{t_i})|)| X^\theta_{t_i} = x]|. $$
We observe that we can see $\sup_{u_2 \in [t_i, t_{i+1}]}|\mathbb{E}[|h_1(X^\theta_{u_2},\theta)||\varphi_{\Delta_{n,i}^\beta}(X^\theta_{u_2} - X^\theta_{t_i})|| X^\theta_{t_i} = x]|$ as $R(\theta, \Delta_{n,i}^0, x)\,= \,R(\theta, 1,x)$ and we use the Proposition \ref{prop: truc moche h} on the other terms, getting
\begin{align}
\sup_{u_2 \in [t_i, t_{i+1}]}|\mathbb{E}[|A^2_cg_{i,n}(X^\theta_{u_2})|| X^\theta_{t_i} = x] | & \le R(\theta, \Delta_{n,i}^{1 - \alpha \beta -\epsilon}, x) + R(\theta, 1,x) + \sum_{j=2}^4 c \Delta_{n,i}^{-\beta(j-1)}R(\theta, \Delta_{n,i}^{1 - \alpha \beta -\epsilon}, x) = \nonumber \\
&  = R(\theta, \Delta_{n,i}^{1 - \alpha \beta -\epsilon -3\beta}, x) + R(\theta, 1,x).
\label{eq: A^2c theorem 3} 
\end{align}
Let us now consider $A_c(A_dg_{i,n})(y)$ \\
\begin{equation}
A_c(A_dg_{i,n})(y)= A_c(\int_\mathbb{R} [g_{i,n}(\cdot + z \gamma(\cdot)) - g_{i,n}(\cdot)- z \gamma(\cdot) g_{i,n}'(\cdot)] F(z)dz)_{(y)} .
\label{eq: AcAdg} 
\end{equation}
Let us denote
\begin{equation}
h_{i,n}(y,z):= g_{i,n}(y + z\gamma(y)) - g_{i,n}(y) - z \gamma(y) g_{i,n}'(y).
\label{eq: definition hni} 
\end{equation}
We observe that
\begin{equation}
\frac{\partial h_{i,n}}{\partial y} (y,z) = g_{i,n}'(y + z\gamma(y)) - g_{i,n}'(y) + z\gamma'(y)(g_{i,n}'(y + z\gamma(y))- g_{i,n}'(y)) - z\gamma(y) g_{i,n}''(y), 
\label{eq: estimation derivative hn} 
\end{equation}
$$\frac{\partial^2 h_{i,n}}{\partial y^2} (y,z) = g_{i,n}''(y + z\gamma(y))(1 +z\gamma'(y))^2 + g_{i,n}'(y + z\gamma(y))z\gamma''(y) +$$
\begin{equation}
- g_{i,n}''(y) - g_{i,n}'''(y)\gamma(y)z - 2g_{i,n}''(y)z \gamma'(y) - g_{i,n}'(y)z\gamma''(y).
\label{eq: estim deriv2 hn} 
\end{equation}
Using the estimation (\ref{eq: estim 2 derivatives g}), we have
\begin{equation}
|g_{i,n}'(y)| \le c \qquad
|g_{i,n}''(y)| \le c\Delta_{n,i}^{-\beta} \qquad
|g_{i,n}'''(y)| \le c\Delta_{n,i}^{-2\beta}
\label{eq: estimations g',g'' and g'''} 
\end{equation}
Hence
\begin{equation}
|\frac{\partial h_{i,n}}{\partial y} (y,z) | \le \left \| g_{i,n}'' \right \|_\infty P(y)(|z|+|z|^2) \le (|z|+|z|^2) P(y) \Delta_{n,i}^{-\beta},
\label{eq: estimation derivative hni} 
\end{equation}
and similarly
$$|\frac{\partial^2 h_{i,n}}{\partial y^2} (y,z)| \le \Delta_{n,i}^{-2\beta} P(y)(|z|+ |z|^2 + |z|^3).$$
Since functions $a^2$ and $b$ have polynomial growth, we obtain 
$$|A_ch_{i,n}(y,z)| \le \Delta_{n,i}^{-2\beta}P(y)(|z|+ |z|^2+ |z|^3).$$
Using dominated convergence theorem we get
$$|A_c(\int_\mathbb{R} h_{i,n}(\cdot, z) F(z)dz)_{(y)}| \le \Delta_{n,i}^{-2\beta} P(y) \int_\mathbb{R} (|z|+ |z|^2 +|z|^3) F(z)dz $$
and so, using also the third point of Lemma \ref{lemma: Moment inequalities} and \eqref{eq: AcAdg}, we get
\begin{equation}
\sup_{u_2 \in [t_i, t_{i+1}]}|\mathbb{E}[A_c(A_dg_{n,i})(X^\theta_{u_2})| X^\theta_{t_i} = x] |= R(\theta, \Delta_{n,i}^{- 2 \beta}, x).  
\label{eq: AcAdg finale } 
\end{equation}
We reason on the same way on $A_d(A_cg_{n,i})(y)$: \\
\begin{equation}
A_d(A_cg_{n,i})(y)= \int_\mathbb{R} [A_cg_{n,i}(y + z \gamma(y)) - A_cg_{n,i}(y) - z \gamma(y)(A_cg_{n,i})'(y) ]F(z)dz.
\label{eq: AdAcg} 
\end{equation}
It is, in module, upper bounded by
$$c\int_0^1 \int_\mathbb{R}[|(A_cg_{n,i})'(y + z \gamma(y)s)|+ |(A_cg_{n,i})'(y)|]|z||\gamma(y)|F(z)dsdz.$$
In order to estimate it we observe that, $\forall y'$,
$$(A_cg_{n,i})'(y')= (aa'g_{n,i}'' + \frac{1}{2}a^2 g_{n,i}''' + b'g_{n,i}' + bg_{n,i}'')(y').$$
Using (\ref{eq: estimations g',g'' and g'''}) and the polynomial growth of $a$, $b$ and their derivatives, we get
$$|(A_cg_{n,i})'(y')| \le c + P(y')\Delta_{n,i}^{- \beta} + P(y')\Delta_{n,i}^{- 2\beta} \le P(y')\Delta_{n,i}^{- 2\beta}.$$
It yields
$$c\int_0^1 \int_\mathbb{R}[|(A_cg_{n,i})'(y + z \gamma(y)s)|+ |(A_cg_{n,i})'(y)|]|z||\gamma(y)|F(z)dsdz \le $$
$$ \le \Delta_{n,i}^{-2\beta} \int_0^1 \int_\mathbb{R} (P(y + z\gamma(y)s) + P(y)]|z||\gamma(y)|F(z)dsdz \le \Delta_{n,i}^{-2\beta} \int_\mathbb{R} P(y)P(z)|z| F(z) dz \le c \Delta_{n,i}^{-2\beta}P(y),$$
where we have used the first point of Assumptions 3 and the second of Assumption 4.
Hence $| A_d(A_cg)(y)| \le \Delta_{n,i}^{-2\beta}P(y).$ \\
Taking the expected value and using the third point of Lemma \ref{lemma: Moment inequalities}, we obtain
$$\sup_{u_2 \in [t_i, t_{i+1}]}|\mathbb{E}[A_d(A_cg_{n,i})(X^\theta_{u_2})| X^\theta_{t_i} = x]| =  R(\theta, \Delta_{n,i}^{- 2 \beta}, x).$$
In conclusion, we consider $A^2_d(g_{n,i})(y)$
\begin{equation}
A^2_d(g_{n,i})(y)= \int_\mathbb{R} [A_dg_{n,i}(y + z \gamma(y)) - A_dg_{n,i}(y) - z \gamma(y)(A_dg_{n,i})'(y) ] F(z)dz.
\label{eq: AdAdg} 
\end{equation}
Again it is, in module, upper bounded by
\begin{equation}
c \int_0^1 \int_\mathbb{R}[|(A_dg_{n,i})'(y + z \gamma(y)s)|+ |(A_dg_{n,i})'(y)|]|z||\gamma(y)|F(z)ds\,dz
\label{eq: estimation AdAdg} 
\end{equation}
But
\begin{equation}
A_dg_{n,i}(y')= \int_\mathbb{R} [g_{n,i}(y'+ z \gamma(y'))-g_{n,i}(y') - z \gamma(y')g_{n,i}'(y') ]F(z)dz = \int_\mathbb{R} h_{i,n} (y', z)F(z) dz, 
\label{eq: int hn come Adg} 
\end{equation}
with $h_{i,n}$ defined in \eqref{eq: definition hni}. Using control equation (\ref{eq: estimation derivative hni}) and dominated convergence theorem, we get that \eqref{eq: int hn come Adg} is upper bounded by $P(y')\Delta_{n,i}^{-\beta}$. \\
It follows from (\ref{eq: AdAdg}) and (\ref{eq: estimation AdAdg}) that
$$|A_d^2g_{n,i}(y)| \le c \Delta_{n,i}^{-\beta} P(y) \int_\mathbb{R}P(z) F(z) dz $$ and it turns, using again the third point of Lemma \ref{lemma: Moment inequalities},
$$\sup_{u_2 \in [t_i, t_{i+1}]}|\mathbb{E}[(A_d^2g_{n,i})(X^\theta_{u_2})|X^\theta_{t_i} = x] | = R(\theta,\Delta_{n,i}^{-\beta}, x).$$
Pieces things together we get
$$\sup_{u_2 \in [t_i, t_{i+1}]}|\mathbb{E}[A^2g_{n,i}(X^\theta_{u_2})| X^\theta_{t_i} = x]| =R(\theta, \Delta_{n,i}^{1 - \alpha \beta -\epsilon -3 \beta}, x) + R(\theta, \Delta_{n,i}^{- 2\beta}, x) + R(\theta, \Delta_{n,i}^{-\beta}, x)=$$
$$= R(\theta, \Delta_{n,i}^{-2 \beta}, x),$$
where $R(\theta, \Delta_{n,i}^{1 - \alpha \beta -\epsilon -3 \beta}, x)$ is negligible compared to $R(\theta, \Delta_{n,i}^{-2 \beta}, x)$ because, for each choice of $\alpha$ and $\beta$, we can find an $\epsilon$ arbitrarily small such that $1 - \alpha \beta -\epsilon -\beta$ is more than $0$.
We substitute it in Dynkin's formula and we obtain
$$\mathbb{E}[g_{n,i}(X_{t_{i+1}}^\theta )|X_{t_i}^\theta = x] = $$
\begin{equation}
= \Delta_{n,i}(b(x, \theta) + \int_{\mathbb{R} \backslash \left \{0 \right \}} z \gamma(x)[\varphi_{\Delta_{n,i}^\beta}( z \gamma(x)) - 1] F(z) dz ) + \frac{\Delta_{n,i}^2}{2} R(\theta, \Delta_{n,i}^{-2 \beta}, x). 
\label{eq: Dynkin g} 
\end{equation}
We use the definition of $\Delta_{n,i}$ and the property \eqref{propriety power R} on $R$, then we substitute in (\ref{eq: Dynkin g}) getting (\ref{eq: dl num}).\\
We now want to prove \eqref{eq: dl m}. From the expansion \eqref{eq: dl num} and the property \eqref{eq: definition R} of $R$, there exists $k_0 > 0$ such that for $|x| \le \Delta_{n,i}^{k_0}$, $\mathbb{E}[\varphi_{\Delta_{n,i}^\beta}(X_{t_{i+1}}^\theta -  X_{t_i}^\theta)|X_{t_i}^\theta = x] \ge \frac{1}{2}$ $\forall n, i \le n$: we are avoiding the possibility that the denominator is in the neighborhood of $0$. Using \eqref{eq: reformulation m}, \eqref{eq: Dynkin g} and \eqref{eq: dl phi} we have that
\begin{equation}
m_\theta(x)= x + \frac{\Delta_{n,i}(b(x, \theta) + \int_{\mathbb{R} \backslash \left \{0 \right \}} z \gamma(x)[\varphi_{\Delta_{n,i}^\beta}( z \gamma(x)) - 1] F(z) dz ) + R(\theta, \Delta_{n,i}^{2- 2 \beta}, x)}{1 + R(\theta, \Delta_{n,i}^{(1 - \alpha \beta) \land (2- 3 \beta)}, x) }.
\label{eq: ref m final} 
\end{equation}
Now we can use that $R$ in the denominator is a rest function and so we obtain
\begin{equation}
\frac{1}{1 +  R(\theta, \Delta_{n,i}^{(1 - \alpha \beta) \land (2- 3 \beta)}, x)  } \sim 1 -  R(\theta, \Delta_{n,i}^{(1 - \alpha \beta) \land (2- 3 \beta)}, x).
\label{eq: limite notevole R} 
\end{equation}
Replacing \eqref{eq: limite notevole R} in \eqref{eq: ref m final} we get
$$m_\theta(x)= x + [\Delta_{n,i}(b(x, \theta) + \int_{\mathbb{R} \backslash \left \{0 \right \}} z \gamma(x)[\varphi_{\Delta_{n,i}^\beta}( z \gamma(x)) - 1] F(z) dz ) + R(\theta, \Delta_{n,i}^{2- 2 \beta}, x)](1 - R(\theta, \Delta_{n,i}^{(1 - \alpha \beta) \land (2- 3 \beta)}, x)).$$
The expansion \eqref{eq: dl m} follows.
\end{proof}

\subsection{Proof of Theorem \ref{th: dL mtheta alpha  > 1}}
\begin{proof}
Let us now consider an expansion of \eqref{eq: reformulation m} in the case where $\alpha$ is in $[1,2)$. Again, we skip the study of $\alpha = 1$ to avoid the emergence of logarithmic functions; as it is embedded in the study of $\alpha > 1$ with the choice of $\alpha $ arbitrarily close to $1$. \\
We start observing that \eqref{eq: dynikin theorem 2} and \eqref{eq: estim 2 derivatives g} still hold; we want to show that even in this case the last term of \eqref{eq: dynikin theorem 2} is negligible compared to the others. Again, we consider its decomposition in continuous and discrete part. \\
Concerning $A^2_c g_{i,n}$, \eqref{eq: A^2c theorem 3} is still true. Let us now consider $A_c(A_d g_{i,n})(y)$ as written in \eqref{eq: AcAdg}. We act as in the proof of Theorem \ref{th: dl den alpha >1}, using Taylor development up to second order, on the function $h_{i,n}$ defined in \eqref{eq: definition hni}. Hence we obtain the following estimation:
$$|h_{i,n}(y, z)| \le \left \| g_{i,n}'' \right \|_{\infty} \frac{|z|^2 \gamma(y)^2}{2}$$
and in the same way, using also \eqref{eq: estimations g',g'' and g'''},
$$|\frac{\partial h_{i,n}}{\partial y} (y,z) | \le \left \| g_{i,n}'' \right \|_\infty |z|^2 |\gamma(y) \gamma'(y)| + \left \| g_{i,n}''' \right \|_\infty |z|^2 \frac{\gamma^2(y)}{2}|1 + \gamma'(y)z| \le$$
\begin{equation}
\le |z|^2 P(y) |\Delta_{n,i}|^{-\beta} +  |\Delta_{n,i}|^{-2\beta} P(y)(|z|^2 + |z|^3),
\label{eq: estimation derivative hni alpha>1} 
\end{equation}
\begin{equation}
|\frac{\partial^2 h_{i,n}}{\partial y^2} (y,z)| \le |\Delta_{n,i}^{-\beta}| |z|^2 P(y) + |\Delta_{n,i}|^{-2\beta} P(y)(|z|^2 + |z|^3) +|\Delta_{n,i}|^{-3\beta} P(y)(|z|^2 + |z|^3).  
\label{eq: estim deriv2 hni alpha >1} 
\end{equation}
Since $a^2$ and $b$ have polynomial growth, \eqref{eq: estim deriv2 hni alpha >1} provides us an estimation on $|A_ch_{i,n}(\cdot,z)(y)|$. Using dominated convergence theorem, \eqref{eq: AcAdg}, the estimation of $|A_ch_{i,n}(\cdot,z)(y)|$ obtained from \eqref{eq: estim deriv2 hni alpha >1} and the fact that both $\int_\mathbb{R} (|z|^2 + |z|^3) F(z) dz$ and $\int_\mathbb{R} (|z|^2 + |z|^3) F(z) dz$ are finite, we get
\begin{equation}
\sup_{u_2 \in [t_i, t_{i+1}]}|\mathbb{E}[A_c(A_dg_{n,i})(X^\theta_{u_2})| X^\theta_{t_i} = x] |= R(\theta, \Delta_{n,i}^{- \beta}, x) + R(\theta, \Delta_{n,i}^{- 2 \beta}, x) + R(\theta, \Delta_{n,i}^{- 3 \beta}, x)= R(\theta, \Delta_{n,i}^{- 3 \beta}, x).  
\label{eq: AcAdg alpha >1} 
\end{equation}
We now consider $A_d(A_c g_{i,n})(y)$. Using \eqref{eq: AdAcg} and the development to the second order of the function $A_cg_{i,n}(y + z\gamma(y))$ we obtain
\begin{equation}
|A_d(A_c g_{i,n})(y)| \le c \int_\mathbb{R}\int_0^1|(A_c g_{i,n})''(y + s \, z \gamma(y))||z|^2 |\gamma^2(y)| F(z) ds dz.
\label{eq: estim AdAcg alpha >1} 
\end{equation}
We observe that $(A_cg_{i,n})''(y) = [b''g_{i,n}' + 2b'g_{i,n}'' + bg_{i,n}''' + (a')^2g_{i,n}'' + a(a''g_{i,n}'' + a' g_{i,n}''') + 2aa'g_{i,n}''' + \frac{1}{2} a^2 g_{i,n}^{(4)}](y).$ Using \eqref{eq: estimations g',g'' and g'''}, to which we add $|g_{i,n}^{(4)}(y)| \le c \Delta_{n,i}^{- 3 \beta} $, we get
\begin{equation}
|(A_cg_{i,n})''(y)| \le c\,P(y)\Delta_{n,i}^{-3\beta}.
\label{eq: estimation (Acg)'' alpha >1} 
\end{equation}
Using \eqref{eq: estim AdAcg alpha >1} and \eqref{eq: estimation (Acg)'' alpha >1} it yields
\begin{equation}
\sup_{u_2 \in [t_i, t_{i+1}]} |\mathbb{E}[(A_d A_c g_{i,n})(X^\theta_{u_2})|X^\theta_{t_i} = x]| = R(\theta, \Delta_{n,i}^{- 3 \beta}, x).
\label{eq: AdAcg alpha >1} 
\end{equation}
To conclude, we consider $A_dA_dg_{i,n}$. Using \eqref{eq: AdAdg} and the development up to the second order we get
$$|A_d(A_d g_{i,n})(y)| \le c \int_\mathbb{R}\int_0^1|(A_dg_{i,n})''(y + s \, z \gamma(y))||z|^2 |\gamma^2(y)| F(z) ds dz.
\label{eq: estim AdAdg alpha >1} $$
We recall that \eqref{eq: int hn come Adg} still holds, with $h_{i,n}$ defined in \eqref{eq: definition hni}. In order to estimate $(A_dg_{i,n})''(y)$ in the case where $\alpha \in [1,2)$ we use therefore \eqref{eq: estim deriv2 hni alpha >1} joint with dominated convergence theorem. It provides us
\begin{equation}
\sup_{u_2 \in [t_i, t_{i+1}]} |\mathbb{E}[(A_d A_d g_{i,n})(X^\theta_{u_2})|X^\theta_{t_i} = x]| = R(\theta, \Delta_{n,i}^{- 3 \beta}, x).
\label{eq: AdAdg alpha >1} 
\end{equation}
Using \eqref{eq: A^2c theorem 3}, \eqref{eq: AcAdg alpha >1}, \eqref{eq: AdAcg alpha >1} and \eqref{eq: AdAdg alpha >1} we put the pieces together and so we obtain
$$ \sup_{u_2 \in [t_i, t_{i+1}]} | \mathbb{E}[A^2f_{i,n}(X^\theta_{u_2})| X^\theta_{t_i} = x] | = R(\theta,\Delta_{n,i}^{1 - \alpha\beta - 3\beta - \epsilon}, x) + R(\theta, 1, x) + R(\theta, \Delta_{n,i}^{-3\beta}, x) = R(\theta, \Delta_{n,i}^{-3\beta}, x).$$
Indeed, since $\epsilon$ is arbitrarily small, for each choice of $\alpha$ and $\beta$ we can find $\epsilon$ such that $1 - \alpha\beta - 3\beta - \epsilon > - 3 \beta$. 
We substitute in the Dynkin formula \eqref{eq: dynikin theorem 2} and so we get
$$\mathbb{E}[g_{n,i}(X_{t_{i+1}}^\theta )|X_{t_i}^\theta = x] = $$
\begin{equation}
= \Delta_{n,i}(b(x, \theta) + \int_{\mathbb{R} \backslash \left \{0 \right \}} z \gamma(x)[\varphi_{\Delta_{n,i}^\beta}( z \gamma(x)) - 1] F(z) dz ) + \frac{\Delta_{n,i}^2}{2} R(\theta, \Delta_{n,i}^{-3\beta}, x). 
\label{eq: Dynkin g alpha>1} 
\end{equation}
We use the definition of $\Delta_{n,i}$ and the property \eqref{propriety power R} on $R$, then we substitute in (\ref{eq: Dynkin g alpha>1}) getting \eqref{eq: dl num alpha >1}.\\
In order to prove \eqref{eq: dl m alpha>1}, we observe again that from the expansion \eqref{eq: dl num alpha >1} and the property \eqref{eq: definition R} of $R$, there exists $k_0 > 0$ such that for $|x| \le \Delta_{n,i}^{k_0}$, $\mathbb{E}[\varphi_{\Delta_{n,i}^\beta}(X_{t_{i+1}}^\theta -  X_{t_i}^\theta)|X_{t_i}^\theta = x] \ge \frac{1}{2}$ $\forall n, i \le n$. Using \eqref{eq: reformulation m}, \eqref{eq: Dynkin g alpha>1} and \eqref{eq: dl phi alpha >1} we have that
\begin{equation}
m_\theta(x)= x + \frac{\Delta_{n,i}(b(x, \theta) + \int_{\mathbb{R} \backslash \left \{0 \right \}} z \gamma(x)[\varphi_{\Delta_{n,i}^\beta}( z \gamma(x)) - 1] F(z) dz ) + R(\theta, \Delta_{n,i}^{2- 3 \beta}, x)}{1 + R(\theta, \Delta_{n,i}^{(1 - \alpha \beta) \land (2- 4 \beta)}, x) }.
\label{eq: ref m final alpha >1} 
\end{equation}
Now $R$ in the denominator is a rest function and so
\begin{equation}
\frac{1}{1 + R(\theta, \Delta_{n,i}^{(1 - \alpha \beta) \land (2- 4 \beta)}, x)} \sim 1 - R(\theta, \Delta_{n,i}^{(1 - \alpha \beta) \land (2- 4 \beta)}, x). \label{eq: R final m alpha >1} 
\end{equation}
We now replace \eqref{eq: R final m alpha >1} in \eqref{eq: ref m final alpha >1} and we observe that multiplying by $R$ we obtain negligible functions, hence we get \eqref{eq: dl m alpha>1}.
\end{proof}

Let us now prove the development of $m_{\theta, \Delta_{n,i}}$ in the particular case with finite intensity that makes possible to approximate explicitly the contrast function.

\subsection{Proof of Proposition \ref{prop: dl m intensita finita}}

\begin{proof}
We want to use again Dynkin's formula \eqref{eq: Dynkin formula beginning}.
We consider the decomposition of the generator: $A = A_c + A_d$ and, by the Remark 1 and the fact that we are in the finite intensity case, we can take $A_df(x) = \int_\mathbb{R} \lambda [f(x + \gamma (x) z) - f(x)] F_0(z) dz $, where $F(z) = \lambda F_0(z)$ and $\int_\mathbb{R} F_0(z) dz = 1 $. \\
Concerning the denominator, we denote again $f_{i,n}(y) : = \varphi_{\Delta_{n,i}^\beta}(y-x)$ and, in order to calculate $A^kf_{i,n}(y)$ we introduce the following set of functions:
$$\mathcal{F}^p := \left \{ g(y) \, s. \, t. \, g(y) = \sum_{k = 0}^p \varphi^{(k)}((y - x) \Delta_{n,i}^{-\beta}) \Delta_{n,i}^{- k \beta}(\sum_{j = 0}^k h_{k,j}(y) \Delta_{n,i}^{\beta j})  \right \}$$
where, $\forall k, j$, $\forall l \ge 0 $ $\exists c $ such that $| \frac{\partial^l}{\partial y^l} h_{k,j}(y)| \le c (1 + |y|^c)$ and $\forall k,j$ $h_{k,j}$ is $\mathcal{C}^\infty$. We observe that, if $g \in \mathcal{F}^p $, then $g' \in \mathcal{F}^{p+1}$, $bg$ and $a^2g$ are in $\mathcal{F}^p$ and therefore if $g \in \mathcal{F}^p $, then $Ag \in \mathcal{F}^{p +2}$. \\
We now want to show that, for $g \in \mathcal{F}^p$, $A_d$ acts like $- \lambda I_d$ up to an error term. Indeed,
\begin{equation}
A_dg(y) = \int_\mathbb{R} \lambda [g(y + \gamma(y)z) - g(y)] F_0(z) dz = \lambda \int_\mathbb{R} g(y + \gamma(y)z) F_0(z) dz - \lambda g(y). 
\label{eq: Adg caso finito} 
\end{equation}
Let us start considering $g(y) = \varphi^{(k)}((y - x) \Delta_{n,i}^{-\beta})h(y)$, where $k \le p$ and $h \in \mathcal{C}^\infty$ is such that $\forall l \ge 0$ $\exists c $: $|\frac{\partial ^l}{\partial y^l}h(y)| \le c(1 + |y|^c)$. Then,
$$\int_\mathbb{R} g(y + \gamma(y)z) F_0(z) dz = \int_\mathbb{R} \varphi^{(k)}((y+ \gamma(y)z - x) \Delta_{n,i}^{-\beta})\,h(y + \gamma(y)z) F_0(z) dz.$$
With the change of variable $u := (y+ \gamma(y)z - x) \Delta_{n,i}^{-\beta} $ it becomes equal to
\begin{equation}
\frac{\Delta_{n,i}^\beta}{\gamma(y)} \int_\mathbb{R} \varphi^{(k)}(u) h(x + u \Delta_{n,i}^\beta)F_0(\frac{x - y}{\gamma(y)} + \frac{\Delta_{n,i}^\beta u}{\gamma(y)}) du.
\label{eq: varphi contro F} 
\end{equation}
We define $\tilde{F}(x, y, s ) : = \frac{h(x + s)}{\gamma(y)}F_0(\frac{x - y}{\gamma(y)} + \frac{s}{\gamma(y)}) $ and we develop it up to the M-order, getting
$$\tilde{F}(x, y, \Delta_{n,i}^\beta u ) = \sum_{j = 0}^M \frac{\partial^j \tilde{F} }{\partial s^j} (x, y, 0) (\Delta_{n,i}^\beta u)^j + \int_0^1 \frac{\partial^{M+1}}{\partial s^{M+1}} \tilde{F}(x, y, t \Delta_{n,i}^\beta u) \frac{(1 - t)^M}{M !}(\Delta_{n,i}^\beta u)^{M + 1} dt.$$
Replacing the development in \eqref{eq: varphi contro F} and recalling that by the definition of $\varphi$ we have $\int_\mathbb{R} u^j \varphi^{(k)}(u) du = 0 $,  we get
\begin{equation}
\int_\mathbb{R} \varphi^{(k)}(u)\tilde{F}(x, y, \Delta_{n,i}^\beta u ) du = \sum_{j = 0}^M 0 + \int_\mathbb{R}\int_0^1 \varphi^{(k)}(u)\frac{\partial^{M+1}}{\partial s^{M+1}} \tilde{F}(x, y, t \Delta_{n,i}^\beta u) \frac{(1 - t)^M}{M !}(\Delta_{n,i}^\beta u)^{M + 1} dt \, du.
\label{eq: 48.5 dans le papier modifié} 
\end{equation}
We observe that it is $|\frac{\partial ^{l_1 + l_2 + l_3}}{\partial s^{l_1} \partial x^{l_2} \partial y^{l_3}}\tilde{F}(x, y, s)| \le c(1 + |x|^c +|y|^c + |s|^c)$. Therefore, since the support of $\varphi^{(k)}$ is compact, we get
\begin{equation}
\int_\mathbb{R}\int_0^1 \varphi^{(k)}(u)\frac{\partial^{M+1}}{\partial s^{M+1}} \tilde{F}(x, y, t \Delta_{n,i}^\beta u) \frac{(1 - t)^M}{M !}(\Delta_{n,i}^\beta u)^{M + 1} dt \, du \le c (\Delta_{n,i}^\beta)^{M+1}(1 + |x|^c + |y|^c). 
\label{eq: estimation particular g} 
\end{equation}
Hence using \eqref{eq: varphi contro F} and \eqref{eq: estimation particular g} on $|\int_\mathbb{R} g(y + \gamma(y)z) F_0(z) dz|$ and the differentiation of \eqref{eq: 48.5 dans le papier modifié} on $|\frac{\partial^l}{\partial y^l}\int_\mathbb{R} g(y + \gamma(y)z) F_0(z) dz|$ we get that both of them are upper bounded by $c(1 + |x|^c + |y|^c) \Delta_{n,i}^{\beta(M+2)}$, where in the second case the constant $c$ depends on $l$. \\
Turning to a general function $g \in \mathcal{F}^p$, the estimations above become
\begin{equation}
|\int_\mathbb{R} g(y + \gamma(y)z) F_0(z) dz| \le c(1 + |x|^c + |y|^c) \Delta_{n,i}^{\beta(M+2)} \Delta_{n,i}^{- \beta p}
\label{eq: estim g} 
\end{equation}
and, $\forall l \ge 1$,
\begin{equation}
|\frac{\partial^l}{\partial y^l} \int_\mathbb{R} g(y + \gamma(y)z) F_0(z) dz| \le c_l(1 + |x|^{c_l} + |y|^{c_l}) \Delta_{n,i}^{\beta(M+2)} \Delta_{n,i}^{- \beta p}.
\label{eq: estim derivate g} 
\end{equation}
We introduce the set of functions
$$\mathcal{R}^p := \left \{ r(x, y, \Delta_{n,i}^p) \mbox{ such that } \forall l \ge 0 \, \, \exists c_l \, |\frac{\partial^l}{\partial y^l} r(x, y, \Delta_{n,i}^p) |\le c_l(1 + |x|^{c_l} + |y|^{c_l}) \Delta_{n,i}^p   \right \}.$$
Hence, using \eqref{eq: Adg caso finito}, \eqref{eq: estim g} and \eqref{eq: estim derivate g} we have proved that, $\forall g \in \mathcal{F}^p$,
\begin{equation}
A_d g (y) = - \lambda g(y) + r(x, y, \Delta_{n,i}^{\beta (M+2 - p)}).
\label{eq: 51. 5 dans le papier modifié} 
\end{equation}
We observe that if a function $r$ is in $\mathcal{R}^p$, then both $A_d r$ and $A_c r$ are in $\mathcal{R}^p$. We can therefore now calculate for $f_{i,n}(y) = \varphi((y - x) \Delta_{n,i}^{- \beta}) $, $f_{i,n} \in \mathcal{F}^0$,
\begin{equation}
A_{i_1}f_{i,n}(y) =
\begin{cases}
A_cf_{i,n}(y) \qquad \mbox{if } i_1 = c \\
A_d f_{i,n} (y) = - \lambda f_{i,n}(y) + r(x, y, \Delta_{n,i}^{\beta (M+2)}) \qquad \mbox{if } i_1 = d,
\end{cases}
\end{equation}
We want to show, by recurrence, that
\begin{equation}
A_{i_N} \circ ... \circ A_{i_1} (f_{i,n})(y) = A_c^{l(i_1, ... , i_N)} f_{i,n} (y) (- \lambda)^{N - l(i_1, ... , i_N) } + r(x, y, \Delta_{n,i}^{\beta(M +2) - 2 \beta l(i_1, ... , i_N)}),
\label{eq: recurrence} 
\end{equation}
with $l(i_1, ... , i_N)$ the number of $c$ in $\left \{ i_1, ... , i_N \right \}$. 
Let us consider the base case
\begin{equation}
A_{i_2} \circ A_{i_1} f_{i,n}(y) =
\begin{cases}
A_c^2f_{i,n}(y) \qquad \mbox{if } i_2 = i_1 = c \\
A_c(- \lambda f_{i,n}(y) + r(x, y, \Delta_{n,i}^{\beta (M+2)})) = -\lambda A_cf_{i,n}(y) + r(x, y, \Delta_{n,i}^{\beta (M+2)}) \qquad \mbox{if } i_2 = c, \, i_1 = d \\
-\lambda A_cf_{i,n}(y) + r(x, y, \Delta_{n,i}^{\beta (M+2) - 2 \beta}) \qquad \mbox{if } i_2 = d, \, i_1 = c \\
A_d(- \lambda f_{i,n}(y) + r(x, y, \Delta_{n,i}^{\beta (M+2)})) = \lambda^2 f_{i,n}(y) + r(x, y, \Delta_{n,i}^{\beta (M+2)})
\quad \mbox{if } i_2 = i_1 = d,  
\end{cases}
\end{equation}
where in the third case we have used $A_cf_{i,n} \in \mathcal{F}^2$.
So we have
$$A_{i_2} \circ A_{i_1} f_{i,n}(y) = A_c^{l(i_1, i_2)}f_{i,n} (y) (- \lambda)^{2 - l(i_1, i_2)} + r(x, y, \Delta_{n,i}^{\beta(M +2) - 2 \beta l(i_1, i_2)}),$$
as we wanted. For the inductive step, we assume that \eqref{eq: recurrence} holds, now 
$$A_{i_{N + 1}} \circ A_{i_N} \circ ... \circ A_{i_1} (f_{i,n})(y) = $$
\begin{equation}
=
\begin{cases}
A_c \circ  A_c^{l(i_1, ... , i_N)} f_{i,n} (y) (- \lambda)^{N - l(i_1, ... , i_N) } + r(x, y, \Delta_{n,i}^{\beta(M +2) - 2 \beta l(i_1, ... , i_N)}) \quad \mbox{if } i_{N + 1} = c, \\ 
(- \lambda)A_c^{l(i_1, ... , i_N)} f_{i,n} (y) (- \lambda)^{N - l(i_1, ... , i_N) } + r(x, y, \Delta_{n,i}^{\beta(M +2) - 2 \beta l(i_1, ... , i_N)}) \quad \mbox{if } i_{N + 1} = d,
\end{cases}
\end{equation}
where in the first case we have used that $A_c r(x, y, \Delta_{n,i}^h) \in \mathcal{R}^h $, $\forall h$, and in the second case that $A_d r(x, y, \Delta_{n,i}^h) \in \mathcal{R}^h$ and that $A_c^{l(i_1, ... , i_N)} f_{i,n} \in \mathcal{F}^{2 l(i_1, ... , i_N)}$ while using \eqref{eq: 51. 5 dans le papier modifié}. \\
It is equal to $A_c^{l(i_1, ... , i_N, i_{N + 1})} f_{i,n} (y) (- \lambda)^{N + 1 - l(i_1, ... , i_N, i_{N + 1}) } + r(x, y, \Delta_{n,i}^{\beta(M +2) - 2 \beta l(i_1, ... , i_N, i_{N + 1})})$ and therefore the recurrence is proved.
We can now calculate $A^kf_{i,n}(x)$ in the Dynkin's formula \eqref{eq: Dynkin formula beginning} using \eqref{eq: recurrence}:
$$A^k f_{i,n}(x) = \sum_{(i_1, ... , i_k) \in \left \{ c, d \right \}^k} (A_{i_k} \circ ... \circ A_{i_1}) f_{i,n}(x) = $$
\begin{equation}
= \sum_{(i_1, ... , i_k) \in \left \{ c, d \right \}^k} A_c^{l(i_1, ... , i_k)} f_{i,n} (x) (- \lambda)^{k - l(i_1, ... , i_k) } + r(x, x, \Delta_{n,i}^{\beta(M +2) - 2 \beta l(i_1, ... , i_k)}).
\label{eq: sostituzione Akf} 
\end{equation}
Recalling that $A_c^l f_{i,n}(x) = 0 $ $\forall l \ge 1$, \eqref{eq: sostituzione Akf} becomes
$(- \lambda)^k f_{i,n}(x) + r(x, x, \Delta_{n,i}^{\beta(M +2) - 2 \beta k}). $ \\
Therefore, the principal term in the development of the denominator of $m_{\theta, \Delta_{n,i}}(x)$ from Dynkin's formula up to order $N$ is 
$$\sum_{k =0}^N \frac{\Delta_{n,i}^k}{k!} A^kf_{i,n}(x) = \sum_{k =0}^N \frac{\Delta_{n,i}^k}{k!} (- \lambda)^k f_{i,n}(x) + r(x, x, \Delta_{n,i}^{\beta(M +2) - 2 \beta k + k}).$$
Let us now consider the term of rest in the Dynkin's formula \eqref{eq: Dynkin formula beginning}. Observing that
$$|A_c^{N + 1}f_{i,n}(y)| \le \Delta_{n,i}^{- 2 \beta(N + 1)}(1 + |y|^c)$$
using \eqref{eq: recurrence} and the definition of the function $r$, we get that
\begin{equation}
|A^{N + 1} f_{i,n} (y)| \le c(\Delta_{n,i}^{- 2 \beta(N + 1)} + \Delta_{n,i}^{\beta(M +2) - 2 \beta (N +1)} ) (1 + |y|^ c ).
\label{eq: termine di resto Dynkin} 
\end{equation}
Therefore
\begin{equation}
\mathbb{E}[|A^{N + 1} f_{i,n} (X_{u_{n + 1}})| | X_{t _i} = x ] \le c(\Delta_{n,i}^{- 2 \beta(N + 1)} + \Delta_{n,i}^{\beta(M +2) - 2 \beta (N +1)} ) (1 + |x|^ c ).
\label{eq: esperance terme du reste} 
\end{equation}
Replacing in \eqref{eq: Dynkin formula beginning} it yields
$$|\int_{t_i}^{t_{i+1}} \int_{t_i}^{u_1} ... \int_{t_i}^{u_N}\mathbb{E}[A^{N + 1} f_{i,n} (X_{u_{n + 1}}) | X_{t _i} = x ] du_{N + 1}... du_2 du_1| \le $$
$$ \le c\Delta_{n,i}^{N + 1}(\Delta_{n,i}^{- 2 \beta(N + 1)} + \Delta_{n,i}^{\beta(M +2) - 2 \beta (N +1)} ) (1 + |x|^ c ). $$
Since $\Delta_{n,i}^{\beta(M +2) - 2 \beta (N +1)} $ is negligible compared to $\Delta_{n,i}^{- 2 \beta(N + 1)}$, it is enough to have $(N + 1)(1 - 2 \beta) \ge \lfloor \beta(M + 2) \rfloor$ in order to get the following development of the denominator $d_{\Delta_{n,i}}(x)$ of $m_{\theta, \Delta_{n,i}}(x)$:
$$d_{\Delta_{n,i}}(x) = \sum_{k = 0}^N \frac{\Delta_{n,i}^k}{k!}(- \lambda)^k f_{i,n}(x) + r(x, x, \Delta_{n,i}^{\beta(M+2) + (1 - 2 \beta)k}) + r(x, x, \Delta_{n,i}^{(1 - 2 \beta)(N + 1)}) =$$
$$= \sum_{k = 0}^{\lfloor \beta(M + 2) \rfloor} \frac{\Delta_{n,i}^k}{k!}(- \lambda)^k + r(x, x, \Delta_{n,i}^{\beta(M+2)}), $$
where we have also used that, by the definition of $f_{i,n}$, $f_{i,n}(x) = 1$ and in the sum we have considered only the terms up to $k = \lfloor \beta(M+2) \rfloor$ because the others are rest terms. \\
Let us now study the numerator $n_{\Delta_{n,i}}(x)$ of $m_{\theta, \Delta_{n,i}}(x)$: acting like in the proof of Theorem \ref{th: dL mtheta alpha <1} we consider $\bar{g}(y): = (y - x) \varphi ((y - x) \Delta_{n,i}^{ -\beta})$. Let us introduce, in place of $\mathcal{F}^p$, the set $\tilde{\mathcal{F}}^p$.
$$\tilde{\mathcal{F}}^p : = \left \{ \tilde{g}(y) \, \, s.t. \, \tilde{g}(y) = \sum_{k = 0}^p \varphi^{(k)}((y - x) \Delta_{n,i}^{- \beta}) \Delta_{n,i}^{- k \beta} (\sum_{j = 0}^k h_{k,j}(x,y) \Delta_{n,i}^{\beta j})  \right \}$$
where, $\forall k, j$, $\forall l \ge 0$, $\exists c_l$ such that $| \frac{\partial^l}{\partial y^l} h_{k,j}(x,y)| \le c_l (1 + |x|^{c_l} + |y|^{c_l})$. We observe that, as it was for $\mathcal{F}^p$, if $\tilde{g} \in \tilde{\mathcal{F}}^p$ then $A\tilde{g} \in \tilde{\mathcal{F}}^{p + 2}$ and, for all $\tilde{g} \in \tilde{\mathcal{F}}^p$,
\begin{equation}
A_d \tilde{g}(y) = -\lambda \tilde{g} (y) + r(x,y, \Delta_{n,i}^{\beta(M + 2 - p)}).
\label{eq: adg in tildeF} 
\end{equation}
It turns that the same relation as \eqref{eq: recurrence} holds with $\bar{g}$ in place of $f_{i,n}$.
Hence we get
\begin{equation}
A^k \bar{g}(y) = (A_c + A_d)^k \bar{g}(y) = \sum_{(i_1, ... , i_k) \in \left \{ c,d \right \}^k} A_c^{l(i_1, ... , l_k)} \bar{g} (y)(- \lambda)^{k - l(i_1, ... , i_k)} + r(x,y, \Delta_{n,i}^{\beta(M + 2 ) - 2 \beta l(i_1, ... ,  i_k)}) =
\label{eq: Akg development intesite finie} 
\end{equation}
$$ = \sum_{l = 0}^k \binom{k}{l} (- \lambda)^{k - l} A^l_c \bar{g}(y) +  r(x,y, \Delta_{n,i}^{\beta(M + 2 ) - 2 \beta k}),$$
where $l(i_1, ... , i_k)$ is the number of $c$ in $\left \{ i_1, ..., i_k \right \}$ and $\binom{k}{l}$ are the binomial coefficients. Now, concerning the continuous part of the generator, since it is local and $\bar{g}(y) = (y - x)$ in the neighborhood of $x$, we find $A_c^l \bar{g}(x) = A_K^{(l)}(x)$, which are exactly the coefficients found in the case without jump studied by Kessler in \hyperref[csl:15]{(Kessler, 1997)}.\\
By \eqref{eq: Akg development intesite finie}, the principal term in the development of the numerator is therefore
$$\sum_{k = 0}^N \frac{\Delta_{n,i}^k}{k!} A^k g(x) = \sum_{k = 0}^N \frac{\Delta_{n,i}^k}{k!}(\sum_{l = 0}^k \binom{k}{l} (- \lambda)^{k - l} A^{(l)}_K(x) +  r(x,x, \Delta_{n,i}^{\beta(M + 2 ) - 2 \beta k})) = $$
\begin{equation}
= \sum_{k = 0}^N \frac{\Delta_{n,i}^k}{k!}(\sum_{l = 0}^k \binom{k}{l} (- \lambda)^{k - l} A^{(l)}_K(x)) + r(x,x, \Delta_{n,i}^{\beta(M + 2 )}). 
\label{eq: Akg primo} 
\end{equation}
Changing the order of summation and introducing $k' := k-l $ we get that the first term of the previous equation is equal to
$$\sum_{l = 0}^N \sum_{k = l}^N \frac{\Delta_{n,i}^k}{k!} \binom{k}{l} (- \lambda)^{k - l} A^{(l)}_K(x) = \sum_{l = 0}^N \frac{\Delta_{n,i}^l}{l!} A^{(l)}_K (x) \sum_{k' = 0 }^{N - l} \Delta_{n,i}^{k'} (- \lambda)^{k'}\frac{l!}{(k' + l)!} \binom{l + k'}{l} = $$
\begin{equation}
= \sum_{l = 0}^N \frac{\Delta_{n,i}^l}{l!} A^{(l)}_K (x) \sum_{k' = 0 }^{N - l}\frac{ \Delta_{n,i}^{k'} (- \lambda)^{k'}}{k'!},
\label{eq: acg risolto} 
\end{equation}
where in the last equality we have used the definition of binomial coefficients.
Concerning the rest term in the Dynkin's formula, we use again \eqref{eq: termine di resto Dynkin} and \eqref{eq: esperance terme du reste} with $\bar{g}$ in place of $f_{i,n}$ and it turns again
\begin{equation}
|\int_{t_i}^{t_{i+1}} \int_{t_i}^{u_1} ... \int_{t_i}^{u_N}\mathbb{E}[A^{N + 1} \bar{g} (X_{u_{N + 1}}) | X_{t _i} = x ] du_{N + 1}... du_2 du_1| \le r(x,x, \Delta_{n,i}^{(1 - 2 \beta ) (N + 1)}). 
\label{eq: rest in g} 
\end{equation}
Hence, using \eqref{eq: Akg primo}, \eqref{eq: acg risolto} and \eqref{eq: rest in g} we have the following development:
\begin{equation}
n_{\Delta_{n,i}}(x) = \sum_{l = 0}^N \frac{\Delta_{n,i}^l}{l!} A^{(l)}_K(x) \sum_{k' = 0 }^{N - l}\frac{ \Delta_{n,i}^{k'} (- \lambda)^{k'}}{k'!} + r(x,x, \Delta_{n,i}^{\beta (M + 2)}) + r(x,x, \Delta_{n,i}^{(1 - 2 \beta ) (N + 1)}).
\label{eq: dl numeratore intenita finita} 
\end{equation}
If $(N + 1)(1 - 2 \beta) \ge \beta (M + 2)$, it entails
$$ n_{\Delta_{n,i}}(x) = \sum_{l = 0}^{\lfloor \beta(M + 2) \rfloor} \frac{\Delta_{n,i}^l}{l!} A^{(l)}_K(x) \sum_{k' = 0 }^{\lfloor \beta(M + 2) \rfloor} \frac{ \Delta_{n,i}^{k'} (- \lambda)^{k'}}{k'!} + r(x,x, \Delta_{n,i}^{\beta (M + 2)}).$$
Acting as in the proof of the development of $m_{\theta}$ given in Theorem \ref{th: dL mtheta alpha <1} we can say that it exists $k_0 > 0$ such that, for $|x| \le \Delta_{n,i}^{ - k_0} $, the development of $m_{\theta, \Delta_{n,i}}(x)$ is
\begin{equation}
x + \frac{n_{\Delta_{n,i}}(x)}{d_{\Delta_{n,i}}(x)} = x + \sum_{l = 0}^{\lfloor \beta(M + 2) \rfloor} \frac{\Delta_{n,i}^l}{l!} A^{(l)}_K (x) + r(x,x, \Delta_{n,i}^{\beta (M + 2)}).
\label{eq: dl tout ordre m} 
\end{equation}
 The expansion \eqref{eq: dl m intensita finita} follows after remarking that $A^{(0)}_K(x)=0$.
\end{proof}

\subsection{Contrast convergence}\label{Ss:Contrast_conv}
Before proving the contrast convergence, let us define $r(\theta,x)$ as the particular rest function that turns out from the development of $m_{\theta, \Delta_{n,i}}$:
\begin{equation}
r(\theta, x) := m_{\theta, \Delta_{n,i}}(x) - x - \Delta_{n,i} \, b(x, \theta) -  \Delta_{n,i} \, \int_{\mathbb{R} \backslash \left \{0 \right \}} z\, \gamma(x)\,[1 - \varphi_{\Delta_{n,i}^\beta}(\gamma(x)z)]\,F(z)dz.
\label{eq: definition r} 
\end{equation}
We recall that $r(\theta, x)$ is $R(\theta, \Delta_{n,i}^{1 + \delta}, x)$ with $\delta >0$ as defined below equation \eqref{eq: link betwen m and zeta}.\\
In order to prove the consistency and asymptotic normality of the estimator, the first step is the following Lemma: \\
\begin{lemma}
Suppose that Assumptions 1-5 and $A_\beta$ are satisfied. Then
\begin{equation}
\frac{U_n(\theta) - U_n(\theta_0)}{t_n} \xrightarrow{\mathbb{P}} \int_\mathbb{R} \frac{(b(x, \theta) - b(x, \theta_0))^2}{a^2(x)} \pi(dx)
\label{eq: contrast convergence} 
\end{equation}
\label{lemma: convergence contrast} 
\end{lemma}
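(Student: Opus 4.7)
The plan is to decompose $U_n(\theta)-U_n(\theta_0)$ into a quadratic term that supplies the limit and a cross term that vanishes, then control every piece through the limit theorems of Section \ref{S:Limit} together with the expansion of $m_{\theta,\Delta_{n,i}}$ given by Theorems \ref{th: dL mtheta alpha <1} and \ref{th: dL mtheta alpha  > 1}. Writing $\Phi_i:=\varphi_{\Delta_{n,i}^\beta}(X_{t_{i+1}}-X_{t_i})\,1_{\{|X_{t_i}|\le\Delta_{n,i}^{-k}\}}$, the elementary identity $A^2-B^2=(B-A)^2+2(B-A)(A)$ applied with $A=X_{t_{i+1}}-m_{\theta_0}(X_{t_i})$ and $B=X_{t_{i+1}}-m_\theta(X_{t_i})$ yields
\begin{equation*}
U_n(\theta)-U_n(\theta_0)=\sum_{i=0}^{n-1}\frac{(m_{\theta_0}(X_{t_i})-m_\theta(X_{t_i}))^2}{a^2(X_{t_i})\Delta_{n,i}}\Phi_i+2\sum_{i=0}^{n-1}\frac{(m_{\theta_0}(X_{t_i})-m_\theta(X_{t_i}))(X_{t_{i+1}}-m_{\theta_0}(X_{t_i}))}{a^2(X_{t_i})\Delta_{n,i}}\Phi_i.
\end{equation*}

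The key observation is that in the expansion of $m_{\theta,\Delta_{n,i}}(x)$ provided by Theorems \ref{th: dL mtheta alpha <1}--\ref{th: dL mtheta alpha  > 1}, the $\theta$-free jump correction cancels in the difference, so
\begin{equation*}
m_{\theta_0}(x)-m_\theta(x)=\Delta_{n,i}\bigl(b(x,\theta_0)-b(x,\theta)\bigr)+R(\theta,\Delta_{n,i}^{1+\delta},x),
\end{equation*}
with $\delta>0$ (as defined below \eqref{eq: link betwen m and zeta}). Squaring and dividing by $\Delta_{n,i}$, the first sum decomposes into the leading quantity $\sum_i \Delta_{n,i}\,\frac{(b(X_{t_i},\theta_0)-b(X_{t_i},\theta))^2}{a^2(X_{t_i})}\Phi_i$, to which Proposition \ref{prop: LT1}(ii) applies (using Assumption 5 to bound $1/a^2$ and Assumption 7 for the polynomial growth of $(b(\cdot,\theta_0)-b(\cdot,\theta))^2$), giving the desired integral against $\pi$; all other contributions are of the form $\frac{1}{t_n}\sum_i\Delta_{n,i}^{1+\delta'}(1+|X_{t_i}|^c)$ for some $\delta'>0$, hence converge to $0$ in $L^1$ by Lemma \ref{lemma 2.1 GLM}.

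For the cross term I factor $(m_{\theta_0}-m_\theta)/\Delta_{n,i}=(b(\cdot,\theta_0)-b(\cdot,\theta))+R(\theta,\Delta_{n,i}^\delta,\cdot)$ and substitute the decomposition \eqref{eq: link betwen m and zeta}, namely
\begin{equation*}
X_{t_{i+1}}-m_{\theta_0}(X_{t_i})=\zeta_i+\int_{t_i}^{t_{i+1}}\bigl(b(X_s,\theta_0)-b(X_{t_i},\theta_0)\bigr)ds-R(\theta_0,\Delta_{n,i}^{1+\delta},X_{t_i}).
\end{equation*}
The main piece pairs $(b(X_{t_i},\theta_0)-b(X_{t_i},\theta))/a^2(X_{t_i})$ with $\zeta_i$, and Proposition \ref{prop: LT2} gives $1/t_n$ times this sum $\xrightarrow{\mathbb P}0$ (the function has polynomial growth by Assumption 7). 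The remaining contributions factor a polynomial of $X_{t_i}$ against either $R(\theta_0,\Delta_{n,i}^{1+\delta},X_{t_i})$ or $\int_{t_i}^{t_{i+1}}(b(X_s,\theta_0)-b(X_{t_i},\theta_0))ds$; using the Lipschitz property of $b$ together with Lemma \ref{lemma: Moment inequalities}, these are controlled in $L^1$ by $\frac{c}{t_n}\sum_i(\Delta_{n,i}^{1+\delta}+\Delta_{n,i}^{3/2})\to 0$. For the terms carrying the extra $R(\theta,\Delta_{n,i}^\delta,X_{t_i})$ factor, I bound $\mathbb E[(X_{t_{i+1}}-m_{\theta_0}(X_{t_i}))^2\Phi_i^2\mid\mathcal F_{t_i}]^{1/2}\le c\Delta_{n,i}^{1/2}(1+|X_{t_i}|^c)$ via Lemma \ref{lemma: conditional expected value zeta}(3) and conditional Cauchy--Schwarz, producing an $L^1$-bound $\frac{c}{t_n}\sum_i\Delta_{n,i}^{\delta+1/2}$, which vanishes since $\Delta_{n,i}\le\Delta_n\to 0$ and $\sum_i\Delta_{n,i}=O(t_n)$.

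The main obstacle is the careful bookkeeping of the cross term, where neither side is individually of the right order: the rest $R(\theta,\Delta_{n,i}^\delta,\cdot)$ has only a fractional gain in $\Delta_{n,i}$, while $X_{t_{i+1}}-m_{\theta_0}(X_{t_i})$ is a martingale-type increment of size $\sqrt{\Delta_{n,i}}$. The gain comes from combining the conditional variance estimate \eqref{eq: x - m prop 2} with the fact that Proposition \ref{prop: LT2} absorbs the leading noise $\zeta_i$ irrespective of polynomial $X_{t_i}$ factors; every other piece is either $o(\Delta_{n,i})$ per step or genuinely a rest, so dividing by $t_n$ annihilates it.
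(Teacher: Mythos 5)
Your decomposition via the identity $B^2-A^2=(B-A)^2+2A(B-A)$ with $A=X_{t_{i+1}}-m_{\theta_0}(X_{t_i})$ and $B=X_{t_{i+1}}-m_{\theta}(X_{t_i})$ is algebraically exact and in fact more streamlined than the paper's route (which first substitutes the SDE dynamics into both $U_n(\theta)$ and $U_n(\theta_0)$ and then disposes of five cross terms $A_i,\dots,E_i$ one by one); your treatment of the quadratic term and of the leading piece of the cross term (via Proposition \ref{prop: LT2}) is sound. There is, however, a genuine gap in your final estimate. For the piece $\frac{1}{t_n}\sum_i R(\theta,\Delta_{n,i}^{\delta},X_{t_i})\,a^{-2}(X_{t_i})\,(X_{t_{i+1}}-m_{\theta_0}(X_{t_i}))\,\Phi_i$ you apply conditional Cauchy--Schwarz together with \eqref{eq: x - m prop 2} and arrive at the $L^1$ bound $\frac{c}{t_n}\sum_i \Delta_{n,i}^{\delta+1/2}$. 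This does \emph{not} tend to zero in general: on a regular grid it equals $c\,\Delta_n^{\delta-1/2}$, which fails to vanish as soon as $\delta\le 1/2$. Since $\delta=1-2\beta$ when $\alpha<1$ and $\delta=1-3\beta$ when $\alpha\ge 1$, Assumption $A_\beta$ permits $\delta$ arbitrarily close to $0$ (take $\beta$ close to $1/2$, resp.\ $1/3$), so your bound breaks down on a nonempty range of admissible $\beta$.

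The crude $L^1$ bound discards precisely the cancellation that makes this term small. By the definition of $m_{\theta_0}$ and the Markov property one has $\mathbb{E}[(X_{t_{i+1}}-m_{\theta_0}(X_{t_i}))\varphi_{\Delta_{n,i}^\beta}(X_{t_{i+1}}-X_{t_i})\mid\mathcal{F}_{t_i}]=0$, and the prefactor $R(\theta,\Delta_{n,i}^{\delta},X_{t_i})\,a^{-2}(X_{t_i})\,1_{\{|X_{t_i}|\le \Delta_{n,i}^{-k}\}}$ is $\mathcal{F}_{t_i}$-measurable, so the summands form a martingale difference array. Applying the conditional-mean/conditional-variance convergence criterion (Lemma 9 of Genon-Catalot and Jacod, which the paper invokes repeatedly), the summed conditional means vanish identically, while the summed conditional second moments are bounded, via \eqref{eq: x - m prop 2}, by $\frac{c}{t_n^2}\sum_i \Delta_{n,i}^{2\delta+1}(1+|X_{t_i}|^c)\le c\,\Delta_n^{2\delta}\,t_n^{-1}\cdot\frac{1}{t_n}\sum_i\Delta_{n,i}(1+|X_{t_i}|^c)\rightarrow 0$ in $L^1$. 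This is in substance how the paper treats its analogous term $B_i$ (there the factor $X_{t_{i+1}}-m_{\theta_0}(X_{t_i})$ is further reduced to $\zeta_i$ and Lemma \ref{lemma: conditional expected value zeta} is used); with this replacement your argument closes.
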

\begin{proof}
By the definition, 
$$U_n(\theta)= \sum_{i =0}^{n -1} \frac{(X_{t_{i+1}} - m_\theta(X_{t_i}))^2}{a^2(X_{t_i})\Delta_{n,i}} \, \varphi_{\Delta_{n,i}^\beta}(X_{t_{i+1}} - X_{t_i})1_{\left \{|X_{t_i}| \le \Delta_{n,i}^{-k} \right \} }.$$
We want to reformulate the contrast function, in order to compensate for the terms not depending on $\theta$ in the difference $U_n(\theta) - U_n(\theta_0)$. \\
The dynamic of the process $X$ is known and so we can write
$$X_{t_{i +1}} = X_{t_i} + \int_{t_i}^{t_{i+1}} b(X_s, \theta_0) ds + \int_{t_i}^{t_{i+1}} a(X_s) dW_s + \int_{t_i}^{t_{i+1}} \int_\mathbb{R} \gamma(X_{s^-})z \tilde{\mu}(ds,dz).$$
We have proved the development (\ref{eq: dl m}) of $m_\theta$, too. We can substitute both of them in $U_n(\theta)$, getting
$$ U_n(\theta)= \sum_{i=0}^{n-1} \frac{1}{a^2(X_{t_i})\Delta_{n,i}}[X_{t_i} + \int_{t_i}^{t_{i+1}} b(X_s, \theta_0) ds + \int_{t_i}^{t_{i+1}} a(X_s) dW_s + \int_{t_i}^{t_{i+1}} \int_\mathbb{R} \gamma(X_{s^-})z \tilde{\mu}(ds,dz)
- X_{t_i}+ $$
$$ + \Delta_{n,i}(-  b(X_{t_i}, \theta) + \int_{\mathbb{R} \backslash \left \{0 \right \}} z\, \gamma(X_{t_i})\,(1 - \varphi_{\Delta_{n,i}^\beta}(\gamma(X_{t_i})z))\,F(z)dz) + r(\theta, X_{t_i} )]^2\varphi_{\Delta_{n,i}^\beta}(X_{t_{i+1}} - X_{t_i})1_{\left \{|X_{t_i}| \le \Delta_{n,i}^{-k} \right \} } \, =$$
$$= \sum_{i=0}^{n-1} \frac{1}{a^2(X_{t_i})\Delta_{n,i}} (\int_{t_i}^{t_{i+1}} b(X_s, \theta_0) ds 
- \Delta_{n,i}\, b(X_{t_i}, \theta) + \zeta_i + r(\theta, X_{t_i} ))^2 \varphi_{\Delta_{n,i}^\beta}(X_{t_{i+1}} - X_{t_i})1_{\left \{|X_{t_i}| \le \Delta_{n,i}^{-k} \right \} }, $$
we recall the definition of $\zeta_i := \int_{t_i}^{t_{i+1}} a(X_s)dW_s + \int_{t_i}^{t_{i+1}} \int_{\mathbb{R} \backslash \left \{0 \right \}} \gamma(X_{s^-})z \tilde{\mu}(ds,dz) \,+ $ \\
$+  \Delta_{n,i}\, \int_{\mathbb{R} \backslash \left \{0 \right \}} z \, \gamma(X_{t_i})\,[1 - \varphi_{\Delta_{n,i}^\beta}(\gamma(X_{t_i})z)]\,F(z)dz,$ as in (\ref{eq: def zeta}); we point out that $\zeta_i$ does not depend on $\theta$. \\
In the same way
$$U_n(\theta_0) = \sum_{i=0}^{n-1} \frac{1}{a^2(X_{t_i})\Delta_{n,i}} (\int_{t_i}^{t_{i+1}} b(X_s, \theta_0) ds 
- \Delta_{n,i}\, b(X_{t_i}, \theta_0) + \zeta_i +  r(\theta_0, X_{t_i} ))^2 \varphi_{\Delta_{n,i}^\beta}(X_{t_{i+1}} - X_{t_i}) 1_{\left \{|X_{t_i}| \le \Delta_{n,i}^{-k} \right \} } $$
and so $$\frac{U_n(\theta) - U_n(\theta_0) }{t_n} = \frac{1}{t_n} \sum_{i=0}^{n-1} \frac{\varphi_{\Delta_{n,i}^\beta}(X_{t_{i+1}} - X_{t_i}) 1_{\left \{|X_{t_i}| \le \Delta_{n,i}^{-k} \right \} }}{a^2(X_{t_i})\Delta_{n,i}}[\Delta_{n,i}^2(b(X_{t_i}, \theta)^2 - b(X_{t_i}, \theta_0)^2) + $$
\begin{equation}
+ 2\Delta_{n,i}\int_{t_i}^{t_{i+1}} b(X_s, \theta_0) ds(b(X_{t_i}, \theta_0) - b(X_{t_i}, \theta)) + A_i + B_i + C_i + D_i + E_i],
\label{eq: difference contraste convergence} 
\end{equation}
with $$A_i = 2\zeta_i \Delta_{n,i} (b(X_{t_i}, \theta_0) - b(X_{t_i}, \theta)), \qquad B_i = 2 \zeta_i(r(\theta, X_{t_i} ) - r(\theta_0, X_{t_i} )),$$
$$C_i = 2 \Delta_{n,i}(r(\theta_0, X_{t_i})b(X_{t_i}, \theta_0) - r(\theta, X_{t_i} )b(X_{t_i}, \theta)), \qquad D_i = r(\theta, X_{t_i} )^2 - r(\theta_0, X_{t_i} )^2, $$
$$E_i = 2 \int_{t_i}^{t_{i+1}} b(X_s, \theta_0) ds (r(\theta, X_{t_i} ) - r(\theta_0, X_{t_i} )). $$
Our goal is to show that the contribution of $A_i$, $B_i$, $C_i$, $D_i$ and $E_i$ go to zero in probability as $n \rightarrow \infty$ and to prove that the other terms converge to $\int_\mathbb{R} \frac{(b(x, \theta) - b(x, \theta_0))^2}{a^2(x)} \pi(dx)$. \\
We observe that the rest function $r(\theta,x)$ is present in all the terms that have to converge to $0$ but $A_i$, on which we use a different motivation to obtain the convergence:
$$\frac{1}{t_n} \sum_{i=0}^{n-1} \frac{\varphi_{\Delta_{n,i}^\beta}(X_{t_{i+1}} - X_{t_i})1_{\left \{|X_{t_i}| \le \Delta_{n,i}^{-k} \right \} }}{a^2(X_{t_i})\Delta_{n,i}}A_i =  \frac{1}{t_n} \sum_{i=0}^{n-1} \varphi_{\Delta_{n,i}^\beta}(X_{t_{i+1}} - X_{t_i}) f_{i,n}(X_{t_i}, \theta) 1_{\left \{|X_{t_i}| \le \Delta_{n,i}^{-k} \right \} } \zeta_i, $$
with $f_{i,n}(X_{t_i}, \theta): = \frac{2}{a^2(X_{t_i})}\,( b(X_{t_i}, \theta_0) -  b(X_{t_i}, \theta)) $. \\
In order to apply Proposition \ref{prop: LT2} we observe that, by the assumptions done on the coefficients, $f_{i,n}$ has polynomial growth.
We therefore get the convergence to zero in probability, using Proposition \ref{prop: LT2}. \\
We want to show that $\frac{1}{t_n} \sum_{i=0}^{n-1} \frac{\varphi_{\Delta_{n,i}^\beta}(X_{t_{i+1}} - X_{t_i})1_{\left \{|X_{t_i}| \le \Delta_{n,i}^{-k} \right \} }}{a^2(X_{t_i})\Delta_{n,i}}B_i \xrightarrow{\mathbb{P}} 0 $ and so we observe that, by the definition of the function $r$ and by \eqref{propriety power R} we have that
\begin{equation}
r(\theta,X_{t_i})1_{\left \{|X_{t_i}| \le \Delta_{n,i}^{-k} \right \}}= R(\theta, \Delta_{n,i}^{1 + \delta},X_{t_i})= \Delta_{n,i}^{1 + \delta}R(\theta, 1, X_{t_i}).
\label{eq: puissance R} 
\end{equation}
Hence
$$\frac{1}{t_n} \sum_{i=0}^{n-1} \frac{\varphi_{\Delta_{n,i}^\beta}(X_{t_{i+1}} - X_{t_i})1_{\left \{|X_{t_i}| \le \Delta_{n,i}^{-k} \right \} }}{a^2(X_{t_i})\Delta_{n,i}}B_i = $$
$$ = \frac{1}{t_n} \sum_{i=0}^{n-1} \Delta_{n,i}^{\delta} \zeta_i \frac{\varphi_{\Delta_{n,i}^\beta}(X_{t_{i+1}} - X_{t_i})1_{\left \{|X_{t_i}| \le \Delta_{n,i}^{-k} \right \} }(R(\theta,1, X_{t_i}) - R(\theta_0,1, X_{t_i} ))}{a^2(X_{t_i})}$$
To prove the convergence, we have to show that
\begin{equation}
\frac{1}{t_n} \sum_{i= 0}^{n-1} |\mathbb{E}[\Delta_{n,i}^{\delta} f_{i,n}(X_{t_i}, \theta) 1_{\left \{|X_{t_i}| \le \Delta_{n,i}^{-k} \right \} }\zeta_i \varphi_{\Delta_{n,i}^\beta}(X_{t_{i+1}} - X_{t_i})| \mathcal{F}_{t_i}] |\xrightarrow{\mathbb{P}} 0,
\label{eq: convergence Bi} 
\end{equation}
and
$$ \frac{1}{(t_n)^2} \sum_{i= 0}^{n-1}\mathbb{E}[\Delta_{n,i}^{2 \delta} f_{i,n}^2(X_{t_i}, \theta) 1_{\left \{|X_{t_i}| \le \Delta_{n,i}^{-k} \right \} } \zeta_i^2 \varphi^2_{\Delta_{n,i}^\beta}(X_{t_{i+1}} - X_{t_i})| \mathcal{F}_{t_i}] \xrightarrow{\mathbb{P}} 0, $$
with $$f_{i,n}(X_{t_i}, \theta) = \frac{R(\theta,1, X_{t_i}) - R(\theta_0,1, X_{t_i} )}{a^2(X_{t_i})}.$$
By the measurability of $X_{t_i}$ with respect to $\mathcal{F}_{t_i}$, by the fact that $|\Delta_{n,i}| \le
\Delta_n$ and that $t_n = 0(n \Delta_n)$ we get
$$\frac{1}{t_n} \sum_{i= 0}^{n-1} |\mathbb{E}[\Delta_{n,i}^{\delta} f_{i,n}(X_{t_i}, \theta) 1_{\left \{|X_{t_i}| \le \Delta_{n,i}^{-k} \right \} } \zeta_i \varphi_{\Delta_{n,i}^\beta}(X_{t_{i+1}} - X_{t_i})| \mathcal{F}_{t_i}] | = $$
$$= \frac{1}{t_n} \sum_{i= 0}^{n-1}\Delta_{n,i}^{\delta}| f_{i,n}(X_{t_i}, \theta)||\mathbb{E}[1_{\left \{|X_{t_i}| \le \Delta_{n,i}^{-k} \right \} }\zeta_i \varphi_{\Delta_{n,i}^\beta}(X_{t_{i+1}} - X_{t_i})| \mathcal{F}_{t_i}] | \le $$
$$\le \Delta_n^{\delta} \frac{c}{n\Delta_n} \sum_{i= 0}^{n-1} |f_{i,n}(X_{t_i}, \theta)| |\mathbb{E}[1_{\left \{|X_{t_i}| \le \Delta_{n,i}^{-k} \right \} } \zeta_i \varphi_{\Delta_{n,i}^\beta}(X_{t_{i+1}} - X_{t_i})| \mathcal{F}_{t_i}] | .$$
We recall that $\delta$ is positive. Using \eqref{eq: convergence expected value prop 2}, we get the convergence \eqref{eq: convergence Bi} in $L^1$ and thus in probability. \\
In the same way, 
$$ \frac{1}{(t_n)^2} \sum_{i= 0}^{n-1}\mathbb{E}[\Delta_{n,i}^{2\delta} f_{i,n}^2(X_{t_i}, \theta)1_{\left \{|X_{t_i}| \le \Delta_{n,i}^{-k} \right \} } \zeta_i^2 \varphi^2_{\Delta_{n,i}^\beta}(X_{t_{i+1}} - X_{t_i})| \mathcal{F}_{t_i}] \le $$
$$\le \Delta_n^{2\delta} \frac{c}{n^2\Delta^2_n} \sum_{i= 0}^{n-1} f_{i,n}^2(X_{t_i}, \theta) \mathbb{E}[1_{\left \{|X_{t_i}| \le \Delta_{n,i}^{-k} \right \} } \zeta_i^2 \varphi^2_{\Delta_{n,i}^\beta}(X_{t_{i+1}} - X_{t_i})| \mathcal{F}_{t_i}],$$
that goes to zero in probability using \eqref{eq: conv squared prop 2}. \\
$$\frac{1}{t_n} \sum_{i=0}^{n-1} \frac{\varphi_{\Delta_{n,i}^\beta}(X_{t_{i+1}} - X_{t_i})1_{\left \{|X_{t_i}| \le \Delta_{n,i}^{-k} \right \} }}{a^2(X_{t_i})\Delta_{n,i}}C_i = \frac{2}{t_n} \sum_{i=0}^{n-1} \Delta_{n,i}^{1 + \delta} f_{i,n}(X_{t_i}, \theta) \varphi_{\Delta_{n,i}^\beta}(X_{t_{i+1}} - X_{t_i}),$$
with $f_{i,n}(X_{t_i}, \theta): = \frac{R(\theta_0,1, X_{t_i})b(X_{t_i}, \theta_0) - R(\theta,1, X_{t_i} )b(X_{t_i}, \theta)}{a^2(X_{t_i})} 1_{\left \{|X_{t_i}| \le \Delta_{n,i}^{-k} \right \} } $, where we have used \eqref{eq: puissance R}. \\
In module, it is upper bounded by $\Delta_n^{\delta} \frac{c}{n} \sum_{i=0}^{n-1} | f_{i,n}(X_{t_i}, \theta) \varphi_{\Delta_{n,i}^\beta}(X_{t_{i+1}} - X_{t_i})|.$ \\
We observe that the exponent on $\Delta_n$ is positive so it goes to zero as $n \rightarrow \infty $ and that $|\varphi_{\Delta_{n,i}^\beta}(X_{t_{i+1}} - X_{t_i})| \le c$. 
By the polynomial growth of $f_{i,n}$ and the third point of Lemma \ref{lemma 2.1 GLM}, we get that 
$\frac{1}{n} \sum_{i=0}^{n-1} | f_{i,n}(X_{t_i}, \theta)| $ is bounded in $L^1$. It yields the convergence in probability that we were looking for. \\
Let us consider $D_i$. Using triangle inequality, we can just prove the convergence of the following:
$$|\frac{1}{t_n} \sum_{i=0}^{n-1} \frac{\varphi_{\Delta_{n,i}^\beta}(X_{t_{i+1}} - X_{t_i})1_{\left \{|X_{t_i}| \le \Delta_{n,i}^{-k} \right \} }}{a^2(X_{t_i})\Delta_{n,i}} r(\theta, X_{t_i} )^2 | = $$
$$ = |\frac{1}{t_n} \sum_{i=0}^{n-1} \Delta_{n,i}^{1 + 2\delta} \frac{\varphi_{\Delta_{n,i}^\beta}(X_{t_{i+1}} - X_{t_i})1_{\left \{|X_{t_i}| \le \Delta_{n,i}^{-k} \right \} }}{a^2(X_{t_i})} R(\theta,1, X_{t_i} )^2 | \le $$
$$\le \Delta_n^{2\delta} \frac{1}{n} \sum_{i=0}^{n-1} | f_{i,n}(X_{t_i}, \theta) \varphi_{\Delta_{n,i}^\beta}(X_{t_{i+1}} - X_{t_i})|, $$
$f_{i,n}(X_{t_i}, \theta) = \frac{ R(\theta,1, X_{t_i} )^2}{a^2(X_{t_i})}$, using also the indicator is always upper bounded by $1$. \\
Also this time the exponent on $\Delta_n$ is positive. We can use the boundedness of $ | \varphi_{\Delta^\beta_{n,i}}|$, the polynomial growth of $f_{i,n}$ and third point of Lemma \ref{lemma 2.1 GLM} in order to get that $ \frac{1}{n} \sum_{i=0}^{n-1}| f_{i,n}(X_{t_i}, \theta) \varphi_{\Delta_{n,i}^\beta}(X_{t_{i+1}} - X_{t_i})|$ is bounded in $L^1$ . It turns
$$|\frac{1}{t_n} \sum_{i=0}^{n-1} \frac{\varphi_{\Delta_{n,i}^\beta}(X_{t_{i+1}} - X_{t_i})1_{\left \{|X_{t_i}| \le \Delta_{n,i}^{-k} \right \} }}{a^2(X_{t_i})\Delta_{n,i}} r(\theta, X_{t_i} )^2 | \xrightarrow{\mathbb{P}} 0. $$
Considering $E_i$, we use again the triangle inequality in order to prove only the convergence to zero of the following:
\begin{equation}
|\frac{1}{t_n} \sum_{i=0}^{n-1} 2 \frac{\varphi_{\Delta_{n,i}^\beta}(X_{t_{i+1}} - X_{t_i})1_{\left \{|X_{t_i}| \le \Delta_{n,i}^{-k} \right \} }}{a^2(X_{t_i})\Delta_{n,i}} \int_{t_i}^{t_{i+1}} b(X_s, \theta_0) ds \, r(\theta, X_{t_i}) |. 
\label{eq: Ei in contrast conv} 
\end{equation}
In the sequel it will be useful to substitute $\int_{t_i}^{t_{i+1}} b(X_s, \theta_0) ds$ with $\Delta_{n,i}\, b(X_{t_i}, \theta_0)$. \\
\begin{equation}
\int_{t_i}^{t_{i+1}} b(X_s, \theta_0) ds = \int_{t_i}^{t_{i+1}} [b(X_s, \theta_0)- b(X_{t_i}, \theta_0) ] ds + \Delta_{n,i}b(X_{t_i}, \theta_0).
\label{eq: rewrite int b} 
\end{equation}
In order to show that the first term is negligible compared to $\Delta_{n,i}$, we consider the following expected value:
$$\sup_{u \in [0, \Delta_{n,i}]} \mathbb{E}[|b(X_{t_i + u}, \theta_0) - b(X_{t_i}, \theta_0)||\mathcal{F}_{t_i}] \le
\sup_{u \in [0, \Delta_{n,i}]} \mathbb{E}[\left \| \frac{\partial b}{\partial x }\right \|_\infty |X_{t_i + u} - X_{t_i}|| \mathcal{F}_{t_i}] \le $$
$$\le c\sup_{u \in [0, \Delta_{n,i}]} \mathbb{E}[|X_{t_i + u} - X_{t_i}||\mathcal{F}_{t_i}].$$
In the last inequality we have used that the derivative of $b$ is supposed bounded. \\
Using Holder inequality we get that it is, for each $p \ge 2$, upper bounded by
$$c\sup_{u \in [0, \Delta_{n,i}]} (\mathbb{E}[|X_{t_i + u} - X_{t_i}|^p| \mathcal{F}_{t_i}])^\frac{1}{p} \le$$
\begin{equation}
\le c \sup_{u \in [0, \Delta_{n,i}]}(|t_i + u - t_i|(1 + |X_{t_i}|^p ))^\frac{1}{p} = R(\theta, \Delta_{n,i}^{\frac{1}{p}}, X_{t_i}).
\label{eq: estim substitution b} 
\end{equation}
Where, in the last inequality, we have used the second point of Lemma \ref{lemma: Moment inequalities}. \\
For $p = 2$, $\mathbb{E}[|b(X_{t_i + u}, \theta_0) - b(X_{t_i}, \theta_0)||\mathcal{F}_{t_i}]  \le R(\theta, \Delta_{n,i}^{\frac{1}{2}}, X_{t_i})$ and therefore
\begin{equation}
\int_{t_i}^{t_{i+1}} \mathbb{E}[|b(X_s, \theta_0)- b(X_{t_i}, \theta_0)||\mathcal{F}_{t_i}] ds \le  \, \int_{t_i}^{t_{i+1}}R(\theta, \Delta_{n,i}^{\frac{1}{2}}, X_{t_i})ds =\, R(\theta, \Delta_{n,i}^{\frac{3}{2}}, X_{t_i}),
\label{eq: expected value int b as R} 
\end{equation}
negligible compared to $\Delta_{n,i}$, that is the order of the second term of \eqref{eq: rewrite int b}. \\
Using \eqref{eq: puissance R} and \eqref{eq: rewrite int b}, \eqref{eq: Ei in contrast conv} can be reformulated as
\begin{align}
 |\frac{1}{t_n} \sum_{i=0}^{n-1} 2 \frac{\varphi_{\Delta_{n,i}^\beta}(X_{t_{i+1}} - X_{t_i})1_{\left \{|X_{t_i}| \le \Delta_{n,i}^{-k} \right \} }\Delta_{n,i}^{\delta}R(\theta, 1, X_{t_i})}{a^2(X_{t_i})} [\Delta_{n,i} \,b(X_{t_i}, \theta_0) + \nonumber \\
 +\int_{t_i}^{t_{i+1}}[ b(X_s, \theta_0) - b(X_{t_i}, \theta_0)] ds ]|.
\label{eq: reformulation Ei} 
\end{align}
The first term is upper bounded by
$$\Delta_n^{\delta}\frac{1}{n} \sum_{i=0}^{n-1} | f_{i,n}(X_{t_i}, \theta) \varphi_{\Delta_{n,i}^\beta}(X_{t_{i+1}} - X_{t_i})|,$$
where $f_{i,n}(X_{t_i}, \theta)= \frac{2b(X_{t_i}, \theta_0) R(\theta,1, X_{t_i} )}{a^2(X_{t_i})}$. \\
Again, the exponent on $\Delta_n$ is positive and $\frac{1}{n} \sum_{i=0}^{n-1} | f_{i,n}(X_{t_i}, \theta) \varphi_{\Delta_{n,i}^\beta}(X_{t_{i+1}} - X_{t_i})|$ is bounded in $L^1$ using the boundedness of $\varphi_{\Delta_{n,i}^\beta}$, the polynomial growth of $f_{i,n}$ and the third point of Lemma \ref{lemma 2.1 GLM}. \\
Concerning the second term of \eqref{eq: reformulation Ei}, we observe it is upper bounded by
$$\Delta_n^{\delta}\frac{1}{n\Delta_n} \sum_{i=0}^{n-1} | f_{i,n}(X_{t_i}, \theta) \int_{t_i}^{t_{i+1}}[ b(X_s, \theta_0) - b(X_{t_i}, \theta_0)] ds \, \varphi_{\Delta_{n,i}^\beta}(X_{t_{i+1}} - X_{t_i})|,$$
where $f_{i,n}(X_{t_i}, \theta)= \frac{2R(\theta,1, X_{t_i} )}{a^2(X_{t_i})}$.
The exponent on $\Delta_n$ is still positive and $\frac{1}{n\Delta_n} \sum_{i=0}^{n-1} | f_{i,n}(X_{t_i}, \theta) \int_{t_i}^{t_{i+1}}[ b(X_s, \theta_0) - b(X_{t_i}, \theta_0)] ds \, \varphi_{\Delta_{n,i}^\beta}(X_{t_{i+1}} - X_{t_i})|$ is bounded in $L^1$. Indeed, 
$$\frac{1}{n\Delta_n} \sum_{i=0}^{n-1} \mathbb{E}[| f_{i,n}(X_{t_i}, \theta) \int_{t_i}^{t_{i+1}}[ b(X_s, \theta_0) - b(X_{t_i}, \theta_0)] ds \, \varphi_{\Delta_{n,i}^\beta}(X_{t_{i+1}} - X_{t_i})|] \le $$
$$ \le \frac{c}{n\Delta_n} \sum_{i=0}^{n-1} \mathbb{E}[ | f_{i,n}(X_{t_i}, \theta) \int_{t_i}^{t_{i+1}}[ b(X_s, \theta_0) - b(X_{t_i}, \theta_0)] ds |]=$$
\begin{equation}
= \frac{c}{n\Delta_n} \sum_{i=0}^{n-1} \mathbb{E}[ | f_{i,n}(X_{t_i}, \theta) \mathbb{E}[\int_{t_i}^{t_{i+1}}[ b(X_s, \theta_0) - b(X_{t_i}, \theta_0)] ds \,| \mathcal{F}_{t_i}]|] = \frac{c}{n\Delta_n} \sum_{i=0}^{n-1} \mathbb{E}[ | f_{i,n}(X_{t_i}, \theta) R(\theta, \Delta_{n,i}^\frac{3}{2}, X_{t_i})|],
\label{eq: bound in L1} 
\end{equation}
where we have used the definition of conditional expectation and \eqref{eq: expected value int b as R}. \\
From \eqref{propriety power R}, we can upper bound \eqref{eq: bound in L1} by
$\Delta_n^\frac{1}{2} \frac{1}{n} \sum_{i=0}^{n-1} \mathbb{E}[ | f_{i,n}(X_{t_i}, \theta) R(\theta, 1, X_{t_i})|]. $ \\ 
The exponent of $\Delta_n$ is clearly positive and $\frac{1}{n} \sum_{i=0}^{n-1} \mathbb{E}[ | f_{i,n}(X_{t_i}, \theta) R(\theta, 1, X_{t_i})|]$ is bounded  using again the polynomial growth of both $f_{n,i}$ and $R$ and the third point of Lemma \ref{lemma 2.1 GLM}. \\
We have obtained the wanted convergence. \\
\\
Let us now consider the main terms of \eqref{eq: difference contraste convergence}: we will show that they converge to $\int_\mathbb{R} \frac{(b(x, \theta) - b(x, \theta_0))^2}{a^2(x)} \pi(dx)$. \\
In order to do it, we want to replace $\int_{t_i}^{t_{i+1}} b(X_s, \theta_0) ds$ with $\Delta_{n,i}\, b(X_{t_i}, \theta_0)$ in \eqref{eq: difference contraste convergence}, getting:
$$\Delta_{n,i}^2[b(X_{t_i}, \theta)^2-  b(X_{t_i}, \theta_0)^2] + 2\Delta_{n,i}^2b(X_{t_i}, \theta_0)[ b(X_{t_i}, \theta_0) -  b(X_{t_i}, \theta)] = \Delta_{n,i}^2 [ b(X_{t_i}, \theta) -  b(X_{t_i}, \theta_0)]^2.$$
Hence, we can reformulate \eqref{eq: difference contraste convergence} adding and subtracting $\Delta_{n,i}b(X_{t_i}, \theta_0)$. We obtain
$$ \frac{U_n(\theta) - U_n(\theta_0)}{t_n} = \frac{1}{t_n} \sum_{i=0}^{n-1} \frac{\varphi_{\Delta_{n,i}^\beta}(X_{t_{i+1}} - X_{t_i})1_{\left \{|X_{t_i}| \le \Delta_{n,i}^{-k} \right \} }}{a^2(X_{t_i})}\Delta_{n,i}[ b(X_{t_i}, \theta) -  b(X_{t_i}, \theta_0)]^2 +$$
\begin{equation}
+ \frac{1}{t_n} \sum_{i=0}^{n-1} \frac{2 \varphi_{\Delta_{n,i}^\beta}(X_{t_{i+1}} - X_{t_i})1_{\left \{|X_{t_i}| \le \Delta_{n,i}^{-k} \right \} }}{a^2(X_{t_i})} (\int_{t_i}^{t_{i+1}} [b(X_s, \theta_0) - b(X_{t_i}, \theta_0)] ds ) \,[b(X_{t_i}, \theta_0) - b(X_{t_i}, \theta)] + R_i,
\label{eq: difference constraste reformulated} 
\end{equation}
where $R_i$ represents the rest terms, for which we have already shown the convergence to $0$ in probability.
The second term of \eqref{eq: difference constraste reformulated} goes to $0$ in $L^1$, in fact
$$\mathbb{E}[|\frac{1}{t_n} \sum_{i=0}^{n-1} f(X_{t_i}, \theta)\varphi_{\Delta_{n,i}^\beta}(X_{t_{i+1}} - X_{t_i})(\int_{t_i}^{t_{i+1}} (b(X_s, \theta_0) ds- b(X_{t_i}, \theta_0))ds|] =$$
$$ = \mathbb{E}[|\frac{1}{t_n} \sum_{i=0}^{n-1} f(X_{t_i}, \theta)\mathbb{E}[\varphi_{\Delta_{n,i}^\beta}(X_{t_{i+1}} - X_{t_i})(\int_{t_i}^{t_{i+1}} (b(X_s, \theta_0) - b(X_{t_i}, \theta_0))ds | \mathcal{F}_{t_i}]|],$$
With $f(X_{t_i}, \theta) := \frac{2 (b(X_{t_i}, \theta_0) - b(X_{t_i}, \theta)) }{ a^2(X_{t_i})} 1_{\left \{|X_{t_i}| \le \Delta_{n,i}^{-k} \right \} }$. \\
Using that $\varphi_{\Delta_{n,i}^\beta}(X_{t_{i+1}} - X_{t_i})$ is bounded by a constant and the estimation \eqref{eq: expected value int b as R}, we get that it is upper bounded by 
$$\mathbb{E}[|\frac{1}{n\Delta_n} \sum_{i=0}^{n-1} f(X_{t_i}, \theta)R(\theta, \Delta_{n,i}^{\frac{3}{2}}, X_{t_i})|] \le \Delta_n^\frac{1}{2} \frac{1}{n} \sum_{i=0}^{n-1} \mathbb{E}[|f(X_{t_i}, \theta)R(\theta, 1, X_{t_i})|],$$
where in the last inequality we have used \eqref{propriety power R}, the triangle inequality and that $|\Delta_{n,i}| \le \Delta_n$.
Using the third point of Lemma \ref{lemma 2.1 GLM}, we obtain that $\frac{1}{n} \sum_{i=0}^{n-1} |f(X_{t_i}, \theta)R(\theta, 1, X_{t_i})|$ is bounded in $L^1$ and so the convergence wanted. \\
To conclude, we use the second point of Proposition \ref{prop: LT1} on the first term of \eqref{eq: difference constraste reformulated}. It yields 
$$\frac{1}{t_n} \sum_{i=0}^{n-1} \frac{\varphi_{\Delta_{n,i}^\beta}(X_{t_{i+1}} - X_{t_i})1_{\left \{|X_{t_i}| \le \Delta_{n,i}^{-k} \right \} }}{a^2(X_{t_i})}\Delta_{n,i} (b(X_{t_i}, \theta_0) - b(X_{t_i}, \theta))^2 \xrightarrow{\mathbb{P}} \int_\mathbb{R} \frac{(b(x, \theta) - b(x, \theta_0))^2}{a^2(x)} \pi(dx).$$
Therefore, $$\frac{U_n (\theta) - U_n (\theta_0)}{t_n}\xrightarrow{\mathbb{P}} \int_\mathbb{R} \frac{(b(x, \theta) - b(x, \theta_0))^2}{a^2(x)} \pi(dx).$$
\end{proof}

\begin{remark}
We observe that the contrast function does not converge: $\forall \theta \in \Theta$
$$\lim_{n \rightarrow \infty} \frac{U_n(\theta)}{t_n} =\infty.$$
It happens because, in the expansion $$X_{t_{i+1}} - m_\theta(X_{t_i}) = \zeta_i + \int_{t_i}^{t_{i+1}} b(X_s, \theta_0) ds - \Delta_{n,i}\, b(X_{t_i}, \theta) + R(\theta, \Delta_{n,i}^{1 + \delta}, X_{t_i}),$$ $\zeta_i$ is of the order $\Delta_n^{\frac{1}{2}}$ while the order of the part dependent on $\theta$ is $\Delta_n$. \\
That is the reason why we consider the difference between $U_n(\theta)$ and $U_n(\theta_0)$: stressing that $\zeta_i$ does not depend on $\theta$, we get that in the difference it does not contribute anymore. \\
The asymptotic behavior of $(U_n(\theta) - U_n(\theta_0))$ is therefore governed by the part depending on $\theta$. \\

\end{remark}

\subsection{Consistency of the estimator}
In order to prove the consistency of $\hat{\theta}_n$, we need that the convergence (\ref{eq: contrast convergence}) takes place in probability uniformly in the parameter $\theta$, we want therefore to show the uniformity of the convergence in $\theta$. \\
Let $S_n(\theta): = \frac{U_n(\theta) - U_n(\theta_0)}{t_n}$ ; we regard this as a random element taking values in $(C(\Theta), \left \| . \right \|_\infty)$. It suffices to prove the tightness of this sequence, to do it we need an explicit approximation of $\dot{m}_{\theta,h}$. Such an approximation, together with the approximation of $\ddot{m}_{\theta,h}$, will be also useful to study the asymptotic behavior of the derivatives of the contrast function. In the following proposition we study their asymptotic expansions as $\Delta_{n,i} \rightarrow 0$ : \\
\begin{proposition}
 \label{P:expansion_m_dot} 
Suppose that Assumptions 1 to 4 and 7 hold, with $\alpha \in (0,2)$, $\alpha \neq 1$ and $\beta \in (0, \frac{1}{1 + \alpha} - \epsilon)$. Then, for $|y| \le h^{- k_0}$ (where $k_0$ is the same as in Theorem \ref{th: dL mtheta alpha <1} or \ref{th: dL mtheta alpha  > 1}, according to $\alpha <1$ or $\alpha > 1$),
\begin{equation}
\dot{m}_{\theta,h}(y)= h \dot{b}(y, \theta) + R(\theta, h^{\frac{3}{2} \land (2 - \alpha\beta -\epsilon - \beta)}, y )
\label{eq: dl dot mtheta} 
\end{equation}
and
\begin{equation}
\ddot{m}_{\theta,h}(y)= h \ddot{b}(y, \theta) + R(\theta, h^{\frac{3}{2} \land (2 - \alpha\beta -\epsilon - \beta)}, y ).
\label{eq: dl ddot mtheta} 
\end{equation}
\label{prop: dl dotm ddotm} 
\end{proposition}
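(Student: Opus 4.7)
The plan is to differentiate the quotient representation $m_{\theta,h}(y) = y + \tilde N(\theta,h,y)/D(\theta,h,y)$, where $\tilde N(\theta,h,y) := \mathbb E[g_{h,y}(X_h^\theta)\mid X_0^\theta=y]$ with $g_{h,y}(z):=(z-y)\varphi_{h^\beta}(z-y)$, and $D(\theta,h,y) := \mathbb E[\varphi_{h^\beta}(X_h^\theta-y)\mid X_0^\theta=y]$. Since Theorems \ref{th: dL mtheta alpha <1} and \ref{th: dL mtheta alpha  > 1} already furnish expansions of $\tilde N$ and $D$, the task reduces to obtaining analogous expansions for $\partial_\theta\tilde N$ and $\partial_\theta D$ (and their second derivatives), and then applying the quotient rule together with the fact $D=1+o(1)$ from Theorems \ref{th: dl den alpha <1}--\ref{th: dl den alpha >1}.

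My main tool will be the semigroup-derivative identity obtained by differentiating the Kolmogorov backward equation in $\theta$: as only the drift in $A^\theta$ depends on $\theta$, one has $\partial_\theta A^\theta = \dot b(\,\cdot\,,\theta)\,\partial_y$, and hence for any sufficiently regular $f$,
\begin{equation*}
\partial_\theta \mathbb E[f(X_h^\theta)\mid X_0^\theta=y] \;=\; \int_0^h \mathbb E\bigl[\dot b(X_s^\theta,\theta)\,\partial_z(P^\theta_{h-s}f)(X_s^\theta)\,\big|\,X_0^\theta=y\bigr]\,ds.
\end{equation*}
Applied with $f=g_{h,y}$, and combined with the Dynkin expansion $P^\theta_{h-s}f = f + (h-s)A^\theta f + O((h-s)^2)$ whose coefficients have been explicitly controlled in the proofs of Theorems \ref{th: dl den alpha <1}--\ref{th: dL mtheta alpha  > 1}, the identity produces the main term $h\dot b(y,\theta)$, coming from $g_{h,y}'(y)=\varphi_{h^\beta}(0)=1$. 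Used with $f=\varphi_{h^\beta}(\cdot-y)$, the identity produces no contribution of order $h$ since $\varphi'(0)=0$, so $\partial_\theta D$ is purely a remainder.

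Controlling the remainders amounts to estimating the two errors that arise when $\partial_z(P^\theta_{h-s}f)(X_s^\theta)$ is replaced by $g_{h,y}'(y)=1$. First, the spatial variation $\dot b(X_s^\theta,\theta)-\dot b(y,\theta)$ is bounded by a constant multiple of $|X_s^\theta - y|$ thanks to Assumption 7, and Lemma \ref{lemma: Moment inequalities} yields $\mathbb E[|X_s^\theta-y|\mid X_0^\theta=y]\le c\sqrt{s}(1+|y|)$; time integration then gives the $h^{3/2}$ contribution appearing in \eqref{eq: dl dot mtheta}. Second, the Dynkin correction $(h-s)\partial_z A^\theta g_{h,y}(y)$ involves derivatives of $\varphi_{h^\beta}$ against the singular Lévy measure, and a direct application of Proposition \ref{prop: truc moche h} (with one extra factor $h^{-\beta}$ from the action of $\partial_z$ on $\varphi_{h^\beta}$) yields a bound of order $h^{2-\alpha\beta-\epsilon-\beta}$. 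Taking the minimum of these exponents gives the announced remainder, and the hypothesis $\beta<\tfrac{1}{1+\alpha}-\epsilon$ precisely ensures $2-\alpha\beta-\epsilon-\beta>1$, so that the rest is genuinely of smaller order than the main term $h\dot b(y,\theta)$. The quotient rule then delivers \eqref{eq: dl dot mtheta}.

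The second-order expansion \eqref{eq: dl ddot mtheta} follows by applying the same strategy once more: differentiating the semigroup-derivative identity in $\theta$ produces a leading term $h\ddot b(y,\theta)$ together with cross terms that are estimated by the same machinery, relying on the third-order smoothness of $b$ in $\theta$ guaranteed by Assumption 7. The main obstacle throughout the argument is the careful bookkeeping of derivatives of the truncation $\varphi_{h^\beta}$ against the singular Lévy measure in the regime $\alpha\in[1,2)$; however, the needed estimates follow from adapting those already developed in the proofs of Theorems \ref{th: dL mtheta alpha <1}--\ref{th: dL mtheta alpha  > 1}, with $b$ systematically replaced by $\dot b$ or $\ddot b$ once the parametric differentiation has been performed. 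Justification of differentiation under the expectation is standard, using the $L^p$-regularity of $\theta\mapsto X^\theta$ inherited from the smoothness of the coefficients in Assumption 7.
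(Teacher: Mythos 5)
Your Duhamel identity $\partial_\theta P_h^\theta f=\int_0^h P_s^\theta\bigl(\dot b(\cdot,\theta)\,\partial_z P_{h-s}^\theta f\bigr)\,ds$ is formally correct and does isolate the right leading terms (since $g_{h,y}'(y)=1$ for the numerator and $\varphi_{h^\beta}'(0)=0$ for the denominator), but the argument has a genuine gap at its central step: you never actually control $\partial_z\bigl(P_{h-s}^\theta f\bigr)$ for the singular test functions $f=g_{h,y}$ and $f=\varphi_{h^\beta}(\cdot-y)$. The estimates you borrow from the proofs of Theorems \ref{th: dl den alpha <1}--\ref{th: dL mtheta alpha  > 1} bound quantities of the form $\mathbb{E}[A^kf(X_u^\theta)\mid X_0^\theta=z]$, i.e.\ the semigroup applied to generator iterates; they say nothing about the \emph{spatial derivative} of $P_{h-s}^\theta f$. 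Differentiating the Dynkin expansion in $z$ term by term is not free: the $z$-derivative of the iterated-integral remainder involves $\mathbb{E}\bigl[(A^2f)'(X_u^{\theta,z})\,\partial_zX_u^{\theta,z}\bigr]$, which requires (i) moment bounds on the first-variation process $\partial_zX^{\theta,z}$ --- the paper only establishes such bounds for $\partial_\theta X^\theta$ and $\partial_\theta^2 X^\theta$ (Lemma \ref{lemma: estimation dotx et ddotx}), precisely the objects your route is designed to avoid --- and (ii) a re-run of the whole $A_cA_d$, $A_dA_c$, $A_d^2$ case analysis with one extra derivative of $\varphi_{h^\beta}$, hence one extra factor $h^{-\beta}$, separately for $\alpha<1$ and $\alpha\in[1,2)$. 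Your claim that this ``yields a bound of order $h^{2-\alpha\beta-\epsilon-\beta}$'' is asserted rather than derived, and the naive power counting does not obviously reproduce that exponent.

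There is a second, smaller omission: your error decomposition lists only the spatial variation of $\dot b$ and the ``Dynkin correction'', but replacing $\partial_z(P_{h-s}^\theta g_{h,y})(X_s^\theta)$ by $1$ also produces the zeroth-order term $g_{h,y}'(X_s^\theta)-g_{h,y}'(y)=\bigl[\varphi_{h^\beta}(X_s^\theta-y)-1\bigr]+(X_s^\theta-y)h^{-\beta}\varphi'\bigl((X_s^\theta-y)/h^\beta\bigr)$. Both summands are supported on the event $\{|X_s^\theta-y|\ge h^\beta\}$, and the crude Markov bound $\mathbb{P}(|X_s^\theta-y|\ge h^\beta)\le c\,s\,h^{-2\beta}(1+|y|^2)$ only yields a contribution of order $h^{2-2\beta}$ after time integration, which is strictly worse than the claimed $\tfrac32\wedge(2-\alpha\beta-\epsilon-\beta)$ when $\beta$ is close to $\tfrac12$ and $\alpha$ is small; you would need the sharper annulus estimates of Lemma \ref{lemma: P(E)} and Proposition \ref{prop: truc moche z} here. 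For comparison, the paper sidesteps all of this by differentiating pathwise: it represents $\dot m_{\theta,h}$ through $\dot X_h^\theta=\partial_\theta X_h^\theta$, proves $\mathbb{E}[|\dot X_h^\theta/h|^p]\le c(1+|x|^c)$ via Gronwall, and then kills every term containing $\varphi^{(k)}$, $k\ge1$, in one stroke by applying Proposition \ref{prop: truc moche z} to $Z=\dot X_h^\theta/h$ and its products. If you wish to keep the semigroup route you must supply the missing first-variation estimates and carry out the extra-derivative power counting; as it stands the proof is incomplete.
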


\begin{remark}
It is also possible to show that
\begin{equation}
|\dddot{m}_{\theta,h}(y)| = R(\theta, h,y).  
\label{eq: estimation 3dot m } 
\end{equation}
\end{remark}
The proposition above will be proved in the Appendix \ref{S:Proof_derivatives}, where we will also justify \eqref{eq: estimation 3dot m }. We can now show the tightness of $S_n(\theta)$: \\
\begin{lemma}
Suppose that Assumptions 1 - 8 and $A_\beta$ are satisfied. Then 
$$S_n(\theta): = \frac{U_n(\theta) - U_n(\theta_0)}{t_n}$$ is a tight sequence in $(C(\Theta), \left \| . \right \|_\infty)$.
\label{lemma: tightness Sn}
\begin{proof}
In the proof we use the notation of Section $5.3$ and especially of the proof of Lemma \ref{lemma: convergence contrast}.  
Since the sum of tight sequences is also tight, we can see $S_n(\theta)$ as $S_{n1}(\theta) + S_{n2}(\theta)$, where
$$S_{n1}(\theta): = \frac{1}{t_n} \sum_{i=0}^{n-1} \frac{\varphi_{\Delta_{n,i}^\beta}(X_{t_{i+1}} - X_{t_i}) 1_{\left \{|X_{t_i}| \le \Delta_{n,i}^{-k} \right \} }}{a^2(X_{t_i})\Delta_{n,i}}[\Delta_{n,i}^2(b(X_{t_i}, \theta)^2 - b(X_{t_i}, \theta_0)^2) + $$
$$ + 2\Delta_{n,i}\int_{t_i}^{t_{i+1}} b(X_s, \theta_0) ds(b(X_{t_i}, \theta_0) - b(X_{t_i}, \theta)) + C_i + D_i + E_i] +$$
$$+ \frac{2}{t_n} \sum_{i=0}^{n-1} \frac{ 1_{\left \{|X_{t_i}| \le \Delta_{n,i}^{-k} \right \} } \mathbb{E}[\zeta_i \varphi_{\Delta_{n,i}^\beta}(X_{t_{i+1}} - X_{t_i})| \mathcal{F}_{t_i}] }{a^2(X_{t_i})\Delta_{n,i}} ( \Delta_{n,i}(b(X_{t_i}, \theta) - b(X_{t_i}, \theta_0)) + (r(\theta, X_{t_i}) - r(\theta_0, X_{t_i}))),$$
$$S_{n2}(\theta):= \frac{2}{t_n} \sum_{i=0}^{n-1} \frac{ 1_{\left \{|X_{t_i}| \le \Delta_{n,i}^{-k} \right \} }}{a^2(X_{t_i})\Delta_{n,i}}[ \zeta_i \varphi_{\Delta_{n,i}^\beta}(X_{t_{i+1}} - X_{t_i}) + $$
$$ - \mathbb{E}[\zeta_i \varphi_{\Delta_{n,i}^\beta}(X_{t_{i+1}} - X_{t_i})| \mathcal{F}_{t_i}]] ( \Delta_{n,i}(b(X_{t_i}, \theta) - b(X_{t_i}, \theta_0)) + (r(\theta, X_{t_i}) - r(\theta_0, X_{t_i}))),$$
and show the tightness of the two sequences individually, using two different criteria. \\
In order to prove that $S_{n1}$ is tight, we want to show that $ \sup_n \mathbb{E}[\sup_{\theta \in \Theta}| \frac{\partial}{ \partial \theta} S_{n1}(\theta)|] < \infty.$
As concerns $S_{n2}(\theta)$, according to Theorem 20 in Appendix 1 from Ibragimov and Has' Minskii \hyperref[csl:21]{(Ibragimov \& Has' Minskii, 2013)}, we should verify the following: for some positive constant $H$ independent of $n$, ,
\begin{equation}
\mathbb{E}[(S_{n2}(\theta))^2] \le H \qquad \forall \theta \in \Theta ,
\label{eq: 1 criterion tightness} 
\end{equation}
\begin{equation}
\mathbb{E}[(S_{n2}(\theta_1)- S_{n2}(\theta_2))^2] \le H(\theta_1 - \theta_2)^2 \qquad \forall \theta_1, \theta_2 \in \Theta.
\label{eq: 2 criterion tightness} 
\end{equation}
The derivative that we want to estimate is, using the expressions of $C_i$, $D_i$ and $E_i$,
\begin{equation}
\frac{\partial S_{n1} (\theta) }{\partial \theta}= \frac{1}{t_n} \sum_{i=0}^{n-1} \frac{\varphi_{\Delta_{n,i}^\beta}(X_{t_{i+1}} - X_{t_i}) 1_{\left \{|X_{t_i}| \le \Delta_{n,i}^{-k} \right \} }}{a^2(X_{t_i})\Delta_{n,i}}[2\Delta_{n,i}^2b(X_{t_i}, \theta) \dot{b}(X_{t_i}, \theta) + 
\label{eq: derivative Sn1} 
\end{equation}
$$ + 2\Delta_{n,i}\int_{t_i}^{t_{i+1}} b(X_s, \theta_0) ds(- \dot{b}(X_{t_i}, \theta)) -2\Delta_{n,i}(\dot{b}r)(X_{t_i}, \theta)-2\Delta_{n,i}(b\dot{r})(\theta, X_{t_i}) + 2(\dot{r}r)(\theta, X_{t_i}) +$$
$$ +2\dot{r}(\theta, X_{t_i})\int_{t_i}^{t_{i+1}} b(X_s, \theta_0) ds ] + \frac{2}{t_n} \sum_{i=0}^{n-1} \frac{\mathbb{E}[\zeta_i \varphi_{\Delta_{n,i}^\beta}(X_{t_{i+1}} - X_{t_i})| \mathcal{F}_{t_i}] 1_{\left \{|X_{t_i}| \le \Delta_{n,i}^{-k} \right \} }}{a^2(X_{t_i})\Delta_{n,i}}(\dot{r}(\theta, X_{t_i}) + \Delta_{n,i}\dot{b}(X_{t_i}, \theta)).  $$
Using triangle inequality, we can just estimate each term in $L^1$ norm. \\
Using the polynomial growth of both $b$ and $\dot{b}$, the fact that $\varphi$ and the indicator function are bounded, that $a^2$ is bigger than a constant from Assumption 5 and that $|\Delta_{n,i}| \le \Delta_n$, we get the first term of \eqref{eq: derivative Sn1} is upper bounded by
$$\mathbb{E}[\sup_{\theta \in \Theta}| \frac{1}{n} \sum_{i=0}^{n-1} (1 + |X_{t_i}|^c ) |],$$
that is bounded by the third point of Lemma \ref{lemma 2.1 GLM}. \\
On the second term of \eqref{eq: derivative Sn1} we can use that $\varphi$ and the indicator function are bounded, that $a^2$ is bigger than a constant from Assumption 5, that both $b$ and $\dot{b}$ have polynomial growth, from the integral we get a $|\Delta_{n,i}|$ (using \eqref{eq: rewrite int b} and \eqref{eq: expected value int b as R}) that is smaller than $\Delta_n$ and so we have just to use the third point of Lemma \ref{lemma 2.1 GLM} in order to say that the moments of $X$ are bounded. Hence
$$\mathbb{E}[\sup_{\theta \in \Theta} |\frac{1}{t_n} \sum_{i=0}^{n-1}\frac{\varphi_{\Delta_{n,i}^\beta}(X_{t_{i+1}} - X_{t_i}) 1_{\left \{|X_{t_i}| \le \Delta_{n,i}^{-k} \right \} }}{a^2(X_{t_i})\Delta_{n,i}}2\Delta_{n,i}\int_{t_i}^{t_{i+1}} b(X_s, \theta_0) ds(- \dot{b}(X_{t_i}, \theta)) |] \le c.$$
Concerning the third and the fourth terms of \eqref{eq: derivative Sn1}, we use again that $\varphi$ and the indicator function are bounded, that $a^2$ is bigger than a constant from Assumption 5 and that $\dot{b}$ has polynomial growth. 
We recall that
\begin{equation}
r(\theta, X_{t_i}) 1_{\left \{|X_{t_i}| \le \Delta_{n,i}^{-k} \right \} } = R(\theta, \Delta_{n,i}^{1 + \delta }, X_{t_i})= \Delta_{n,i}^{1 + \delta } R(\theta, 1, X_{t_i}), 
\label{eq: r x indicatrice} 
\end{equation}
using \eqref{eq: puissance R}.
By the definition \eqref{eq: definition r} and the development \eqref{eq: dl dot mtheta} of $\dot{m}_{\theta}$ we get also the following estimation:
\begin{equation}
\sup_{\theta \in \Theta}|\dot{r}(\theta, x) |\le \Delta_{n,i}(1 + |x|^c).
\label{eq: estimation dot r} 
\end{equation}
We obtain in this way a $|\Delta_{n,i}|$ that is always smaller than $\Delta_n$ and so we can simplify the $\Delta_n$ in the denominator.  Now we use the third point of Lemma \ref{lemma 2.1 GLM} and we get also this time that the expectation is bounded. \\
Also on the fifth we use that $\varphi$ and the indicator function are bounded, $a^2$ is bigger than a constant from Assumption 5, \eqref{eq: r x indicatrice} and \eqref{eq: estimation dot r} on $\dot{r}$. Therefore the fifth term of \eqref{eq: derivative Sn1} is upper bounded by $\Delta_{n}^{\delta}\mathbb{E}[| \frac{1}{n} \sum_{i=0}^{n-1} (1 + |X_{t_i}|^c ) |].$ \\
Since the exponent on $\Delta_n$ is positive and by the third point of Lemma \ref{lemma 2.1 GLM}, it is upper  bounded by a constant. \\
As concerns the expected value of the sixth term of \eqref{eq: derivative Sn1}, we use again that $\varphi$ and the indicator function are both bounded, $a^2$ is bigger than a constant from Assumption 5 and \eqref{eq: estimation dot r} on $\dot{r}$. Moreover, we get a $|\Delta_{n,i}|$ from the integral (using \eqref{eq: rewrite int b} and \eqref{eq: expected value int b as R}). The third point of Lemma \ref{lemma 2.1 GLM} is sufficient to assure the boundedness of the considered expectation. \\
Let us now consider 
$$\mathbb{E}[\sup_{\theta \in \Theta} |\frac{2}{t_n} \sum_{i=0}^{n-1}\frac{\varphi_{\Delta_{n,i}^\beta}(X_{t_{i+1}} - X_{t_i}) 1_{\left \{|X_{t_i}| \le \Delta_{n,i}^{-k} \right \} }}{a^2(X_{t_i})}\dot{b}(X_{t_i}, \theta)\mathbb{E}[\zeta_i \varphi_{\Delta_{n,i}^\beta}(X_{t_{i+1}} - X_{t_i})| \mathcal{F}_{t_i}]|].$$
By the boundedness of $\varphi$, the Assumption 5 on $a$ and the polynomial growth of $\dot{b}$, it is upper bounded by
$$\mathbb{E}[\sup_{\theta \in \Theta} |\frac{1}{n\Delta_n} \sum_{i=0}^{n-1}\mathbb{E}[ 1_{\left \{|X_{t_i}| \le \Delta_{n,i}^{-k} \right \} }\zeta_i \varphi_{\Delta_{n,i}^\beta}(X_{t_{i+1}} - X_{t_i})| \mathcal{F}_{t_i}](1 + |X_{t_i}|^c )|] \le $$
$$\le \mathbb{E}[\sup_{\theta \in \Theta} |\frac{1}{n\Delta_n} \sum_{i=0}^{n-1}R(\theta, \Delta_{n,i}^{(1 + \delta) \land \frac{3}{2}})(1 + |X_{t_i}|^c )|] \le c\Delta_n^{\delta \land \frac{1}{2}},  $$
where we have used \eqref{eq: convergence expected value prop 2}, $|\Delta_{n,i}| \le \Delta_n$ and the third point of Lemma \ref{lemma 2.1 GLM}. Since the exponent on $\Delta_n$ is positive, it is bounded by a constant.\\
In order to conclude the proof of the $S_{n1}$'s tightness, we observe that by the boundedness of both $\varphi$ and the indicator function, the Assumption 5 on $a$ and \eqref{eq: estimation dot r} on $\dot{r}$ we get
$$\mathbb{E}[\sup_{\theta \in \Theta}| \frac{2}{t_n} \sum_{i=0}^{n-1}\frac{\varphi_{\Delta_{n,i}^\beta}(X_{t_{i+1}} - X_{t_i}) 1_{\left \{|X_{t_i}| \le \Delta_{n,i}^{-k} \right \} }}{a^2(X_{t_i})\Delta_{n,i}}\dot{r}(\theta, X_{t_i})\mathbb{E}[\zeta_i \varphi_{\Delta_{n,i}^\beta}(X_{t_{i+1}} - X_{t_i})| \mathcal{F}_{t_i}]|] \le $$
$$\le \mathbb{E}[| \frac{c}{n\Delta_n} \sum_{i=0}^{n-1}\mathbb{E}[ 1_{\left \{|X_{t_i}| \le \Delta_{n,i}^{-k} \right \} } \zeta_i \varphi_{\Delta_{n,i}^\beta}(X_{t_{i+1}} - X_{t_i})| \mathcal{F}_{t_i}](1 + |X_{t_i}|^c )], $$
on which we can act exactly like above, getting the wanted boundedness. \\ \\
Let us now consider $S_{n2}$. In order to prove \eqref{eq: 2 criterion tightness}, we observe that
\begin{equation}
\mathbb{E}[(S_{n2}(\theta_1) - S_{n2}(\theta_2))^2] \le  \frac{c}{n^2 \Delta_n^2} \mathbb{E}[(\sum_{i=0}^{n-1}\frac{ 1_{\left \{|X_{t_i}| \le \Delta_{n,i}^{-k} \right \} }}{a^2(X_{t_i})\Delta_{n,i}}[ \zeta_i \varphi_{\Delta_{n,i}^\beta}(X_{t_{i+1}} - X_{t_i}) + 
\label{eq: Sn2 martingale} 
\end{equation}
$$ - \mathbb{E}[\zeta_i \varphi_{\Delta_{n,i}^\beta}(X_{t_{i+1}} - X_{t_i})| \mathcal{F}_{t_i}]] ( \Delta_{n,i}(b(X_{t_i}, \theta_2) - b(X_{t_i}, \theta_1)) + r(\theta_1, X_{t_i}) - r(\theta_2, X_{t_i})))^2]$$
By the building the sum is a square integrable martingale. The Pythagoras'\ theorem on a square integrable martingale yields that \eqref{eq: Sn2 martingale} is equal to
\begin{equation}
\frac{c}{n^2 \Delta_n^2} \sum_{i=0}^{n-1} \mathbb{E}[\frac{ 1_{\left \{|X_{t_i}| \le \Delta_{n,i}^{-k} \right \} }}{a^4(X_{t_i})\Delta^2_{n,i}}[ \zeta_i \varphi_{\Delta_{n,i}^\beta}(X_{t_{i+1}} - X_{t_i}) + 
\label{eq: equal Sn2 martingale} 
\end{equation}
$$- \mathbb{E}[\zeta_i \varphi_{\Delta_{n,i}^\beta}(X_{t_{i+1}} - X_{t_i})| \mathcal{F}_{t_i}]]^2 ( \Delta_{n,i}(b(X_{t_i}, \theta_2) - b(X_{t_i}, \theta_1)) + r(\theta_1, X_{t_i}) - r(\theta_2, X_{t_i}))^2].$$
We now observe that
$$( \Delta_{n,i}(b(X_{t_i}, \theta_2) - b(X_{t_i}, \theta_1)) + r(\theta_1, X_{t_i}) - r(\theta_2, X_{t_i}))^2 \le c \Delta^2_{n,i}(b(X_{t_i}, \theta_2) - b(X_{t_i}, \theta_1))^2 +$$
$$ + c(r(\theta_1, X_{t_i}) - r(\theta_2, X_{t_i}))^2 \le c \Delta^2_{n,i} \dot{b}(X_{t_i}, \theta_u)^2(\theta_1 - \theta_2)^2 +c \dot{r}(\theta_u,X_{t_i})^2(\theta_1 - \theta_2)^2, $$
where $\theta_u \in [\theta_1, \theta_2]$. Using \eqref{eq: estimation dot r}, it is upper bounded by
\begin{equation}
c \Delta^2_{n,i} [\dot{b}(X_{t_i}, \theta_u)^2 + (1 + |X_{t_i}|^c)^2](\theta_1 - \theta_2)^2 .
\label{eq: estimation dotb ans dotr in sn2} 
\end{equation}
Replacing \eqref{eq: estimation dotb ans dotr in sn2} in \eqref{eq: equal Sn2 martingale}, using that the indicator function is bounded by a constant, the Assumption 5 on $a$ and that $\dot{b}$ has polynomial growth, we get that \eqref{eq: equal Sn2 martingale} is upper bounded by
$$\frac{c}{n^2 \Delta_n^2}\sum_{i=0}^{n-1} \mathbb{E}[(\zeta_i \varphi_{\Delta_{n,i}^\beta}(X_{t_{i+1}} - X_{t_i})- \mathbb{E}[\zeta_i \varphi_{\Delta_{n,i}^\beta}(X_{t_{i+1}} - X_{t_i})| \mathcal{F}_{t_i}])^2(1 + |X_{t_i}|^c)^2](\theta_1 - \theta_2)^2 =$$
\begin{equation}
= \frac{c}{n^2 \Delta_n^2}\sum_{i=0}^{n-1} \mathbb{E}[\mathbb{E}[(\zeta_i \varphi_{\Delta_{n,i}^\beta}(X_{t_{i+1}} - X_{t_i})- \mathbb{E}[\zeta_i \varphi_{\Delta_{n,i}^\beta}(X_{t_{i+1}} - X_{t_i})| \mathcal{F}_{t_i}])^2| \mathcal{F}_{t_i}](1 + |X_{t_i}|^c)^2](\theta_1 - \theta_2)^2, 
\label{eq: eq martingale cond Sn2} 
\end{equation}
by the definition of conditional expected value and the measurability of $X_{t_i}$. \\
We observe that $\mathbb{E}[(\zeta_i \varphi_{\Delta_{n,i}^\beta}(X_{t_{i+1}} - X_{t_i})- \mathbb{E}[\zeta_i \varphi_{\Delta_{n,i}^\beta}(X_{t_{i+1}} - X_{t_i})| \mathcal{F}_{t_i}])^2| \mathcal{F}_{t_i}]$ is the conditional variance of $\zeta_i \varphi$ and so it is always smaller then $\mathbb{E}[\zeta_i^2 \varphi^2_{\Delta_{n,i}^\beta}(X_{t_{i+1}} - X_{t_i})| \mathcal{F}_{t_i}]$ that is, using \eqref{eq: conv squared prop 2}, $R(\theta, \Delta_{n,i}, X_{t_i})$. We get that \eqref{eq: eq martingale cond Sn2} is upper bounded by $$\frac{1}{n^2 \Delta_n^2}\sum_{i=0}^{n-1} \mathbb{E}[R(\theta, \Delta_{n,i}, X_{t_i})(1 + |X_{t_i}|^c)^2](\theta_1 - \theta_2)^2 \le \frac{1}{n \Delta_n}c(\theta_1 - \theta_2)^2, $$
where in the last inequality we have used \eqref{propriety power R} in order to say that $R(\theta, \Delta_{n,i}, X_{t_i})= \Delta_{n,i} R(\theta, 1,X_{t_i}) $, the fact that $|\Delta_{n,i}| \le \Delta_n$, the natural polynomial growth of the function derived from its definition \eqref{eq: definition R} and the third point of Lemma \ref{lemma 2.1 GLM} in order to assure the boundedness of the expected value. \\
Hence, recalling that $n \Delta_n \rightarrow \infty$, we get \eqref{eq: 2 criterion tightness} since $\frac{1}{n \Delta_n}c(\theta_1 - \theta_2)^2 \le c(\theta_1 - \theta_2)^2$. \\
\\
Concerning \eqref{eq: 1 criterion tightness}, we act exactly like we have already done in order to prove \eqref{eq: 2 criterion tightness}, getting
$\mathbb{E}[(S_{n2}(\theta))^2] \le c(\theta - \theta_0)^2$. $\Theta$ is a compact set and so $\Theta$'s diameter $d: = \sup_{\theta_1, \theta_2 \in \Theta } |\theta_1 - \theta_2| $ is $< \infty$. We therefore deduce \eqref{eq: 1 criterion tightness}: 
$c(\theta - \theta_0)^2 \le  cd^2 \le c.$ \\
The tightness of $S_n(\theta) = \frac{U_n(\theta) - U_n(\theta_0)}{t_n}$ follows.
\end{proof}
\end{lemma}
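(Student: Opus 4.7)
The approach is to decompose $S_n(\theta)$ into two pieces controlled by complementary tightness criteria. Recall from the proof of Lemma~\ref{lemma: convergence contrast} that $U_n(\theta)-U_n(\theta_0)$ naturally splits into (i) terms depending on $\theta$ only through $b(\theta,\cdot)$ and $r(\theta,\cdot)$ evaluated at the $X_{t_i}$, and (ii) noise terms multiplying the random variables $\zeta_i$ defined in \eqref{eq: def zeta}. I would rewrite each $\zeta_i \varphi_{\Delta_{n,i}^\beta}$ as $\mathbb{E}[\zeta_i\varphi_{\Delta_{n,i}^\beta}\mid \mathcal F_{t_i}] + (\zeta_i\varphi_{\Delta_{n,i}^\beta} - \mathbb{E}[\zeta_i\varphi_{\Delta_{n,i}^\beta}\mid \mathcal F_{t_i}])$, group the predictable conditional-expectation contributions together with the deterministic terms into $S_{n1}(\theta)$, and collect the centered martingale increments into $S_{n2}(\theta)$. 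Since a sum of tight sequences is tight, it suffices to prove tightness of each part separately.

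\textbf{Tightness of $S_{n1}$.} I would use the classical criterion that a uniform bound $\sup_n \mathbb{E}[\sup_{\theta\in\Theta}|\partial_\theta S_{n1}(\theta)|]<\infty$, together with $L^1$-boundedness at one fixed $\theta$, implies tightness in $(C(\Theta),\|\cdot\|_\infty)$. The differentiation is legitimate thanks to Assumption~7 and Proposition~\ref{prop: dl dotm ddotm}, which provides the estimate $\dot r(\theta,x)=O(\Delta_{n,i}^{1+\delta})(1+|x|^c)$ uniformly in $\theta$. Each differentiated term consists of polynomially growing functions of $X_{t_i}$ multiplied by the bounded factors $\varphi$, $1_{\{|X_{t_i}|\le \Delta_{n,i}^{-k}\}}$, $1/a^2(X_{t_i})$ (Assumption~5), and typically an extra power $\Delta_{n,i}^\delta$ coming from $r$ or $\dot r$. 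Summing $n$ such terms and dividing by $t_n=O(n\Delta_n)$ gives a bound controlled uniformly via Lemma~\ref{lemma 2.1 GLM}~(3). For the pieces involving $\mathbb{E}[\zeta_i\varphi_{\Delta_{n,i}^\beta}\mid\mathcal F_{t_i}]$, the first estimate of Lemma~\ref{lemma: conditional expected value zeta} saves an extra $\Delta_{n,i}^{\delta\wedge 1/2}$, which is exactly what is needed to absorb the $1/\Delta_{n,i}$ prefactor after normalisation.

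\textbf{Tightness of $S_{n2}$.} Since $S_{n2}(\theta)$ is, by construction, a sum of $\mathcal F_{t_{i+1}}$-measurable centered increments, I would verify the Ibragimov--Has'minskii conditions \eqref{eq: 1 criterion tightness}--\eqref{eq: 2 criterion tightness}. Pythagoras' theorem for square-integrable martingales reduces $\mathbb{E}[(S_{n2}(\theta_1)-S_{n2}(\theta_2))^2]$ to a sum of squared $L^2$ norms of individual increments. Conditioning on $\mathcal F_{t_i}$ and using the second estimate of Lemma~\ref{lemma: conditional expected value zeta}, $\mathbb{E}[\zeta_i^2\varphi_{\Delta_{n,i}^\beta}^2\mid\mathcal F_{t_i}]=R(\theta_0,\Delta_{n,i},X_{t_i})$, bounds each conditional variance by a quantity of order $\Delta_{n,i}$. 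The $\theta$-dependent factor is of the form $\Delta_{n,i}(b(X_{t_i},\theta_1)-b(X_{t_i},\theta_2))+(r(\theta_1,X_{t_i})-r(\theta_2,X_{t_i}))$; by the mean value theorem combined with Proposition~\ref{prop: dl dotm ddotm} and Assumption~7, its square is at most $C\Delta_{n,i}^2(1+|X_{t_i}|^c)(\theta_1-\theta_2)^2$. Putting all this together with the $1/(a^4\Delta_{n,i}^2)$ prefactor and the $1/t_n^2$ normalisation gives a bound of order $\frac{1}{(n\Delta_n)^2}\sum_i \Delta_{n,i}\cdot(\theta_1-\theta_2)^2=\frac{(\theta_1-\theta_2)^2}{n\Delta_n}\le C(\theta_1-\theta_2)^2$, establishing \eqref{eq: 2 criterion tightness}; \eqref{eq: 1 criterion tightness} follows by setting $\theta_2=\theta_0$ and using compactness of $\Theta$.

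\textbf{Main obstacle.} The delicate point is bookkeeping the exact orders of magnitude in the $\zeta_i$ terms: $\zeta_i$ itself is only $O(\Delta_{n,i}^{1/2})$, so neither a brute derivative bound nor a naive $L^2$ estimate works on the whole expression; what saves the argument is that the conditional mean $\mathbb{E}[\zeta_i\varphi|\mathcal F_{t_i}]$ carries an extra factor $\Delta_{n,i}^{\delta\wedge 1/2}$ (Lemma~\ref{lemma: conditional expected value zeta} part 1), so its contribution is smooth enough to be placed in $S_{n1}$, while the leftover centered increments have their variances controlled by the martingale orthogonality so that they do not accumulate beyond the rate $n\Delta_n$. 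The choice to split $S_n$ through conditional expectation, rather than by the $A_i$--$E_i$ decomposition alone, is precisely what makes both criteria work simultaneously.
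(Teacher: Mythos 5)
Your proposal is correct and follows essentially the same route as the paper: the same splitting of $S_n$ into $S_{n1}$ (deterministic terms plus the predictable part $\mathbb{E}[\zeta_i\varphi_{\Delta_{n,i}^\beta}\mid\mathcal F_{t_i}]$) and $S_{n2}$ (centered martingale increments), with tightness of $S_{n1}$ via the uniform bound on $\mathbb{E}[\sup_\theta|\partial_\theta S_{n1}(\theta)|]$ and tightness of $S_{n2}$ via the Ibragimov--Has'minskii conditions, martingale orthogonality, and the conditional variance bound of Lemma \ref{lemma: conditional expected value zeta}. The order bookkeeping you describe (the extra $\Delta_{n,i}^{\delta\wedge 1/2}$ from part 1 of that lemma, the $O(\Delta_{n,i})$ bound on $\dot r$, and the final $\frac{1}{n\Delta_n}(\theta_1-\theta_2)^2$ estimate) matches the paper's argument.
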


We are now ready to show the consistence of the estimator $\hat{\theta}_n := arg\min_{\theta \in \Theta} U_n(\theta)$. \\
We want to prove that $\hat{\theta}_n \xrightarrow{\mathbb{P}} \theta_0$ when $ n \rightarrow \infty$, that is equivalent to show that $\forall \left \{ \hat{\theta}_{n_k} \right \} \subset \hat{\theta}_n$, $\exists \left \{ \hat{\theta}_{n_{k_j}} \right \} \subset \left \{ \hat{\theta}_{n_k} \right \} $ such that $\hat{\theta}_{n_{k_j}} \rightarrow \theta_0$ a.s. \\
Let $ \left \{ \hat{\theta}_{n_k} \right \}$ be a subsequence of $ \left \{ \hat{\theta}_{n} \right \}$. By the uniform convergence in probability of the contrast function given by Lemma \ref{lemma: convergence contrast} and Lemma \ref{lemma: tightness Sn}, we get the a.s. convergence along some subsequence of $n_k$, denoted $n_{k_j}$:
$$\sup_{\theta \in \Theta}|\frac{U_{n_{k_j}}(\theta) - U_{n_{k_j}}(\theta_0)}{t_{n_{k_j}}} - l(\theta, \theta_0)| \xrightarrow{a. s.} 0, \quad n_{k_j} \rightarrow\infty,$$
where $l(\theta, \theta_0) = \int_\mathbb{R} \frac{(b(x, \theta) - b(x, \theta_0))^2}{a^2(x)} \pi(dx)  \ge 0$. \\
Now, for fixed $\omega \in \Omega$, thanks to the compactness of $\Theta$, there exists a subsequence of $n_{k_j}$, that we still denote $n_{k_j}$, and a $\theta_\infty$ such that $\hat{\theta}_{n_{k_j}} \rightarrow \theta_\infty$. \\
Since the mapping $\theta \mapsto l(\theta, \theta_0)$ is continuous, we have
$l(\hat{\theta}_{n_{k_j}}, \theta_0) \rightarrow l(\theta_\infty, \theta_0).$\\
Then, by the definition of $\hat{\theta}_n$ as the argmin of $U_n(\theta)$, we have
$$0 \ge \frac{U_{n_{k_j}}(\hat{\theta}_{n_{k_j}}) - U_{n_{k_j}}(\theta_0)}{t_{n_{k_j}}} \rightarrow l(\theta_\infty, \theta_0) \ge 0 $$
and so $l(\theta_\infty, \theta_0)=0$.
The Assumption 6 of identifiability leads that $\theta_\infty = \theta_0$. \\
This implies that any convergent subsequence of $\hat{\theta}_n$ tends to $\theta_0$; this means the consistency of $\hat{\theta}_n$.

\subsection{Contrast's derivatives convergence}
We are now ready to show the convergence of the derivative of the contrast function through the following lemma: \\
\begin{lemma}
Suppose that Assumptions 1 - 8 and $A_\beta$ are satisfied. Then 
$$\frac{\dot{U}_n(\theta_0)}{\sqrt[]{t_n}} \xrightarrow{\mathcal{L}} N(0, 4 \int_\mathbb{R}(\frac{\dot{b}(x, \theta_0)}{a(x)})^2 \, \pi(dx)). $$
\label{lemma dot Un convergence} 
\end{lemma}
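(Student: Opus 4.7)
The plan is to differentiate $U_n(\theta)$ at $\theta=\theta_0$, isolate a principal martingale-like term to which Proposition \ref{prop: LT3} applies directly, and show that all remainders are $o_{\mathbb{P}}(1)$ after division by $\sqrt{t_n}$. Writing $\varphi_i:=\varphi_{\Delta_{n,i}^\beta}(X_{t_{i+1}}-X_{t_i})$ and $\mathbf 1_i:=1_{\{|X_{t_i}|\le \Delta_{n,i}^{-k}\}}$, differentiation gives
\begin{equation*}
\dot U_n(\theta_0)=-2\sum_{i=0}^{n-1}\frac{(X_{t_{i+1}}-m_{\theta_0}(X_{t_i}))\,\dot m_{\theta_0}(X_{t_i})}{a^2(X_{t_i})\,\Delta_{n,i}}\,\varphi_i\,\mathbf 1_i.
\end{equation*}
Proposition \ref{P:expansion_m_dot} yields $\dot m_{\theta_0}(X_{t_i})=\Delta_{n,i}\dot b(X_{t_i},\theta_0)+R(\theta_0,\Delta_{n,i}^{1+\delta'},X_{t_i})$ for some $\delta'>0$ on the event $\mathbf 1_i=1$. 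Substituting, $\dot U_n(\theta_0)/\sqrt{t_n}=M_n+\rho_n$, where
\begin{equation*}
M_n:=-\frac{2}{\sqrt{t_n}}\sum_{i=0}^{n-1}(X_{t_{i+1}}-m_{\theta_0}(X_{t_i}))\,\frac{\dot b(X_{t_i},\theta_0)}{a^2(X_{t_i})}\,\varphi_i\,\mathbf 1_i,
\end{equation*}
\begin{equation*}
\rho_n:=-\frac{2}{\sqrt{t_n}}\sum_{i=0}^{n-1}(X_{t_{i+1}}-m_{\theta_0}(X_{t_i}))\,\frac{R(\theta_0,\Delta_{n,i}^{1+\delta'},X_{t_i})}{a^2(X_{t_i})\,\Delta_{n,i}}\,\varphi_i\,\mathbf 1_i.
\end{equation*}

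For the principal term $M_n$, I would apply Proposition \ref{prop: LT3} with $f(x,\theta_0)=-2\dot b(x,\theta_0)/a^2(x)$. Assumption 7 guarantees the polynomial-growth hypotheses on $f$, and Assumption 5 keeps $a^2$ bounded away from zero, so we obtain
\begin{equation*}
M_n\xrightarrow{\mathcal L}N\!\left(0,\int_{\mathbb R}\frac{4\dot b^2(x,\theta_0)}{a^4(x)}\,a^2(x)\,\pi(dx)\right)=N\!\left(0,4\int_{\mathbb R}\frac{\dot b^2(x,\theta_0)}{a^2(x)}\,\pi(dx)\right).
\end{equation*}

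It then remains to prove $\rho_n\xrightarrow{\mathbb P}0$. Using \eqref{propriety power R} we rewrite $R(\theta_0,\Delta_{n,i}^{1+\delta'},X_{t_i})/\Delta_{n,i}=\Delta_{n,i}^{\delta'}\widetilde R(\theta_0,1,X_{t_i})$ with $\widetilde R$ of at most polynomial growth. By Cauchy--Schwarz,
\begin{equation*}
\mathbb E[|\rho_n|]\le \frac{c\,\Delta_n^{\delta'}}{\sqrt{t_n}}\sum_{i=0}^{n-1}\mathbb E\!\left[(1+|X_{t_i}|^c)\,\bigl|\mathbb E[(X_{t_{i+1}}-m_{\theta_0}(X_{t_i}))^2\varphi_i^2\mathbf 1_i\mid\mathcal F_{t_i}]\bigr|^{1/2}\right].
\end{equation*}
Point 3 of Lemma \ref{lemma: conditional expected value zeta} gives the inner conditional expectation as $R(\theta_0,\Delta_{n,i},X_{t_i})$, i.e.\ it is bounded by $c(1+|X_{t_i}|^c)\Delta_{n,i}$. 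Combining with $n\Delta_n=O(t_n)$ and Lemma \ref{lemma 2.1 GLM}(3) to control $\mathbb E[1+|X_{t_i}|^c]$, we get $\mathbb E[|\rho_n|]\le c\Delta_n^{\delta'}\sqrt{n\Delta_n/t_n}=O(\Delta_n^{\delta'})\to 0$, so $\rho_n\to 0$ in $L^1$ hence in probability. Slutsky then delivers the stated CLT.

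The only delicate point is verifying that the error term in $\dot m_{\theta_0}$ provided by Proposition \ref{P:expansion_m_dot} has strictly positive excess exponent $\delta'$ under Assumption $A_\beta$ for both regimes $\alpha<1$ and $\alpha\in[1,2)$; once the bound $\min(\tfrac12,\,1-\alpha\beta-\epsilon-\beta)>0$ is secured (which forces the restriction $\beta<1/(1+\alpha)-\epsilon$ already built into the assumption on $\beta$), the remainder estimate above closes and Proposition \ref{prop: LT3} yields the conclusion.
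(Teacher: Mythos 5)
Your decomposition of $\dot U_n(\theta_0)/\sqrt{t_n}$ into a principal term $M_n$ handled by Proposition \ref{prop: LT3} and a remainder $\rho_n$ coming from the rest term in the expansion of $\dot m_{\theta_0}$ is exactly the paper's strategy, and your treatment of $M_n$ and of the positivity of the excess exponent $\delta'=\tfrac12\wedge(1-\alpha\beta-\epsilon-\beta)$ under $A_\beta$ is correct. The gap is in the remainder: your $L^1$ bound on $\rho_n$ does not close. After inserting point 3 of Lemma \ref{lemma: conditional expected value zeta}, the right-hand side of your displayed inequality is of order
\begin{equation*}
\frac{\Delta_n^{\delta'}}{\sqrt{t_n}}\sum_{i=0}^{n-1}\Delta_{n,i}^{1/2}
\;\asymp\;\frac{\Delta_n^{\delta'}}{\sqrt{t_n}}\,n\,\Delta_n^{1/2}
\;\asymp\;\Delta_n^{\delta'}\sqrt{n},
\end{equation*}
not $\Delta_n^{\delta'}\sqrt{n\Delta_n/t_n}$ as you claim (you have effectively replaced $\sum_i\Delta_{n,i}^{1/2}$ by $(\sum_i\Delta_{n,i})^{1/2}$). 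Since $\delta'\le\tfrac12$, the quantity $\Delta_n^{\delta'}\sqrt{n}$ diverges in general (take $\Delta_n=n^{-1/4}$), so this argument would only work under an extra rate condition of the type $n\Delta_n^{2\delta'}\to0$ --- precisely the kind of condition the paper is designed to avoid.

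The missing idea is that the triangle inequality throws away the conditional centering of the summands: by the very definition of $m_{\theta_0}$ one has $\mathbb{E}[(X_{t_{i+1}}-m_{\theta_0}(X_{t_i}))\varphi_i\mid\mathcal F_{t_i}]=0$ (equation \eqref{eq: mtg X-mtheta}), so $\rho_n$ is a sum of martingale differences. One must therefore work at the level of second moments, where the normalization is $1/t_n$ rather than $1/\sqrt{t_n}$: either compute $\mathbb{E}[\rho_n^2]=\frac{4}{t_n}\sum_i\mathbb{E}\bigl[(X_{t_{i+1}}-m_{\theta_0}(X_{t_i}))^2\varphi_i^2\mathbf 1_i\,R^2/(a^4\Delta_{n,i}^2)\bigr]\le c\Delta_n^{2\delta'}\to0$ using orthogonality of the increments and \eqref{eq: x - m prop 2}, or, as the paper does, invoke Lemma 9 of Genon-Catalot and Jacod, whose first condition \eqref{eq: critere 1 conv dot un } is identically zero by the martingale property and whose second condition \eqref{eq: critere 2 conv dot un } carries the factor $1/t_n$ and is $O(\Delta_n^{1\wedge 2(1-\alpha\beta-\epsilon-\beta)})$. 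With that replacement your proof matches the paper's.
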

\begin{proof}
We recall that 
$$U_n(\theta_0)= \sum_{i =0}^{n -1} \frac{(X_{t_{i+1}} - m_{\theta_0}(X_{t_i}))^2}{a^2(X_{t_i})\Delta_{n,i}} \, \varphi_{\Delta_{n,i}^\beta}(X_{t_{i+1}} - X_{t_i})1_{\left \{|X_{t_i}| \le \Delta_{n,i}^{-k} \right \} },$$
hence
\begin{equation}
\dot{U}_n(\theta_0)= 2 \sum_{i =0}^{n -1} \frac{(X_{t_{i+1}} - m_{\theta_0}(X_{t_i})) \dot{m}_{\theta_0}(X_{t_i})}{a^2(X_{t_i})\Delta_{n,i}} \, \varphi_{\Delta_{n,i}^\beta}(X_{t_{i+1}} - X_{t_i})1_{\left \{|X_{t_i}| \le \Delta_{n,i}^{-k} \right \} }.
\label{eq: dotUn(theta0)} 
\end{equation}
It means that
\begin{align}
\frac{\dot{U}_n(\theta_0)}{\sqrt[]{t_n}} = \frac{2}{\sqrt[]{t_n}}\sum_{i =0}^{n -1} (X_{t_{i+1}} - m_{\theta_0}(X_{t_i})) \frac{\dot{b}(X_{t_i}, \theta_0)}{a^2(X_{t_i})}\varphi_{\Delta_{n,i}^\beta}(X_{t_{i+1}} - X_{t_i})1_{\left \{|X_{t_i}| \le \Delta_{n,i}^{-k} \right \} } + \nonumber \\
+ \frac{2}{\sqrt[]{t_n}}\sum_{i =0}^{n -1} (X_{t_{i+1}} - m_{\theta_0}(X_{t_i})) \frac{R(\theta_0, \Delta_{n,i}^{\frac{1}{2} \land (1 - \alpha \beta -\epsilon - \beta)}, X_{t_i})}{a^2(X_{t_i})}\varphi_{\Delta_{n,i}^\beta}(X_{t_{i+1}} - X_{t_i})1_{\left \{|X_{t_i}| \le \Delta_{n,i}^{-k} \right \} }, 
\label{eq: contrast deriv conv} 
\end{align}
where we have used the development \eqref{eq: dl dot mtheta} of $\dot{m}_\theta(X_{t_i})$. \\
We now use Proposition \ref{prop: LT3} on the first term of \eqref{eq: contrast deriv conv}, getting that it converges in distribution to a Gaussian random variable with mean $0$ and variance $\int_\mathbb{R} \frac{4 \dot{b}^2(x, \theta_0)}{a^4(x)}a^2(x) \pi(dx) = \int_\mathbb{R} 4(\frac{\dot{b}(x, \theta_0)}{a(x)})^2 \pi(dx)$, as we wanted.
In order to get the thesis we want to show that the second term of \eqref{eq: contrast deriv conv} goes to zero in probability as $ t_n \rightarrow \infty$. In order to do this, we we want to use Lemma 9 of \hyperref[8 GLM]{(Genon Catalot \& Jacod 1993))} and so we have to prove the following:
\begin{equation}
\frac{2}{\sqrt[]{t_n}}\sum_{i =0}^{n -1}\mathbb{E}[(X_{t_{i+1}} - m_{\theta_0}(X_{t_i})) \frac{R(\theta_0, \Delta_{n,i}^{\frac{1}{2} \land (1 - \alpha \beta -\epsilon - \beta)}, X_{t_i})}{a^2(X_{t_i})}\varphi_{\Delta_{n,i}^\beta}(X_{t_{i+1}} - X_{t_i})1_{\left \{|X_{t_i}| \le \Delta_{n,i}^{-k} \right \} }| \mathcal{F}_{t_i}] \rightarrow 0
\label{eq: critere 1 conv dot un } 
\end{equation}
\begin{equation}
\frac{4}{t_n}\sum_{i =0}^{n -1} \mathbb{E}[ ((X_{t_{i+1}} - m_{\theta_0}(X_{t_i})) \frac{R(\theta_0, \Delta_{n,i}^{\frac{1}{2}\land (1 - \alpha \beta -\epsilon - \beta)}, X_{t_i})}{a^2(X_{t_i})}\varphi_{\Delta_{n,i}^\beta}(X_{t_{i+1}} - X_{t_i})1_{\left \{|X_{t_i}| \le \Delta_{n,i}^{-k} \right \} })^2| \mathcal{F}_{t_i}] \rightarrow 0
\label{eq: critere 2 conv dot un } 
\end{equation}
Using the measurability and the fact that
\begin{equation}
\mathbb{E}[(X_{t_{i+1}} - m_{\theta_0}(X_{t_i}))\varphi_{\Delta_{n,i}^\beta}(X_{t_{i+1}} - X_{t_i})| \mathcal{F}_{t_i}] =0
\label{eq: mtg X-mtheta} 
\end{equation}
we get \eqref{eq: critere 1 conv dot un }.
Let us consider \eqref{eq: critere 2 conv dot un }. Using the Assumption 5 on $a$, the measurability of $R$ and the expression \eqref{eq: x - m prop 2} we can upper bound it with
$$\frac{c}{n \Delta_n} \sum_{i =0}^{n -1}R(\theta_0, \Delta_{n,i}^{1 \land 2(1 - \alpha \beta -\epsilon - \beta)}, X_{t_i})R(\theta_0, \Delta_{n,i}, X_{t_i}) \le \Delta_n^{1 \land 2(1 - \alpha \beta -\epsilon - \beta)} \frac{1}{n } \sum_{i =0}^{n -1} R(\theta_0, 1, X_{t_i}), $$
that goes to zero in norm $1$ by the polynomial growth of $R$, the third point of Lemma \ref{lemma 2.1 GLM} and $A_\beta$. Therefore it converges to zero also in probability. \\
It follows that $$\frac{2}{\sqrt[]{t_n}}\sum_{i =0}^{n -1} (X_{t_{i+1}} - m_{\theta_0}(X_{t_i})) \frac{R(\theta_0, \Delta_{n,i}^{\frac{1}{2} \land (1 - \alpha \beta -\epsilon - \beta)}, X_{t_i})}{a^2(X_{t_i})}\varphi_{\Delta_{n,i}^\beta}(X_{t_{i+1}} - X_{t_i})1_{\left \{|X_{t_i}| \le \Delta_{n,i}^{-k} \right \} } \xrightarrow{\mathbb{P}} 0,$$ as we wanted.
\end{proof}

Concerning the second derivative of the contrast function, we have the following convergence: \\
\begin{lemma}
Suppose that Assumptions 1 - 8 and $A_\beta$ hold. Then
$$\frac{\ddot{U}_n(\theta_0)}{t_n} \xrightarrow{\mathbb{P}} -2\int_\mathbb{R}(\frac{\dot{b}(x, \theta_0)}{a(x)})^2 \, \pi(dx). $$
\label{lemma: convergence ddot Un } 
\end{lemma}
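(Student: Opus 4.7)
The plan is to differentiate the expression \eqref{eq: dotUn(theta0)} for $\dot U_n$ once more in $\theta$ and analyze the two resulting sums separately. Writing
\begin{align*}
\ddot U_n(\theta_0) &= -2\,T_{1,n} + 2\,T_{2,n},\\
T_{1,n} &:= \sum_{i=0}^{n-1}\frac{\dot m_{\theta_0}(X_{t_i})^2}{a^2(X_{t_i})\,\Delta_{n,i}}\,\varphi_{\Delta_{n,i}^\beta}(X_{t_{i+1}}-X_{t_i})\,1_{\{|X_{t_i}|\le \Delta_{n,i}^{-k}\}},\\
T_{2,n} &:= \sum_{i=0}^{n-1}\frac{(X_{t_{i+1}}-m_{\theta_0}(X_{t_i}))\,\ddot m_{\theta_0}(X_{t_i})}{a^2(X_{t_i})\,\Delta_{n,i}}\,\varphi_{\Delta_{n,i}^\beta}(X_{t_{i+1}}-X_{t_i})\,1_{\{|X_{t_i}|\le \Delta_{n,i}^{-k}\}},
\end{align*}
the target is to establish $T_{1,n}/t_n \xrightarrow{\mathbb{P}} \int (\dot b(x,\theta_0)/a(x))^2\,\pi(dx)$ and $T_{2,n}/t_n \xrightarrow{\mathbb{P}}0$, from which the stated limit follows.

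For $T_{1,n}$ I would use Proposition \ref{prop: dl dotm ddotm}, which under Assumption $A_\beta$ yields some $\eta>0$ such that, on $\{|X_{t_i}|\le \Delta_{n,i}^{-k_0}\}$, $\dot m_{\theta_0}(X_{t_i}) = \Delta_{n,i}\,\dot b(X_{t_i},\theta_0) + R(\theta_0,\Delta_{n,i}^{1+\eta},X_{t_i})$. Squaring and dividing by $\Delta_{n,i}$, the leading contribution $\frac{1}{t_n}\sum_i \Delta_{n,i}\,\dot b^2(X_{t_i},\theta_0)/a^2(X_{t_i})\cdot \varphi_{\Delta_{n,i}^\beta}(X_{t_{i+1}}-X_{t_i})\cdot 1_{\{\cdots\}}$ converges in probability to $\int (\dot b/a)^2\,d\pi$ by Proposition \ref{prop: LT1}(ii), using Assumption 5 to bound $1/a^2$ and Assumption 7 for the polynomial growth of $\dot b$. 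The cross term and the quadratic remainder each carry a positive power $\Delta_n^\eta$ via \eqref{propriety power R}; their $L^1$-norms vanish by combining the polynomial growth of $R$ with Lemma \ref{lemma 2.1 GLM}(iii).

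For $T_{2,n}$, the analogous expansion $\ddot m_{\theta_0}(X_{t_i}) = \Delta_{n,i}\,\ddot b(X_{t_i},\theta_0) + R(\theta_0,\Delta_{n,i}^{1+\eta},X_{t_i})$ from Proposition \ref{prop: dl dotm ddotm} splits the sum in two. The principal piece $\frac{1}{t_n}\sum_i (X_{t_{i+1}}-m_{\theta_0}(X_{t_i}))\,\ddot b(X_{t_i},\theta_0)/a^2(X_{t_i})\,\varphi_{\Delta_{n,i}^\beta}(X_{t_{i+1}}-X_{t_i})\cdot 1_{\{\cdots\}}$ is a sum of $(\mathcal F_{t_i})$-martingale increments: indeed, the defining identity \eqref{eq: definition m} gives $\mathbb E[(X_{t_{i+1}}-m_{\theta_0}(X_{t_i}))\,\varphi_{\Delta_{n,i}^\beta}(X_{t_{i+1}}-X_{t_i})\mid \mathcal F_{t_i}]=0$. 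Its second moment, controlled by \eqref{eq: x - m prop 2} as $t_n^{-2}\sum_i R(\theta_0,\Delta_{n,i},X_{t_i})=O(1/t_n)$, tends to zero. For the residual piece with $R$ in place of $\ddot b$, conditioning on $\mathcal F_{t_i}$ and applying Cauchy--Schwarz with \eqref{eq: x - m prop 2} yields a factor $\Delta_{n,i}^{1/2}$ from $(X_{t_{i+1}}-m_{\theta_0})\,\varphi$, which combined with the extra $\Delta_n^\eta$ from the remainder gives an $L^1$-bound vanishing via Lemma \ref{lemma 2.1 GLM}(iii).

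The main obstacle is the bookkeeping of remainder exponents in the analysis of $T_{1,n}$: one has to verify that, under $A_\beta$, the error term in \eqref{eq: dl dot mtheta} has exponent strictly greater than $1$ on $\Delta_{n,i}$, so that after dividing by $\Delta_{n,i}$ the contribution still vanishes. This amounts to checking that $\beta<1/2$ (for $\alpha<1$) or $\beta<1/3$ (for $\alpha\in[1,2)$) imply $\beta<1/(1+\alpha)$, hence $2-\alpha\beta-\beta-\epsilon>1$ for $\epsilon$ small enough, which supplies the $\eta>0$ used throughout.
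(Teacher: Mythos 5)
Your decomposition of $\ddot U_n(\theta_0)$ into $T_{1,n}$ and $T_{2,n}$, the treatment of $T_{1,n}$ via the expansion \eqref{eq: dl dot mtheta} and Proposition \ref{prop: LT1}, and the martingale/second-moment argument for the principal piece of $T_{2,n}$ all coincide with the paper's proof. There is, however, a genuine gap in your handling of the residual piece of $T_{2,n}$, i.e.\ the term
$$\frac{1}{t_n}\sum_{i=0}^{n-1}\frac{(X_{t_{i+1}}-m_{\theta_0}(X_{t_i}))\,R(\theta_0,\Delta_{n,i}^{1+\eta},X_{t_i})}{a^2(X_{t_i})\,\Delta_{n,i}}\,\varphi_{\Delta_{n,i}^\beta}(X_{t_{i+1}}-X_{t_i})\,1_{\{|X_{t_i}|\le \Delta_{n,i}^{-k}\}}.$$
Your proposed $L^1$ bound via conditional Cauchy--Schwarz and \eqref{eq: x - m prop 2} gives a conditional factor of order $\Delta_{n,i}^{1/2}$, hence an overall bound of order $\frac{1}{n\Delta_n}\sum_i \Delta_{n,i}^{1/2+\eta}\le c\,\Delta_n^{\eta-1/2}$. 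But from \eqref{eq: dl ddot mtheta} the remainder exponent is $1+\eta=\frac{3}{2}\wedge(2-\alpha\beta-\epsilon-\beta)$, so $\eta\le\frac{1}{2}$ always, and for $\alpha\in[1,2)$ one can have $\eta=1-\alpha\beta-\beta-\epsilon$ strictly below $\frac{1}{2}$ (e.g.\ $\alpha=1.5$, $\beta=0.3$ gives $\eta\approx 0.25$). Then $\Delta_n^{\eta-1/2}\to\infty$ and the $L^1$ route fails.

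The fix is to not discard the martingale structure for this piece: since $R(\theta_0,\Delta_{n,i}^{1+\eta},X_{t_i})$ is $\mathcal F_{t_i}$-measurable, the identity $\mathbb E[(X_{t_{i+1}}-m_{\theta_0}(X_{t_i}))\varphi_{\Delta_{n,i}^\beta}(X_{t_{i+1}}-X_{t_i})\mid\mathcal F_{t_i}]=0$ applies here too, so the sum of conditional expectations vanishes, and the sum of conditional second moments is of order $\frac{1}{t_n^2}\sum_i \Delta_{n,i}^{-2}\,\Delta_{n,i}^{2+2\eta}\,R(\theta_0,\Delta_{n,i},X_{t_i})\le c\,\frac{\Delta_n^{2\eta}}{n\Delta_n}\to 0$; Lemma 9 of Genon-Catalot and Jacod then gives convergence to zero in probability. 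This is exactly what the paper does, except that it keeps $\ddot m_{\theta_0}(X_{t_i})$ intact and applies the criterion to the whole of $T_{2,n}$ at once, inserting the bound $\ddot m_{\theta_0}^2\le \Delta_{n,i}^2\ddot b^2+R(\theta_0,\Delta_{n,i}^{3\wedge 2(2-\alpha\beta-\epsilon-\beta)},X_{t_i})$ only inside the second-moment computation, which avoids the splitting altogether. With that correction your argument is complete; the exponent bookkeeping you describe for $T_{1,n}$ (checking $\beta<1/(1+\alpha)$ under $A_\beta$) is correct.
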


\begin{proof}
Derivating twice the expression of $U_n$ we get
\begin{align}
\ddot{U}_n(\theta_0) = -2 \sum_{i =0}^{n -1} \frac{\dot{m}_{\theta_0}^2(X_{t_i})}{a^2(X_{t_i}) \Delta_{n,i}} \varphi_{\Delta_{n,i}^\beta}(X_{t_{i+1}} - X_{t_i})1_{\left \{|X_{t_i}| \le \Delta_{n,i}^{-k} \right \} } + \nonumber \\
+ 2 \sum_{i =0}^{n -1} \frac{(X_{t_{i+1}} - m_{\theta_0}(X_{t_i})) \ddot{m}_{\theta_0}(X_{t_i})}{a^2(X_{t_i})\Delta_{n,i}} \, \varphi_{\Delta_{n,i}^\beta}(X_{t_{i+1}} - X_{t_i})1_{\left \{|X_{t_i}| \le \Delta_{n,i}^{-k} \right \} }
\label{eq: ddotUn(theta0)} 
\end{align}
First of all we show that the second term of \eqref{eq: ddotUn(theta0)}, divided by $n \Delta_n$, goes to zero in probability. We use again Lemma 9 of \hyperref[8 GLM]{(Genon Catalot \& Jacod 1993)}. Hence, our goal is to prove the following:
\begin{equation}
\frac{2}{t_n} \sum_{i =0}^{n -1} \mathbb{E}[\frac{(X_{t_{i+1}} - m_{\theta_0}(X_{t_i})) \ddot{m}_{\theta_0}(X_{t_i})}{a^2(X_{t_i})\Delta_{n,i}} \, \varphi_{\Delta_{n,i}^\beta}(X_{t_{i+1}} - X_{t_i})1_{\left \{|X_{t_i}| \le \Delta_{n,i}^{-k} \right \} }| \mathcal{F}_{t_i}] \rightarrow 0
\label{eq: convergence E ddotUn} 
\end{equation}
\begin{equation}
\frac{4}{(t_n)^2} \sum_{i =0}^{n -1} \mathbb{E}[ (\frac{(X_{t_{i+1}} - m_{\theta_0}(X_{t_i})) \ddot{m}_{\theta_0}(X_{t_i})}{a^2(X_{t_i})\Delta_{n,i}} \, \varphi_{\Delta_{n,i}^\beta}(X_{t_{i+1}} - X_{t_i})1_{\left \{|X_{t_i}| \le \Delta_{n,i}^{-k} \right \} })^2| \mathcal{F}_{t_i}] \rightarrow 0
\label{eq: convergence squared E ddotUn} 
\end{equation}
As we acted in the last proof, we use \eqref{eq: mtg X-mtheta} in order to get \eqref{eq: convergence E ddotUn}. \\
Concerning \eqref{eq: convergence squared E ddotUn}, using Assumption 5 on $a$, the measurability of $R$, the development \eqref{eq: dl ddot mtheta} of $\ddot{m}_{\theta_0}(X_{t_i})$ and the expression \eqref{eq: x - m prop 2} we can upper bound it with
$$\frac{c}{n^2\Delta^2_n}\sum_{i =0}^{n -1} [R(\theta_0, \Delta_{n,i}, X_{t_i}) \frac{\Delta_{n,i}^2 \ddot{b}^2(X_{t_i}, \theta_0) + R(\theta_0, \Delta_{n,i}^{3 \land 2(2 - \alpha \beta -\epsilon - \beta)}, X_{t_i})}{\Delta^2_{n,i}} ] \le$$
$$\le \frac{c}{n^2\Delta_n} \sum_{i =0}^{n -1} R(\theta_0, 1, X_{t_i}),$$
where in the last inequality we have used the polynomial growth of $\ddot{b}$, the property \eqref{propriety power R} on $R$ and that $|\Delta_{n,i}| \le \Delta_n$ . Since $n \Delta_n \rightarrow \infty$ and 
$\frac{1}{n}\sum_{i =0}^{n -1} R(\theta_0, 1, X_{t_i}) $ is bounded in $L^1$, we get the convergence en probability wanted. \\
Let us now consider the first term of \eqref{eq: ddotUn(theta0)}. Using the development \eqref{eq: dl dot mtheta} we get
\begin{equation}
\frac{-2}{t_n} \sum_{i =0}^{n -1} \frac{(\Delta_{n,i}\dot{b}(X_{t_i}, \theta_0) + R(\theta_0, \Delta_{n,i}^{\frac{3}{2}\land (2 - \beta - \beta \alpha -\epsilon)}, X_{t_i}))^2}{a^2(X_{t_i}) \Delta_{n,i}} \varphi_{\Delta_{n,i}^\beta}(X_{t_{i+1}} - X_{t_i})1_{\left \{|X_{t_i}| \le \Delta_{n,i}^{-k} \right \} }.
\label{eq: ddotUn(theta0) first term} 
\end{equation}
Hence, we obtain three terms by expanding the square. Using on the first Proposition \ref{prop: LT1}, we get the convergence
\begin{equation}
\frac{-2}{t_n} \sum_{i =0}^{n -1} \frac{\Delta_{n,i}\dot{b}^2(X_{t_i}, \theta_0)}{a^2(X_{t_i})}\varphi_{\Delta_{n,i}^\beta}(X_{t_{i+1}} - X_{t_i})1_{\left \{|X_{t_i}| \le \Delta_{n,i}^{-k} \right \} } \xrightarrow{\mathbb{P}} -2 \int_\mathbb{R} \frac{\dot{b}^2(x, \theta_0)}{a^2(x)} \pi(dx).
\label{eq: first term con ddotUn} 
\end{equation}
The second term of \eqref{eq: ddotUn(theta0) first term} is 
$$\frac{-4}{t_n} \sum_{i =0}^{n -1} \frac{2\dot{b}(X_{t_i}, \theta_0)R(\theta_0, \Delta_{n,i}^{\frac{3}{2}\land (2 - \beta - \beta \alpha -\epsilon)}, X_{t_i})}{a^2(X_{t_i})} \varphi_{\Delta_{n,i}^\beta}(X_{t_{i+1}} - X_{t_i})1_{\left \{|X_{t_i}| \le \Delta_{n,i}^{-k} \right \} }.$$
Using Assumption 5 on $a$, the fact that both $\varphi$ and the indicator function are bounded, the polynomial growth of both $\dot{b}$ and $R$  and the third point of Lemma \ref{lemma 2.1 GLM} we get that its $L^1$ norm is upper bounded by $c \Delta_n^{\frac{1}{2} \land (1 - \beta - \beta \alpha -\epsilon)}$. Since the exponent on $\Delta_n$ is positive, the convergence in norm $L^1$ and therefore in probability follows. \\
Concerning the last term of \eqref{eq: ddotUn(theta0) first term}, using again Assumption 5 on $a$, the fact that both $\varphi$ and the indicator function are bounded, the polynomial growth of $R$  and the third point of Lemma \ref{lemma 2.1 GLM} we get that its $L^1$ norm is upper bounded by $c \Delta_n^{1 \land (2 - 2 \beta - 2 \beta \alpha -2\epsilon)}$. Once again, since the exponent on $\Delta_n$ is positive, the convergence in norm $L^1$ and therefore in probability follows. \\
It yields 
$$\frac{\ddot{U}_n(\theta_0)}{t_n} \xrightarrow{\mathbb{P}} -2 \int_\mathbb{R} \frac{\dot{b}^2(x, \theta_0)}{a^2(x)} \pi(dx).$$
\end{proof}

\subsection{Asymptotic normality of the estimator}\label{Ss:Asymptotic_nor}
In order to show the asymptotic normality of the estimator we need the following lemma: \\
\begin{lemma}
Suppose that Assumptions 1 - 8 and $A_\beta$ hold. Then
\begin{equation}
\frac{1}{t_n} \sup_{t \in [0,1]}|\ddot{U}_n(\theta_0 + t (\hat{\theta}_n - \theta_0)) - \ddot{U}_n(\theta_0)|\xrightarrow{\mathbb{P}} 0,
\label{eq: conv unif ddot Un} 
\end{equation}
where $\hat{\theta}_n$ is the estimator defined in \eqref{eq: def thetan}.
\begin{proof}
Let us define
\begin{equation}
\tilde{\theta}_n : = \theta_0 + t (\hat{\theta}_n - \theta_0).
\label{eq: def tilde theta} 
\end{equation}
 Using \eqref{eq: ddotUn(theta0)},
\begin{align}
\frac{\ddot{U}_n(\tilde{\theta}_n) - \ddot{U}_n(\theta_0)}{t_n} = -\frac{2}{t_n} \sum_{i =0}^{n -1} \frac{( \dot{m}_{\tilde{\theta}_n}^2 (X_{t_i}) - \dot{m}_{\theta_0}^2(X_{t_i}))}{a^2(X_{t_i}) \Delta_{n,i}} \varphi_{\Delta_{n,i}^\beta}(X_{t_{i+1}} - X_{t_i})1_{\left \{|X_{t_i}| \le \Delta_{n,i}^{-k} \right \} } + \nonumber \\
+ \frac{2}{t_n} \sum_{i =0}^{n -1} \frac{(X_{t_{i+1}} - m_{\theta_0}(X_{t_i})) (\ddot{m}_{\tilde{\theta}_n}(X_{t_i})-\ddot{m}_{\theta_0}(X_{t_i}))}{a^2(X_{t_i})\Delta_{n,i}} \, \varphi_{\Delta_{n,i}^\beta}(X_{t_{i+1}} - X_{t_i})1_{\left \{|X_{t_i}| \le \Delta_{n,i}^{-k} \right \} } + \nonumber \\
+ \frac{2}{t_n} \sum_{i =0}^{n -1} \frac{(m_{\theta_0}(X_{t_i}) - m_{\tilde{\theta}_n}(X_{t_i}))\ddot{m}_{\tilde{\theta}_n}(X_{t_i})}{a^2(X_{t_i})\Delta_{n,i}} \, \varphi_{\Delta_{n,i}^\beta}(X_{t_{i+1}} - X_{t_i})1_{\left \{|X_{t_i}| \le \Delta_{n,i}^{-k} \right \} }.
\label{eq: difference de la derivé du contraste} 
\end{align}
Concerning the first term of \eqref{eq: difference de la derivé du contraste}, we use the following estimation:
\begin{equation}
|\dot{m}_{\tilde{\theta}_n}^2 (X_{t_i}) - \dot{m}_{\theta_0}^2(X_{t_i})| \le 2 |\ddot{m}_{\theta_u}(X_{t_i})\dot{m}_{\theta_u}(X_{t_i})(\tilde{\theta}_n - \theta_0)|,
\label{eq: estimation difference mtheta} 
\end{equation}
where $\theta_u \in [\theta_0, \tilde{\theta}_n]$. We replace the development \eqref{eq: dl dot mtheta} and \eqref{eq: dl ddot mtheta} of $\dot{m}$ and $\ddot{m}$. Hence the first term of \eqref{eq: difference de la derivé du contraste} is, in module, upper bounded by
\begin{equation}
\frac{2}{n} \sum_{i =0}^{n -1}|2(\dot{b}(X_{t_i}, \theta_u) + R(\theta_u, \Delta_{n,i}^{\frac{1}{2} \land (1 - \beta - \beta \alpha -\epsilon)}, X_{t_i}))(\ddot{b}(X_{t_i}, \theta_u) + R(\theta_u, \Delta_{n,i}^{\frac{1}{2} \land (1 - \beta - \beta \alpha -\epsilon)}, X_{t_i}) )||\tilde{\theta}_n - \theta_0| =
\label{eq: estim 1 term derivé contraste} 
\end{equation}
$$= \frac{1}{n} \sum_{i =0}^{n -1}|R(\theta_u, 1, X_{t_i})||\tilde{\theta}_n - \theta_0| \le \frac{1}{n} \sum_{i =0}^{n -1}c(1 + |X_{t_i}|^c )|\hat{\theta}_n - \theta_0| ,$$
where we have used Assumption 5 on $a$, the boundedness of both $\varphi$ and the indicator function, the property \eqref{propriety power R} on $R$ that $|\Delta_{n,i} | \le \Delta_n$ and the definition \eqref{eq: def tilde theta} of $\tilde{\theta}_n$ joint with the fact that $|t| \le 1$. By the consistency of $\hat{\theta}_n$ that we have already proved, we get that the first term of \eqref{eq: difference de la derivé du contraste} converges to zero in probability uniformly in $t$, since 
the right hand side of \eqref{eq: estim 1 term derivé contraste}
is bounded in $L^1$ by the third point of Lemma \ref{lemma 2.1 GLM} and it does not depend on $t$.\\
On the third term of \eqref{eq: difference de la derivé du contraste} we use again the Assumption 5 on $a$, the fact that both $\varphi$ and the indicator function are bounded, the development \eqref{eq: dl ddot mtheta} of $\ddot{m}_\theta$ and the following estimation:
$|m_{\theta_0}(X_{t_i}) - m_{\tilde{\theta}_n}(X_{t_i})| \le | \dot{m}_{\theta_u}(X_{t_i})||\theta_0 - \tilde{\theta}_n| ,$
on which we can use the development \eqref{eq: dl dot mtheta} of $\dot{m}_\theta$. We can hence upper bound the third term with
\begin{equation}
\frac{2}{n} \sum_{i =0}^{n -1}2|(\dot{b}(X_{t_i}, \theta_u) + R(\theta_u, \Delta_{n,i}^{\frac{1}{2} \land (1 - \beta - \beta \alpha -\epsilon)}, X_{t_i}))(\ddot{b}(X_{t_i},\tilde{\theta}_n) + R(\tilde{\theta}_n, \Delta_{n,i}^{\frac{1}{2} \land (1 - \beta - \beta \alpha-\epsilon)}, X_{t_i}) )||\theta_0 - \tilde{\theta}_n | =
\label{eq: estim 3 term derivé contraste} 
\end{equation}
$$= \frac{1}{n} \sum_{i =0}^{n -1}|R(\theta, 1, X_{t_i})||\tilde{\theta}_n - \theta_0| \le \frac{1}{n} \sum_{i =0}^{n -1}c(1 + |X_{t_i}|^c )|\hat{\theta}_n - \theta_0| .$$
The consistency of $\hat{\theta}_n$ yields the convergence in probability uniformly in $t$ wanted, by the boundedness in $L^1$ of the sum, that does not depend on $t$. \\
It remains to prove the convergence to zero, uniformly in $t$, for the second term of \eqref{eq: difference de la derivé du contraste}; it is sufficient to prove that the following sequence $S_n(\theta)$ converges to zero uniformly with respect to $\theta$:
$$S_n(\theta): = \frac{2}{t_n} \sum_{i =0}^{n -1} \frac{(X_{t_{i+1}} - m_{\theta_0}(X_{t_i})) (\ddot{m}_{\theta}(X_{t_i})-\ddot{m}_{\theta_0}(X_{t_i}))}{a^2(X_{t_i})\Delta_{n,i}} \, \varphi_{\Delta_{n,i}^\beta}(X_{t_{i+1}} - X_{t_i})1_{\left \{|X_{t_i}| \le \Delta_{n,i}^{-k} \right \} }.$$
The pointwise convergence is already proved (it is enough to repeat the proof of \eqref{eq: convergence E ddotUn} and \eqref{eq: convergence squared E ddotUn} with $\ddot{m}_{\theta}(X_{t_i}) - \ddot{m}_{\theta_0}(X_{t_i})$ in place of $\ddot{m}_{\theta_0}(X_{t_i})$). In order to show that the convergence takes place uniformly in $\theta$, we prove the tightness of $S_n(\theta)$, using the criterion analogues to \eqref{eq: 1 criterion tightness} and \eqref{eq: 2 criterion tightness}. \\
Let us consider \eqref{eq: 2 criterion tightness} first. We observe that 
$$\mathbb{E}[(S_n(\theta_1)- S_n(\theta_2))^2] \le$$
\begin{equation}
\le \frac{c}{n^2\Delta^2_n} \mathbb{E}[( \sum_{i =0}^{n -1} \frac{(X_{t_{i+1}} - m_{\theta_0}(X_{t_i})) (\ddot{m}_{\theta_1}(X_{t_i})-\ddot{m}_{\theta_2}(X_{t_i}))}{a^2(X_{t_i})\Delta_{n,i}} \, \varphi_{\Delta_{n,i}^\beta}(X_{t_{i+1}} - X_{t_i})1_{\left \{|X_{t_i}| \le \Delta_{n,i}^{-k} \right \} })^2].
\label{eq: Sntheta_1 - Sntheta_2} 
\end{equation}
By the building the sum is a square integrable martingale. The Pythagoras' theorem on a square integrable martingale yields that \eqref{eq: Sntheta_1 - Sntheta_2} is equal to
\begin{equation}
\frac{c}{n^2\Delta^2_n} \mathbb{E}[\sum_{i =0}^{n -1} \frac{(X_{t_{i+1}} - m_{\theta_0}(X_{t_i}))^2 (\ddot{m}_{\theta_1}(X_{t_i})-\ddot{m}_{\theta_2}(X_{t_i}))^2}{a^4(X_{t_i})\Delta^2_{n,i}} \, \varphi^2_{\Delta_{n,i}^\beta}(X_{t_{i+1}} - X_{t_i})1_{\left \{|X_{t_i}| \le \Delta_{n,i}^{-k} \right \} } ].
\label{eq: equivalence Sntheta_1 - Sntheta_2 } 
\end{equation}
We now use the following estimation:
\begin{equation}
|\ddot{m}_{\theta_1}(X_{t_i})-\ddot{m}_{\theta_2}(X_{t_i})| \le |\dddot{m}_{\theta_u}(X_{t_i})||\theta_1 - \theta_2|.
\label{eq: accroissements ddotm} 
\end{equation}
Replacing \eqref{eq: accroissements ddotm} in \eqref{eq: Sntheta_1 - Sntheta_2} and using \eqref{eq: estimation 3dot m } on $\dddot{m}_{\theta_u}(X_{t_i})$, we can upper bound \eqref{eq: Sntheta_1 - Sntheta_2} with
$$\frac{c}{n^2\Delta^2_n} \mathbb{E}[\sum_{i =0}^{n -1} \frac{(X_{t_{i+1}} - m_{\theta_0}(X_{t_i}))^2 R(\theta_u, \Delta^2_{n,i}, X_{t_i})}{a^4(X_{t_i})\Delta^2_{n,i}} \, \varphi^2_{\Delta_{n,i}^\beta}(X_{t_{i+1}} - X_{t_i})1_{\left \{|X_{t_i}| \le \Delta_{n,i}^{-k} \right \} } ](\theta_1 - \theta_2)^2 \le$$
\begin{equation}
\le \frac{c}{n^2\Delta^2_n} \mathbb{E}[\sum_{i =0}^{n -1} f(X_{t_i}, \theta_u) \mathbb{E}[(X_{t_{i+1}} - m_{\theta_0}(X_{t_i}))^2 \varphi^2_{\Delta_{n,i}^\beta}(X_{t_{i+1}} - X_{t_i})1_{\left \{|X_{t_i}| \le \Delta_{n,i}^{-k} \right \} }| \mathcal{F}_{t_i}]](\theta_1 - \theta_2)^2,
\label{eq: conditional in normality} 
\end{equation}
with $f(X_{t_i}, \theta_u) = \frac{R(\theta_u, 1, X_{t_i})}{a^4(X_{t_i})}$ and where we have used the property \eqref{propriety power R} of the functions $R$ and the definition of conditional expected value. \\
Using \eqref{eq: x - m prop 2}, the property \eqref{propriety power R} and that $|\Delta_{n,i}| \le \Delta_n$ , we can upper bound \eqref{eq: conditional in normality} with \\
$\frac{4}{n^2\Delta_n} \sum_{i =0}^{n -1}  \mathbb{E}[f(X_{t_i}, \theta_u)R(\theta_0, 1, X_{t_i})](\theta_1 - \theta_2)^2.$ \\
By the Assumption 5 on $a$ and the polynomial growth of $R$ derived by its definition, $f$ has polynomial growth. Using the third point of Lemma \ref{lemma 2.1 GLM} we get that the expected value is bounded. Hence, since $n\Delta_n \rightarrow \infty $, it yields
\begin{equation}
\frac{4}{n^2\Delta_n} \sum_{i =0}^{n -1}  \mathbb{E}[f(X_{t_i}, \theta_u)R(\theta_0, 1, X_{t_i})] \le c,
\label{eq: estimation crit 2 tightness end} 
\end{equation}
therefore we obtain \eqref{eq: 2 criterion tightness} on $S_n$. \\
Concerning \eqref{eq: 1 criterion tightness}, we can act exactly in the same way, using \eqref{eq: estimation crit 2 tightness end} and the compactness of $\Theta$. The tightness of $S_n(\theta)$ follows.
\end{proof}
\end{lemma}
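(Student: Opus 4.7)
Set $\widetilde{\theta}_n := \theta_0 + t(\hat{\theta}_n - \theta_0)$. Starting from the formula \eqref{eq: ddotUn(theta0)} for $\ddot{U}_n$, the difference $\ddot{U}_n(\widetilde{\theta}_n) - \ddot{U}_n(\theta_0)$ splits naturally into three pieces: (a) a sum involving $\dot{m}_{\widetilde{\theta}_n}^2(X_{t_i}) - \dot{m}_{\theta_0}^2(X_{t_i})$; (b) a sum involving $(X_{t_{i+1}} - m_{\theta_0}(X_{t_i}))(\ddot{m}_{\widetilde{\theta}_n}(X_{t_i}) - \ddot{m}_{\theta_0}(X_{t_i}))$; and (c) a sum involving $(m_{\theta_0}(X_{t_i}) - m_{\widetilde{\theta}_n}(X_{t_i}))\ddot{m}_{\widetilde{\theta}_n}(X_{t_i})$. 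I will show each of the three goes to zero in probability uniformly in $t \in [0,1]$.

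For (a) and (c), the idea is to apply the mean value theorem to extract an explicit factor $(\widetilde{\theta}_n - \theta_0)$. For (a) write $\dot{m}_{\widetilde{\theta}_n}^2 - \dot{m}_{\theta_0}^2 = 2\, \ddot{m}_{\theta_u}\dot{m}_{\theta_u}(\widetilde{\theta}_n - \theta_0)$ for some $\theta_u$ between $\theta_0$ and $\widetilde{\theta}_n$, and similarly for (c) write $m_{\theta_0}-m_{\widetilde{\theta}_n}$ as $-\dot{m}_{\theta_u}(\widetilde{\theta}_n-\theta_0)$. Then plug in the expansions \eqref{eq: dl dot mtheta}--\eqref{eq: dl ddot mtheta} of $\dot{m}_\theta$ and $\ddot{m}_\theta$, together with Assumption 5 on $a$, the boundedness of $\varphi$ and of the indicator, the polynomial growth of $\dot{b}$, $\ddot{b}$ and $R$, and $|\widetilde{\theta}_n-\theta_0| \le |\hat{\theta}_n-\theta_0|$. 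The resulting upper bound is of the form $|\hat{\theta}_n - \theta_0| \cdot \frac{1}{n}\sum_i (1+|X_{t_i}|^c)$, uniform in $t$. By the third point of Lemma \ref{lemma 2.1 GLM} the average is bounded in $L^1$, and by the consistency $\hat{\theta}_n \xrightarrow{\mathbb{P}} \theta_0$ the product goes to zero in probability.

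For (b) the factor $(X_{t_{i+1}} - m_{\theta_0}(X_{t_i}))$ is a (nearly) martingale increment and cannot be controlled by its sup norm, so the strategy of (a),(c) fails. Instead, define
\begin{equation*}
S_n(\theta) := \frac{2}{t_n} \sum_{i=0}^{n-1} \frac{(X_{t_{i+1}} - m_{\theta_0}(X_{t_i}))(\ddot{m}_{\theta}(X_{t_i})-\ddot{m}_{\theta_0}(X_{t_i}))}{a^2(X_{t_i})\Delta_{n,i}} \varphi_{\Delta_{n,i}^\beta}(X_{t_{i+1}} - X_{t_i}) \mathbf{1}_{\{|X_{t_i}| \le \Delta_{n,i}^{-k}\}}
\end{equation*}
viewed as a random element of $(C(\Theta),\|\cdot\|_\infty)$. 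Pointwise convergence $S_n(\theta) \xrightarrow{\mathbb{P}} 0$ follows by adapting the argument of Lemma \ref{lemma: convergence ddot Un }: apply Lemma 9 of Genon-Catalot--Jacod using the martingale property \eqref{eq: mtg X-mtheta} together with the conditional $L^2$ estimate \eqref{eq: x - m prop 2}. It then suffices to upgrade to uniform convergence via tightness, using the Kolmogorov-type criterion \eqref{eq: 1 criterion tightness}--\eqref{eq: 2 criterion tightness}. The key step is to bound $\mathbb{E}[(S_n(\theta_1)-S_n(\theta_2))^2] \le c(\theta_1-\theta_2)^2$: since the defining sum is a square-integrable martingale, Pythagoras reduces the computation to a sum of conditional second moments; apply the mean value inequality $|\ddot{m}_{\theta_1}-\ddot{m}_{\theta_2}| \le |\dddot{m}_{\theta_u}||\theta_1-\theta_2|$ with the third-derivative control \eqref{eq: estimation 3dot m } of order $\Delta_{n,i}$, and then \eqref{eq: x - m prop 2} to handle the $(X_{t_{i+1}} - m_{\theta_0}(X_{t_i}))^2 \varphi^2$ conditional moment. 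This yields $(\theta_1-\theta_2)^2$ times a bounded quantity (after using $n\Delta_n\to\infty$ and Lemma \ref{lemma 2.1 GLM}), giving tightness; \eqref{eq: 1 criterion tightness} then follows by the same estimate together with compactness of $\Theta$.

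The main obstacle is precisely piece (b): the lack of a direct pointwise bound for the martingale-increment factor forces the proof through a tightness argument in $C(\Theta)$, which in turn requires the third-derivative expansion \eqref{eq: estimation 3dot m } announced in the remark following Proposition \ref{prop: dl dotm ddotm}. Pieces (a) and (c) are routine once one has the expansions of $\dot{m}_\theta$ and $\ddot{m}_\theta$ and the already-proven consistency of $\hat{\theta}_n$.
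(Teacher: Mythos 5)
Your proposal is correct and follows essentially the same route as the paper: the identical three-term decomposition of $\ddot{U}_n(\widetilde{\theta}_n)-\ddot{U}_n(\theta_0)$, mean-value arguments combined with the expansions of $\dot{m}_\theta$, $\ddot{m}_\theta$ and consistency for the terms (a) and (c), and for the martingale term (b) the same tightness argument in $(C(\Theta),\|\cdot\|_\infty)$ based on the Pythagoras identity for square-integrable martingales, the bound \eqref{eq: estimation 3dot m } on $\dddot{m}_{\theta}$, and the conditional moment estimate \eqref{eq: x - m prop 2}. No gaps to report.
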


We are now ready to prove the asymptotic normality of the estimator. Using \eqref{eq: conv unif ddot Un} we have that
\begin{equation}
\frac{1}{t_n} \int_0^1 [\ddot{U}_n(\theta_0 + t(\hat{\theta}_n - \theta_0)) - \ddot{U}_n(\theta_0)]dt \xrightarrow{\mathbb{P}} 0.
\label{eq: conv int ddotUntheta} 
\end{equation}
We observe that
\begin{align}
\frac{1}{t_n} \int_0^1 [\ddot{U}_n(\theta_0 + t(\hat{\theta}_n - \theta_0))]dt \, \sqrt[]{t_n}(\hat{\theta}_n - \theta_0) = \nonumber \\
= \frac{1}{\sqrt[]{t_n}}\int_0^1 [\ddot{U}_n(\theta_0 + t(\hat{\theta}_n - \theta_0))]dt \, (\hat{\theta}_n - \theta_0) = \frac{1}{\sqrt[]{t_n}}(\dot{U}_n(\hat{\theta}_n) - \dot{U}_n(\theta_0)) = - \frac{\dot{U}_n(\theta_0)}{\sqrt[]{t_n}},
\end{align}
where in the last equality we have used that, on the set $\left \{ \hat{\theta}_n \in \mathop \Theta\limits^ \circ \right \}$, $\dot{U}_n(\hat{\theta}_n) =0 $ since $\hat{\theta}_n$ is a 
minimum. \\
Hence
\begin{equation}
\sqrt[]{t_n}(\hat{\theta}_n - \theta_0) = \frac{- \frac{\dot{U}_n(\theta_0)}{\sqrt[]{t_n}}}{\frac{1}{t_n} \int_0^1 [\ddot{U}_n(\theta_0 + t(\hat{\theta}_n - \theta_0))]dt}.
\label{eq: normality} 
\end{equation}
Using Lemma \ref{lemma dot Un convergence} we have the convergence in distribution of the numerator of \eqref{eq: normality} to $N(0, 4 \int_\mathbb{R} (\frac{\dot{b}(x,\theta_0)}{a(x)})^2\pi(dx))$ and, by the equation \eqref{eq: conv int ddotUntheta}, the denominator converges in probability to $-2\int_\mathbb{R} (\frac{\dot{b}(x,\theta_0)}{a(x)})^2\pi(dx) $. \\
Therefore $\sqrt[]{t_n}(\hat{\theta}_n - \theta_0)$ converges in distribution to $N(0, \frac{4 \int_\mathbb{R} (\frac{\dot{b}(x,\theta_0)}{a(x)})^2\pi(dx)}{4(\int_\mathbb{R} (\frac{\dot{b}(x,\theta_0)}{a(x)})^2 \pi(dx))^2})$, i. e. it is $N(0,( \int_\mathbb{R} (\frac{\dot{b}(x,\theta_0)}{a(x)})^2\pi(dx))^{-1} )$, as we wanted.

\subsection{Proof of Proposition \ref{P:normality_approximate_contrast}}
The proof of the proposition is essentially similar to the the proof of the asymptotic normality of
$\hat{\theta}_n$ given in Sections \ref{Ss:Contrast_conv}--\ref{Ss:Asymptotic_nor} and we skip it. The main difference comes from the fact that Proposition \ref{prop: LT3} holds true with $\widetilde{m}_{\theta_0}(X_{t_i})$ replacing
${m}_{\theta_0}(X_{t_i})$ under the condition that $\sqrt{n} \Delta_n^{\rho-1/2} \to 0$.

\section*{References}\sloppy

\phantomsection
\label{csl:7}A\"it-Sahalia, Y., \& Yu, J. (2006). {Saddlepoint Approximations for Continuous-Time Markov Processes, Journal of Econometrics, 134, 507–551}.

\phantomsection
\label{csl:17}Applebaum, D. L. processes, \& stochastic calculus. Cambridge university press. (2009).

\phantomsection
\label{csl:4}Barndorff-Nielsen, O. E., \& Shephard, N. (2001). {Non-Gaussian Ornstein-Uhlenbeck-Based Models and Some of Their Uses in Financial Economics. J. R. Stat. Soc., Ser. B, Stat. Methodol., 63, 167-241}.

\phantomsection
\label{csl:3}Bates, D. S. (1996). {Jumps and Stochastic Volatility: Exchange Rate Processes Implicit in Deutsche Mark. The Review of Financial Studies, 9(1), 69-107}.

\phantomsection
\label{Derivative EDS} Bichteler, K. (1987). Malliavin calculus for processes with jumps. Stochastics Monographs.

\phantomsection
\label{csl:6}Ditlevsen, S., \& Greenwood, P. (2013). {The Morris–lecar Neuron Model Embeds a Leaky Integrate-And-Fire Model. Journal of Mathematical Biology 67 239-259}.

\phantomsection
\label{csl:5}Eraker, B., Johannes, M., \& N, P. (2003). {The Impact of Jumps in Volatility and Returns. J. Finance, 58(3), 1269}.

\phantomsection
\label{Florens Zmirou} Florens Zmirou, Danielle. "Approximate discrete-time schemes for statistics of diffusion processes." Statistics: A Journal of Theoretical and Applied Statistics 20.4 (1989): 547-557.

\phantomsection
\label{8 GLM} Genon Catalot, V. and Jacod, J. (1993). On the estimation of the diffusion coefficient for multi- dimensional diffusion processes. Annales de l’institut Henri Poincaré (B) Probabilités et Statistiques, 29, 119-151.

\phantomsection
\label{csl:16}Gloter, A., Loukianova, D., \& Mai, H. (2018). {Jump Filtering and Efficient Drift Estimation for Lévy-Driven Sdes. The Annals of Statistics, 46(4), 1445}.

\phantomsection
\label{csl:21}Ibragimov, I. A., \& Has' Minskii, R. Z. (2013). {Statistical Estimation: Asymptotic Theory (vol. 16). Springer Science and Business Media}.

\phantomsection
\label{csl:24}Jacod, J., \& Protter, P. (2011). {Discretization of Processes (vol. 67). Springer Science and Business Media}.

\phantomsection
\label{11 GLM} Jacod, J., and Shiryaev, A. (2013). Limit theorems for stochastic processes (Vol. 288). Springer Science and Business Media.

\phantomsection
\label{csl:9}Jakobsen, N., \& S{\o}rensen, M. (2017). {Estimating Functions for Jump-Diffusions, Preprint}.

\phantomsection
\label{csl:15}Kessler, M. (1997). {Estimation of an Ergodic Diffusion from Discrete Observations. Scandinavian Journal of Statistics, 24(2), 211-229}.

\phantomsection
\label{csl:2} Kou, S.G. (2002). A Jump Diffusion Model for Option Pricing. Management Science, 48, 1086-1101.

\phantomsection
\label{csl:8}Li, C., \& Chen, D. (2016). {Estimating Jump-Diffusions Using Closed Form Likelihood Expansions. Journal of Econometrics, 195, 51–71}.

\phantomsection
\label{18 GLM}Masuda, H. (2007). Ergodicity and exponential beta mixing bounds for multidimensional diffusions with jumps. Stochastic processes and their applications, 117(1), 35-56.

\phantomsection
\label{19 GLM} Masuda, H. (2009). Erratum to: Ergodicity and exponential beta mixing bound for multidimensional diffusions with jumps, Stochastic Process. Appl. 117 (2007) 35–56. Stochastic Processes and their Applications, 119(2), 676-678.

\phantomsection
\label{csl:12}Masuda, H. (2013). {Convergence of Gaussian Quasi-Likelihood Random Fields for Ergodic Lévy Driven Sde Observed At High Frequency. Annals of Stat., 41(3), 1593}.

\phantomsection
\label{csl:1}Merton, R. C. (1976). {Option Pricing When Underlying Stock Returns Are Discontinuous. Journal of Financial Economics, 3, 125-144}.

\phantomsection
\label{csl:20}Nikolskii, S. M. (1977). {Approximation of Functions of Several Variables and Imbedding Theorems (russian). Sec. Ed., Moskva, Nauka 1977 English Translation of the First Ed., Berlin 1975}.

\phantomsection
\label{Protter GLM} Protter, P. E. (2005). Stochastic differential equations. In Stochastic integration and differential equations (pp. 249-361). Springer, Berlin, Heidelberg.

\phantomsection
\label{csl:11} Shimizu, Y. (2006). M Estimation for Discretely Observed Ergodic Diffusion Processes with Infinitely many Jumps. Statistical Inference for Stochastic Processes, 9, 179-225.

\phantomsection
\label{csl:10}Shimizu, Y., \& Yoshida, N. (2006). {Estimation of Parameters for Diffusion Processes with Jumps from Discrete Observations. Statistical Inference for Stochastic Processes, 9(3), 227-277}.

\phantomsection
\label{csl:14}Yoshida, N. (1992). {Estimation for Diffusion Processes from Discrete Observation. Journal of Multivariate Analysis, 41, 220-242}.

\vspace{0.5 cm}
LaMME, UMR CNRS 8071 \\
Université d'Evry Val d'Essonne \\
91037 Évry Cedex \\
France \\
E-mail: chiara.amorino@univ-evry.fr \\
E-mail: arnaud.gloter@univ-evry.fr

\appendix

\section{Appendix}
In this section we will prove the technical lemmas that we have used in order to show the main theorems.

\subsection{Proof of expansions of the derivatives of the function $m_{\theta, h}$}\label{S:Proof_derivatives}
In order to prove the explicit approximation of $\dot{m}_{\theta,h}$ and $\ddot{m}_{\theta,h}$ provided in Proposition \ref{prop: dl dotm ddotm}, the following lemma will be useful.
We point out that $X_t^\theta$ is $X_t^{\theta, x}$ and so the process starts in $0$: $X_0^{\theta,x} = x$. \\

\begin{lemma}
Suppose that Assumptions 1 to 4 and 7 hold. Let us define $\dot{X}^{\theta,x}_t : = \frac{\partial X^{\theta,x}_t}{\partial \theta}$ and $\ddot{X}^{\theta,x}_t : = \frac{\partial^2 X^{\theta,x}_t}{\partial \theta^2}$. Then, for all $p \ge 2$ $\exists c > 0$: $\forall h \le \Delta_n$ $\forall x$,
\begin{equation}
\mathbb{E}[|\frac{\dot{X}^{\theta,x}_h}{h}|^p ] \le c(1 + |x|^c ),
\label{eq: estimation dot x} 
\end{equation}
\begin{equation}
\mathbb{E}[|\frac{\ddot{X}^{\theta,x}_h}{h}|^p ] \le c(1 + |x|^c ). 
\label{eq: estimation ddot x} 
\end{equation}
\label{lemma: estimation dotx et ddotx} 
\end{lemma}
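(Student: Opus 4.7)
The plan is to write down the linear SDEs satisfied by $\dot X^{\theta,x}$ and $\ddot X^{\theta,x}$ by formally differentiating \eqref{eq: model} with respect to $\theta$ (the existence of these derivatives being justified via the Malliavin calculus for jump processes of Bichteler cited in the paper). Since only the drift $b$ depends on $\theta$ and the initial condition does not, we have $\dot X_0^{\theta,x}=\ddot X_0^{\theta,x}=0$; this is ultimately what produces the factor $h$ on the right-hand side of the two estimates.

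First I would treat $\dot X^{\theta,x}$. It satisfies
\begin{equation*}
\dot X_t^{\theta,x}=\int_0^t \dot b(\theta,X_s^{\theta,x})\,ds + \int_0^t b_x(\theta,X_s^{\theta,x})\dot X_s^{\theta,x}\,ds + \int_0^t a'(X_s^{\theta,x})\dot X_s^{\theta,x}\,dW_s + \int_0^t\!\!\int \gamma'(X_{s^-}^{\theta,x})\dot X_{s^-}^{\theta,x} z\,\tilde\mu(ds,dz).
\end{equation*}
The source term $\int_0^t \dot b(\theta,X_s)\,ds$ has $L^p$ norm of order $h$, because $\dot b$ has polynomial growth by Assumption 7 and $\sup_{s\le h}\mathbb{E}[|X_s^{\theta,x}|^q\mid X_0^{\theta,x}=x]\le c(1+|x|^c)$ by Lemma \ref{lemma: Moment inequalities}. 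All coefficients multiplying $\dot X$ are bounded (Assumption 7), and $\int |z|^p F(dz)$ is finite for every $p\ge 2$ (combine Assumption 3.1 for $|z|>1$ with $\alpha<2$ for $|z|\le 1$). Setting $\phi(t)=\mathbb{E}[\sup_{s\le t}|\dot X_s^{\theta,x}|^p]$ and applying H\"older to the Lebesgue integrals, Burkholder-Davis-Gundy to the Brownian part, and Kunita's inequality to the compensated Poisson part, I would obtain
\begin{equation*}
\phi(t)\le C\,h^p(1+|x|^c) + C\int_0^t \phi(s)\,ds,
\end{equation*}
and Gronwall then yields $\phi(h)\le C h^p(1+|x|^c)$ for $h\le \Delta_n$, which is \eqref{eq: estimation dot x}.

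For $\ddot X^{\theta,x}$ I would differentiate once more. The resulting SDE is again linear in $\ddot X$ with the same bounded coefficients $b_x$, $a'$, $\gamma'$, but has additional source terms of the forms $\int_0^t \ddot b(\theta,X_s)\,ds$, $\int_0^t \dot b_x(\theta,X_s)\dot X_s\,ds$, $\int_0^t b_{xx}(\theta,X_s)(\dot X_s)^2\,ds$, and the analogous stochastic integrals with $a''(X_s)(\dot X_s)^2$ and $\gamma''(X_{s^-})(\dot X_{s^-})^2$. The $\ddot b$-term is $O(h^p)$ in $L^p$ by polynomial growth of $\ddot b$ exactly as in step~1; the terms involving $\dot X$ or $(\dot X)^2$ are controlled by inserting the bound $\mathbb{E}[|\dot X_s|^q]\le C s^q(1+|x|^c)$ from step~1 (used for $q=p$ and $q=2p$), and turn out to be $O(h^{2p})$ or smaller, hence are absorbed into the $O(h^p)$ term. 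A second Gronwall argument then yields \eqref{eq: estimation ddot x}.

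The main technical obstacle is the careful accounting of moments in Kunita's BDG-type inequality for the compensated Poisson integral, which produces both a quadratic-variation term $\mathbb{E}[(\int_0^t\!\!\int |H|^2 F(z)dz\,ds)^{p/2}]$ and a jump-size term $\mathbb{E}[\int_0^t\!\!\int |H|^p F(z)dz\,ds]$; both must be bounded using finiteness of $\int |z|^2 F(dz)$ and $\int |z|^p F(dz)$ (which is where $\alpha<2$ plays a role) and then reduced to integrals of $\phi$. Once this book-keeping is done, the remaining estimates are routine.
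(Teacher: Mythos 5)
Your proposal is correct and follows essentially the same route as the paper: differentiate the SDE in $\theta$, bound the $\theta$-dependent source term $\int_0^h \dot b(\theta,X_s)\,ds$ in $L^p$ by $O(h^p)(1+|x|^c)$, control the linear terms via Jensen, Burkholder--Davis--Gundy and Kunita's inequality using the boundedness of $b_x$, $a'$, $\gamma'$ and the moment conditions on $F$, and close with Gronwall; the second-order estimate is then obtained the same way after absorbing the $\dot X$-dependent source terms, which are of higher order, exactly as in the paper. The only cosmetic difference is your use of $\mathbb{E}[\sup_{s\le t}|\dot X_s|^p]$ in place of the paper's pointwise $\mathbb{E}[|\dot X_h|^p]$, which changes nothing in the argument.
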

\begin{proof}
The dynamic of the process $X$ is known. The same applies to the processes $\dot{X}^{\theta,x}_t$ and $\ddot{X}^{\theta,x}_t$ (cf. (missing citation), section 5).
\begin{equation}
\dot{X}^{\theta,x}_h = \int_0^h (b'(X^{\theta,x}_s, \theta)\dot{X}^{\theta,x}_s + \dot{b}(X^{\theta,x}_s, \theta)) ds + \int_0^h a'(X^{\theta,x}_s)\dot{X}^{\theta,x}_s dW_s + \int_0^h \int_\mathbb{R} \gamma'(X^{\theta,x}_{s^-})\dot{X}^{\theta,x}_s z \tilde{\mu}(dz, ds)
\label{eq: dynamic dotx} 
\end{equation}
and
\begin{equation}
\ddot{X}^{\theta,x}_h = \int_0^h (b''(X^{\theta,x}_s, \theta)(\dot{X}^{\theta,x}_s)^2 + 2\dot{b}'(X^{\theta,x}_s, \theta)\dot{X}^{\theta,x}_s + b'(X^{\theta,x}_s, \theta)\ddot{X}^{\theta,x}_s + \ddot{b}(X^{\theta,x}_s, \theta)) ds +
\label{eq: dynamic ddotx} 
\end{equation}
$$+ \int_0^h (a''(X^{\theta,x}_s)(\dot{X}^{\theta,x}_s)^2 + a'(X^{\theta,x}_s)\ddot{X}^{\theta,x}_s)dW_s + 
\int_0^h \int_\mathbb{R} (\gamma''(X^{\theta,x}_{s^-})(\dot{X}^{\theta,x}_s)^2 + \gamma'(X^{\theta,x}_{s^-})\ddot{X}^{\theta,x}_s) z \tilde{\mu}(dz, ds).$$
From now on, we will drop the dependence of the starting point in order to make the notation easier. \\
Let us start with the proof of  \eqref{eq: estimation dot x}. We observe that, taking the $L^p$ norm of \eqref{eq: dynamic dotx}, we have the following estimation:
\begin{equation}
\mathbb{E}[|\dot{X}_h^\theta|^p] \le c\mathbb{E}[|\int_0^h (b'(X^\theta_s, \theta)\dot{X}^\theta_s + \dot{b}(X^\theta_s, \theta)) ds |^p] + c \mathbb{E}[| \int_0^h a'(X^\theta_s)\dot{X}^\theta_s dW_s |^p] + c \mathbb{E}[|\int_0^h \int_\mathbb{R} \gamma'(X^\theta_{s^-})\dot{X}^\theta_s z \tilde{\mu}(dz, ds)|^p]. 
\label{eq: expectation dinamyc dotX} 
\end{equation}
Concerning the first term of \eqref{eq: expectation dinamyc dotX}, 
$$\mathbb{E}[|\int_0^h (b'(X^\theta_s, \theta)\dot{X}^\theta_s + \dot{b}(X^\theta_s, \theta)) ds |^p] \le c\mathbb{E}[|\int_0^h b'(X^\theta_s, \theta)\dot{X}^\theta_s ds |^p] + c\mathbb{E}[|\int_0^h\dot{b}(X^\theta_s, \theta) ds |^p].$$
Then, using Jensen inequality on the first, we obtain
$$\mathbb{E}[|\int_0^h b'(X^\theta_s, \theta)\dot{X}^\theta_s ds |^p] = \mathbb{E}[h^p| \frac{1}{h}\int_0^h b'(X^\theta_s, \theta)\dot{X}^\theta_s ds |^p] \le $$
$$\le \mathbb{E}[h^{p-1} \int_0^h |b'(X^\theta_s, \theta)|^p |\dot{X}^\theta_s|^p ds ] = h^{p-1} \int_0^h \mathbb{E}[|b'(X^\theta_s, \theta)|^p |\dot{X}^\theta_s|^p] ds.$$
The derivatives of $b$ with respect to $x$ are supposed bounded, it yields
\begin{equation}
\mathbb{E}[|\int_0^h b'(X^\theta_s, \theta)\dot{X}^\theta_s ds |^p] \le c h^{p-1} \int_0^h \mathbb{E}[|\dot{X}^\theta_s|^p] ds.
\label{eq: estimation b expected norme p dotx } 
\end{equation}
Let us now consider the second term of \eqref{eq: expectation dinamyc dotX}. Using Burkholder-Davis-Gundy and Jensen inequalities we get
$$ \mathbb{E}[| \int_0^h a'(X^\theta_s)\dot{X}^\theta_s dW_s |^p] \le c \mathbb{E}[| \int_0^h (a'(X^\theta_s)\dot{X}^\theta_s)^2 ds |^\frac{p}{2}] = $$
$$= c \mathbb{E}[h^\frac{p}{2}|\frac{1}{h} \int_0^h (a'(X^\theta_s)\dot{X}^\theta_s)^2 ds |^\frac{p}{2}] \le
c h^{\frac{p}{2} - 1} \mathbb{E}[\int_0^h |a'(X^\theta_s)\dot{X}^\theta_s|^p].$$
Therefore
\begin{equation}
\mathbb{E}[| \int_0^h a'(X^\theta_s)\dot{X}^\theta_s dW_s |^p] \le c h^{\frac{p}{2} - 1}\int_0^h \mathbb{E}[|\dot{X}^\theta_s|^p] ds,
\label{eq: estimation a expected norme p dotx } 
\end{equation}
where we have used that the derivatives of $a$ are bounded. \\
The third term of \eqref{eq: expectation dinamyc dotX} can be estimed using Kunita inequality (cf. the Appendix of \hyperref[csl:24]{(Jacod \& Protter, 2011)}):
$$\mathbb{E}[|\int_0^h \int_\mathbb{R} \gamma'(X^\theta_{s^-})\dot{X}^\theta_s z \tilde{\mu}(dz, ds)|^p] \le$$
$$ \le \mathbb{E}[\int_0^h \int_\mathbb{R} |\gamma'(X^\theta_{s})\dot{X}^\theta_s|^p |z|^p \bar{\mu}(dz, ds)] + \mathbb{E}[|\int_0^h \int_\mathbb{R} (\gamma'(X^\theta_{s})\dot{X}^\theta_s)^2 z^2 \bar{\mu}(dz, ds)|^\frac{p}{2}] \le$$
$$\le \int_0^h\mathbb{E}[|\gamma'(X^\theta_{s})|^p|\dot{X}^\theta_s|^p] ( \int_\mathbb{R} |z|^p F(z)dz) ds + 
\mathbb{E}[|\int_0^h (\gamma'(X^\theta_{s})\dot{X}^\theta_s)^2 (\int_\mathbb{R}z^2 F(z)dz) ds|^\frac{p}{2}]\le$$
$$\le c\int_0^h\mathbb{E}[|\gamma'(X^\theta_{s})|^p|\dot{X}^\theta_s|^p] ds + 
c \mathbb{E}[|\int_0^h (\gamma'(X^\theta_{s})\dot{X}^\theta_s)^2 ds|^\frac{p}{2}], $$
where in the last two inequalities we have just used the definition of the compensated measure $\bar{\mu}$ and the third point of Assumption 4. \\
Since the derivatives of $\gamma$ are supposed bounded and by the Jensen inequality we get it is upper bounded by
$$c \int_0^h\mathbb{E}[|\dot{X}^\theta_s|^p] ds + c \mathbb{E}[h^{\frac{p}{2} - 1}\int_0^h|\gamma'(X^\theta_{s})|^p|\dot{X}^\theta_s|^p ds] \le c \int_0^h\mathbb{E}[|\dot{X}^\theta_s|^p] ds + c h^{\frac{p}{2} - 1} \int_0^h\mathbb{E}[|\dot{X}^\theta_s|^p] ds.  $$
Hence
\begin{equation}
\mathbb{E}[|\int_0^h \int_\mathbb{R} \gamma'(X^\theta_{s^-})\dot{X}^\theta_s z \tilde{\mu}(dz, ds)|^p] \le c(1 + h^{\frac{p}{2} - 1}) \int_0^h\mathbb{E}[|\dot{X}^\theta_s|^p] ds.
\label{eq: estimation gamma expected norme p dotx } 
\end{equation}
From \eqref{eq: estimation b expected norme p dotx }, \eqref{eq: estimation a expected norme p dotx } and \eqref{eq: estimation gamma expected norme p dotx }, we obtain
$$\mathbb{E}[|\dot{X}_h^\theta|^p] \le c\mathbb{E}[|\int_0^h\dot{b}(X^\theta_s, \theta) ds |^p] + c(1 + h^{\frac{p}{2} - 1}+ h^{p - 1}) \int_0^h\mathbb{E}[|\dot{X}^\theta_s|^p] ds.  $$
Let $M_h$ be $\mathbb{E}[|\dot{X}_h^\theta|^p]$, then the equation above can be seen as
$$M_h \le c\mathbb{E}[|\int_0^h\dot{b}(X^\theta_s, \theta) ds |^p] + c(1 + h^{\frac{p}{2} - 1}+ h^{p - 1}) \int_0^h M_s ds.$$
Using Gronwall lemma, it yields $M_h \le c\mathbb{E}[|\int_0^h\dot{b}(X^\theta_s, \theta) ds |^p] e^{ch(1 + h^{\frac{p}{2} - 1}+ h^{p - 1})}$. \\
By the polynomial growth of $\dot{b}$ and the third point of Lemma 1, $$\mathbb{E}[|\int_0^h\dot{b}(X^\theta_s, \theta) ds |^p] \le ch^p(1 + |X_0^{0,x}|^c)= ch^p(1 + |x|^c). $$ \\
Hence $\mathbb{E}[|\dot{X}_h^\theta|^p] \le c h^p(1 + |x|^c).$
\\
Our goal is now to prove \eqref{eq: estimation ddot x}. In order to do it, we take the $L^p$ norm of \eqref{eq: dynamic ddotx}, getting the following estimation:
\begin{align}
\mathbb{E}[|\ddot{X}^\theta_h|^p] \le \mathbb{E}[|\int_0^h (b''(X^\theta_s, \theta)(\dot{X}^\theta_s)^2 + 2\dot{b}'(X^\theta_s, \theta)\dot{X}^\theta_s + b'(X^\theta_s, \theta)\ddot{X}^\theta_s + \ddot{b}(X^\theta_s, \theta)) ds|^p]+ \nonumber \\
+ \mathbb{E}[|\int_0^h (a''(X^\theta_s)(\dot{X}^\theta_s)^2 + a'(X^\theta_s)\ddot{X}^\theta_s)dW_s |^p] + \mathbb{E}[|\int_0^h \int_\mathbb{R} (\gamma''(X^\theta_{s^-})(\dot{X}^\theta_s)^2 + \gamma'(X^\theta_{s^-})\ddot{X}^\theta_s) z \tilde{\mu}(dz, ds)|^p] 
\label{eq: expectation dinamyc ddotX} 
\end{align}
The first term of \eqref{eq: expectation dinamyc ddotX} is upper bounded by
$$\mathbb{E}[|\int_0^h (b''(X^\theta_s, \theta)(\dot{X}^\theta_s)^2ds|^p] + \mathbb{E}[|\int_0^h 2\dot{b}'(X^\theta_s, \theta)\dot{X}^\theta_s ds|^p] + \mathbb{E}[ |\int_0^h b'(X^\theta_s, \theta)\ddot{X}^\theta_s ds|^p] + \mathbb{E}[| \int_0^h \ddot{b}(X^\theta_s, \theta) ds|^p]\le$$
\begin{equation}
\le ch^{p - 1} \int_0^h \mathbb{E}[|\dot{X}^\theta_s|^{2p}] ds + c h^{p - 1}\int_0^h \mathbb{E}[|\dot{b}'(X^\theta_s, \theta)|^p|\dot{X}^\theta_s|^p] ds +  ch^{p - 1} \int_0^h \mathbb{E}[|\ddot{X}^\theta_s|^p] ds + \mathbb{E}[| \int_0^h \ddot{b}(X^\theta_s, \theta) ds|^p],
\label{eq: bddot} 
\end{equation}
where we have used Jensen inequality and that the derivatives of $b$ with respect to $x$ are supposed bounded. \\
By Holder inequality
$$\mathbb{E}[|\dot{b}'(X^\theta_s, \theta)|^p|\dot{X}^\theta_s|^p] \le (\mathbb{E}[|\dot{b}'(X^\theta_s, \theta)|^{pp_1}])^\frac{1}{p_1} (\mathbb{E}[|\dot{X}^\theta_s|^{pp_2}]^\frac{1}{p_2} \le c(h^{pp_2})^\frac{1}{p_2}(1 + |x|^c) = c h^p(1 + |x|^c),$$
where in the last inequality we have used the boundedness of $\dot{b}'$ and \eqref{eq: estimation dot x}.
Since $\ddot{b}$ has polynomial growth and by the third point of Lemma \ref{lemma: Moment inequalities}, $\mathbb{E}[| \int_0^h \ddot{b}(X^\theta_s, \theta)) ds|^p] \le ch^p(1 + |x|^c)$. 
Replacing in \eqref{eq: bddot} and using also on its first term \eqref{eq: estimation dot x} we obtain it is upper bounded by
\begin{equation}
\mathbb{E}[|\int_0^h (b''(X^\theta_s, \theta)(\dot{X}^\theta_s)^2 + 2\dot{b}'(X^\theta_s, \theta)\dot{X}^\theta_s + b'(X^\theta_s, \theta)\ddot{X}^\theta_s + \ddot{b}(X^\theta_s, \theta)) ds|^p] \le
\label{eq: estimation b expected norme p ddotx } 
\end{equation}
$$ \le c(1 + |x|^c)(h^{3p} + h^{2p}+ h^p )+  ch^{p - 1} \int_0^h \mathbb{E}[|\ddot{X}^\theta_s|^p] ds . $$
Let us now consider the second term of \eqref{eq: expectation dinamyc ddotX}. By Burkholder-Davis-Gundy and Jensen inequalities we get
$$\mathbb{E}[|\int_0^h (a''(X^\theta_s)(\dot{X}^\theta_s)^2 + a'(X^\theta_s)\ddot{X}^\theta_s)dW_s |^p]  \le
\mathbb{E}[|\int_0^h (a''(X^\theta_s)(\dot{X}^\theta_s)^2 + a'(X^\theta_s)\ddot{X}^\theta_s)^2ds |^\frac{p}{2}]\le $$
\begin{equation}
\le h^{\frac{p}{2} -1}\mathbb{E}[\int_0^h |a''(X^\theta_s)(\dot{X}^\theta_s)^2|^p + |a'(X^\theta_s)\ddot{X}^\theta_s|^pds] \le c h^{\frac{p}{2} + 2p}(1 + |x|^c) + ch^{\frac{p}{2} -1} \int_0^h \mathbb{E}[|\ddot{X}^\theta_s|^p] ds,
\label{eq: estimation a expected norme p ddotx } 
\end{equation}
where in the last inequality we have used that the derivatives of $a$ are supposed bounded and \eqref{eq: estimation dot x}. \\
Concerning the last term of \eqref{eq: expectation dinamyc ddotX}, by Kunita inequality it is upper bounded by
$$\mathbb{E}[\int_0^h \int_\mathbb{R} |\gamma''(X^\theta_{s})(\dot{X}^\theta_s)^2 + \gamma'(X^\theta_{s})\ddot{X}^\theta_s|^p |z|^p \bar{\mu}(dz, ds)] +\mathbb{E}[|\int_0^h \int_\mathbb{R} (\gamma''(X^\theta_{s})(\dot{X}^\theta_s)^2 + \gamma'(X^\theta_{s})\ddot{X}^\theta_s)^2 z^2 \bar{\mu}(dz, ds)|^\frac{p}{2}] \le$$
$$\le c \int_0^h \mathbb{E}[|\gamma''(X^\theta_{s})(\dot{X}^\theta_s)^2 + \gamma'(X^\theta_{s})\ddot{X}^\theta_s|^pds] + h^{\frac{p}{2} - 1}\int_0^h \mathbb{E}[|\gamma''(X^\theta_{s})(\dot{X}^\theta_s)^2 + \gamma'(X^\theta_{s})\ddot{X}^\theta_s|^pds], $$
having used Jensen inequality and the third point of Assumption 4 in order to say that $\int_\mathbb{R}|z|^p F(z) dz < c$. \\
Using \eqref{eq: estimation dot x} and the boundedness of the derivatives of $\gamma$, it is upper bounded by
\begin{equation}
c(1 + |x|^c)h^{2p +1} + c\int_0^h \mathbb{E}[|\ddot{X}^\theta_s|^p]ds + ch^{\frac{p}{2} + 2p}(1 + |x|^c) + ch^{\frac{p}{2} - 1}\int_0^h \mathbb{E}[|\ddot{X}^\theta_s|^p]ds.
\label{eq: estimation gamma expected norme p ddotx } 
\end{equation}
From \eqref{eq: estimation b expected norme p ddotx }, \eqref{eq: estimation a expected norme p ddotx } and \eqref{eq: estimation gamma expected norme p ddotx } we get
$$\mathbb{E}[|\ddot{X}^\theta_h|^p] \le c(1 + |x|^c)h^p(1+h^p + h^{2p} + h^{p + \frac{p}{2}} + h^{p +1}) + c(1+ h^{p-1}+h^{\frac{p}{2} -1})\int_0^h \mathbb{E}[|\ddot{X}^\theta_s|^p]ds.$$
Using Gronwall Lemma we obtain $\mathbb{E}[|\ddot{X}^\theta_h|^p] \le c(1 + |x|^c)h^p(1+h^p + h^{2p} + h^{p + \frac{p}{2}} + h^{p +1})$ and so $ \mathbb{E}[|\ddot{X}^\theta_h|^p] \le c(1 + |x|^c)h^p, $ as we wanted.
\end{proof}

\begin{remark}
Supposing that the same assumptions as in Lemma $5$ hold and acting as we have done in order to get the estimations \eqref{eq: estimation dot x} and \eqref{eq: estimation ddot x} it is possible to prove that, for all $p \ge 2$ $\exists c > 0$:  $\forall h \le \Delta_{n} $ , $\forall x$,
\begin{equation}
\mathbb{E}[|\frac{\partial^3}{\partial \theta^3}X_h^{\theta,x}|^p \frac{1}{h^p}] \le c(1 + |x|^c).
\label{eq: estimation 3dot x} 
\end{equation}
\end{remark}

\subsubsection{Proof of Proposition \ref{prop: dl dotm ddotm}}
\begin{proof}
As in the proof of Lemma \ref{lemma: estimation dotx et ddotx}, we drop the dependence on the starting point in order to make the notation easier. \\
We recall the definition of $m_{\theta,h}(x):$
$$m_{\theta, h}(x): =\frac{\mathbb{E}[X_h^\theta \varphi_{h^\beta}(X_{h}^\theta - X_0^\theta)|X_0^\theta = x]}{\mathbb{E}[\varphi_{h^\beta}(X_h^\theta - X_0^\theta)|X_0^\theta = x]} = \frac{\mathbb{E}[X_h^\theta \varphi_{h^\beta}(X_{h}^\theta - x)]}{\mathbb{E}[\varphi_{h^\beta}(X_h^\theta -x)]}.$$
Its derivative with respect to $\theta$ is
\begin{equation}
\frac{\mathbb{E}[\dot{X}_h^\theta \varphi_{h^\beta}(X_{h}^\theta - x)] + \mathbb{E}[X_h^\theta h^{- \beta} \dot{X}_{h}^\theta \varphi'_{h^\beta}(X_{h}^\theta - x)]}{\mathbb{E}[\varphi_{h^\beta}(X_h^\theta - x)]} - m_{\theta,h}(x)\frac{\mathbb{E}[h^{- \beta} \dot{X}_{h}^\theta \varphi'_{h^\beta}(X_{h}^\theta - x)]}{\mathbb{E}[\varphi_{h^\beta}(X_h^\theta - x)]}.
\label{eq: derivé mtheta} 
\end{equation}
On the second and on the third term of \eqref{eq: derivé mtheta} we divide and we multiply by $h$ and then we use Proposition \ref{prop: truc moche z}, taking $Z_1=\frac{\dot{X}_{h}^\theta}{h} X_{h}^\theta $ and $Z_2=\frac{\dot{X}_h^\theta}{h}$, respectively. 
We are allowed to do that because they are both bounded in $L^p$, with $p$ arbitrary high,  since we can use \eqref{eq: estimation dot x} on $Z_2$ and  Holder inequality, \eqref{eq: estimation dot x} and the third point of Lemma \ref{lemma: Moment inequalities} on $Z_1$. For $|x| \le h^{- k_0}$ we have
\begin{equation}
m_{\theta,h}(x)= x + \frac{\mathbb{E}[(X_h^\theta - x) \varphi_{h^\beta}(X_{h}^\theta - x)]}{\mathbb{E}[\varphi_{h^\beta}(X_h^\theta -x)]} = R(\theta, 1, x),
\label{eq: m come R(1) senza dl} 
\end{equation}
where we have used that $k_0$ turns out in the proof of theorems \ref{th: dL mtheta alpha <1} and \ref{th: dL mtheta alpha  > 1}, hence it has been chosen such that, for $|x| \le h^{-k_0}$ we have that $\mathbb{E}[\varphi_{h^\beta}(X_h^\theta - x)] \ge \frac{1}{2}$. Moreover the expected value is bounded as a result of the boundedness of $\varphi$ and the third point of Lemma \ref{lemma: Moment inequalities}.
It yields, for $\epsilon > 0$ arbitrary small,
\begin{equation}
\dot{m}_{\theta,h }= \frac{\mathbb{E}[\dot{X}_h^\theta \varphi_{h^\beta}(X_h^\theta - x)] + R(\theta, h^{2 - \alpha\beta -\epsilon - \beta}, x)}{\mathbb{E}[\varphi_{h^\beta}(X_h^\theta - x)]}.
\label{eq: dot mtheta with rest} 
\end{equation}
Let us now consider the first term. Replacing the dynamic of the process $\dot{X}_h^\theta$, we get
$$\mathbb{E}[\int_0^h (b'(X^\theta_s, \theta)\dot{X}^\theta_s + \dot{b}(X^\theta_s, \theta)) ds \, \varphi_{h^\beta}(X_h^\theta - x)] + \mathbb{E}[(\int_0^h a'(X^\theta_s)\dot{X}^\theta_s dW_s +$$
\begin{equation}
+ \int_0^h \int_\mathbb{R} \gamma'(X^\theta_{s^-})\dot{X}^\theta_s z \tilde{\mu}(dz, ds)) \, \varphi_{h^\beta}(X_h^\theta - x)] = \mathbb{E}[\int_0^h \dot{b}(X^\theta_s, \theta) ds \, \varphi_{h^\beta}(X_h^\theta - x)] + R(\theta, h^{\frac{3}{2}}, x).
\label{eq: dotx with term rest} 
\end{equation}
In fact, using Holder inequality, 
$$|\mathbb{E}[\int_0^h b'(X^\theta_s, \theta)\dot{X}^\theta_s ds \, \varphi_{h^\beta}(X_h^\theta - x)] |\le$$
$$ \le (\mathbb{E}[|\int_0^h b'(X^\theta_s, \theta)\dot{X}^\theta_s ds|^p])^\frac{1}{p} (\mathbb{E}[\varphi^q_{h^\beta}(X_h^\theta - x)])^\frac{1}{q} \le (c h^{p-1} \int_0^h \mathbb{E}[|\dot{X}^\theta_s|^p] ds)^\frac{1}{p},$$
where in the last inequality we have used that $\varphi$ is bounded and \eqref{eq: estimation b expected norme p dotx }. By \eqref{eq: estimation dot x}, it is upper bounded by $(c h^{2p}(1 + |x|^c))^\frac{1}{p}$. It turns
\begin{equation}
\mathbb{E}[\int_0^h b'(X^\theta_s, \theta)\dot{X}^\theta_s ds \, \varphi_{h^\beta}(X_h^\theta - x)]= R(\theta, h^2, x).
\label{eq: estim b' dotx varphi} 
\end{equation}
In the same way, from Holder inequality, \eqref{eq: estimation a expected norme p dotx } and the fact that $\varphi$ is bounded, we get
$|\mathbb{E}[\int_0^h a'(X^\theta_s)\dot{X}^\theta_s dW_s \, \varphi_{h^\beta}(X_h^\theta - x)] |\le (c h^{\frac{p}{2} -1} \int_{0}^h \mathbb{E}[|\dot{X}^\theta_s|^p] ds)^\frac{1}{p}.$
Using \eqref{eq: estimation dot x}, it yields
\begin{equation}
 \mathbb{E}[\int_0^h a'(X^\theta_s)\dot{X}^\theta_s dW_s \, \varphi_{h^\beta}(X_h^\theta - x)] = R(\theta, h^\frac{3}{2}, x).
\label{eq: estim a' dotx varphi} 
\end{equation}
Using again Holder inequality, the fact that $\varphi$ is bounded and \eqref{eq: estimation gamma expected norme p dotx } we obtain
$$|\mathbb{E}[\int_0^h \int_\mathbb{R} \gamma'(X^\theta_{s^-})\dot{X}^\theta_s z \tilde{\mu}(dz, ds) \, \varphi_{h^\beta}(X_h^\theta - x)] | \le (c(1 + h^{\frac{p}{2} -1}) \int_0^h \mathbb{E}[|\dot{X}^\theta_s|^p] ds)^\frac{1}{p}.$$
Using \eqref{eq: estimation dot x}, we obtain 
$\mathbb{E}[\int_0^h \int_\mathbb{R} \gamma'(X^\theta_{s^-})\dot{X}^\theta_s z \tilde{\mu}(dz, ds) \, \varphi_{h^\beta}(X_h^\theta -x)] = R(\theta, h^{1 + \frac{1}{p}}, x),$
where $p$ turns out from Holder inequality. We can choose $p=2$, getting
\begin{equation}
\mathbb{E}[\int_0^h \int_\mathbb{R} \gamma'(X^\theta_{s^-})\dot{X}^\theta_s z \tilde{\mu}(dz, ds) \, \varphi_{h^\beta}(X_h^\theta - x)] = R(\theta, h^\frac{3}{2}, x).
\label{eq: estim gamma' dotx varphi} 
\end{equation}
Using \eqref{eq: estim b' dotx varphi}, \eqref{eq: estim a' dotx varphi} and \eqref{eq: estim gamma' dotx varphi}
we have \eqref{eq: dotx with term rest}, as we wanted. \\
The first term of \eqref{eq: dotx with term rest} can be seen as 
$$\mathbb{E}[\int_0^h (\dot{b}(X^\theta_s, \theta)- \dot{b}(x, \theta)) ds \, \varphi_{h^\beta}(X_h^\theta - x)] + \mathbb{E}[\int_0^h \dot{b}(x, \theta) ds \, \varphi_{h^\beta}(X_h^\theta - x)].$$
Using Holder inequality and the fact that $\varphi$ is bounded we get
$$\mathbb{E}[\int_0^h (\dot{b}(X^\theta_s, \theta)- \dot{b}(x, \theta)) ds \, \varphi_{h^\beta}(X_h^\theta - x)] \le $$
$$ \le c (\mathbb{E}[(\int_0^h (\dot{b}(X^\theta_s, \theta)- \dot{b}(x, \theta)) ds )^p])^\frac{1}{p} \le c (\mathbb{E}[(\int_0^h \left \| \frac{\partial \dot{b}}{\partial x }\right \|_\infty |X_s^\theta - x| ds )^p])^\frac{1}{p}.$$
From Jensen inequality we get it is upper bounded by
$c(h^{p-1}\int_0^h \mathbb{E}[|X_s^\theta - x|^p]ds )^\frac{1}{p} \le c (h^{p+1}(1 + |x|^p))^\frac{1}{p}, $
where we have used the second point of Lemma \ref{lemma: Moment inequalities}. It yields
$$\mathbb{E}[\int_0^h (\dot{b}(X^\theta_s, \theta)- \dot{b}(x, \theta)) ds \, \varphi_{h^\beta}(X_h^\theta - x)] = R(\theta, h^{1 + \frac{1}{p}}, x). $$
Taking $p=2$, the equation \eqref{eq: dotx with term rest} becomes
\begin{equation}
\mathbb{E}[\int_0^h \dot{b}(x, \theta) ds \, \varphi_{h^\beta}(X_h^\theta - x)] + R(\theta, h^{\frac{3}{2}}, x) + R(\theta,h^{\frac{3}{2}}, x) = \mathbb{E}[h \dot{b}(x, \theta) \varphi_{h^\beta}(X_h^\theta - x)] + R(\theta, h^{\frac{3}{2}}, x). 
\label{eq: expected ddotx varphi} 
\end{equation}
Replacing in \eqref{eq: dot mtheta with rest}, we get
$$\dot{m}_{\theta,h}(x)= \frac{\mathbb{E}[h \dot{b}(x, \theta) \varphi_{h^\beta}(X_h^\theta - x)] + R(\theta, h^{\frac{3}{2}}, x) + R(\theta, h^{2 - \alpha\beta -\epsilon - \beta}, x)}{\mathbb{E}[\varphi_{h^\beta}(X_h^\theta - x)]} = h \dot{b}(x, \theta) + \frac{R(\theta, h^{\frac{3}{2} \land (2 - \alpha\beta -\epsilon - \beta)}, x)}{\mathbb{E}[\varphi_{h^\beta}(X_h^\theta - x)]}.$$
We use the developments \eqref{eq: dl phi} and \eqref{eq: dl phi alpha >1} on the denominator; in both of them the function $R$ is negligible compared to $1$ without any condition on $\alpha$ and $\beta$. Hence for $|x| \le h^{-k_0}$ we get the expression (\ref{eq: dl dot mtheta}). \\
\\
In order to prove \eqref{eq: dl ddot mtheta}, we have to compute the second derivative of $m_{\theta,h}(x)$. From now on we will write only $\varphi^{(k)}$ for $\varphi^{(k)}_{h^\beta}(X_h^\theta - x)$, $k \ge 0$.
$$\ddot{m}_{\theta,h}(x) = \frac{\mathbb{E}[\ddot{X}^\theta_h\varphi] + h^{- \beta} \mathbb{E}[(\dot{X}^\theta_h)^2 \varphi']}{\mathbb{E}[\varphi]} - \frac{h^{-\beta}\mathbb{E}[\dot{X}^\theta_h\varphi]\mathbb{E}[\dot{X}^\theta_h\varphi']}{(\mathbb{E}[\varphi])^2} + h^{-\beta}\frac{\mathbb{E}[(\dot{X}^\theta_h)^2 \varphi'] +\mathbb{E}[(\dot{X}^\theta_h)^2 \varphi'' X^\theta_h h^{-\beta} ] + \mathbb{E}[X^\theta_h\varphi' \ddot{X}^\theta_h]}{\mathbb{E}{\varphi}} +$$
$$+ h^{-2\beta}\mathbb{E}[\dot{X}^\theta_h \varphi']\frac{m_{\theta,h}(x)\mathbb{E}[\dot{X}^\theta_h \varphi'] - \mathbb{E}[X^\theta_h \dot{X}^\theta_h \varphi']}{(\mathbb{E}[\varphi])^2} - h^{-\beta} \frac{\dot{m}_{\theta,h}(x)\mathbb{E}[\dot{X}^\theta_h \varphi'] + h^{-\beta} m_{\theta,h}(x) \mathbb{E}[(\dot{X}^\theta_h)^2 \varphi''] +  m_{\theta,h}(x)\mathbb{E}[\ddot{X}^\theta_h \varphi'] }{\mathbb{E}[\varphi]}.$$
As for the study of \eqref{eq: derivé mtheta}, we want to rely on Proposition $6$ to treat each term of the form $\mathbb{E}[Z\varphi^{(k)}]$, with $k \ge 1$, where $Z$ is bounded in $L^p$ and use $|\mathbb{E}[Z\varphi^{(k)}]| \le \mathbb{E}[|Z||\varphi^{(k)}|] = R(\theta, h^{1 - \alpha \beta - \epsilon}, x)$. \\
We take successively the following variables as choice for $Z$: $(\frac{\dot{X}^\theta_h}{h})^2$, $\frac{\dot{X}^\theta_h}{h}$, $(\frac{\dot{X}^\theta_h}{h})^2$, $(\frac{\dot{X}^\theta_h}{h})^2 X^\theta_h$, $\frac{\ddot{X}^\theta_h}{h} X^\theta_h$, $\frac{\dot{X}^\theta_h}{h}$, $\frac{\dot{X}^\theta_h}{h} X^\theta_h$, $\frac{\dot{X}^\theta_h}{h}$, $(\frac{\dot{X}^\theta_h}{h})^2$, $\frac{\ddot{X}^\theta_h}{h}$. \\
All those variable $Z$ are bounded in $L^p$ for $p \ge 2$ by \eqref{eq: estimation dot x} - \eqref{eq: estimation ddot x}, the third point of Lemma 1 and Holder inequality. We deduce
\begin{equation}
\ddot{m}_{\theta,h}(x) = \frac{\mathbb{E}[\ddot{X}^\theta_h\varphi] + R(\theta, h^{3 - \alpha \beta -\epsilon - \beta},x)}{\mathbb{E}[\varphi]} - \frac{\mathbb{E}[\dot{X}^\theta_h\varphi]R(\theta, h^{2 - \alpha \beta -\epsilon - \beta},x)}{(\mathbb{E}[\varphi])^2} +
\label{eq: dot m apres prop4} 
\end{equation}
$$+ \frac{R(\theta, h^{3 - \alpha \beta -\epsilon - \beta},x) + R(\theta, h^{3 - \alpha \beta -\epsilon - 2\beta},x) + R(\theta, h^{2 - \alpha \beta -\epsilon - \beta},x)}{\mathbb{E}[\varphi]} + \frac{R(\theta, h^{4 - 2\alpha \beta -\epsilon - 2\beta},x)}{(\mathbb{E}[\varphi])^2} + $$
$$ - \frac{R(\theta, h^{2 - \alpha \beta -\epsilon - \beta},x)\dot{m}_{\theta,h}(x) + R(\theta, h^{3 - \alpha \beta -\epsilon - 2\beta},x) + R(\theta, h^{2 - \alpha \beta -\epsilon - \beta},x)}{\mathbb{E}[\varphi]}.$$
We are no longer considering $m_{\theta,h}(x)$ because, by the expression \eqref{eq: m come R(1) senza dl}, we can include it in the function $R$. \\
Using \eqref{eq: dotx with term rest} and \eqref{eq: expected ddotx varphi}, 
$\mathbb{E}[\dot{X}^\theta_h\varphi]= h\dot{b}(x, \theta)\mathbb{E}[\varphi] + R(\theta, h^{\frac{3}{2}}, x )$. \\
Hence $\mathbb{E}[\dot{X}^\theta_h\varphi]R(\theta, h^{2 - \alpha \beta -\epsilon - \beta},x) = R(\theta, h^{3 - \alpha \beta -\epsilon - \beta},x),$ by the definition of rest function $R$. \\
We have already proved \eqref{eq: dl dot mtheta}, so
$$R(\theta, h^{2 - \alpha \beta -\epsilon - \beta},x)\dot{m}_{\theta,h}(x) = R(\theta, h^{3 - \alpha \beta -\epsilon - \beta},x).$$
Let us now consider $\mathbb{E}[\ddot{X}^\theta_h\varphi]$. Replacing the dynamic of $\ddot{X}^\theta_h$ by \eqref{eq: dynamic ddotx}, it is
$$ \mathbb{E}[\varphi\int_0^h \ddot{b}(X^\theta_s, \theta) ds] + \mathbb{E}[\varphi\int_0^h (b''(X^\theta_s, \theta)(\dot{X}^\theta_s)^2 + 2\dot{b}'(X^\theta_s, \theta)\dot{X}^\theta_s + b'(X^\theta_s, \theta)\ddot{X}^\theta_s) ds] + $$
$$+ \mathbb{E}[\varphi\int_0^h (a''(X^\theta_s)(\dot{X}^\theta_s)^2 + a'(X^\theta_s)\ddot{X}^\theta_s)dW_s] + \mathbb{E}[\varphi\int_0^h \int_\mathbb{R} (\gamma''(X^\theta_{s^-})(\dot{X}^\theta_s)^2 + \gamma'(X^\theta_{s^-})\ddot{X}^\theta_s) z \tilde{\mu}(dz, ds)]=$$
\begin{equation}
= \mathbb{E}[\varphi\int_0^h \ddot{b}(X^\theta_s, \theta) ds] +  R(\theta, h^{\frac{3}{2}}, x).
\label{expected ddotb varphi} 
\end{equation}
Indeed, using Holder inequality,
$$|\mathbb{E}[\varphi\int_0^h (b''(X^\theta_s, \theta)(\dot{X}^\theta_s)^2 + 2\dot{b}'(X^\theta_s, \theta)\dot{X}^\theta_s + b'(X^\theta_s, \theta)\ddot{X}^\theta_s) ds]| \le$$
$$\le (\mathbb{E}[\varphi^q])^\frac{1}{q} (\mathbb{E}[(\int_0^h (b''(X^\theta_s, \theta)(\dot{X}^\theta_s)^2 + 2\dot{b}'(X^\theta_s, \theta)\dot{X}^\theta_s + b'(X^\theta_s, \theta)\ddot{X}^\theta_s) ds)^p])^\frac{1}{p} \le $$
$$\le (c(1 + |x|^c ) h^{3p} + c(1 + |x|^c ) h^{2p} + ch^{p-1}\int_0^h \mathbb{E}[|\ddot{X}_s|^p] ds)^\frac{1}{p},$$
where in the last inequality we have used that $\varphi$ is bounded and we acted as in \eqref{eq: estimation b expected norme p ddotx }. By \eqref{eq: estimation ddot x}, it is upper bounded by $(c h^{3p} + c h^{2p})^\frac{1}{p}(1 + |x|^c )$. It turns
\begin{equation}
\mathbb{E}[\varphi\int_0^h (b''(X^\theta_s, \theta)(\dot{X}^\theta_s)^2 + 2\dot{b}'(X^\theta_s, \theta)\dot{X}^\theta_s + b'(X^\theta_s, \theta)\ddot{X}^\theta_s) ds] =  R(\theta, h^2, x).
\label{estim b" ddotx} 
\end{equation}
In the same way, from Holder inequality, \eqref{eq: estimation a expected norme p ddotx } and the fact that $\varphi$ is bounded we get
$$ |\mathbb{E}[\varphi\int_0^h (a''(X^\theta_s)(\dot{X}^\theta_s)^2 + a'(X^\theta_s)\ddot{X}^\theta_s)dW_s] |\le (c(1 + |x|^c )h^{2p + \frac{p}{2}}+ ch^{\frac{p}{2} - 1}\int_0^h \mathbb{E}[|\ddot{X}_s|^p] ds)^\frac{1}{p}.$$ 
Using \eqref{eq: estimation ddot x}, it is upper bounded by $\le (ch^{2p + \frac{p}{2}} + ch^{p + \frac{p}{2}})^\frac{1}{p}(1 + |x|^c )$ and so we obtain
\begin{equation}
|\mathbb{E}[\varphi\int_0^h (a''(X^\theta_s)(\dot{X}^\theta_s)^2 + a'(X^\theta_s)\ddot{X}^\theta_s)dW_s] | = R(\theta, h^{\frac{3}{2}}, x).
\label{eq: estim a" ddotx} 
\end{equation}
Using again Holder inequality, \eqref{eq: estimation gamma expected norme p ddotx }, the fact that $\varphi$ is bounded and \eqref{eq: estimation ddot x}, we have
$$|\mathbb{E}[\varphi\int_0^h \int_\mathbb{R} (\gamma''(X^\theta_{s^-})(\dot{X}^\theta_s)^2 + \gamma'(X^\theta_{s^-})\ddot{X}^\theta_s) z \tilde{\mu}(dz, ds)] | \le c(h^{2p + 1} + h^{p + 1}+ h^{2p + \frac{p}{2}} +  h^{p + \frac{p}{2}} )^\frac{1}{p}(1 + |x|^c ).$$
Hence, since $p \ge 2$, $$|\mathbb{E}[\varphi\int_0^h \int_\mathbb{R} (\gamma''(X^\theta_{s^-})(\dot{X}^\theta_s)^2 + \gamma'(X^\theta_{s^-})\ddot{X}^\theta_s) z \tilde{\mu}(dz, ds)]| = R(\theta, h^{1 +\frac{1}{p}}, x).$$
Since $p$ turns out from Holder inequality and on which we have only the constraint $p \ge 2$, we can choose $p= 2$, getting
\begin{equation}
|\mathbb{E}[\varphi\int_0^h \int_\mathbb{R} (\gamma''(X^\theta_{s^-})(\dot{X}^\theta_s)^2 + \gamma'(X^\theta_{s^-})\ddot{X}^\theta_s) z \tilde{\mu}(dz, ds)]| = R(\theta, h^{\frac{3}{2}}, x).
\label{eq: estim gamma" ddotx } 
\end{equation}
From \eqref{estim b" ddotx}, \eqref{eq: estim a" ddotx} and \eqref{eq: estim gamma" ddotx } we have \eqref{expected ddotb varphi} as we wanted. \\
The first term of \eqref{expected ddotb varphi} can be seen as
$\mathbb{E}[\varphi \int_0^h(\ddot{b}(X^\theta_s, \theta) - \ddot{b}(x, \theta)) ds] + \mathbb{E}[\varphi \int_0^h\ddot{b}(x, \theta) ds].$ \\
Using Holder inequality and the fact that $\varphi$ is bounded we get
$$\mathbb{E}[\int_0^h (\ddot{b}(X^\theta_s, \theta)- \ddot{b}(x, \theta)) ds \, \varphi_{h^\beta}(X_h^\theta - x)] \le $$
$$ \le c (\mathbb{E}[(\int_0^h (\ddot{b}(X^\theta_s, \theta)- \ddot{b}(x, \theta)) ds )^p])^\frac{1}{p} \le c (\mathbb{E}[(\int_0^h \left \| \frac{\partial \ddot{b}}{\partial x }\right \|_\infty |X_s^\theta - x| ds )^p])^\frac{1}{p}.$$
From Jensen inequality we get it is upper bounded by
$ c(h^{p-1}\int_0^h \mathbb{E}[|X_s^\theta - x|^p]ds )^\frac{1}{p} \le c (h^{p+1}(1 + |x|^p))^\frac{1}{p}, $
where we have used the second point of Lemma \ref{lemma: Moment inequalities}. It yields
$$\mathbb{E}[\varphi \int_0^h(\ddot{b}(X^\theta_s, \theta) - \ddot{b}(x, \theta)) ds] = R(\theta, h^{1 + \frac{1}{p}},x).$$
Therefore, considering $p = 2$, \eqref{expected ddotb varphi} becomes 
$\mathbb{E}[\varphi \, \ddot{X}^\theta_h]= \mathbb{E}[\varphi\, \ddot{b}(x, \theta)h] + R(\theta, h^{\frac{3}{2}}, x).$ \\
Replacing in \eqref{eq: dot m apres prop4} and using the development \eqref{eq: dl phi} or \eqref{eq: dl phi alpha >1} of the denominator we obtain, for $|x| \le h^{-k_0}$,
$$\ddot{m}_{\theta,h}(x)= h \ddot{b}(x, \theta) +  R(\theta, h^{\frac{3}{2}}, x) + R(\theta, h^{3 - \alpha\beta -\epsilon - \beta}, x) +  R(\theta, h^{3 - \alpha\beta -\epsilon - 2\beta}, x)+$$
$$+ R(\theta, h^{2 - \alpha\beta -\epsilon - \beta}, x) + R(\theta, h^{4 - 2\alpha\beta -\epsilon - 2\beta}, x) = h \ddot{b}(x, \theta) +  R(\theta, h^{\frac{3}{2} \land (2 - \alpha\beta -\epsilon - \beta) }, x).$$
\end{proof}
We want now to justify \eqref{eq: estimation 3dot m }. \\
In the expression of $\dddot{m}_{\theta,h}(y)$, the numerator is the sum of product of terms with the following form: \\
$\mathbb{E}[\varphi^{(k)} X_h^{h_0} \dot{X}_h^{h_1} \ddot{X}_h^{h_2} \dddot{X}_h^{h_3}]h^{-\beta k},$
where $k \ge 1$ and $h_1 + h_2 + h_3 \ge k$. \\
The only term with a different form is $\mathbb{E}[\varphi \dddot{X}]$, that is $R(\theta, h, y)$ by the boundedness of $\varphi$ and the equation \eqref{eq: estimation 3dot x}. \\
We observe that, using Proposition \ref{prop: truc moche z} defining $Z = \frac{X_h^{h_0} \dot{X}_h^{h_1} \ddot{X}_h^{h_2} \dddot{X}_h^{h_3}}{h^{h_1 + h_2 + h_3}} $, we get
$$|\mathbb{E}[\varphi^{(k)} X_h^{h_0} \dot{X}_h^{h_1} \ddot{X}_h^{h_2} \dddot{X}_h^{h_3}]|h^{-\beta k} \le h^{-\beta k + h_1 + h_2 + h_3 + 1 - \alpha \beta -\epsilon} \le h^{(1-\beta) k + 1 - \alpha \beta -\epsilon}. $$
We observe that the exponent on $h$ is more then $1$ if and only if $\beta < \frac{k}{k + \alpha} - \frac{\epsilon}{k + \alpha}$, with $ k \ge 1$. Since $\frac{1}{1 + \alpha} - \frac{\epsilon}{1 + \alpha}$ is the smallest, the Assumption $\beta < \frac{1}{1 + \alpha} - \frac{\epsilon}{1 + \alpha}$ that we added in Proposition \ref{prop: dl dotm ddotm} assures that $|\dddot{m}_{\theta,h}(y)| = R(\theta, h,y)$, as we wanted.

\subsection{Proof of limit theorems}\label{S:Proof_limit}
In this subsection we prove the theorems stated in Section \ref{S:Limit}.

\subsubsection{Proof of Proposition \ref{prop: LT1}}
\begin{proof}
$(i)$ follows from Lemma 4.4 in \hyperref[csl:16]{(Gloter, Loukianova, \& Mai, 2018)}, ergodic theorem and the $L^1$ convergence to zero of $\frac{1}{n} \sum_{i = 0}^{n-1}(1 + |X_{t_i}|)^c1_{\left \{|X_{t_i}| > \Delta_{n,i}^{-k} \right \} }$, which is a consequence of the third point of Lemma \ref{lemma 2.1 GLM}. Remark that in \hyperref[csl:16]{(Gloter, Loukianova, \& Mai, 2018)} the Lemma 4.4 is stated the for $\alpha \in (0,1)$ only. However an inspection of the proof shows that it is valid for $\alpha \in (0,2)$. \\
Concerning $(ii)$, we can see 
$\frac{1}{t_n} \sum_{i=0}^{n-1}\Delta_{n,i}f(X_{t_i}, \theta)\varphi_{\Delta_{n,i}^\beta}(X_{t_{i+1}} - X_{t_i})1_{\left \{|X_{t_i}| \le \Delta_{n,i}^{-k} \right \} }$ as
\begin{equation}
\frac{1}{t_n} \sum_{i=0}^{n-1}\Delta_{n,i}f(X_{t_i}, \theta) 1_{\left \{|X_{t_i}| \le \Delta_{n,i}^{-k} \right \} } +\frac{1}{t_n} \sum_{i=0}^{n-1}\Delta_{n,i}f(X_{t_i}, \theta)(\varphi_{\Delta_{n,i}^\beta}(X_{t_{i+1}} - X_{t_i}) - 1) 1_{\left \{|X_{t_i}| \le \Delta_{n,i}^{-k} \right \} }.
\label{eq: prop 1 varphi spezzato} 
\end{equation}
We have already showed in $(i)$ that on the first term of \eqref{eq: prop 1 varphi spezzato} we have the convergence wanted and so, in order to get the thesis, it is enough to prove the following:
\begin{equation}
\sup_{\theta \in \Theta} |\frac{1}{t_n} \sum_{i=0}^{n-1}\Delta_{n,i}f(X_{t_i}, \theta)(\varphi_{\Delta_{n,i}^\beta}(X_{t_{i+1}} - X_{t_i}) - 1) 1_{\left \{|X_{t_i}| \le \Delta_{n,i}^{-k} \right \} }| \xrightarrow{\mathbb{P}} 0
\label{eq: thesis prop 1} 
\end{equation}
We observe that 
$$|\frac{1}{t_n} \sum_{i=0}^{n-1}\Delta_{n,i}f(X_{t_i}, \theta)(\varphi_{\Delta_{n,i}^\beta}(X_{t_{i+1}} - X_{t_i}) - 1) 1_{\left \{|X_{t_i}| \le \Delta_{n,i}^{-k} \right \} }| \le |\frac{1}{n \Delta_n} \sum_{i=0}^{n-1}\Delta_{n,i}f(X_{t_i}, \theta)(\varphi_{\Delta_{n,i}^\beta}(X_{t_{i+1}} - X_{t_i}) - 1)|.$$
By the definition of $\varphi$, it is different from zero only if $|\Delta X_i| > \Delta_{n,i}^\beta$. Using Markov inequality and Lemma \ref{lemma: Moment inequalities},
\begin{equation}
\mathbb{P}(|X_{t_{i + 1}} - X_{t_i}| > \Delta_{n,i}^\beta ) \le \mathbb{E}[|X_{t_{i + 1}} - X_{t_i}|^2] \Delta_{n,i}^{-2\beta} \le c \Delta_{n,i}^{1 - 2\beta}.
\label{eq: stima prob con indicatrice} 
\end{equation}
It means that the left hand side of \eqref{eq: thesis prop 1} converges to zero in $L^1$ and so in probability, indeed
$$\mathbb{E}[\sup_{\theta \in \Theta}|\frac{1}{t_n} \sum_{i=0}^{n-1}\Delta_{n,i}f(X_{t_i}, \theta)(\varphi_{\Delta_{n,i}^\beta}(X_{t_{i+1}} - X_{t_i}) - 1) 1_{\left \{|X_{t_i}| \le \Delta_{n,i}^{-k} \right \} }|] \le$$
$$\le \mathbb{E}[ \sup_{\theta \in \Theta}|\frac{1}{n \Delta_n} \sum_{i=0}^{n-1}\Delta_{n,i}f(X_{t_i}, \theta) 1_{\left \{ |X_{t_{i + 1}} - X_{t_i}| > \Delta_{n,i}^\beta \right \} }|] \le \frac{1}{n \Delta_n} \sum_{i=0}^{n-1}\Delta_{n,i} \mathbb{E}[\sup_{\theta \in \Theta}|f(X_{t_i}, \theta)|^2]^\frac{1}{2} \mathbb{E}[|1_{\left \{ |X_{t_{i + 1}} - X_{t_i}| > \Delta_{n,i}^\beta \right \} }|^2]^\frac{1}{2} \le    $$
$$\le \frac{c}{n \Delta_n} \sum_{i=0}^{n-1}\Delta_{n,i} \mathbb{P}(|X_{t_{i + 1}} - X_{t_i}| > \Delta_{n,i}^\beta )^\frac{1}{2} \le c\Delta_{n,i}^{\frac{1}{2}-\beta},$$
where we have first used Cauchy-Schwarz inequality and then the polynomial growth of $|\sup_{\theta \in \Theta} f|$and the third point of Lemma \ref{lemma 2.1 GLM} and \eqref{eq: stima prob con indicatrice}. Since the exponent on $\Delta_{n,i}$ is positive we get the thesis.
\end{proof}

\subsubsection{Proof of Proposition \ref{prop: LT2} and Lemma \ref{lemma: conditional expected value zeta} }
\textit{Proof of Proposition \ref{prop: LT2}.} \\
In order to show that $\frac{1}{t_n} \sum_{i =0}^{n -1}f_{i,n}(X_{t_i}, \theta) \zeta_i \varphi_{\Delta_{n,i}^\beta}(X_{t_{i+1}} - X_{t_i})1_{\left \{|X_{t_i}| \le \Delta_{n,i}^{-k} \right \} }$ converges to zero in probability, we want to use the Lemma 9 of (missing citation) and so we have to show the following:
\begin{equation}
\frac{1}{t_n} \sum_{i =0}^{n -1} \mathbb{E}[f_{i,n}(X_{t_i}, \theta) \zeta_i \varphi_{\Delta_{n,i}^\beta}(X_{t_{i+1}} - X_{t_i})1_{\left \{|X_{t_i}| \le \Delta_{n,i}^{-k} \right \} }|\mathcal{F}_{t_i}] \rightarrow 0,
\label{eq: condition 1 proposition 2} 
\end{equation}
\begin{equation}
\frac{1}{(t_n)^2} \sum_{i =0}^{n -1} \mathbb{E}[f^2_{i,n}(X_{t_i}, \theta) \zeta^2_i \varphi^2_{\Delta_{n,i}^\beta}(X_{t_{i+1}} - X_{t_i})1_{\left \{|X_{t_i}| \le \Delta_{n,i}^{-k} \right \} }|\mathcal{F}_{t_i}] \rightarrow 0.
\label{eq: condition 2 proposition 2} 
\end{equation}
If Lemma \ref{lemma: conditional expected value zeta} holds we have that, using \eqref{eq: convergence expected value prop 2}, the left hand side of \eqref{eq: condition 1 proposition 2} results upper bounded by
$ \Delta_{n}^{\delta \land \frac{1}{2}} \frac{1}{n} \sum_{i =0}^{n -1}|f_{i,n}(X_{t_i}, \theta)| R(\theta, 1, X_{t_i}), $
where we have used the property \eqref{propriety power R} on $R$ and the fact that $|\Delta_{n,i}| \le \Delta_n$. Since the exponent on $\Delta_n$ is positive and $\frac{1}{n} \sum_{i =0}^{n -1}f_{i,n}(X_{t_i}, \theta) R(\theta, 1, X_{t_i})$ is bounded in $L^1$ using the polynomial growth of both $f_{i,n}$ and $R$ and the third point of Lemma \ref{lemma 2.1 GLM}, we get the convergence in probability \eqref{eq: condition 1 proposition 2}. \\
Concerning \eqref{eq: condition 2 proposition 2}, if Lemma \ref{lemma: conditional expected value zeta} holds we can use \eqref{eq: conv squared prop 2} getting that \eqref{eq: condition 2 proposition 2} is
$\frac{1}{n^2 \Delta_n} \sum_{i =0}^{n -1}f^2_{i,n}(X_{t_i}, \theta) R(\theta, 1, X_{t_i}),  $
where we have used also the property \eqref{propriety power R} on $R$ and the fact that $|\Delta_{n,i}| \le \Delta_n$. Since $n \Delta_n \rightarrow \infty$ and $ \frac{1}{n} \sum_{i =0}^{n -1}f^2_{i,n}(X_{t_i}, \theta) R(\theta, 1, X_{t_i})$ is bounded in $L^1$ by the polynomial growth of both $f_{i,n}$ and $R$ and the third point of Lemma \ref{lemma 2.1 GLM}, we get the convergence \eqref{eq: condition 2 proposition 2} as we wanted. \\
Hence, if Lemma \ref{lemma: conditional expected value zeta} holds, then Proposition \ref{prop: LT2} is proved. \finpv
\textit{Proof of Lemma \ref{lemma: conditional expected value zeta}.} \\
By the definition \eqref{eq: def zeta} of $\zeta_i$ and the dynamic of the process $X$, we get
\begin{equation}
\zeta_i = X_{t_{i+1}} - X_{t_i} - \int_{t_i}^{t_{i+ 1}} b(\theta_0, X_s) ds +  \Delta_{n,i}\, \int_{\mathbb{R} \backslash \left \{0 \right \}} z \, \gamma(X_{t_i})\,[1 - \varphi_{\Delta_{n,i}^\beta}(\gamma(X_{t_i})z)]\,F(z)dz.
\label{eq: reformulation zeta} 
\end{equation}
We write the left hand side of \eqref{eq: convergence expected value prop 2} by using the last equation and adding and subtracting $m_{\theta_0, \Delta_{n,i}}(X_{t_i})$:
\begin{equation}
\mathbb{E}[(X_{t_{i+1}} -m_{\theta_0, \Delta_{n,i}}(X_{t_i}))\varphi_{\Delta_{n,i}^\beta}(X_{t_{i+1}} - X_{t_i})1_{\left \{|X_{t_i}| \le \Delta_{n,i}^{-k} \right \} }|\mathcal{F}_{t_i}] + \mathbb{E}[(m_{\theta_0, \Delta_{n,i}}(X_{t_i}) - X_{t_i} - \int_{t_i}^{t_{i+ 1}} b(\theta_0, X_s) ds +
\label{eq: lemma 3 apres reformulation} 
\end{equation}
$$ +  \Delta_{n,i}\, \int_{\mathbb{R} \backslash \left \{0 \right \}} z \, \gamma(X_{t_i})\,[1 - \varphi_{\Delta_{n,i}^\beta}(\gamma(X_{t_i})z)]\,F(z)dz)\varphi_{\Delta_{n,i}^\beta}(X_{t_{i+1}} - X_{t_i})1_{\left \{|X_{t_i}| \le \Delta_{n,i}^{-k} \right \} }|\mathcal{F}_{t_i}].$$
By the $\mathcal{F}_{t_i}$-measurability of $X_{t_i}$, the first term of \eqref{eq: lemma 3 apres reformulation} is equal to
$$\mathbb{E}[(X_{t_{i+1}} -m_{\theta_0, \Delta_{n,i}}(X_{t_i}))\varphi_{\Delta_{n,i}^\beta}(X_{t_{i+1}} - X_{t_i})|\mathcal{F}_{t_i}]1_{\left \{|X_{t_i}| \le \Delta_{n,i}^{-k} \right \} },$$
that is zero by the definition of $m_{\theta_0, \Delta_{n,i}}$. \\
On the second term of \eqref{eq: lemma 3 apres reformulation} we use the development \eqref{eq: dl m} or \eqref{eq: dl m alpha>1}, respectively for $\alpha <1$ and $\alpha > 1$. Hence, we obtain
\begin{equation}
\mathbb{E}[(\int_{t_i}^{t_{i+ 1}}(b(\theta_0, X_{t_i})- b(\theta_0, X_s)) ds + R(\theta_0, \Delta_{n,i}^{1 + \delta}, X_{t_i}))\varphi_{\Delta_{n,i}^\beta}(X_{t_{i+1}} - X_{t_i})1_{\left \{|X_{t_i}| \le \Delta_{n,i}^{-k} \right \} }|\mathcal{F}_{t_i}],
\label{eq: lemma 3 pto 1} 
\end{equation}
where $\delta > 0$ is defined below equation \eqref{eq: link betwen m and zeta}. 
Using the boundedness of both $\varphi$ and the indicator function and \eqref{eq: expected value int b as R} on the first term of \eqref{eq: lemma 3 pto 1}, we get that \eqref{eq: lemma 3 pto 1} is upper bounded by 
$$R(\theta_0, \Delta_{n,i}^\frac{3}{2}, X_{t_i}) + R(\theta_0, \Delta_{n,i}^{1 + \delta}, X_{t_i}) = R(\theta_0, \Delta_{n,i}^{(1+ \delta) \land \frac{3}{2}}, X_{t_i}), $$
as we wanted. \\
Concerning the second point of Lemma \ref{lemma: conditional expected value zeta}, we use \eqref{eq: def zeta} in order to say that
\begin{equation}
\zeta_i^2 \le c(\int_{t_i}^{t_{i+1}} a(X_s)dW_s)^2 + c(\int_{t_i}^{t_{i+1}}\int_{\mathbb{R} \backslash \left \{0 \right \}} z \, \gamma(X_{s^-})\tilde{\mu}(ds,dz))^2 +c  \Delta^2_{n,i} (\int_{\mathbb{R} \backslash \left \{0 \right \}} z \, \gamma(X_{t_i})\,[1 - \varphi_{\Delta_{n,i}^\beta}(\gamma(X_{t_i})z)]\,F(z)dz)^2.
\label{eq: zeta_i^2} 
\end{equation}
Using this estimation in the left hand side of \eqref{eq: conv squared prop 2} we obtain three terms, the first is
$$\mathbb{E}[c(\int_{t_i}^{t_{i+1}} a(X_s)dW_s)^2\varphi^2_{\Delta_{n,i}^\beta}(X_{t_{i+1}} - X_{t_i})1_{\left \{|X_{t_i}| \le \Delta_{n,i}^{-k} \right \} }|\mathcal{F}_{t_i}] \le \mathbb{E}[c(\int_{t_i}^{t_{i+1}} a(X_s)dW_s)^2|\mathcal{F}_{t_i}],$$
by the boundedness of both $\varphi$ and the indicator function. Using the conditional form of Ito's isometry it is
\begin{equation}
c \mathbb{E}[\int_{t_i}^{t_{i+1}} a^2(X_s)ds |\mathcal{F}_{t_i}]= R(\theta_0, \Delta_{n,i}, X_{t_i}),
\label{eq: estimation a prop 2} 
\end{equation}
by the polynomial growth of $a$, the third point of Lemma \ref{lemma: Moment inequalities} and the definition of the function $R$. \\
We can upper bound the second term of \eqref{eq: zeta_i^2} using first of all the boundedness of both $\varphi$ and the indicator function, and then Kunita's inequality in the conditional form (Appendix of \hyperref[csl:24]{(Jacod \& Protter, 2011)}). We get the following estimation:
$$\mathbb{E}[c(\int_{t_i}^{t_{i+1}}\int_{\mathbb{R} \backslash \left \{0 \right \}} z \, \gamma(X_{s^-})\tilde{\mu}(ds,dz))^2 \varphi^2_{\Delta_{n,i}^\beta}(X_{t_{i+1}} - X_{t_i})1_{\left \{|X_{t_i}| \le \Delta_{n,i}^{-k} \right \} }|\mathcal{F}_{t_i}] \le$$
$$\le \mathbb{E}[c(\int_{t_i}^{t_{i+1}}\int_{\mathbb{R} \backslash \left \{0 \right \}} z \, \gamma(X_{s^-})\tilde{\mu}(ds,dz))^2 |\mathcal{F}_{t_i}] \le c\mathbb{E}[\int_{t_i}^{t_{i+1}}\int_{\mathbb{R} \backslash \left \{0 \right \}} |z|^2 \, \gamma^2(X_{s^-})\bar{\mu}(ds,dz) |\mathcal{F}_{t_i}]  \le  $$
\begin{equation}
\le c\mathbb{E}[\int_{t_i}^{t_{i+1}}\gamma^2(X_{s^-})ds |\mathcal{F}_{t_i}] = R(\theta_0, \Delta_{n,i}, X_{t_i}),
\label{eq: estimation gamma prop 2} 
\end{equation}
where in the last inequality and equality we have used, respectively, the definition of the compensator measure $\bar{\mu}$ and the polynomial growth of $\gamma$ and the third point of Lemma \ref{lemma: Moment inequalities}. \\
Concerning the third term of \eqref{eq: zeta_i^2}, we have already showed in Remark 3 an estimation, depending on $\alpha$, that is at most $\Delta_{n,i}^\frac{1}{2}$. Its square is therefore at least a $R(\theta, \Delta_{n,i}, X_{t_i})$ function, it follows that \eqref{eq: conv squared prop 2} holds. \\
We now want to prove \eqref{eq: x - m prop 2}.
Using \eqref{eq: link betwen m and zeta},
\begin{equation}
(X_{t_{i+1}} - m_{\theta_0}(X_{t_i}))^2 \le c\zeta_i^2 + c(\int_{t_i}^{t_{i+1}} b(X_s, \theta_0) ds - \Delta_{n,i}b(X_{t_i}, \theta_0))^2 + R(\theta_0, \Delta_{n,i}^{2 + 2 \delta}, X_{t_i}).
\label{eq: substitution X - mtheta } 
\end{equation}
We can replace it in \eqref{eq: x - m prop 2}, getting three terms that are of magnitude at most $\Delta_{n, i}$. \\
Indeed, on the first we can use \eqref{eq: conv squared prop 2}. \\
On the second term  we can use the boundedness of both $\varphi$ and the indicator function and Jensen inequality, getting 
$$c\mathbb{E}[(\int_{t_i}^{t_{i+1}} b(X_s, \theta_0) ds - \Delta_{n,i}b(X_{t_i}, \theta_0))^2\varphi^2_{\Delta_{n,i}^\beta}(X_{t_{i+1}} - X_{t_i})1_{\left \{|X_{t_i}| \le \Delta_{n,i}^{-k} \right \} }|\mathcal{F}_{t_i}] \le $$
$$c  \Delta_{n,i} \mathbb{E}[ \int_{t_i}^{t_{i+1}} (b(X_s, \theta_0) ds - b(X_{t_i}, \theta_0))^2|\mathcal{F}_{t_i}] \le $$
\begin{equation}
\le c  \Delta_{n,i} \mathbb{E}[ \int_{t_i}^{t_{i+1}} b^2(X_s, \theta_0) ds|\mathcal{F}_{t_i}] + c  \Delta^2_{n,i} \mathbb{E}[ b^2(X_{t_i}, \theta_0)|\mathcal{F}_{t_i}]= R(\theta_0, \Delta_{n,i}^2, X_{t_i}),  
\label{eq: estimation b al quadrato} 
\end{equation}
where in the last equality we have used the polynomial growth of $b$ on both of the two terms and moreover the third point of Lemma \ref{lemma: Moment inequalities} on the first term. \\
In conclusion, we obtain 
$$\mathbb{E}[(X_{t_{i+1}} - m_{\theta_0}(X_{t_i}))^2\varphi^2_{\Delta_{n,i}^\beta}(X_{t_{i+1}} - X_{t_i})1_{\left \{|X_{t_i}| \le \Delta_{n,i}^{-k} \right \} }|\mathcal{F}_{t_i}] = $$
$$ =  R(\theta_0, \Delta_{n,i}, X_{t_i}) + R(\theta_0, \Delta_{n,i}^2, X_{t_i}) + R(\theta_0, \Delta_{n,i}^{2+ 2 \delta}, X_{t_i}) = R(\theta_0, \Delta_{n,i}, X_{t_i}).$$
Hence, we have the thesis.  \finpv

\subsubsection{Proof of Proposition \ref{prop: LT3}.}
In order to prove Proposition \ref{prop: LT3}, the following lemma will be useful: \\
\begin{lemma}
Let us denote by $\tilde{X}^J$ the jump part of $X$ given by
\begin{equation}
\tilde{X}_t^J := \int_0^t \int_{\mathbb{R} \backslash \left \{0 \right \}} z \, \gamma(X_{s^-})\tilde{\mu}(ds,dz), \qquad t \ge 0
\label{eq: definition X^J} 
\end{equation}
and 
$\Delta_i \tilde{X}^J : = \tilde{X}_{t_{i+1}}^J - \tilde{X}_{t_i}^J. $ \\
Then, for each $q \ge 2$,  $\exists \epsilon > 0$ such that
\begin{equation}
\mathbb{E}[|\Delta_i \tilde{X}^J\varphi_{\Delta_{n,i}^\beta}(X_{t_{i+1}} - X_{t_i})|^q |\mathcal{F}_{t_i} ] = R(\theta_0, \Delta_{n,i}^{1 + \beta(q - \alpha)}, X_{t_i}) = R(\theta_0, \Delta_{n,i}^{1 + \epsilon}, X_{t_i}).
\label{eq: salti trascurabili} 
\end{equation}
\label{lemma: lemma per mostrare prop LT3} 
\end{lemma}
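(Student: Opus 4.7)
The key idea is that on the support of $\varphi_{\Delta_{n,i}^\beta}(X_{t_{i+1}}-X_{t_i})$ the total increment $|X_{t_{i+1}}-X_{t_i}|$ is bounded by $2\Delta_{n,i}^\beta$, and through the SDE decomposition this constrains the jump increment $\Delta_i\tilde{X}^J$. I split the compensated Poisson integral at the threshold $\Delta_{n,i}^\beta$, writing $\Delta_i\tilde{X}^J = M^{(1)} + M^{(2)}$, where $M^{(1)}$ (resp.\ $M^{(2)}$) is the contribution of jumps with $|z|\le\Delta_{n,i}^\beta$ (resp.\ $|z|>\Delta_{n,i}^\beta$), and I treat the two pieces separately.

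For $M^{(1)}$, the conditional Kunita inequality combined with $F(z)\le c|z|^{-1-\alpha}$ and Lemma~\ref{lemma: Moment inequalities} produces two contributions of orders $\Delta_{n,i}^{1+\beta(q-\alpha)}$ and $\Delta_{n,i}^{q/2+q\beta(2-\alpha)/2}$; the elementary inequality $(q/2-1)(1-\alpha\beta)\ge 0$ for $q\ge 2$ shows that the first dominates, giving $\mathbb{E}[|M^{(1)}|^q\mid\mathcal{F}_{t_i}]\le c(1+|X_{t_i}|^c)\Delta_{n,i}^{1+\beta(q-\alpha)}$, without any appeal to the truncation.

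For $M^{(2)}$, write $M^{(2)} = N^{(2)} - K^{(2)}$ (Poisson part minus compensator) and let $E$ denote the event that $\mu$ charges $[t_i,t_{i+1}]\times\{|z|>\Delta_{n,i}^\beta\}$; the Poisson structure yields $\mathbb{P}(E\mid\mathcal{F}_{t_i})\le c\Delta_{n,i}^{1-\alpha\beta}$. On $E^c$ only $-K^{(2)}$ remains, and a direct tail estimate on the Lévy integral $\int_{|z|>\Delta_{n,i}^\beta} z\gamma(x) F(z)\,dz$ (as used in Remark~2) gives $|K^{(2)}|^q\le c\,\Delta_{n,i}^{1+\beta(q-\alpha)}(1+|X_{t_i}|^c)$ in both regimes $\alpha<1$ and $\alpha\ge 1$, thanks to $(q-1)(1-\alpha\beta)\ge 0$. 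On $E$, the SDE decomposition combined with $|\Delta_iX|\le 2\Delta_{n,i}^\beta$ on $\{\varphi\ne 0\}$ yields the pointwise bound
$$|M^{(2)}|\,\varphi \;\le\; 2\Delta_{n,i}^\beta + |M^{(1)}| + |K^{(2)}| + \Bigl|\int_{t_i}^{t_{i+1}}b(X_s,\theta_0)\,ds\Bigr| + \Bigl|\int_{t_i}^{t_{i+1}}a(X_s)\,dW_s\Bigr|.$$
Raising to the $q$-th power and integrating against $1_E$: the leading term $c\Delta_{n,i}^{q\beta}\mathbb{P}(E\mid\mathcal{F}_{t_i})$ produces exactly $c\Delta_{n,i}^{1+\beta(q-\alpha)}$; the drift contribution is at most $c\Delta_{n,i}^q$, which fits since $q(1-\beta)\ge 1-\alpha\beta$ under $A_\beta$ for $q\ge 2$. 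For the Brownian term, conditional Hölder with exponents $1/p_1+1/p_2=1$ gives
$$\mathbb{E}\Bigl[\Bigl|\textstyle\int_{t_i}^{t_{i+1}}\!\!a\,dW\Bigr|^q 1_E\,\Bigm|\,\mathcal{F}_{t_i}\Bigr] \;\le\; c\,\Delta_{n,i}^{q/2+(1-\alpha\beta)/p_2}(1+|X_{t_i}|^c),$$
and the strict inequality $\beta<1/2$ (guaranteed by $A_\beta$) allows choosing $p_2>1$ close enough to $1$ so that $q/2+(1-\alpha\beta)/p_2\ge 1+\beta(q-\alpha)$. Summing all contributions gives the first equality of the claim; the second equality is then immediate since $q\ge 2>\alpha$ implies $\beta(q-\alpha)>0$, so any $\epsilon\in(0,\beta(q-\alpha)]$ works.

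\textbf{Main obstacle.} The delicate step is the control of the Brownian contribution on the big-jump event: since both $\int a(X_s)\,dW_s$ and $E$ depend on the same path of $X$, no independence argument is available. The Hölder trick succeeds only because Brownian moments of every order are bounded by powers of $\Delta_{n,i}^{1/2}$ and because $\beta<1/2$ leaves a narrow but sufficient window to trade moment exponent against probability exponent.
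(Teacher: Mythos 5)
Your proof is correct and follows essentially the same route as the paper: the paper's event $N_n^i$ (no jump of $L$ exceeding $4\Delta_{n,i}^\beta/\gamma_{min}$) plays the role of your $E^c$, the bound $\mathbb{P}((N_n^i)^c\mid\mathcal{F}_{t_i})\le c\Delta_{n,i}^{1-\alpha\beta}$ is your estimate on $\mathbb{P}(E\mid\mathcal{F}_{t_i})$, the small-jump part is handled by the conditional Kunita inequality exactly as you do, and on the large-jump event the paper likewise uses the support of $\varphi$ to bound the jump increment via $|\Delta_i X|\le c\Delta_{n,i}^\beta$ plus the continuous increment. Your explicit H\"older argument (with $p_2>1$ close to $1$, exploiting $\beta<1/2$) for the Brownian contribution on the large-jump event is a slightly more careful writing of the step the paper dispatches with ``in the same way'' in \eqref{eq: secondo termine Nin^c}, but the underlying mechanism is identical.
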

\textit{Proof of Lemma \ref{lemma: lemma per mostrare prop LT3}.} \\
For all $n \in \mathbb{N}$ and $i \in \mathbb{N}$ we define the set on which all the jumps of $L$ on the interval $(t_i, t_{i+1}]$ are small:
\begin{equation}
N_n^i : = \left \{ |\Delta L_s| \le \frac{4 \Delta_{n,i}^\beta}{\gamma_{min}}; \quad \forall s \in (t_i, t_{i+1}]  \right \},
\label{eq: definition Nî_n} 
\end{equation}
where $\Delta L_s := L_s - L_{s^-}$. 
We hence split the left hand side of \eqref{eq: salti trascurabili} as
\begin{equation}
\mathbb{E}[|\Delta_i \tilde{X}^J\varphi_{\Delta_{n,i}^\beta}(X_{t_{i+1}} - X_{t_i})|^q 1_{N^i_n} |\mathcal{F}_{t_i} ] + \mathbb{E}[|\Delta_i \tilde{X}^J\varphi_{\Delta_{n,i}^\beta}(X_{t_{i+1}} - X_{t_i})|^q 1_{(N^i_n)^c} |\mathcal{F}_{t_i} ]. 
\label{eq: salti con partizione} 
\end{equation}
We now observe that, by the definition of $N^i_n$, 
$$|\mathbb{E}[|\Delta_i \tilde{X}^J\varphi_{\Delta_{n,i}^\beta}(X_{t_{i+1}} - X_{t_i})|^q 1_{N^i_n} |\mathcal{F}_{t_i} ] | \le $$
\begin{equation}
\le c \mathbb{E}[|\int_{t_i}^{t_{i+1}} \int_{|z| \le \frac{4 \Delta_{n,i}^\beta}{\gamma_{min}}} z \, \gamma(X_{s^-}) \tilde{\mu}(ds,dz)|^q+ |\int_{t_i}^{t_{i+1}} \int_{|z| \ge \frac{4 \Delta_{n,i}^\beta}{\gamma_{min}}} |z| \, |\gamma(X_{s^-})| \bar{\mu}(ds,dz) |^q |\mathcal{F}_{t_i} ].
\label{eq: con Nin} 
\end{equation}
We observe that the order of the second term depend on $\alpha$. Acting as in Remark 3, we get that its order is $\Delta_{n,i}^q$ if $\alpha \in (0,1)$ while it is $\Delta_{n,i}^{q + q\beta(1 - \alpha)}$ if $\alpha \in (1,2)$. Since $q$ is more than $q + q\beta(1 - \alpha)$ if and only if $\alpha > 1$, we can say that the second term of \eqref{eq: con Nin} is upper bounded by $c\Delta_{n,i}^{q \land (q + q\beta(1 - \alpha))}$. The first term of \eqref{eq: con Nin} is instead upper bounded by 
 $$c \mathbb{E}[|\int_{t_i}^{t_{i+1}} \int_{|z| \le \frac{4 \Delta_{n,i}^\beta}{\gamma_{min}}} |z|^2 \, |\gamma(X_{s^-})|^2 \bar{\mu}(ds,dz)|^\frac{q}{2} |\mathcal{F}_{t_i} ] + c\mathbb{E}[\int_{t_i}^{t_{i+1}} \int_{|z| \le \frac{4 \Delta_{n,i}^\beta}{\gamma_{min}}} |z|^q \, |\gamma(X_{s^-})|^q \bar{\mu}(ds,dz)|\mathcal{F}_{t_i} ] \le $$
$$\le c \left \| \gamma \right \|^q_\infty (\mathbb{E}[|\int_{t_i}^{t_{i+1}} \int_{|z| \le \frac{4 \Delta_{n,i}^\beta}{\gamma_{min}}} |z|^{1 - \alpha} dzds|^\frac{q}{2} |\mathcal{F}_{t_i} ] + \mathbb{E}[\int_{t_i}^{t_{i+1}} \int_{|z| \le \frac{4 \Delta_{n,i}^\beta}{\gamma_{min}}} |z|^{q - 1 - \alpha}  dzds|\mathcal{F}_{t_i} ]  ) \le$$
\begin{equation}
\le c(\int_{t_i}^{t_{i+1}}\Delta_{n,i}^{(2 - \alpha)\beta}ds)^\frac{q}{2} + c \int_{t_i}^{t_{i+1}}\Delta_{n,i}^{(q - \alpha)\beta}ds + \Delta_{n,i}^q \le c(\Delta_{n,i}^{(1 + (2 - \alpha)\beta)\frac{q}{2}} + \Delta_{n,i}^{(q - \alpha)\beta + 1}) = R(\theta_0, \Delta_{n,i}^{(q - \alpha)\beta + 1}, X_{t_i}), 
\label{eq: salti Ni_n} 
\end{equation}
where we have used Kunita inequality, the definition of $\bar{\mu}$ and the second point of Assumption 4. 
Using the consideration below equation \eqref{eq: con Nin} and \eqref{eq: salti Ni_n} we get
\begin{equation}
|\mathbb{E}[|\Delta_i \tilde{X}^J\varphi_{\Delta_{n,i}^\beta}(X_{t_{i+1}} - X_{t_i})|^q 1_{N^i_n} |\mathcal{F}_{t_i} ] | \le R(\theta_0, \Delta_{n,i}^{(q - \alpha)\beta + 1}, X_{t_i}) + R(\theta_0, \Delta_{n,i}^{q \land (q + q\beta(1 - \alpha))}, X_{t_i}) = R(\theta_0, \Delta_{n,i}^{(q - \alpha)\beta + 1}, X_{t_i}).
\label{eq: stima finale con Nin} 
\end{equation}
For $\alpha \in (0,2)$, $\alpha \neq 1$ and $\beta \in(0, \frac{1}{2})$ the exponent on $\Delta_{n,i}$ can be seen as $1 + \epsilon$, with $\epsilon > 0$. \\
Concerning the second term of \eqref{eq: salti con partizione}, we have
\begin{equation}
\mathbb{E}[|\Delta_i \tilde{X}^J\varphi_{\Delta_{n,i}^\beta}(X_{t_{i+1}} - X_{t_i})|^q 1_{(N^i_n)^c} |\mathcal{F}_{t_i} ] \le c\mathbb{E}[(|\Delta_i X|^q + |\Delta X_i^c|^q) |\varphi^q_{\Delta_{n,i}^\beta}(X_{t_{i+1}} - X_{t_i}) | 1_{(N^i_n)^c} |\mathcal{F}_{t_i} ],
\label{eq: stima Nin^c} 
\end{equation}
where $|\Delta_i X| : = |X_{t_{i+1}} - X_{t_i}|$ and $\Delta X_i^c$ is the increment of the continuous part of X in the interval $(t_i, t_{i+1}]$. We observe that, by the definition of $\varphi_{\Delta_{n,i}^\beta}(X_{t_{i+1}} - X_{t_i})$, the first term in the right hand side is different from zero only if $|\Delta_i X|^q \le \Delta_{n,i}^{\beta q}$. Therefore
\begin{equation}
\mathbb{E}[|\Delta_i X|^q \varphi^q_{\Delta_{n,i}^\beta}(X_{t_{i+1}} - X_{t_i}) 1_{(N^i_n)^c} |\mathcal{F}_{t_i} ] \le \Delta_{n,i}^{\beta q}\mathbb{P}_i((N^i_n)^c) \le c\Delta_{n,i}^{\beta q + 1 - \alpha \beta}.
\label{eq: primo termine Nin^c} 
\end{equation}
Indeed
\begin{equation}
\mathbb{P}_i((N^i_n)^c) = \mathbb{P}_i(\exists s \in (t_i, t_{i+1}]\, : \, |\Delta L_s| > \frac{4 \Delta_{n,i}^\beta}{\gamma_{min}} ) \le c \int_{t_i}^{t_{i+1}} \int_{\frac{4 \Delta_{n,i}^\beta}{\gamma_{min}} }^{\infty} F(z) dz ds \le c \Delta_{n,i}^{1 - \alpha \beta},
\label{eq: estim Nin^c} 
\end{equation}
where we have used the third point of Assumption 4. Since $q \ge 2$, $\beta q + 1 - \alpha \beta$ is always more than $1$. \\
In the same way
\begin{equation}
\mathbb{E}[|\Delta X_i^c|^q \varphi^q_{\Delta_{n,i}^\beta}(X_{t_{i+1}} - X_{t_i}) 1_{(N^i_n)^c} |\mathcal{F}_{t_i} ] \le c\Delta_{n,i}^{\frac{1}{2} q} \Delta_{n,i}^{1 - \alpha \beta}(1 + |X_{t_i}|^c ),
\label{eq: secondo termine Nin^c} 
\end{equation}
that is again more than $1$. Using \eqref{eq: salti con partizione}, \eqref{eq: stima finale con Nin}, \eqref{eq: primo termine Nin^c} and \eqref{eq: secondo termine Nin^c} we get the thesis.
\finpv
We can now prove Proposition \ref{prop: LT3}. \\ \\
\textit{Proof of Proposition \ref{prop: LT3}.} \\
We denote
\begin{equation}
s_i^n: = \frac{1}{\sqrt[]{t_n}}(X_{t_{i+1}} - m_{\theta_0}(X_{t_i}))f(X_{t_i}, \theta)\varphi_{\Delta_{n,i}^\beta}(X_{t_{i+1}} - X_{t_i})1_{\left \{|X_{t_i}| \le \Delta_{n,i}^{-k} \right \} }.
\label{eq: definition s_i^n in the dotU_n convergence} 
\end{equation}
In order to show the asymptotic normality we have to prove that $s^n$ is a martingale difference array such that
\begin{equation}
\sum_{i= 0}^{n-1} \mathbb{E}[|s_i^n|^{2 + r}| \mathcal{F}_{t_i}] \xrightarrow{\mathbb{P}} 0,
\label{eq: condition 1' conv dotU_n} 
\end{equation}
for a constant $\delta > 0$, and
\begin{equation}
\sum_{i= 0}^{n-1} \mathbb{E}[|s_i^n|^2| \mathcal{F}_{t_i}] \xrightarrow{\mathbb{P}} \int_\mathbb{R} f^2(x, \theta) a^2(x) \pi(dx),
\label{eq: condition 2' conv dotU_n} 
\end{equation}
c.f. Theorem A2 in the Appendix of \hyperref[csl:10]{(Shimizu \& Yoshida, 2006)}. \\
We observe that $s_i^n$ is a martingale difference array since, $\forall i \ge 0$,
$$\mathbb{E}[s_i^n| \mathcal{F}_{t_i}] = \frac{f(X_{t_i}, \theta)1_{\left \{|X_{t_i}| \le \Delta_{n,i}^{-k} \right \} }}{\sqrt[]{t_n}} \mathbb{E}[(X_{t_{i+1}} - m_{\theta_0}(X_{t_i}))\varphi_{\Delta_{n,i}^\beta}(X_{t_{i+1}} - X_{t_i})| \mathcal{F}_{t_i}] = 0,$$
by the measurability of $f$ and the indicator function and the definition of $m_{\theta_0}(X_{t_i})$. \\
We now want to prove \eqref{eq: condition 2' conv dotU_n}. Using \eqref{eq: link betwen m and zeta} and the definition of $\zeta_i$ we have that
\begin{equation}
(X_{t_{i+1}} - m_{\theta_0}(X_{t_i}))^2 = (\int_{t_i}^{t_{i+1}} a(X_s) dW_s)^2 + 2B_{i,n}\int_{t_i}^{t_{i+1}} a(X_s) dW_s + B^2_{i,n},
\label{eq: x - m funzione di a } 
\end{equation}
where 
$$B_{i,n} : = \int_{t_i}^{t_{i+1}} (b(X_s, \theta_0) - b(X_{t_i}, \theta_0))  ds + R(\theta_0, \Delta_{n,i}^{1 + \delta}, X_{t_i}) + $$
$$ + \int_{t_i}^{t_{i+1}} \int_{\mathbb{R} \backslash \left \{0 \right \}} z \, \gamma(X_{s^-})\tilde{\mu}(ds,dz) +
\Delta_{n,i}  \int_{\mathbb{R} \backslash \left \{0 \right \}} z \, \gamma(X_{s^-}) [1 - \varphi_{\Delta^\beta_{n,i}}(\gamma(X_{t_i})z)]F(z)dz. $$
Replacing \eqref{eq: x - m funzione di a } in the definition \eqref{eq: definition s_i^n in the dotU_n convergence} of $s_i^n$ we get three terms. We start proving that
\begin{equation}
\frac{1}{t_n} \sum_{i =0}^{n-1} \mathbb{E}[B_{i,n}^2 f^2(X_{t_i}, \theta)\varphi^2_{\Delta_{n,i}^\beta}(X_{t_{i+1}} - X_{t_i})1_{\left \{|X_{t_i}| \le \Delta_{n,i}^{-k} \right \} } | \mathcal{F}_{t_i} ] \xrightarrow{\mathbb{P}} 0.
\label{eq: convergence Bin} 
\end{equation}
Indeed, 
$$\mathbb{E}[B_{i,n}^2\varphi^2_{\Delta_{n,i}^\beta}(X_{t_{i+1}} - X_{t_i})| \mathcal{F}_{t_i} ] \le c
\mathbb{E}[[(\int_{t_i}^{t_{i+1}} (b(X_s, \theta_0) - b(X_{t_i}, \theta_0))  ds) ^2  + $$
\begin{equation}
 + R(\theta_0, \Delta_{n,i}^{2 + 2 \delta}, X_{t_i}) + (\int_{t_i}^{t_{i+1}} \int_{\mathbb{R} \backslash \left \{0 \right \}} z \, \gamma(X_{s^-})\tilde{\mu}(ds,dz))^2 +
 \label{eq: Bin per esteso} 
\end{equation}
$$ + (\Delta_{n,i}  \int_{\mathbb{R} \backslash \left \{0 \right \}} z \, \gamma(X_{s^-}) [1 - \varphi_{\Delta^\beta_{n,i}}(\gamma(X_{t_i})z)]F(z)dz)^2  ]\varphi^2_{\Delta_{n,i}^\beta}(X_{t_{i+1}} - X_{t_i})| \mathcal{F}_{t_i} ] \le$$
\begin{equation}
\le R(\theta_0, \Delta_{n,i}^2, X_{t_i}) +  R(\theta_0, \Delta_{n,i}^{2 + 2 \delta}, X_{t_i}) + R(\theta_0, \Delta_{n,i}^{(1 + \beta(q - \alpha)) \land (1 + \epsilon)}, X_{t_i}), 
\label{eq: sol Bin} 
\end{equation}
where we have used \eqref{eq: estimation b al quadrato} on the first term of \eqref{eq: Bin per esteso}, \eqref{eq: salti trascurabili} of the previous lemma on the third and Remark 3 on the fourth. Indeed, in Remark 3, we found that the last term in less than $ R(\theta_0, \Delta_{n,i}^{2}, X_{t_i}) $ if $\alpha  \le 1$ and less than $ R(\theta_0, \Delta_{n,i}^{2 + 2\beta(1 - \alpha)}, X_{t_i}) $ if $\alpha > 1$; in both cases the exponent on $\Delta_{n,i}$ is always more than $1$, hence we can write it as $1 + \epsilon$. \\
We can upper bound with \eqref{eq: sol Bin} the left hand side of \eqref{eq: convergence Bin} getting
$\frac{1}{t_n} \sum_{i =0}^{n-1} f^2(X_{t_i}, \theta) R(\theta_0, \Delta_{n,i}^{1 + \epsilon}, X_{t_i}) 1_{\left \{|X_{t_i}| \le \Delta_{n,i}^{-k} \right \} },$
that converges to $0$ in norm $L^1$ by the polynomial growth of both $f$ and $R$ and the third point of Lemma \ref{lemma 2.1 GLM} and using that $| \Delta_{n,i}| \le \Delta_n $. We obtain therefore the convergence in probability \eqref{eq: convergence Bin} wanted. \\
Let us now consider the contribution of the first term of \eqref{eq: x - m funzione di a } for the proof of \eqref{eq: condition 2' conv dotU_n}.  
We can see it as
\begin{equation}
\frac{1}{t_n} \sum_{i =0}^{n-1} f^2(X_{t_i}, \theta) \mathbb{E}[(\int_{t_i}^{t_{i+1}} a(X_s) dW_s)^2 | \mathcal{F}_{t_i} ]1_{\left \{|X_{t_i}| \le \Delta_{n,i}^{-k} \right \} } + 
\label{eq: a inizio} 
\end{equation}
$$ + \frac{1}{t_n} \sum_{i =0}^{n-1} f^2(X_{t_i}, \theta) \mathbb{E}[(\int_{t_i}^{t_{i+1}} a(X_s) dW_s)^2 (\varphi^2_{\Delta_{n,i}^\beta}(X_{t_{i+1}} - X_{t_i}) -1) | \mathcal{F}_{t_i} ]1_{\left \{|X_{t_i}| \le \Delta_{n,i}^{-k} \right \} }.$$
On the first term of \eqref{eq: a inizio} we use Ito's isometry, getting
$$\frac{1}{t_n} \sum_{i =0}^{n-1} f^2(X_{t_i}, \theta) \mathbb{E}[\int_{t_i}^{t_{i+1}} a(X_s)^2 ds | \mathcal{F}_{t_i} ]1_{\left \{|X_{t_i}| \le \Delta_{n,i}^{-k} \right \} } = $$
\begin{equation}
= \frac{1}{t_n} \sum_{i =0}^{n-1} f^2(X_{t_i}, \theta) (\Delta_{n,i} a^2(X_{t_i}) + R(\theta_0, \Delta_{n,i}^\frac{3}{2}, X_{t_i}))1_{\left \{|X_{t_i}| \le \Delta_{n,i}^{-k} \right \} },
\label{eq: sostituisco a(Xs)} 
\end{equation}
where we have used \eqref{eq: expected value int b as R} with $a^2$ in place of $b$. Using the first point of Proposition 1  we get that
\begin{equation}
\frac{1}{t_n} \sum_{i =0}^{n-1} f^2(X_{t_i}, \theta) \Delta_{n,i} a^2(X_{t_i}) 1_{\left \{|X_{t_i}| \le \Delta_{n,i}^{-k} \right \} } \xrightarrow{\mathbb{P}} \int_{\mathbb{R}} f^2(x, \theta)a^2(x) \pi(dx),
\label{eq: convergence a} 
\end{equation}
while $\frac{1}{t_n} \sum_{i =0}^{n-1} f^2(X_{t_i}, \theta) R(\theta_0, \Delta_{n,i}^\frac{3}{2}, X_{t_i})1_{\left \{|X_{t_i}| \le \Delta_{n,i}^{-k} \right \} } $ goes to zero in norm $L^1$ and therefore in probability. \\
Let us now consider the second term of \eqref{eq: a inizio}.
Using Cauchy- Schwarz inequality we get it is upper bounded by
$$\frac{1}{n \Delta_n} \sum_{i =0}^{n-1} f^2(X_{t_i}, \theta) \mathbb{E}[|\int_{t_i}^{t_{i+1}} a(X_s) dW_s|^{4}| \mathcal{F}_{t_i} ]^\frac{1}{2} \mathbb{E}[|\varphi^2_{\Delta_{n,i}^\beta}(X_{t_{i+1}} - X_{t_i}) -1|^2 | \mathcal{F}_{t_i} ]^\frac{1}{2} \le$$
$$ \le \frac{1}{n \Delta_n} \sum_{i =0}^{n-1} f^2(X_{t_i}, \theta) \mathbb{E}[|\int_{t_i}^{t_{i+1}} a^2(X_s) ds|^{2}| \mathcal{F}_{t_i} ]^\frac{1}{2} \mathbb{E}[1_{\left \{| X_{t_{i+1}} - X_{t_i}| > \Delta_{n,i}^{\beta} \right \} } | \mathcal{F}_{t_i} ]^\frac{1}{2},$$
where we have used Burkholder Davis Gundy inequality and the fact that, by the definition of $\varphi$, it is different from $0$ only if $| X_{t_{i+1}} - X_{t_i}| > \Delta_{n,i}^{\beta}$. Using Jensen inequality and \eqref{eq: stima prob con indicatrice} in the conditional form we can upper bound it with
$$ \frac{1}{n \Delta_n} \sum_{i =0}^{n-1} f^2(X_{t_i}, \theta) \mathbb{E}[\Delta_{n,i}^2|\frac{1}{\Delta_{n,i}}\int_{t_i}^{t_{i+1}} a^2(X_s) ds|^{2}| \mathcal{F}_{t_i} ]^\frac{1}{2} \mathbb{P}(| X_{t_{i+1}} - X_{t_i}| > \Delta_{n,i}^{\beta} | \mathcal{F}_{t_i} )^\frac{1}{2} \le$$
$$ \frac{c}{n \Delta_n} \sum_{i =0}^{n-1} f^2(X_{t_i}, \theta) \mathbb{E}[\Delta_{n,i} \int_{t_i}^{t_{i+1}} a^4(X_s) ds| \mathcal{F}_{t_i} ]^\frac{1}{2} R(\theta_0, \Delta_{n,i}^{\frac{1}{2} - \beta}, X_{t_i}) \le $$
\begin{equation}
 \le \frac{c}{n \Delta_n} \sum_{i =0}^{n-1} f^2(X_{t_i}, \theta) \Delta_{n,i}^\frac{1}{2} [\Delta_{n,i} a^4(X_{t_i}) + R(\theta_0, \Delta_{n,i}^\frac{3}{2}, X_{t_i})]^\frac{1}{2} R(\theta_0, \Delta_{n,i}^{\frac{1}{2} - \beta}, X_{t_i}),
\label{eq: conv zero con prob} 
\end{equation}
where we have also used \eqref{eq: expected value int b as R} with $a^4$ in place of $b$. We observe that \eqref{eq: conv zero con prob} goes to $0$ in $L^1$ and therefore in probability, indeed its $L^1$ norm is upper bounded by
$$ \le \Delta_n^{\frac{1}{2} - \beta} \frac{c}{n } \sum_{i =0}^{n-1} \mathbb{E} [f^2(X_{t_i}, \theta) R(\theta_0, 1, X_{t_i}) ( a^2(X_{t_i}) + R(\theta_0, \Delta_{n,i}^\frac{3}{4}, X_{t_i}))],$$
that goes to $0$ by the polynomial growth of $f$, $R$ and $a$ and the third point of Lemma \ref{lemma: Moment inequalities} and since $\beta < \frac{1}{2}$. \\
Let us now consider the second term of \eqref{eq: x - m funzione di a } for the proof of \eqref{eq: condition 2' conv dotU_n}. Using Cauchy-Schwarz inequality, \eqref{eq: sol Bin} and Ito's isometry we get
$$\frac{2}{t_n} \sum_{i =0}^{n-1} f^2(X_{t_i}, \theta) \mathbb{E}[B_{i,n} \int_{t_i}^{t_{i+1}} a(X_s) dW_s \, \varphi^2_{\Delta_{n,i}^\beta}(X_{t_{i+1}} - X_{t_i})1_{\left \{|X_{t_i}| \le \Delta_{n,i}^{-k} \right \} }| \mathcal{F}_{t_i} ] \le$$
$$\le \frac{c}{n \Delta_n} \sum_{i =0}^{n-1} f^2(X_{t_i}, \theta)R(\theta_0, \Delta_{n,i}^{1 + \epsilon}, X_{t_i})^\frac{1}{2}\mathbb{E}[\int_{t_i}^{t_{i+1}} a(X_s)^2 ds | \mathcal{F}_{t_i} ]^\frac{1}{2} \le$$
\begin{equation}
\le \Delta_n^{\frac{\epsilon}{2}} \frac{c}{n } \sum_{i =0}^{n-1} f^2(X_{t_i}, \theta)R(\theta_0, 1, X_{t_i})(a^2(X_{t_i}) + R(\theta_0, \Delta_{n,i}^\frac{1}{4}, X_{t_i})),
\label{eq: doppio prodotto} 
\end{equation}
where in the last inequality we have used the property \eqref{propriety power R} of $R$ and \eqref{eq: sostituisco a(Xs)} with the trivial estimation $|\Delta_{n,i}| \le \Delta_n$. By the polynomial growth of both $a$, $f$ and $R$ and the fact that the exponent on $\Delta_n$ is positive we have that \eqref{eq: doppio prodotto} converges to $0$ en norm $L^1$. Hence it converges to $0$ in probability, \eqref{eq: condition 2' conv dotU_n} follows. \\
Our goal is now to prove \eqref{eq: condition 1' conv dotU_n}. Using \eqref{eq: x - m funzione di a } we have that
$$\sum_{i= 0}^{n-1} \mathbb{E}[|s_i^n|^{2 + r}| \mathcal{F}_{t_i}] \le $$
\begin{equation}
\le c \frac{1}{(n \Delta_n)^{1 + \frac{r}{2}}}\sum_{i= 0}^{n-1}f^{2 + r}(X_{t_i}, \theta) (\mathbb{E}[B_{i,n}^{2 + r} \varphi^{2 +r}_{\Delta_{n,i}^\beta}(X_{t_{i+1}} - X_{t_i})|\mathcal{F}_{t_i}] + \mathbb{E}[( \int_{t_i}^{t_{i+1}} a(X_s) dW_s)^{2 + r}|\mathcal{F}_{t_i}]).
\label{eq: preuve con 2 + delta} 
\end{equation}
We act as we have already done in the proof of \eqref{eq: condition 2' conv dotU_n} on the first term of \eqref{eq: preuve con 2 + delta}:  using \eqref{eq: salti trascurabili} we get it is upper bounded by
$$ \frac{c}{(n \Delta_n)^{1 + \frac{r}{2}}}\sum_{i= 0}^{n-1}f^{2 + r}(X_{t_i}, \theta) R(\theta_0, \Delta_{n,i}^{1 + \epsilon}, X_{t_i})\le \Delta_n^\epsilon \frac{c}{(n \Delta_n)^{\frac{r}{2}}} \frac{1}{n}\sum_{i= 0}^{n-1}f^{2 + r}(X_{t_i}, \theta) R(\theta_0, 1, X_{t_i}), $$
that converges to $0$ in norm $L^1$ (and therefore in probability) since $\epsilon > 0 $ and $n \Delta_n \rightarrow \infty $ for $n \rightarrow \infty$. Concerning the second term of \eqref{eq: preuve con 2 + delta}, using Burkholder-Davis-Gundy 
inequality and \eqref{eq: sostituisco a(Xs)} we have
\begin{equation}
\mathbb{E}[( \int_{t_i}^{t_{i+1}} a(X_s) dW_s)^{2 + r}|\mathcal{F}_{t_i}]) \le R(\theta_0, \Delta_{n,i}^{1 + \frac{r}{2}}, X_{t_i}).
\label{eq: a con 2 + delta} 
\end{equation}
Using \eqref{eq: a con 2 + delta} we get that the second term of \eqref{eq: preuve con 2 + delta} is upper bounded by
$\frac{c}{n^\frac{r}{2}} \frac{1}{n } \sum_{i= 0}^{n-1}f^{2 + r}(X_{t_i}, \theta)R(\theta_0, 1, X_{t_i}),$
that converges to $0$ in norm $L^1$ and hence in probability since $n^\frac{r}{2} \rightarrow \infty$. We deduce \eqref{eq: condition 1' conv dotU_n} and therefore the wanted asymptotic normality. \finpv
\subsection{Proof of Propositions \ref{prop: truc moche h} and \ref{prop: truc moche z}.}\label{S:Proof_prop moche}
Since Proposition \ref{prop: truc moche h} is a consequence of Proposition \ref{prop: truc moche z}, let us start with the proof of Proposition \ref{prop: truc moche z}. To lighten the notation we forget the dependence on $\theta$ of $X^\theta$ and $Z_\theta$. \\
\textit{Proof of Proposition \ref{prop: truc moche z}.} \\
Using $\tilde{X}_t^J$ defined in \eqref{eq: definition X^J}, we introduce the event
\begin{equation}
E_h: = \left \{  \tilde{X}_h^J: =  \int_0^t \int_{\mathbb{R} \backslash \left \{0 \right \}} z \, \gamma(X_{s^-})\tilde{\mu}(ds,dz) \in [\frac{1}{2} h^\beta, 4 h^\beta ] \right \}.
\label{eq: definition E} 
\end{equation}
We have that
\begin{equation}
\mathbb{E}[|Z \varphi_{h^\beta}^{(k)}(X_h - x)|] = \mathbb{E}[|Z \varphi_{h^\beta}^{(k)}(X_h - x)| 1_{E_h}] + \mathbb{E}[|Z \varphi_{h^\beta}^{(k)}(X_h - x)|1_{E_h^c}].
\label{eq: prop 5 con decoupe su E} 
\end{equation}
We observe that, by its definition, $\varphi_{h^\beta}^{(k)}(X_h - x)$ is different from $0$ only if $|X_h - x| \in [h^\beta, 2 h^\beta] $.
But  $\Delta_h X := |X_h - x| = |X^c_h - x + \tilde{X}^J_h|$ hence on $E_h^c$, where $ \tilde{X}_h^J \notin [\frac{1}{2} h^\beta, 4 h^\beta ]$, from $|X_h - x| \in [h^\beta, 2 h^\beta] $ we deduce that it must be $|X^c_h - x| \ge \frac{1}{2} h^\beta$. Using this observation and Holder inequality we have that the second term on the right hand side of \eqref{eq: prop 5 con decoupe su E} is upper bounded by
$$(\mathbb{E}[|Z|^{p}])^\frac{1}{p} (\mathbb{E}[|\varphi_{h^\beta}^{(k)}(X_h - x)|^{q}1_{E_h^c}])^\frac{1}{q} \le c(\mathbb{P}(|X^c_h - x| \ge \frac{1}{2} h^\beta))^\frac{1}{q} \le ch^{\frac{r}{q}(\frac{1}{2} - \beta)}$$
$\forall r > 1$, where we have also used that $Z$ is bounded in $L^p$ and Remark 2 in \hyperref[csl:16]{(Gloter, Loukianova, \& Mai, 2018)}. \\
In order to estimate the first term on the right hand side of \eqref{eq: prop 5 con decoupe su E} we need the following lemma that we will prove at the end of the section:
\begin{lemma}
Let us consider $E_h$, the set defined in \eqref{eq: definition E}. We have
\begin{equation}
\mathbb{P}(E_h) \le R(\theta, h^{1 - \beta \alpha}, x).
\label{eq: P(E)} 
\end{equation}
\label{lemma: P(E)} 
\end{lemma}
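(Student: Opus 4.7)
The plan is to exploit the fact that $E_h$ forces the jump part $\tilde X_h^J$ to be of anomalously large size $\gtrsim h^\beta$, much larger than the typical size $\sim h^{1/2}$; with high probability this can only happen because the driver $L$ has produced at least one ``large'' jump on $[0,h]$. Fix a threshold $\eta := h^\beta$ and split
\begin{equation*}
\tilde X_h^J = M_h + N_h - K_h,
\end{equation*}
where $M_h := \int_0^h \int_{|z| \le \eta} z\, \gamma(X_{s^-})\, \tilde\mu(ds,dz)$ is the compensated small-jump martingale, $N_h := \int_0^h \int_{|z|>\eta} z\, \gamma(X_{s^-})\, \mu(ds,dz)$ collects the large jumps, and $K_h := \int_0^h \int_{|z|>\eta} z\, \gamma(X_{s^-})\, F(z)\, dz\, ds$ is their compensator. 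Let $A_h$ denote the event that $L$ has at least one jump of size $>\eta$ on $[0,h]$; off $A_h$ one has $N_h = 0$, so using $E_h \subset \{|\tilde X_h^J| \ge h^\beta/2\}$ we obtain
\begin{equation*}
\mathbb{P}(E_h) \le \mathbb{P}(A_h) + \mathbb{P}\bigl(|M_h| \ge \tfrac{1}{4} h^\beta\bigr) + \mathbb{P}\bigl(|K_h| \ge \tfrac{1}{4} h^\beta\bigr).
\end{equation*}

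The first term is immediate: the number of jumps of $L$ on $[0,h]$ with size exceeding $\eta$ is Poisson with parameter $h \int_{|z|>\eta} F(z)\, dz$, which by Assumption 4.3 is bounded by $c\, h\, \eta^{-\alpha} = c\, h^{1-\alpha\beta}$, hence $\mathbb{P}(A_h) \le c\, h^{1-\alpha\beta}$, exactly of the claimed order.

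For the martingale part, I would apply the $L^2$ isometry for compensated Poisson integrals together with the truncated tail bound $\int_{|z|\le \eta} z^2 F(z)\, dz \le c\, \eta^{2-\alpha}$ (valid because $\alpha<2$) and the moment estimate $\mathbb{E}[\gamma^2(X_s)] \le c(1+|x|^c)$ coming from the sub-linear growth of $\gamma$ (Assumption 3.3) and Lemma \ref{lemma: Moment inequalities} point 3, to get $\mathbb{E}[M_h^2] \le c(1+|x|^c)\, h\, \eta^{2-\alpha}$. Markov's inequality then yields
\begin{equation*}
\mathbb{P}\bigl(|M_h| \ge h^\beta/4\bigr) \le c(1+|x|^c)\, h^{-2\beta} \cdot h\, \eta^{2-\alpha} = c(1+|x|^c)\, h^{1-\alpha\beta},
\end{equation*}
again of the desired order, and without needing more than the $L^2$ moment.

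The compensator $K_h$ is the most delicate because $|\int_{|z|>\eta} z\, F(z)\, dz|$ is bounded uniformly in $\eta$ only when $\alpha <1$, whereas for $\alpha \in [1,2)$ it grows like $\eta^{1-\alpha} = h^{\beta(1-\alpha)}$. In either case, $K_h$ is pathwise dominated by $c\, h\, \eta^{(1-\alpha)\wedge 0}$ times $\sup_{s\le h}|\gamma(X_{s^-})|$, and by Jensen plus Lemma \ref{lemma: Moment inequalities} one obtains $\mathbb{E}[|K_h|^p]\le c(1+|x|^c)\, h^{p(1+ \beta(1-\alpha)\wedge 0)}$; Markov's inequality with $p$ chosen large enough then gives $\mathbb{P}(|K_h|\ge h^\beta/4) \le c(1+|x|^c)\, h^{p(1-\alpha\beta)}$, which is $o(h^{1-\alpha\beta})$. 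Summing the three estimates yields $\mathbb{P}(E_h) \le c(1+|x|^c)\, h^{1-\alpha\beta} = R(\theta, h^{1-\alpha\beta}, x)$. The main obstacle is to keep the $x$-dependence polynomial throughout: because $\gamma$ is not bounded (only sub-linear), powers of $X_s$ appear in every moment estimate and must be propagated back to the initial point via Lemma \ref{lemma: Moment inequalities} point 3; the key analytic input that makes the one-large-jump bookkeeping go through is precisely $\beta\alpha <1$, which is guaranteed by Assumption $A_\beta$.
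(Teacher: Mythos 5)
Your proof is correct and follows essentially the same route as the paper: both arguments isolate the event that the driving Poisson measure has a jump of size of order $h^\beta$ on $[0,h]$ (an event of probability $O(h^{1-\alpha\beta})$ by Assumption 4.3) and then control the remaining small-jump contribution reaching the level $h^\beta$ by a Chebyshev/moment estimate. The only cosmetic difference is that the paper conditions on the no-large-jump event $N_n^i$ and reuses its Kunita-type $r$-th moment bound \eqref{eq: stima finale con Nin}, whereas you decompose $\tilde X_h^J$ additively into compensated small-jump martingale, large jumps and their compensator, getting by with an $L^2$ isometry for the martingale part --- the rates obtained are identical.
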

If $Z \in \mathcal{Z}_{h,c,p}$, then using Holder inequality, the estimation \eqref{eq: P(E)} and the boundedness of $Z$ in $L^p$ we get
$$\mathbb{E}[|Z \varphi_{h^\beta}^{(k)}(X_h - x)| 1_{E_h}] \le (\mathbb{E}[|Z|^{p}]^\frac{1}{p}) (\mathbb{E}[|\varphi_{h^\beta}^{(k)}(X_h - x)|^{q} 1_{E_h}])^\frac{1}{q} \le$$
$$\le cR(\theta, h^{1 - \beta \alpha}, x)^\frac{1}{q} = cR(\theta, h^{\frac{1 - \beta \alpha}{q}}, x), $$
with $\frac{1}{p} + \frac{1}{q} = 1$. Hence, we get the Proposition \ref{prop: truc moche z}. \finpv

Proposition \ref{prop: truc moche h} is a consequence of Proposition \ref{prop: truc moche z}, observing that $(h(X_u, \theta))_{\theta \in \Theta} \in \mathcal{Z}_{t_{i+1} - t_i, c, p}$, for $u \in [t_i, t_{i + 1}]$, and the Markov property. \\
\\
In conclusion, we prove Lemma \ref{lemma: P(E)}. \\ \\
\textit{Proof of Lemma \ref{lemma: P(E)}.} \\
We use again the set $N_n^i$ defined in \eqref{eq: definition Nî_n}. We have
\begin{equation}
\mathbb{P}(E_h)= \mathbb{P}(E_h \cap N_n^i ) + \mathbb{P}(E_h \cap (N_n^i)^c ).
\label{eq: eh splittato} 
\end{equation}
On the second term of \eqref{eq: eh splittato} we use \eqref{eq: estim Nin^c}, getting
\begin{equation}
\mathbb{P}(E_h \cap (N_n^i)^c ) \le \mathbb{P}((N_n^i)^c) \le ch^{1 - \alpha \beta}.
\label{eq: Eh con Nin^c } 
\end{equation}
Concerning the set $E_h \cap N_n^i$, we use Markov inequality and we obtain, $\forall r > 1$,
\begin{equation}
\mathbb{P}(E_h \cap N_n^i) \le c \mathbb{E}[|\tilde{X}_h^J|^r 1_{N_n^i}]h^{- \beta r} \le ch^{- \beta r}h^{1 + \beta(r - \alpha)}= ch^{1 - \beta \alpha},
\label{eq: Eh con Nin} 
\end{equation}
where in the last inequality we used \eqref{eq: stima finale con Nin}. \\
Using \eqref{eq: eh splittato}, \eqref{eq: Eh con Nin^c } and \eqref{eq: Eh con Nin} we get the Lemma \ref{lemma: P(E)}.
\finpv

\end{document}